\numberwithin{equation}{section}
\newtheorem{Thm}[equation]{Theorem}
\newtheorem*{Thm*}{Theorem}
\newtheorem{Prop}[equation]{Proposition}
\newtheorem{Lem}[equation]{Lemma}
\newtheorem{Cor}[equation]{Corollary}
\theoremstyle{remark}
\newtheorem{Def}[equation]{Definition}
\newtheorem{Not}[equation]{Notation}
\newtheorem{Exa}[equation]{Example}
\newtheorem{Cons}[equation]{Construction}
\newtheorem{Rem}[equation]{Remark}
\tikzset{
    labelrotatebelow/.style={anchor=north, rotate=90, inner sep=1.0mm}
}
\tikzset{
    labelrotateabove/.style={anchor=south, rotate=90, inner sep=1.0mm}
}
\newcommand{\nc}{\newcommand}
\nc{\dmo}{\DeclareMathOperator}
\nc{\Beren}[1]{{\color{MidnightBlue}#1}}
\nc{\Irkali}[1]{{\color{OliveGreen}#1}}
\nc{\Christian}[1]{{\color{Violet}#1}}
\nc{\Iout}[1]{\Irakli{\sout{#1}}}
\nc{\Bout}[1]{\Beren{\sout{#1}}}
\nc{\Cout}[1]{\Christian{\sout{#1}}}
\nc{\bbullet}{{\scriptscriptstyle\hspace{-1pt}\bullet}}
\nc{\bullett}{{\scriptscriptstyle\bullet}\hspace{-1pt}}
\nc{\LF}{L\hspace{-0.2ex}F}
\nc{\SpG}{\Sp^G}
\nc{\Prst}{{\cat P}\mathrm{r^{st}}}
\nc{\Mack}{\mathcal{M}ack}
\nc{\SC}{S\cat C}
\dmo{\DM}{DM}
\nc{\DMQ}{\DM_Q}
\dmo{\DerKal}{DMack}
\dmo{\Der}{D}
\dmo{\DMot}{DMot}
\dmo{\rmH}{H}
\dmo{\piu}{\underline{\pi}}
\dmo{\Sphere}{\mathbb{S}}
\nc{\HA}{{\rmH \hspace{-0.2em}\bbA}}
\nc{\HZ}{{\rmH \hspace{-0.2em}\bbZ}}
\nc{\HZbar}{{\rmH \hspace{-0.2em}\underline{\bbZ}}}
\nc{\Fp}{{\bbF_{\hspace{-0.1em}p}}}
\nc{\HFp}{{\rmH \hspace{-0.15em}\bbF_{\hspace{-0.1em}p}}}
\nc{\DHZG}{\Der(\HZ_G)}
\nc{\DHZH}{\Der(\HZ_H)}
\nc{\DHZK}{\Der(\HZ_K)}
\nc{\DHZGN}{\Der(\HZ_{G/N})}
\nc{\DHZGG}{\Der(\HZ_{G/G})}
\nc{\DHZCp}{\Der(\HZ_{C_p})}
\nc{\DHZGprime}{\Der(\HZ_{G'})}
\nc{\DHZ}{\Der(\HZ)}
\nc{\frakp}{\mathfrak{p}}
\nc{\frakq}{\mathfrak{q}}
\nc{\Z}{\mathbb{Z}}
\nc{\SSG}{\text{sSet}_*^G}
\nc{\sSet}{\text{sSet}}
\dmo{\Con}{Conj}
\dmo{\Id}{Id}
\dmo{\Loc}{Loc}
\dmo{\rmK}{\textrm{\rm K}}
\dmo{\Spc}{Spc}
\dmo{\thick}{thick}
\nc{\thickt}[1]{\thick_\otimes\langle #1 \rangle}
\dmo{\cone}{cone}
\dmo{\End}{End}
\dmo{\Mor}{Mor}
\dmo{\Hom}{Hom}
\dmo{\id}{id}
\dmo{\incl}{incl}
\dmo{\Img}{Im}
\dmo{\im}{im}
\dmo{\Ker}{Ker}
\dmo{\ind}{ind}
\dmo{\CoInd}{coind}
\dmo{\res}{res}
\dmo{\infl}{infl}
\dmo{\triv}{triv}
\dmo{\Tel}{Tel} 
\dmo{\Mod}{Mod}%
\dmo{\opname}{op}
\dmo{\SH}{SH}
\dmo{\smallb}{b}
\dmo{\Spec}{Spec}
\dmo{\supp}{supp}
\nc{\SHc}{{\SH^c}}
\nc{\SHp}{{\SH_{(p)}}}
\nc{\SHcp}{{\SH^c_{(p)}}}
\nc{\SHG}{\SH(G)}
\nc{\SHGp}{\SH(G)_{(p)}}
\nc{\SHGc}{\SHG^c}
\nc{\SHGcp}{\SHG^c_{(p)}}
\nc{\quadtext}[1]{\quad\textrm{#1}\quad}
\nc{\qquadtext}[1]{\qquad\textrm{#1}\qquad}
\nc{\adj}{\dashv}
\nc{\adjto}{\rightleftarrows}
\nc{\bbL}{\mathbb{L}}
\nc{\bbA}{\mathbb{A}}
\nc{\bbN}{\mathbb{N}}
\nc{\bbQ}{\mathbb{Q}}
\nc{\bbZ}{\mathbb{Z}}
\nc{\bbF}{\mathbb{F}}
\nc{\cat}[1]{\mathscr{#1}}
\nc{\ie}{{\sl i.e.}, }
\nc{\into}{\mathop{\rightarrowtail}}
\nc{\inv}{^{-1}}
\nc{\isoto}{\mathop{\overset{\sim}\to}}
\nc{\isotoo}{\mathop{\overset{\sim}\too}}
\nc{\onto}{\mathop{\twoheadrightarrow}}
\nc{\too}{\mathop{\longrightarrow}\limits}
\nc{\mapstoo}{\longmapsto}
\nc{\adh}[1]{\overline{#1}}
\nc{\adhpt}[1]{\adh{\{#1\}}}
\nc{\aka}{{a.\,k.\,a.}\ }
\nc{\calF}{\mathcal{F}}
\nc{\eg}{{\sl e.\,g.}}
\nc{\Homcat}[1]{\Hom_{\cat #1}}
\nc{\hook}{\hookrightarrow}
\nc{\ideal}[1]{\langle #1\rangle}
\nc{\ihom}{{\underline{\hom}}}
\nc{\Mid}{\,\big|\,}
\nc{\MMod}{\,\text{-}\Mod}%
\nc{\op}{^{\opname}}
\nc{\oto}[1]{\overset{#1}\to}
\nc{\otoo}[1]{\overset{#1}{\,\too\,}}
\nc{\sminus}{\!\smallsetminus\!}
\nc{\poplus}[1]{^{\oplus #1}}%
\nc{\potimes}[1]{^{\otimes #1}}
\nc{\sbull}{{\scriptscriptstyle\bullet}}
\nc{\SET}[2]{\big\{\,#1\Mid#2\,\big\}}
\nc{\SpcK}{\Spc(\cat K)}
\nc{\then}{\Rightarrow}
\nc{\unit}{\mathbb{1}}
\nc{\xra}{\xrightarrow}
\nc{\phigeom}[1]{\widetilde{\Phi}^{#1}}
\nc{\phigeomb}[1]{\Phi^{#1}}
\dmo{\Oname}{O}
\dmo{\proper}{proper}
\dmo{\lenormal}{\unlhd}
\dmo{\lnormal}{\lhd}
\nc{\normal}{\trianglelefteq}
\nc{\Op}{\Oname^p}
\nc{\Oq}{\Oname^q}
\dmo{\Sp}{Sp}
\dmo{\Ho}{Ho}
\dmo{\Fin}{Fin}
\dmo{\add}{add}
\dmo{\Fun}{Fun}
\dmo{\CAlg}{CAlg}
\dmo{\CMon}{CMon}
\dmo{\CC}{\cat C} 
\dmo{\DD}{\cat D}
\dmo{\OO}{\mathcal{O}}
\dmo{\Map}{Map}
\dmo{\Span}{Span}
\dmo{\N}{N}
\dmo{\Cat}{Cat}
\dmo{\colim}{colim}
\dmo{\Ch}{Ch}
\dmo{\A}{\mathbb{A}^{eff}}
\nc{\AGeff}{\mathbb{A}_G^{\mathrm{eff}}}
\nc{\BGeff}{\mathcal{B}_G^{\mathrm{eff}}}
\nc{\BG}{{\mathcal{B}_G}}
\nc{\NBGeff}{{\N}{\BGeff}}
\dmo{\Ab}{Ab}
\dmo{\Set}{Set}
\dmo{\ev}{ev}
\dmo{\Spcl}{Spcl}
\nc{\Funadd}{\Fun_{\add}}
\dmo{\proj}{proj}
\dmo{\cof}{cof}
\newcounter{enum-resume-hack}
\begin{document}


\title{The spectrum of derived Mackey functors}
\author{Irakli Patchkoria}
\author{Beren Sanders}
\author{Christian Wimmer}
\date{\today}

\begin{abstract}
	We compute the spectrum of the category of derived Mackey functors (in the sense of Kaledin) for all finite groups.  We find that this space captures precisely the top and bottom layers (i.e.~the height infinity and height zero parts) of the spectrum of the equivariant stable homotopy category.  Due to this truncation of the chromatic information, we are able to obtain a complete description of the spectrum for all finite groups, despite our incomplete knowledge of the topology of the spectrum of the equivariant stable homotopy category.  From a different point of view, we show that the spectrum of derived Mackey functors can be understood as the space obtained from the spectrum of the Burnside ring by ``ungluing'' closed points.  In order to compute the spectrum, we provide a new description of Kaledin's category, as the derived category of an equivariant ring spectrum, which may be of independent interest.  In fact, we clarify the relationship between several different categories, establishing symmetric monoidal equivalences and comparisons between the constructions of Kaledin, the spectral Mackey functors of Barwick, the ordinary derived category of Mackey functors, and categories of modules over certain equivariant ring spectra.  We also illustrate an interesting feature of the ordinary derived category of Mackey functors that distinguishes it from other equivariant categories relating to the behavior of its geometric fixed points.
\end{abstract}

\subjclass[2010]{18E30, 55P91, 55U35}
\keywords{}

\thanks{Second-named author supported by NSF grant~DMS-1903429.}

\maketitle

\tableofcontents

\section{Introduction}\label{sec:introduction} 
A compelling yet not completely understood phenomenon in hypertopical \mbox{algebra} is the impression that some stable homotopy theories appear (at least intuitively) to be ``linearizations'' of other stable homotopy theories.  For example, the derived category of the integers $\Der(\bbZ)$ can intuitively be regarded as a kind of ``linearization'' of the stable homotopy category of spectra $\SH$.  Although it is not our present goal to make this notion of ``linearization'' precise, one can readily find additional examples; consider, for example, the relationship between the derived category of motives $\DMot(k)$ over a field and the corresponding motivic stable homotopy category $\SH(k)$.  In this paper we are interested in the ``linearization'' of the \mbox{$G$-equivariant} stable homotopy category $\SH(G)$ for $G$ a finite group.

As the homotopy groups of a $G$-spectrum $X \in \SH(G)$ naturally form a graded \mbox{$G$-Mackey} functor, it seems plausible that the linearization of $\SH(G)$ would be some kind of derived category of Mackey functors.  The category of $G$-Mackey functors $\Mack(G)$ forms an abelian category, so we can certainly consider its derived category $\Der(\Mack(G))$, but Kaledin \cite{Kaledin11} argues that $\Der(\Mack(G))$ is not the ``correct'' definition of the derived category of Mackey functors.  He introduces a new triangulated category of ``derived Mackey functors'' $\DerKal(G)$, which contains $\Mack(G)$ as a subcategory, but with better behavior than $\Der(\Mack(G))$.  Part of Kaledin's argument against $\Der(\Mack(G))$ is that it does not behave in a way analogous to the equivariant stable homotopy category $\SH(G)$.  From our point of view, it is Kaledin's category $\DerKal(G)$ that is the correct ``linearization'' of $\SH(G)$, rather than the ordinary derived category $\Der(\Mack(G))$.

In this paper we will compute the tensor triangular spectrum of the compact objects in Kaledin's category of derived $G$-Mackey functors and explain its close relationship with the spectrum of the stable homotopy category of compact \mbox{$G$-spectra} (of which we have a fairly good understanding due to \cite{BalmerSanders17,BHNNNS19}).  We will find that the spectrum of $\DerKal(G)^c$ captures precisely the top and bottom chromatic layers of the spectrum of $\SH(G)^c$.  For example, the following diagram depicts the relationship between the two spaces for $G=C_p$ the cyclic group of order $p$:

\vspace{0.5em}
\resizebox{0.9\textwidth}{!}{\label{fig:SHCp}
\xy
	(-60,-25)*{\Spec(\DerKal(C_p)^c)};
	(25,-25)*{\Spec(\SH(C_p)^c)};
	{\ar@{^{(}->} (-40,-25)*{};(7,-25)*{}};
{\ar@{^{(}->} (-25,2)*{};(-13,2)*{}};
{\ar@{-} (15,-15)*{};(20,0)*{}};
{\ar@{-} (15,-15)*{};(22.5,0)*{}};
{\ar@{-} (15,-15)*{};(25,0)*{}};
{\ar@{-} (15,-15)*{};(27.5,0)*{}};
{\ar@{-} (40,-15)*{};(45,0)*{}};
{\ar@{-} (40,-15)*{};(47.5,0)*{}};
{\ar@{-} (40,-15)*{};(50,0)*{}};
{\ar@{-} (40,-15)*{};(52.5,0)*{}};
{\ar@{-} (15,-15)*{};(-7,0)*{}};
{\ar@{-} (40,-15)*{};(-7,0)*{}};
{\ar@{-} (40,-15)*{};(-2,0)*{}};
{\ar@{-} (-2,0)*{};(-7,5)*{}};
{\ar@{-} (-2,5)*{};(-7,10)*{}};
{\ar@{-} (-2,10)*{};(-7,15)*{}};
{\ar@{-} (-2,15)*{};(-6.5,19.5)*{}};
{\ar@{-} (-2,25)*{};(-7,25)*{}};
{\ar@{-} (-7,0)*{};(-7,5)*{}};
{\ar@{-} (-7,5)*{};(-7,10)*{}};
{\ar@{-} (-7,10)*{};(-7,15)*{}};
{\ar@{-} (-7,15)*{};(-7,19.5)*{}};
 (-7,0)*{\color{green}\bullet};
 (-7,0)*{\circ};
  (-7,5)*{\color{green}\bullet};
  (-7,5)*{\circ};
  (-7,10)*{\color{green}\bullet};
  (-7,10)*{\circ};
  (-7,15)*{\color{green}\bullet};
  (-7,15)*{\circ};
  (-7,23)*{\vdots};
  (-7,25)*{\color{cyan}\bullet};
  (-7,25)*{\circ};
%
{\ar@{-} (-2,0)*{};(-2,5)*{}};
{\ar@{-} (-2,5)*{};(-2,10)*{}};
{\ar@{-} (-2,10)*{};(-2,15)*{}};
{\ar@{-} (-2,15)*{};(-2,19.5)*{}};
 (-2,0)*{\color{red}\bullet};
 (-2,0)*{\circ};
  (-2,5)*{\color{red}\bullet};
  (-2,5)*{\circ};
  (-2,10)*{\color{red}\bullet};
  (-2,10)*{\circ};
  (-2,15)*{\color{red}\bullet};
  (-2,15)*{\circ};
  (-2,23)*{\vdots};
  (-2,25)*{\color{cyan}\bullet};
  (-2,25)*{\circ};
%
 (25,27.5)*{\scriptscriptstyle (q\neq p,\, n\ge2)};
{\ar@{-} (20,0)*{};(20,5)*{}};
{\ar@{-} (20,5)*{};(20,10)*{}};
{\ar@{-} (20,10)*{};(20,15)*{}};
{\ar@{-} (20,15)*{};(20,19.5)*{}};
 (20,0)*{\color{green}\bullet};
 (20,0)*{\circ};
  (20,5)*{\color{green}\bullet};
  (20,5)*{\circ};
  (20,10)*{\color{green}\bullet};
  (20,10)*{\circ};
  (20,15)*{\color{green}\bullet};
  (20,15)*{\circ};
  (20,23)*{\vdots};
  (20,25)*{\color{cyan}\bullet};
  (20,25)*{\circ};
{\ar@{-} (22.5,0)*{};(22.5,5)*{}};
{\ar@{-} (22.5,5)*{};(22.5,10)*{}};
{\ar@{-} (22.5,10)*{};(22.5,15)*{}};
{\ar@{-} (22.5,15)*{};(22.5,19.5)*{}};
 (22.5,0)*{\color{green}\bullet};
 (22.5,0)*{\circ};
  (22.5,5)*{\color{green}\bullet};
  (22.5,5)*{\circ};
  (22.5,10)*{\color{green}\bullet};
  (22.5,10)*{\circ};
  (22.5,15)*{\color{green}\bullet};
  (22.5,15)*{\circ};
  (22.5,23)*{\vdots};
  (22.5,25)*{\color{cyan}\bullet};
  (22.5,25)*{\circ};
{\ar@{-} (25,0)*{};(25,5)*{}};
{\ar@{-} (25,5)*{};(25,10)*{}};
{\ar@{-} (25,10)*{};(25,15)*{}};
{\ar@{-} (25,15)*{};(25,19.5)*{}};
     (25,0)*{\color{green}\bullet};
	 (25,0)*{\circ};
  (25,5)*{\color{green}\bullet};
  (25,5)*{\circ};
  (25,10)*{\color{green}\bullet};
  (25,10)*{\circ};
  (25,15)*{\color{green}\bullet};
  (25,15)*{\circ};
  (25,23)*{\vdots};
  (25,25)*{\color{cyan}\bullet};
  (25,25)*{\circ};
{\ar@{-} (27.5,0)*{};(27.5,5)*{}};
{\ar@{-} (27.5,5)*{};(27.5,10)*{}};
{\ar@{-} (27.5,10)*{};(27.5,15)*{}};
{\ar@{-} (27.5,15)*{};(27.5,19.5)*{}};
(27.5,0)*{\color{green}\bullet};
(27.5,0)*{\circ};
  (27.5,5)*{\color{green}\bullet};
  (27.5,5)*{\circ};
  (27.5,10)*{\color{green}\bullet};
  (27.5,10)*{\circ};
  (27.5,15)*{\color{green}\bullet};
  (27.5,15)*{\circ};
  (27.5,23)*{\vdots};
  (27.5,25)*{\color{cyan}\bullet};
  (27.5,25)*{\circ};
  (31.25,2.5)*{\hdots};
  (31.25,7.5)*{\hdots};
  (31.25,12.5)*{\hdots};
  (31.25,17.5)*{\hdots};
  (31.25,22.5)*{\hdots};
%
 (50,27.5)*{\scriptscriptstyle (q\neq p,\,n\ge2)};
{\ar@{-} (45,0)*{};(45,5)*{}};
{\ar@{-} (45,5)*{};(45,10)*{}};
{\ar@{-} (45,10)*{};(45,15)*{}};
{\ar@{-} (45,15)*{};(45,19.5)*{}};
 (45,0)*{\color{red}\bullet};
 (45,0)*{\circ};
  (45,5)*{\color{red}\bullet};
  (45,5)*{\circ};
  (45,10)*{\color{red}\bullet};
  (45,10)*{\circ};
  (45,15)*{\color{red}\bullet};
  (45,15)*{\circ};
  (45,23)*{\vdots};
  (45,25)*{\color{cyan}\bullet};
  (45,25)*{\circ};
{\ar@{-} (47.5,0)*{};(47.5,5)*{}};
{\ar@{-} (47.5,5)*{};(47.5,10)*{}};
{\ar@{-} (47.5,10)*{};(47.5,15)*{}};
{\ar@{-} (47.5,15)*{};(47.5,19.5)*{}};
(47.5,0)*{\color{red}\bullet};
(47.5,0)*{\circ};
  (47.5,5)*{\color{red}\bullet};
  (47.5,5)*{\circ};
  (47.5,10)*{\color{red}\bullet};
  (47.5,10)*{\circ};
  (47.5,15)*{\color{red}\bullet};
  (47.5,15)*{\circ};
  (47.5,23)*{\vdots};
  (47.5,25)*{\color{cyan}\bullet};
  (47.5,25)*{\circ};
{\ar@{-} (50,0)*{};(50,5)*{}};
{\ar@{-} (50,5)*{};(50,10)*{}};
{\ar@{-} (50,10)*{};(50,15)*{}};
{\ar@{-} (50,15)*{};(50,19.5)*{}};
  (50,0)*{\color{red}\bullet};
  (50,0)*{\circ};
  (50,5)*{\color{red}\bullet};
  (50,5)*{\circ};
  (50,10)*{\color{red}\bullet};
  (50,10)*{\circ};
  (50,15)*{\color{red}\bullet};
  (50,15)*{\circ};
  (50,23)*{\vdots};
  (50,25)*{\color{cyan}\bullet};
  (50,25)*{\circ};
{\ar@{-} (52.5,0)*{};(52.5,5)*{}};
{\ar@{-} (52.5,5)*{};(52.5,10)*{}};
{\ar@{-} (52.5,10)*{};(52.5,15)*{}};
{\ar@{-} (52.5,15)*{};(52.5,19.5)*{}};
  (52.5,0)*{\color{red}\bullet};
  (52.5,0)*{\circ};
  (52.5,5)*{\color{red}\bullet};
  (52.5,5)*{\circ};
  (52.5,10)*{\color{red}\bullet};
  (52.5,10)*{\circ};
  (52.5,15)*{\color{red}\bullet};
  (52.5,15)*{\circ};
  (52.5,23)*{\vdots};
  (52.5,25)*{\color{cyan}\bullet};
  (52.5,25)*{\circ};
  (56.25,2.5)*{\hdots};
  (56.25,7.5)*{\hdots};
  (56.25,12.5)*{\hdots};
  (56.25,17.5)*{\hdots};
  (56.25,22.5)*{\hdots};
  (15,-15)*{\color{cyan}\bullet};
  (15,-15)*{\circ};
  (40,-15)*{\color{cyan}\bullet};
  (40,-15)*{\circ};
{\ar@{-} (-75,-15)*{};(-70,2)*{}};
{\ar@{-} (-75,-15)*{};(-67.5,2)*{}};
{\ar@{-} (-75,-15)*{};(-65,2)*{}};
{\ar@{-} (-75,-15)*{};(-62.5,2)*{}};
{\ar@{-} (-50,-15)*{};(-45,2)*{}};
{\ar@{-} (-50,-15)*{};(-42.5,2)*{}};
{\ar@{-} (-50,-15)*{};(-40,2)*{}};
{\ar@{-} (-50,-15)*{};(-37.5,2)*{}};
{\ar@{-} (-75,-15)*{};(-97,2)*{}};
{\ar@{-} (-50,-15)*{};(-97,2)*{}};
{\ar@{-} (-50,-15)*{};(-92,2)*{}};
{\ar@{-} (-92,2)*{};(-97,2)*{}};
(-92,2)*{\color{cyan}\bullet};
(-92,2)*{\circ};
(-97,2)*{\color{cyan}\bullet};
(-97,2)*{\circ};
(-70,2)*{\color{cyan}\bullet};
(-70,2)*{\circ};
(-67.5,2)*{\color{cyan}\bullet};
(-67.5,2)*{\circ};
(-65,2)*{\color{cyan}\bullet};
(-65,2)*{\circ};
(-62.5,2)*{\color{cyan}\bullet};
(-62.5,2)*{\circ};
(-57.75,2)*{\hdots};
(-45,2)*{\color{cyan}\bullet};
(-45,2)*{\circ};
(-42.5,2)*{\color{cyan}\bullet};
(-42.5,2)*{\circ};
(-40,2)*{\color{cyan}\bullet};
(-40,2)*{\circ};
(-37.5,2)*{\color{cyan}\bullet};
(-37.5,2)*{\circ};
(-32.5,2)*{\hdots};
(-75,-15)*{\color{cyan}\bullet};
(-75,-15)*{\circ};
(-50,-15)*{\color{cyan}\bullet};
(-50,-15)*{\circ};
\endxy

}

\vspace{1em}
\noindent
Although there remain unresolved questions about the topology of $\Spec(\SH(G)^c)$ for nonabelian groups $G$, we are able to obtain a complete description of the space $\Spec(\DerKal(G)^c)$ for all finite groups because the unresolved chromatic interactions in the topology of $\Spec(\SH(G)^c)$ get truncated away at the top and bottom chromatic layers.

Our description of $\Spec(\DerKal(G)^c)$ is achieved in Theorem~\ref{thm:spec-as-set}, Theorem~\ref{thm:topology} and Proposition~\ref{prop:noetherian-space}.  The corresponding classification of thick tensor-ideals is included as Theorem~\ref{thm:classification}.  Moreover, the precise relationship with $\Spec(\SHG^c)$ is formulated in Corollary~\ref{cor:comparison-with-SHG}.

There is also a very satisfying relationship with the spectrum of the Burnside ring.  Recall that $\Spec(A(G))$ consists of a number of copies of $\Spec(\bbZ)$, one for each conjugacy class of subgroups, but with certain closed points glued together.  We will see that the spectrum of the category of derived Mackey functors is precisely the space obtained from the spectrum of the Burnside ring by ungluing these closed points.  More precisely, $\Spec(\DerKal(G)^c)$ consists of a number of \emph{disjoint} copies of $\Spec(\bbZ)$, one for each conjugacy class of subgroups, with topological interaction between the closed points describing the gluing that occurs in $\Spec(A(G))$. The following picture illustrates this for $G=C_p$:

\[\label{fig:ACp}
\xy
{\ar@{->}_{\rho} (-33,-7.5)*{};(-33,-31)*{}};
(-33,-5)*{\Spec(\DerKal(C_p)^c) = };
(-33,-35)*{\Spec(A(C_p)) = };
(32.25,0)*{\hdots};
(57.25,0)*{\hdots};
{\ar@{-} (15,-15)*{};(20,0)*{}};
{\ar@{-} (15,-15)*{};(22.5,0)*{}};
{\ar@{-} (15,-15)*{};(25,0)*{}};
{\ar@{-} (15,-15)*{};(27.5,0)*{}};
{\ar@{-} (40,-15)*{};(45,0)*{}};
{\ar@{-} (40,-15)*{};(47.5,0)*{}};
{\ar@{-} (40,-15)*{};(50,0)*{}};
{\ar@{-} (40,-15)*{};(52.5,0)*{}};
{\ar@{-} (15,-15)*{};(-7,0)*{}};
{\ar@{-} (40,-15)*{};(-7,0)*{}};
{\ar@{-} (40,-15)*{};(-2,0)*{}};
 {\ar@{-} (-7,0)*{};(-2,0)*{}};
 (-7,0)*{\color{green}\bullet};
 (-7,0)*{\circ};
 (-2,0)*{\color{red}\bullet};
 (-2,0)*{\circ};
%
 (20,0)*{\color{green}\bullet};
 (20,0)*{\circ};
 (22.5,0)*{\color{green}\bullet};
 (22.5,0)*{\circ};
     (25,0)*{\color{green}\bullet};
	 (25,0)*{\circ};
(27.5,0)*{\color{green}\bullet};
(27.5,0)*{\circ};
 (45,0)*{\color{red}\bullet};
 (45,0)*{\circ};
(47.5,0)*{\color{red}\bullet};
(47.5,0)*{\circ};
  (50,0)*{\color{red}\bullet};
  (50,0)*{\circ};
  (52.5,0)*{\color{red}\bullet};
  (52.5,0)*{\circ};
  (15,-15)*{\color{green}\bullet};
  (15,-15)*{\circ};
  (40,-15)*{\color{red}\bullet};
  (40,-15)*{\circ};
%
%
  (-4.5,-8)*{\underbrace{\ }};
{\ar@{|->} (-4.5,-12)*{};(-4.5,-17)*{}};
%
%
{\ar@{-} (15,-40)*{};(20,-30)*{}};
{\ar@{-} (15,-40)*{};(22.5,-30)*{}};
{\ar@{-} (15,-40)*{};(25,-30)*{}};
{\ar@{-} (15,-40)*{};(27.5,-30)*{}};
{\ar@{-} (40,-40)*{};(45,-30)*{}};
{\ar@{-} (40,-40)*{};(47.5,-30)*{}};
{\ar@{-} (40,-40)*{};(50,-30)*{}};
{\ar@{-} (40,-40)*{};(52.5,-30)*{}};
{\ar@{-} (15,-40)*{};(-4.5,-30)*{}};
{\ar@{-} (40,-40)*{};(-4.5,-30)*{}};
(-4.5,-30)*{{\scriptscriptstyle{\color{green}\LEFTCIRCLE}\kern-.80em{\color{red}\RIGHTCIRCLE}} };
(-4.5,-30)*{\Circle};
(20,-30)*{\color{green}\bullet};
(20,-30)*{\circ};
(22.5,-30)*{\color{green}\bullet};
(22.5,-30)*{\circ};
(25,-30)*{\color{green}\bullet};
(25,-30)*{\circ};
(27.5,-30)*{\color{green}\bullet};
(27.5,-30)*{\circ};
(32.25,-30)*{\hdots};
(45,-30)*{\color{red}\bullet};
(45,-30)*{\circ};
(47.5,-30)*{\color{red}\bullet};
(47.5,-30)*{\circ};
(50,-30)*{\color{red}\bullet};
(50,-30)*{\circ};
(52.5,-30)*{\color{red}\bullet};
(52.5,-30)*{\circ};
(57.25,-30)*{\hdots};
(15,-40)*{\color{green}\bullet};
(15,-40)*{\circ};
(40,-40)*{\color{red}\bullet};
(40,-40)*{\circ};
\endxy

\]

\vspace{1em}
\noindent
Here the two closed points for the prime number $p$ (a green one for the trivial subgroup and a red one for the whole group $G$) are glued together in the spectrum of the Burnside ring but remain distinct in the spectrum of the category of derived Mackey functors.  A precise statement of the relationship between the two spaces is provided by Corollary~\ref{cor:comparison-with-AG} and additional examples are illustrated in \ref{exa:D8}--\ref{exa:S3}.

The category of derived Mackey functors thus lies cleanly between the equivariant stable homotopy category and the Burnside ring.  It is a chromatic truncation of the former and an equivariant refinement of the latter.  This clarifies the two distinct features noticed in \cite{BalmerSanders17} that distinguish $\Spec(\SH(G)^c)$ from $\Spec(A(G))$: the appearance of the chromatic filtration and the ungluing of the closed points.

\begin{center}
$\ast\ast\ast$
\end{center}

One feature common to the examples of linearization mentioned above 
\[\SH \rightsquigarrow \Der(\bbZ) \qquad\text{ and }\qquad \SH(k) \rightsquigarrow \DMot(k)\]
is that the linearized category can be interpreted as the derived category of modules over a suitable Eilenberg--MacLane spectrum. Indeed, 
\[\SH=\Der(\Sphere) \to \Der(\HZ)\cong \Der(\bbZ) \quad\text{ and }\quad \SH(k) \to \Der(\HZ_{mot})\cong\DMot(k)\]
where $\HZ$ is the ordinary Eilenberg--MacLane spectrum of the integers and $\HZ_{mot}$ is the motivic ring spectrum representing motivic cohomology.

We will apply a similar perspective to the linearization of $\SH(G)$ and give an alternative description of Kaledin's category of derived Mackey functors as the derived category $\DHZG$ of a commutative equivariant ring spectrum~$\HZ_G:=\triv_G(\HZ)$ (see Definition~\ref{def:HZG}).  In fact, we will establish symmetric monoidal equivalences between three categories: the derived category of $\HZ_G$-modules, the category of $\HZ$-valued spectral $G$-Mackey functors in the sense of Barwick \cite{Barwick17}, and the category of derived $G$-Mackey functors in the sense of Kaledin \cite{Kaledin11}.  These equivalences will all arise from equivalences of the underlying symmetric monoidal $\infty$-categories (see Corollary~\ref{cor: first comparison}, Proposition~\ref{prop: Comparison2} and Theorem~\ref{thm:equivalence}).  The comparison with Kaledin's constructions requires some technical care but the main key is Theorem~\ref{thm:burnside} and Corollary~\ref{cor:monoidal-burnside} which establish that Barwick's effective Burnside $\infty$-category is an $\infty$-categorical localization of Kaledin's Waldhausen type construction on the category of finite $G$-sets.

A common theme throughout the second half of the paper is the power of \mbox{$\infty$-categorical} monadicity theorems (e.g.~the Barr--Beck--Lurie Theorem) and the symmetric monoidal universal characterization of the $\infty$-category of $G$-spectra established by Gepner--Meier \cite{GepnerMeier} and Robalo \cite{Robalo15}.  As part of the analysis, we will obtain a symmetric monoidal equivalence between the $\infty$-category of \mbox{$G$-spectra} and the $\infty$-category of spectral $G$-Mackey functors (see Proposition~\ref{prop:universal-to-spectral-mackey} and Remark~\ref{rem:equiv-with-orthogonal}) which may be of independent interest.

\begin{center}
$\ast\ast\ast$
\end{center}

The details of the construction of $\DHZG$ will be given in Section~\ref{sec:highly-structured} and the equivalence $\DHZG \cong \DerKal(G)$ will be established in Section~\ref{sec:equivalence}.  The actual computation of $\Spec(\DHZG^c)$ appears in Section~\ref{sec:computation}.  Finally, we discuss in Section~\ref{sec:ordinary-derived} what goes wrong if one attempts to apply our method to compute the spectrum of the ordinary derived category of the abelian category of $G$-Mackey functors.  As observed by Greenlees and Shipley \cite{GreenleesShipley14}, this amounts to studying modules over the Eilenberg--MacLane $G$-spectrum associated to the Burnside ring Mackey functor: $\Der(\Mack(G))\cong \Der(\HA_G)$.  Ultimately things break down because the Eilenberg--MacLane spectra $\HA_G$ do not behave well with respect to geometric fixed points.  In fact, our explicit computations in Section~5 illustrate (and give a different perspective on) Kaledin's comments in \cite{Kaledin11} about the pathological behavior of the ordinary derived category of Mackey functors.  More precisely, we show that the target category of the geometric fixed point functor $\Phi^{H}$ for $\Der(\Mack(G))$ depends on the subgroup $H\le G$.  This is quite different than what happens for equivariant categories like $\SH(G)$ and $\DerKal(G)$ where the geometric fixed point functors $\Phi^{H}$ all land in the same category, namely the category associated to the trivial group.  This enables us, in those examples, to pull back information from the well-understood nonequivariant world.

\subsection*{Acknowledgements\,:}
We thank Rune Haugseng, Denis Nardin and Thomas Nikolaus for helpful conversations.  We also thank EPFL and the University of Bonn for their hospitality and for providing the venues where this project first got off the ground.
We also thank John Greenlees 
for reminding us that the dihedral group of order 8 has \emph{two} conjugacy classes of Klein-4 subgroups
(see Example~\ref{exa:D8}).
\bigbreak
\section{Computation of the spectrum}\label{sec:computation} 
We will assume familiarity with the description of $\SH(G)$ and the computation of its spectrum from \cite{BalmerSanders17}.  Following the approach in that work, we will begin by listing the essential features of the category of derived Mackey functors which are needed for the computation of its spectrum.  The main point is that there is a well-behaved notion of geometric fixed point functor which aligns with the corresponding notion for the equivariant stable homotopy category.  The crucial feature which leads to such well-behaved geometric fixed point functors is presented in \ref{it:finite-loc} below.  This feature of Kaledin's category of derived Mackey functors is not shared by the ordinary derived category of the abelian category of Mackey functors (as will be discussed in Section~\ref{sec:ordinary-derived}).
\begin{enumerate}[label=(\Alph*)]
\item\label{it:DHZG}
	For each finite group $G$, we have a tensor triangulated category $\DHZG$ and an adjunction $F_G:\SHG \adjto \DHZG:U_G$ where the left adjoint $F_G$ is a tensor triangulated functor.  The tensor triangulated category $\DHZG$ is rigidly compactly generated by $\SET{F_G(G/H_+)}{H \le G}$.  Consequently, $U_G$ is conservative (=reflects isomorphisms) and $F_G$ preserves compact (=rigid) objects.  Since $F_G$ preserves compact objects, it induces a tensor triangulated functor $\SHG^c \to \DHZG^c$ and hence a continuous map 
		\[
			\Spec(\DHZG^c) \xra{\Spec(F_G)} \Spec(\SHG^c).
		\]
\item \label{it:trivial}
	For $G=\{1\}$ the trivial group, $\DHZG \cong \DHZ$ and the adjunction in~\ref{it:DHZG} is the usual adjunction $F:\SH \adjto \DHZ:U$ induced by the unit $\Sphere \to \HZ$ of the Eilenberg--MacLane spectrum $\HZ$.
\item \label{it:change-of-group}
	For any homomorphism $\alpha:G \to G'$ of finite groups, there is an associated tensor triangulated functor $\alpha^*:\DHZGprime \to \DHZG$ which preserves coproducts, such that both squares in
		\[
		\begin{tikzcd}
			\SH(G') \ar[d,shift right, "F_{G'}"'] \ar[r,"\alpha^*"] & \SHG \ar[d, shift right, "F_G"'] \\
			\DHZGprime \ar[u,shift right, "U_{G'}"'] \ar[r, "\alpha^*"'] & \DHZG \ar[u, shift right, "U_G"']
		\end{tikzcd}
		\]
	commute up to natural isomorphism. As $\alpha^* : \DHZGprime \to \DHZG$ is a tensor triangulated functor between rigidly compactly generated tensor triangulated categories, it preserves compact (=rigid) objects and hence induces a continuous map
		\[
			\Spec(\DHZG^c) \xra{\Spec(\alpha^*)} \Spec(\DHZGprime^c).
		\]
	For a quotient $\alpha :G \to G/N$, we call $\infl_{G/N}^G := \alpha^*$ the inflation functor and for an inclusion $\alpha :H \hookrightarrow G$, we call $\res_H^G := \alpha^*$ the restriction functor. Moreover, we set $\triv_G := \infl_{G/G}^G$, regarded as a functor $\DHZ \to \DHZG$.
\item\label{it:composite-of-maps}
	For a composition $G \xra{\alpha} G' \xra{\beta} G''$ of group homomorphisms, we have a natural isomorphism $(\beta \circ \alpha)^* \cong \alpha^* \circ \beta ^*$. For example, $\res^G_K \cong \res^H_K \circ \res^G_H$ for $K \le H \le G$.
\item\label{it:adjoint}
	For any $H \le G$, the restriction functor $\res^G_H:\DHZG \to \DHZH$ has a left adjoint $\ind_H^G:\DHZH\to\DHZG$. 
\item\label{it:finite-loc}
	For any normal subgroup $N \lenormal G$, the composite
		\[
			\qquad \DHZGN \xra{\infl_{G/N}^G} \DHZG \to \DHZG / \Loc_{\otimes}\langle F_G(G/H_+) \;|\; H \not\supseteq N \rangle
		\]
	is an equivalence. In other words, $\DHZGN$ is a particular finite localization of $\DHZG$, obtained by killing the generators of $\DHZG$ associated to those subgroups which do not contain $N$. Define the ``geometric fixed point'' functor $\phigeom{N,G}:\DHZG \to \DHZGN$ to be the composite
		\[
			\DHZG \to \DHZG / \Loc_{\otimes}\langle F_G(G/H_+) \;|\; H \not\supseteq N \rangle \cong \DHZGN.
		\]
	By construction, it is split by inflation: $\phigeom{N,G} \circ \infl_{G/N}^G \cong \Id_{G/N}$. In particular, taking $N=G$, we have the geometric fixed point functor
		\[
			\phigeomb{G} : \DHZG \xra{\phigeom{G,G}} \DHZGG \cong \DHZ
		\]
	which is (up to equivalence) the localization obtained by killing all generators except $F_G(G/G_+) = \unit_{\DHZG}$.
\item\label{it:phiHG}
	For $H \le G$, we define $\phigeomb{H,G} : \DHZG \to \DHZ$ as the composite
		\[
			\DHZG \xra{\res^G_H} \DHZH \xra{\phigeomb{H}} \DHZ.
		\]
	These are tensor triangulated functors (preserving compact objects). In particular, $\phigeomb{H,G}$ induces a continuous map
		\[
			\Spec(\DHZ^c) \xra{\Spec(\phigeomb{H,G})} \Spec(\DHZG^c).
		\]
	For each prime ideal $\frakp \in \Spec \bbZ \cong \Spec(\DHZ^c)$ and $H \le G$, define $\cat P_G(H,\frakp) \in \Spec(\DHZG^c)$ by
		\[
			\cat P_G(H,\frakp) := \Spec(\phigeomb{H,G})(\frakp)=(\phigeomb{H,G})^{-1}(\frakp).
		\]
\item\label{it:conjugacy}
	For an inner automorphism $c_g := (-)^g : G \xra{\sim} G$, the induced functor $c_g^* : \DHZG \xra{\sim} \DHZG$ is naturally isomorphic to the identity functor.  It then follows from \ref{it:change-of-group} that for any subgroup $H \le G$, the left-hand triangle in
		\[
		\begin{tikzcd}
			 & \Der(\HZ_{H^g}) \ar[dd,"c_g^*", "\sim"'] \ar[dr,bend left=20, "\phigeomb{H^g}"] &\\
			\DHZG \ar[ur, bend left=20, "\res^G_{H^g}"] \ar[dr, bend right=20, "\res^G_H"'] & & \DHZ \\
			 & \DHZH \ar[ur,bend right=20,"\phigeomb{H}"']&
		\end{tikzcd}
		\]
	commutes up to natural isomorphism. The right-hand triangle also commutes up to natural isomorphism since $c_g^*(F_{H^g}(H^g/K^g)) \cong F_H(H/K)$ for any $K \le H \le G$ (again by \ref{it:change-of-group}).  Thus the functor $\phigeomb{H,G} : \DHZG \to \DHZ$ only depends, up to natural isomorphism, on the $G$-conjugacy class of the subgroup $H$.  That is, $\phigeomb{H^g,G} \cong \phigeomb{H,G}$ for any $g \in G$.  As naturally isomorphic functors induces the same map on spectra, it follows that $\cat P_G(H,\frakp) = \cat P_G(K,\frakp)$ if $H \sim_G K$.
\item \label{it:trivHZ}
	Finally, hinting at the reasons behind our choice of notation, $U_G(\unit) \cong \triv_G(\HZ)$ as commutative monoids in $\SHG$, where the right-hand side is the Eilenberg--MacLane spectrum of the integers regarded as a $G$-spectrum with trivial action.  This is the most ``explicit'' fact about our categories $\DHZG$ that we will need.
\end{enumerate}

\begin{Rem}
	The details of the construction of $\DHZG$ and verification of the above facts \ref{it:DHZG} through \ref{it:trivHZ} will be give in Section~\ref{sec:highly-structured}.  For the rest of the present section, we will use the above properties as a black-box in order to compute the spectrum of $\DHZG^c$ and describe its relationship with the spectrum of $\SHG^c$.
\end{Rem}

\begin{Rem}\label{rem:tensor-idempotents}
	The ``geometric fixed point'' functor $\phigeom{N,G}:\DHZG \to \DHZGN$ in \ref{it:finite-loc} is nothing but the finite localization associated to the thick tensor ideal of compact objects generated by the $F_G(G/H_+)$ for $H \not\supseteq N$.  As such it has an associated idempotent triangle $e_{\cat F[\not\supseteq N]} \to \unit \to f_{\cat F[\not\supseteq N]} \to \Sigma e_{\cat F[\not\supseteq N]}$ in $\DHZG$ and can be conveniently understood simply as tensoring with the right idempotent:
		\[
			f_{\cat F[\not\supseteq N]}\otimes -:\DHZG \to f_{\cat F[\not\supseteq N]}\otimes \DHZG \cong \DHZGN.
		\]
	Moreover, the latter equivalence is explicitly given by 
		\[
			f_{\cat F[\not\supseteq N]}\otimes \DHZG \hookrightarrow \DHZG \xra{(-)^N} \DHZGN
		\]
	where $(-)^N$ denotes the right adjoint of inflation (which exists since inflation preserves coproducts by assumption). In other words, it follows formally from the definition \ref{it:finite-loc} that the geometric fixed point functor is given as 
		\begin{equation}\label{eq:geom-formula}
			\phigeom{N,G}(X) \cong (f_{\cat F[\not\supseteq N]} \otimes X)^N.
		\end{equation}
	For further discussion of these tensor idempotents, see \cite[Section~5]{BalmerSanders17} and \cite{BalmerFavi11}.
\end{Rem}

\begin{Lem}\label{lem:geom-commute}
	For any $N \lenormal G$, the diagram
		\begin{equation}\label{eq:geom-compat}
		\begin{aligned}
		\begin{tikzcd}
			\SH(G) \ar[r,"\phigeom{N,G}"] \ar[d,"F_G"'] & \SH(G/N) \ar[d,"F_{G/N}"] \\
			\DHZG \ar[r,"\phigeom{N,G}"] & \DHZGN
		\end{tikzcd}
		\end{aligned}
		\end{equation}
	commutes up to isomorphism.
\end{Lem}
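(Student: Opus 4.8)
The plan is to identify both horizontal functors in~\eqref{eq:geom-compat} as finite Bousfield localizations and to show that the tensor triangulated functor $F_G$ is compatible with them because it carries the localizing ideal killed in the top row into the one killed in the bottom row. Concretely, I would set $\cat L_G := \Loc_\otimes\langle G/H_+ \mid H \not\supseteq N\rangle \subseteq \SH(G)$ and $\cat L'_G := \Loc_\otimes\langle F_G(G/H_+) \mid H \not\supseteq N\rangle \subseteq \DHZG$. By~\ref{it:finite-loc}, the bottom $\phigeom{N,G}$ is, up to the equivalence $\DHZG/\cat L'_G \cong \DHZGN$ recorded there, the Verdier localization $\DHZG \to \DHZG/\cat L'_G$, and it is split by inflation: $\phigeom{N,G}\circ\infl_{G/N}^G \cong \Id_{G/N}$. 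The analogous description holds for the top $\phigeom{N,G}$ by the classical facts about geometric fixed points underlying the approach of~\cite{BalmerSanders17}: inflation identifies $\SH(G/N)$ with $\SH(G)/\cat L_G$, and under this identification $\phigeom{N,G}$ becomes the localization $\SH(G)\to\SH(G)/\cat L_G$ (and is again split by inflation).

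The step requiring the most care -- though it is still routine -- is the inclusion $F_G(\cat L_G) \subseteq \cat L'_G$. For this I would consider the full subcategory $\cat C \subseteq \SH(G)$ of objects $Y$ with $F_G(Y) \in \cat L'_G$. Since $F_G$ is exact and preserves coproducts (being a tensor triangulated left adjoint, by~\ref{it:DHZG}) and $\cat L'_G$ is localizing, $\cat C$ is a localizing subcategory; since $F_G$ is symmetric monoidal and $\cat L'_G$ is a tensor ideal, $F_G(X \otimes Y) \cong F_G(X) \otimes F_G(Y) \in \cat L'_G$ for every $X \in \SH(G)$ and $Y \in \cat C$, so $\cat C$ is in fact a localizing tensor ideal. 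As $\cat C$ contains $G/H_+$ for each $H \not\supseteq N$ by the very definition of $\cat L'_G$, it contains $\cat L_G = \Loc_\otimes\langle G/H_+ \mid H \not\supseteq N\rangle$, as desired.

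Granting this, the universal property of the Verdier quotient produces a tensor triangulated functor $\bar F$ fitting into the commutative square
\[
\begin{tikzcd}
	\SH(G) \ar[r,"F_G"] \ar[d] & \DHZG \ar[d] \\
	\SH(G)/\cat L_G \ar[r,"\bar F"'] & \DHZG/\cat L'_G
\end{tikzcd}
\]
whose vertical functors are the localizations. Transporting $\bar F$ along the equivalences $\SH(G)/\cat L_G \cong \SH(G/N)$ and $\DHZG/\cat L'_G \cong \DHZGN$ of the first paragraph, the induced functor $\SH(G/N) \to \DHZGN$ is the composite $\phigeom{N,G} \circ F_G \circ \infl_{G/N}^G$; by~\ref{it:change-of-group} this is naturally isomorphic to $\phigeom{N,G} \circ \infl_{G/N}^G \circ F_{G/N}$, and by the splitting $\phigeom{N,G}\circ\infl_{G/N}^G\cong\Id_{G/N}$ of~\ref{it:finite-loc} it is therefore naturally isomorphic to $F_{G/N}$. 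Unwinding the identifications, this is exactly the commutativity of~\eqref{eq:geom-compat}.

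I do not expect a serious obstacle: the only steps with genuine content are the inclusion $F_G(\cat L_G) \subseteq \cat L'_G$ and the bookkeeping identifying the descended functor with $F_{G/N}$, both of which follow formally from~\ref{it:DHZG}--\ref{it:finite-loc}. As an alternative one could argue directly from the idempotent formula~\eqref{eq:geom-formula}, using that $F_G$ preserves the idempotent $f_{\cat F[\not\supseteq N]}$ (as it corresponds under $F_G$ to the idempotent of the same name in $\DHZG$) and then comparing $F_G$, $F_{G/N}$ and the two right adjoints $(-)^N$; but this route requires a Beck--Chevalley/mate computation that the Verdier-quotient argument above sidesteps.
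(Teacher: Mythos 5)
Your argument is correct, and its second half (descending $F_G$ to the quotients and then identifying the descended functor with $F_{G/N}$ via property~\ref{it:change-of-group} and the splitting $\phigeom{N,G}\circ\infl_{G/N}^G\cong\Id$ from~\ref{it:finite-loc}) is essentially the same bookkeeping the paper performs. Where you differ is in how the compatibility of $F_G$ with the two finite localizations is established: you prove the ideal inclusion $F_G(\cat L_G)\subseteq\cat L'_G$ by an elementary localizing-tensor-ideal argument and then invoke the universal property of the Verdier quotient, whereas the paper works on the other side of the localization, applying \cite[Prop.~5.11]{BalmerSanders17} to conclude that $F_G$ carries the right idempotent $f$ for $\phigeom{N,G}$ on $\SH(G)$ to the right idempotent for $\phigeom{N,G}$ on $\DHZG$, so that the square $\SH(G)\to f\otimes\SH(G)$, $\DHZG\to F_G(f)\otimes\DHZG$ commutes by monoidality. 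Your route is slightly more self-contained (it does not need the cited idempotent-preservation result), while the paper's idempotent formulation is the one it reuses later (e.g.\ in Lemma~\ref{lem:phigeom-and-restriction} and Proposition~\ref{prop:Cp}); note also that the paper's idempotent argument does not actually require the Beck--Chevalley/mate computation you were worried about, since it sidesteps the right adjoints $(-)^N$ by exactly the same inflation-splitting trick you use.
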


\begin{proof}
	Let $f \in \SH(G)$ denote the right idempotent for the finite localization $\phigeom{N,G}:\SH(G)\to\SH(G/N)$.  Applying \cite[Proposition~5.11]{BalmerSanders17} to the functor $F_G:\SH(G) \to \DHZG$ and recalling the definitions in~\ref{it:finite-loc} and \cite[(H)]{BalmerSanders17}, we see that $F_G(f) \in \DHZG$ is the right idempotent for the finite localization $\phigeom{N,G}:\DHZG\to\DHZGN$. Moreover, the middle square of 
		\[
		\begin{tikzcd}
			\SH(G/N) \ar[r,"\infl_{G/N}^G"] \ar[d,"F_{G/N}"'] & \SH(G) \ar[rr,bend left=15, "\phigeom{N,G}"] \ar[r] \ar[d,"F_G"'] & f\otimes \SH(G) \ar[d,"F_G"] \ar[r,xshift=6.20pt,"\cong"'] & \SH(G/N) \ar[d,"F_{G/N}"] \\
			\DHZGN \ar[r,"\infl_{G/N}^G"] & \DHZG \ar[rr, bend right=15, "\phigeom{N,G}"'] \ar[r] & F_G(f)\otimes \DHZG \ar[r,"\cong"] & \DHZGN
		\end{tikzcd}
		\]
	evidently commutes. The left-hand square commutes by \ref{it:change-of-group} and the horizontal composites are the identity.  It then follows formally that the right-hand square also commutes.
\end{proof}

\begin{Rem}
	It follows from Lemma~\ref{lem:geom-commute}, \ref{it:trivial}, \ref{it:change-of-group} and the definitions in \ref{it:phiHG} that
		\begin{equation}\label{eq:geom-compat2}
		\begin{aligned}
		\begin{tikzcd}
			\SHG \ar[r, xshift=-2.1pt, "\phigeomb{H}"] \ar[d,"F_G"'] & \SH \ar[d,"F"] \\
			\DHZG \ar[r,"\phigeomb{H}"] & \DHZ
		\end{tikzcd}
		\end{aligned}
		\end{equation}
	commutes up to isomorphism for any $H \le G$.
\end{Rem}

\begin{Lem}\label{lem:nested-phigeom}
	For any $N \lenormal G$, we have $\phigeomb{G} \cong \phigeomb{G/N} \circ \phigeom{N,G}$.
\end{Lem}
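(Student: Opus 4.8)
The plan is to unwind both sides through the defining localizations in~\ref{it:finite-loc} and exploit the transitivity of iterated finite localizations. Recall that $\phigeomb{G}:\DHZG\to\DHZ$ is (up to equivalence) the finite localization of $\DHZG$ that kills every compact generator $F_G(G/H_+)$ with $H\ne G$, while $\phigeom{N,G}:\DHZG\to\DHZGN$ kills exactly those $F_G(G/H_+)$ with $H\not\supseteq N$, and $\phigeomb{G/N}:\DHZGN\to\DHZ$ kills every $F_{G/N}((G/N)/\bar H_+)$ with $\bar H\ne G/N$. By~\ref{it:DHZG} and~\ref{it:change-of-group}, $\phigeom{N,G}$ sends the generator $F_G(G/H_+)$ for $H\supseteq N$ to $F_{G/N}((G/N)/(H/N)_+)$ (since $\phigeom{N,G}\circ\infl^G_{G/N}\cong\Id$ and inflation sends $F_{G/N}$-generators to the corresponding $F_G$-generators, at least up to the identifications there), and it sends the generators with $H\not\supseteq N$ to zero.

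First I would observe that a composite of two finite localizations is again a finite localization: if $L_1:\cat T\to\cat T/\cat S_1$ kills a set of compact objects and $L_2:\cat T/\cat S_1\to (\cat T/\cat S_1)/\cat S_2$ kills a set of compact objects, then $L_2\circ L_1$ is the finite localization of $\cat T$ killing the thick tensor-ideal generated by the first set together with lifts of the second set. Applying this with $L_1=\phigeom{N,G}$ and $L_2=\phigeomb{G/N}$: the first localization kills $\langle F_G(G/H_+) : H\not\supseteq N\rangle$, and under the equivalence $f_{\cat F[\not\supseteq N]}\otimes\DHZG\cong\DHZGN$ the generators $F_{G/N}((G/N)/\bar H_+)$ with $\bar H\ne G/N$ correspond to $f_{\cat F[\not\supseteq N]}\otimes F_G(G/H_+)$ for $N\le H\lneq G$. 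Therefore $\phigeomb{G/N}\circ\phigeom{N,G}$ is the finite localization of $\DHZG$ that kills the thick tensor-ideal generated by $\{F_G(G/H_+) : H\not\supseteq N\}\cup\{F_G(G/H_+) : N\le H\lneq G\}$. But this union is precisely $\{F_G(G/H_+) : H\ne G\}$, since every proper subgroup either fails to contain $N$ or contains $N$ and is proper. Hence $\phigeomb{G/N}\circ\phigeom{N,G}$ is, up to equivalence, the finite localization killing all generators except the unit $F_G(G/G_+)=\unit_{\DHZG}$, which by~\ref{it:finite-loc} is exactly $\phigeomb{G}$.

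The main obstacle I anticipate is bookkeeping the identifications carefully enough: one must check that under the equivalence $\DHZGN\cong f_{\cat F[\not\supseteq N]}\otimes\DHZG$ of Remark~\ref{rem:tensor-idempotents}, the compact generators of $\DHZGN$ really do go to (tensor-idempotent multiples of) the claimed generators $F_G(G/H_+)$, $N\le H$, and that passing to the quotient of a quotient is governed by the thick tensor-ideal generated by the union as asserted. Both are standard facts about Bousfield/finite localization in rigidly-compactly generated tensor triangulated categories (cf.~\cite[Section~5]{BalmerSanders17}, \cite{BalmerFavi11}), but they need to be invoked with the correct generating sets. Alternatively, and perhaps more cleanly, one can argue via the tensor-idempotent formula~\eqref{eq:geom-formula}: both $\phigeomb{G}$ and $\phigeomb{G/N}\circ\phigeom{N,G}$ are given by tensoring with an idempotent and then applying a right adjoint of an inflation, and one identifies the two idempotents by showing they have the same ``support'' — i.e., both are the idempotent $f$ for the localizing subcategory generated by all $F_G(G/H_+)$, $H\ne G$ — again using that the composite of the two relevant localizing subcategories is the maximal proper one. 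Either route reduces the lemma to the combinatorial identity $\{H\not\supseteq N\}\cup\{N\le H\lneq G\}=\{H\lneq G\}$, which is immediate.
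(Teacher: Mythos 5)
Your proof is correct and follows essentially the same route as the paper: both realize $\phigeomb{G}$ as the nested (transitive) finite localization obtained by composing $\phigeom{N,G}$ with a further quotient of $\DHZGN$, and both reduce the identification to computing $\phigeom{N,G}(F_G(G/H_+))$ and the set identity $\{H\not\supseteq N\}\cup\{N\le H\lneq G\}=\{H\lneq G\}$. The only cosmetic difference is that you compute $\phigeom{N,G}(F_G(G/H_+))\cong F_{G/N}((G/N)/(H/N)_+)$ for $N\le H$ from the splitting $\phigeom{N,G}\circ\infl_{G/N}^G\cong\Id$ together with \ref{it:change-of-group}, whereas the paper imports it from $\SH(G)$ via Lemma~\ref{lem:geom-commute} and the classical computation of geometric fixed points of orbits.
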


\begin{proof}
	Consider the two localizing $\otimes$-ideals $\cat L_1 \subset \cat L_2$ of $\DHZG$ given by
		\[
			\cat L_1 := \Loc_\otimes\langle F_G(G/H_+) \mid H \not\supseteq N\rangle
			\quad\text{ and }\quad
			\cat L_2 :=\Loc_{\otimes}\langle F_G(G/H_+) \mid H \lneq G \rangle.
		\]
	As Verdier quotients can be ``nested'', the localization
		\begin{equation}\label{eq:first-nest}
			\DHZG \rightarrow \DHZG/\cat L_2 
		\end{equation}
	coincides with the composite
		\begin{equation}\label{eq:second-nest}
			\DHZG \xrightarrow{q} \DHZG / \cat L_1 \rightarrow (\DHZG / \cat L_1)/ q(\cat L_2).
		\end{equation}
	Note that $\DHZG \to \DHZG/\cat L_2 \cong \DHZ$ is $\phigeomb{G}$ while $\DHZG \to \DHZG/\cat L_1 \cong \DHZGN$ is $\phigeom{N,G}$.  We just need to show that the quotient
		\[
			\DHZGN \to \DHZGN / \phigeom{N,G}(\cat L_2)
		\]
	is nothing but the finite localization defining $\phigeomb{G/N}:\DHZGN \to \DHZ$. Now $\phigeom{N,G}(\cat L_2)$ is the localizing tensor-ideal generated by $\SET{\phigeom{N,G}(F_G(G/H_+))}{H \lneq G}$ (see \cite[Prop.~2.31]{Verdier96} and \cite[Cor.~3.2.11]{Neeman01}).  This coincides with the localizing tensor-ideal generated by $\SET{F_{G/N}((G/N)/(H/N)_+)}{N \le H \lneq G}$ since
		\begin{eqnarray*}
			\phigeom{N,G}(F_G(G/H_+)) \cong F_{G/N}(\phigeom{N,G}(G/H_+)) \cong \begin{cases}
			 0 & \text{if } H \not\supseteq N \\
			 F_{G/N}({\scriptstyle \overline{G}/\overline{H}_+ }) & \text{if } H \supseteq N
			\end{cases}
		\end{eqnarray*}
	by Lemma~\ref{lem:geom-commute} and \cite[Cor.~II.9.9]{LewisMaySteinbergerMcClure86}.
\end{proof}

\begin{Lem}\label{lem:phigeom-and-restriction}
	For $N \le K \le G$ with $N \lenormal G$, we have $\phigeom{N,K}\circ \res_K^G \cong \res_{K/N}^{G/N}\circ \phigeom{N,G}$.
\end{Lem}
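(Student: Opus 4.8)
The plan is to realize both composites as functors emanating from the Verdier localization $\phigeom{N,G}$. Write
$\cat L^G := \Loc_\otimes\langle F_G(G/H_+) \mid H \not\supseteq N\rangle \subseteq \DHZG$
and
$\cat L^K := \Loc_\otimes\langle F_K(K/L_+) \mid L \le K,\ L \not\supseteq N\rangle \subseteq \DHZK$
for the localizing $\otimes$-ideals defining $\phigeom{N,G}$ and $\phigeom{N,K}$ as in \ref{it:finite-loc}. I will first check that the exact functor $\phigeom{N,K}\circ\res^G_K \colon \DHZG \to \DHZK \to \Der(\HZ_{K/N})$ annihilates $\cat L^G$. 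Granting this, the universal property of Verdier localization (\cite{Neeman01, Verdier96}), applied to $\phigeom{N,K}\circ\res^G_K$ and the quotient $\phigeom{N,G}\colon\DHZG \to \DHZG/\cat L^G \cong \DHZGN$, produces an exact functor $\psi\colon \DHZGN \to \Der(\HZ_{K/N})$, unique up to natural isomorphism, with $\psi\circ\phigeom{N,G} \cong \phigeom{N,K}\circ\res^G_K$. It then remains to identify $\psi$ with $\res^{G/N}_{K/N}$.

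The key point is that $\res^G_K$ carries $\cat L^G$ into $\cat L^K$. Since $\res^G_K$ is a coproduct-preserving tensor-triangulated functor (\ref{it:change-of-group}), $\res^G_K(\cat L^G)$ lies in the localizing $\otimes$-ideal of $\DHZK$ generated by the images of the generators $F_G(G/H_+)$ with $H\not\supseteq N$, so it suffices to treat those. By \ref{it:change-of-group} together with the double coset decomposition of the restricted $G$-set, $\res^G_K(F_G(G/H_+)) \cong \bigoplus_{[g]\in K\backslash G/H} F_K\big(K/(K\cap H^g)_+\big)$. Because $N$ is normal in $G$, the inclusion $N \le K\cap H^g$ would force $N \le H$, contradicting $H\not\supseteq N$; hence every subgroup $K\cap H^g$ appearing here fails to contain $N$, so each summand, and thus $\res^G_K(F_G(G/H_+))$, belongs to $\cat L^K$. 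Therefore $\res^G_K(\cat L^G)\subseteq\cat L^K$, and since $\phigeom{N,K}$ annihilates $\cat L^K$ by construction, the composite $\phigeom{N,K}\circ\res^G_K$ annihilates $\cat L^G$. I expect this step — along with making sure the Verdier universal property is invoked correctly — to be the main point of the argument; everything else is formal manipulation of the change-of-group functors.

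To pin down $\psi$, I will precompose with inflation. By \ref{it:finite-loc} we have $\phigeom{N,G}\circ\infl^G_{G/N}\cong\Id_{\DHZGN}$, whence $\psi \cong \psi\circ\phigeom{N,G}\circ\infl^G_{G/N} \cong \phigeom{N,K}\circ\res^G_K\circ\infl^G_{G/N}$ (whiskering the isomorphisms $\phigeom{N,G}\circ\infl^G_{G/N}\cong\Id$ and $\psi\circ\phigeom{N,G}\cong\phigeom{N,K}\circ\res^G_K$). Now $\res^G_K\circ\infl^G_{G/N}$ and $\infl^K_{K/N}\circ\res^{G/N}_{K/N}$ are both isomorphic to $\alpha^*$ for the homomorphism $\alpha\colon K \to G/N$, $k\mapsto kN$ — the former factoring as $K \hookrightarrow G \twoheadrightarrow G/N$ and the latter as $K \twoheadrightarrow K/N \hookrightarrow G/N$ — so they agree up to natural isomorphism by \ref{it:change-of-group} and \ref{it:composite-of-maps}. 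Using once more that $\phigeom{N,K}\circ\infl^K_{K/N}\cong\Id$ (\ref{it:finite-loc} applied to $N\lenormal K$), we conclude $\psi \cong \phigeom{N,K}\circ\infl^K_{K/N}\circ\res^{G/N}_{K/N} \cong \res^{G/N}_{K/N}$. Combining with the factorization of the first paragraph yields $\phigeom{N,K}\circ\res^G_K \cong \psi\circ\phigeom{N,G} \cong \res^{G/N}_{K/N}\circ\phigeom{N,G}$, as claimed.
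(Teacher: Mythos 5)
Your proof is correct, and it takes a genuinely different route from the paper's. The paper argues entirely inside the tensor-idempotent calculus of Remark~\ref{rem:tensor-idempotents}: it first shows, via \cite[Prop.~5.11]{BalmerSanders17}, property~\ref{it:change-of-group} and the Mackey formula, that $\res^G_K$ carries the right idempotent $f_{\cat F[\not\supseteq N],G}$ for $\phigeom{N,G}$ to the right idempotent $f_{\cat F[\not\supseteq N],K}$ for $\phigeom{N,K}$, and then runs a chain of isomorphisms using the explicit formula $\phigeom{N,K}\cong \lambda^{N,K}(f_{\cat F[\not\supseteq N],K}\otimes -)$ and the identity $f\otimes\infl^G_{G/N}\phigeom{N,G}(X)\cong f\otimes X$. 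You instead work with the Verdier-quotient description in~\ref{it:finite-loc} directly: you check that $\phigeom{N,K}\circ\res^G_K$ annihilates the defining localizing $\otimes$-ideal (your double-coset argument, with normality of $N$ forcing $N\not\le K\cap H^g$, is exactly the generator-matching that the paper packages as $\res^G_K(f_{\cat F[\not\supseteq N],G})\cong f_{\cat F[\not\supseteq N],K}$), factor through the quotient by the universal property, and then pin down the induced functor by precomposing with inflation and using the splitting $\phigeom{N,K}\circ\infl^K_{K/N}\cong\Id$ together with the exchange $\res^G_K\circ\infl^G_{G/N}\cong\infl^K_{K/N}\circ\res^{G/N}_{K/N}$ from \ref{it:change-of-group} and \ref{it:composite-of-maps} (the same exchange the paper uses). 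What your approach buys is self-containedness relative to the black-box properties \ref{it:DHZG}--\ref{it:trivHZ}: no appeal to \cite[Prop.~5.11]{BalmerSanders17} or to the right adjoint $(-)^N$, only the quotient presentation and its inflation splitting. What the paper's approach buys is a reusable object-level statement (restriction preserves the relevant idempotents, and the conclusion holds via the concrete formula~\eqref{eq:geom-formula}), which fits the idempotent formalism used throughout the rest of the computation. Both hinge on the same two inputs, so the difference is one of packaging, but it is a real difference in proof architecture rather than a paraphrase.
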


\begin{proof}
	Let $f_{\cat F[\not\supseteq N],G}$ denote the right idempotent in $\DHZG$ for $\phigeom{N,G}$ as in Remark~\ref{rem:tensor-idempotents}.  By \cite[Prop.~5.11]{BalmerSanders17}, its restriction $\res^G_K(f_{\cat F[\not\supseteq N],G})$ is the right idempotent for a finite localization of $\DHZK$, namely the localization associated to the compact thick tensor ideal generated by ${\SET{\res^G_K(F_G(G/H_+))}{H \le G, H \not\supseteq N}}$.  Using \ref{it:change-of-group} and the Mackey formula, this coincides with the thick tensor ideal generated by $\SET{F_K(K/L_+)}{L \le K, L \not\supseteq N}$.  In other words, $\res^G_K(f_{\cat F[\not\supseteq N],G}) \cong f_{\cat F[\not\supseteq N],K}$ is the idempotent in $\DHZK$ for $\phigeom{N,K}$.  Now, by \ref{it:composite-of-maps}, $\infl_{K/N}^K \circ \res_{K/N}^{G/N} \cong \res^G_K \circ \infl_{G/N}^G$.  Applying this equation to $\phigeom{N,G}(X)$ and post-composing by $\phigeom{N,K}$ we obtain
	\begin{align*}
		\res^{G/N}_{K/N}(\phigeom{N,G}(X)) 
		&\cong \phigeom{N,K}(\res^G_K(\infl^G_{G/N}(\phigeom{N,G}(X)))) \\
			&\cong \lambda^{N,K}(f_{\cat F[\not\supseteq N],K}\otimes \res^G_K(\infl^G_{G/N}(\phigeom{N,G}(X))))\\
			&\cong \lambda^{N,K}(\res^G_K(f_{\cat F[\not\supseteq N],G})\otimes \res^G_K(\infl^G_{G/N}(\phigeom{N,G}(X))))\\
			&\cong \lambda^{N,K}(\res^G_K(f_{\cat F[\not\supseteq N],G}\otimes \infl^G_{G/N}(\phigeom{N,G}(X))))\\
			&\cong \lambda^{N,K}(\res^G_K(f_{\cat F[\not\supseteq N],G}\otimes X))\\
			&\cong \lambda^{N,K}(f_{\cat F[\not\supseteq N],K}\otimes \res^G_K(X))\\
		&\cong \phigeom{N,K}(\res^G_K(X))
	\end{align*}
	where $\lambda^{N,K}$ denotes the right adjoint of $\infl_{K/N}^K$. Here we have used that 
	\[
		f_{\cat F[\not\supseteq N]} \otimes \infl_{G/N}^G(\phigeom{N,G}(X)) \cong f_{\cat F[\not\supseteq N]} \otimes X
	\]
	for any $X$ in~$\DHZG$. This follows from the fact that
	\[
		f_{\cat F[\not\supseteq N]}\otimes \DHZG \hookrightarrow \DHZG \xra{(-)^N} \DHZGN
	\]
	is an equivalence with quasi-inverse 
	\[
		\DHZGN \xra{\infl_{G/N}^G} \DHZG \to f_{\cat F[\not\supseteq N]} \otimes \DHZG
	\]
	as explained in Remark~\ref{rem:tensor-idempotents}.
\end{proof}

\begin{Lem}\label{lem:res}
	Let $K \le H \le G$. The map
	\[
		\Spec(\res_H^G) : \Spec(\DHZH^c) \to \Spec(\DHZG^c)
	\]
	sends $\cat P_H(K,\frakp)$ to $\cat P_G(K,\frakp)$.
\end{Lem}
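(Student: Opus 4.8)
The plan is to unwind the definitions in \ref{it:phiHG} and combine them with the compatibility of restriction functors recorded in \ref{it:composite-of-maps}. By definition $\phigeomb{K,G} : \DHZG \to \DHZ$ is the composite $\phigeomb{K}\circ\res^G_K$, and likewise $\phigeomb{K,H} = \phigeomb{K}\circ\res^H_K : \DHZH \to \DHZ$, where in both cases $\phigeomb{K}$ denotes the \emph{same} functor $\DHZK \to \DHZ$. Since $K \le H \le G$, property \ref{it:composite-of-maps} supplies a natural isomorphism $\res^G_K \cong \res^H_K\circ\res^G_H$, whence
\[
    \phigeomb{K,G} = \phigeomb{K}\circ\res^G_K \;\cong\; \phigeomb{K}\circ\res^H_K\circ\res^G_H = \phigeomb{K,H}\circ\res^G_H
\]
as tensor triangulated functors $\DHZG \to \DHZ$.

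Next I would pass to spectra. Naturally isomorphic tensor triangulated functors induce the same map on spectra, and $\Spec(-)$ is contravariantly functorial, so the displayed isomorphism yields
\[
    \Spec(\phigeomb{K,G}) = \Spec(\res^G_H)\circ\Spec(\phigeomb{K,H}) : \Spec(\DHZ^c) \to \Spec(\DHZG^c).
\]
Evaluating both sides at a prime $\frakp \in \Spec(\DHZ^c) \cong \Spec\bbZ$ and invoking the definitions of $\cat P_G(K,\frakp)$ and $\cat P_H(K,\frakp)$ from \ref{it:phiHG}, this becomes
\[
    \cat P_G(K,\frakp) = \Spec(\phigeomb{K,G})(\frakp) = \Spec(\res^G_H)\big(\Spec(\phigeomb{K,H})(\frakp)\big) = \Spec(\res^G_H)\big(\cat P_H(K,\frakp)\big),
\]
which is exactly the assertion.

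There is no real obstacle here: the argument is a formal manipulation of the black-box properties \ref{it:phiHG} and \ref{it:composite-of-maps}. The only points requiring a moment's care are bookkeeping the variance of $\Spec(-)$ — restriction $\res^G_H$ points from $\DHZG$ to $\DHZH$, so $\Spec(\res^G_H)$ points from $\Spec(\DHZH^c)$ to $\Spec(\DHZG^c)$, as the statement demands — and noting that the factor $\phigeomb{K}$ appearing in the two composites $\phigeomb{K,G}$ and $\phigeomb{K,H}$ is literally the one functor $\DHZK \to \DHZ$, so that the middle isomorphism above is just pre-composition of $\phigeomb{K}$ with the natural isomorphism $\res^G_K\cong\res^H_K\circ\res^G_H$.
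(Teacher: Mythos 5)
Your proposal is correct and is essentially the paper's own argument: the paper's proof likewise just unwinds the definitions of $\phigeomb{K,H}$ and $\phigeomb{K,G}$ from \ref{it:phiHG} and applies the relation $\res_K^G \cong \res_K^H \circ \res_H^G$ of \ref{it:composite-of-maps}, then passes to spectra. Your write-up merely spells out the functoriality and variance bookkeeping that the paper leaves implicit.
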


\begin{proof}
	This is immediate from the definition of $\phigeomb{K,H}$ and $\phigeomb{K,G}$ (see \ref{it:phiHG}) and the relation $\res_K^G \cong \res_K^H \circ \res_H^G$ (see \ref{it:composite-of-maps}).
\end{proof}

\begin{Prop}\label{prop:image-of-res}
	For any $H \le G$, the image of the map
	\[
		\Spec(\res_H^G) : \Spec(\DHZH^c) \to \Spec(\DHZG^c)
	\]
	coincides with $\supp(F_G(G/H_+))$.
\end{Prop}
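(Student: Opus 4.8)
The plan is to establish the two inclusions $\Img(\Spec(\res_H^G))\subseteq\supp(F_G(G/H_+))$ and $\supp(F_G(G/H_+))\subseteq\Img(\Spec(\res_H^G))$ separately; throughout, write $A:=F_G(G/H_+)\in\DHZG^c$.

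For the first inclusion, let $\cat Q\in\Spec(\DHZH^c)$ and put $\cat P:=(\res_H^G)\inv(\cat Q)$; I must show $A\notin\cat P$, \ie $\res_H^G(A)\notin\cat Q$. By~\ref{it:change-of-group} we have $\res_H^G(A)\cong F_H(\res_H^G(G/H_+))$, and restricting the $G$-set $G/H$ to $H$ the orbit of the trivial coset is a copy of $H/H$, so $\res_H^G(G/H_+)\cong H/H_+\vee(\cdots)$ in $\SH(H)$. Applying $F_H$ (which is tensor-triangulated and preserves coproducts) then exhibits $\unit_{\DHZH}=F_H(\unit_{\SH(H)})$ as a retract of $\res_H^G(A)$, and since a prime ideal is a proper thick $\otimes$-ideal closed under retracts, this forces $\res_H^G(A)\notin\cat Q$.

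For the second inclusion, I would first identify $A$ with $\ind_H^G(\unit_{\DHZH})$: taking left adjoints of the isomorphism $U_H\circ\res_H^G\cong\res_H^G\circ U_G$ supplied by~\ref{it:change-of-group} gives a natural isomorphism $F_G\circ\ind_H^G\cong\ind_H^G\circ F_H$, and evaluating at $\unit_{\SH(H)}$, together with the standard identification $\ind_H^G(\unit_{\SH(H)})\cong G/H_+$ in $\SH(G)$, yields $A\cong\ind_H^G(\unit_{\DHZH})$. I would then invoke the general tensor-triangular principle that, for a tt-functor $f^*\colon\cat K\to\cat L$ of rigid idempotent-complete tt-categories admitting a left adjoint $f_!$ for which the projection formula $f_!(f^*x\otimes y)\cong x\otimes f_!y$ holds, one has $\Img(\Spec(f^*))=\supp(f_!(\unit_{\cat L}))$. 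This applies to $f^*=\res_H^G$ restricted to compact objects: its left adjoint $\ind_H^G$ preserves compactness (as $\res_H^G$ preserves coproducts, by~\ref{it:change-of-group}) and satisfies the projection formula, just as for $\SH(G)$ (see Section~\ref{sec:highly-structured}). Combining the two isomorphisms, $\Img(\Spec(\res_H^G))=\supp(\ind_H^G(\unit_{\DHZH}))=\supp(A)$. For completeness: in the nonformal half of the general principle, assuming $f_!(\unit_{\cat L})\notin\cat P$, the projection formula and exactness of $f_!$ force the $\otimes$-multiplicative family $\{f^*b\mid b\notin\cat P\}$ to be disjoint from the thick $\otimes$-ideal $\langle f^*a\mid a\in\cat P\rangle$, so any thick $\otimes$-ideal $\cat Q$ that is maximal among those disjoint from this family is prime and satisfies $(f^*)\inv(\cat Q)=\cat P$.

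The step I expect to require the most (if still routine) care is the projection formula for $\ind_H^G\dashv\res_H^G$ in $\DHZG$, equivalently the assertion that restriction to a subgroup is a ``finite \'etale'' tt-extension; this I would deduce from the module-theoretic description of $\DHZG$ given in Section~\ref{sec:highly-structured} rather than from the black-box list~\ref{it:DHZG}--\ref{it:trivHZ}. Note that the argument uses nothing about the topology of $\Spec(\DHZG^c)$, so the proposition remains available as an input to the subsequent computation. Alternatively, for the second inclusion one can read off from~\ref{it:phiHG},~\ref{it:composite-of-maps} and the double coset formula that the points $\cat P_G(K,\frakp)$ lying in $\supp(A)$ are exactly those with $K$ subconjugate to $H$, and each such point lies in $\Img(\Spec(\res_H^G))$ since $\phigeomb{K,G}\cong\phigeomb{K,H}\circ\res_H^G$; but promoting this to an equality of the two full subsets of $\Spec(\DHZG^c)$ still requires the \'etale input above, unless the classification of primes is already at hand.
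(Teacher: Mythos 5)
Your proposal is correct, and at the top level it follows the same strategy as the paper: both arguments reduce the proposition to the identity $\Img(\Spec(\res_H^G))=\supp(\ind_H^G(\unit))$ and then identify $\ind_H^G(\unit)\cong F_G(G/H_+)$ by exactly the adjoint-commutation computation you give (take left adjoints in $\res_H^G\circ U_G\cong U_H\circ\res_H^G$ and use $\ind_H^G(\unit)\simeq G/H_+$ in $\SH(G)$), with the same appeal to Neeman for compactness of $\ind_H^G$. Where you genuinely diverge is in how the tensor-triangular identity is justified: the paper uses rigidity of $\DHZG^c$ to replace the left adjoint by the right adjoint $D\ind_H^G D$ on compacts and then simply cites \cite[Thm.~1.7]{Balmer17} together with $\supp(DX)=\supp(X)$, whereas you reprove the principle — the inclusion $\Img\subseteq\supp$ via the concrete observation that $\unit$ is a retract of $\res_H^G F_G(G/H_+)$ (the $H/H$-orbit in $\res_H^G(G/H)$), and the reverse inclusion via the projection formula and the multiplicative-family argument. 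Your route is self-contained where the paper outsources the key step; the paper's is shorter and never has to state a projection formula. Two refinements to your write-up. First, the projection formula you flag as the delicate input is formal on compacts and needs nothing from the module-theoretic model of Section~\ref{sec:highly-structured}: since all objects of $\DHZG^c$ and $\DHZH^c$ are rigid and $\res_H^G$ is monoidal (so commutes with duals), one has, naturally in $z$,
\[
\Hom(x\otimes\ind_H^G(y),z)\cong\Hom\bigl(y,\res_H^G(Dx\otimes z)\bigr)\cong\Hom\bigl(\res_H^G(x)\otimes y,\res_H^G(z)\bigr)\cong\Hom\bigl(\ind_H^G(\res_H^G(x)\otimes y),z\bigr),
\]
and Yoneda gives $\ind_H^G(\res_H^G(x)\otimes y)\cong x\otimes\ind_H^G(y)$; equivalently, it is the dual-conjugate of the (formal) projection formula for the right adjoint $D\ind_H^G D$. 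Second, in the Zorn step you should take $\cat Q$ maximal among thick $\otimes$-ideals that \emph{contain} $\ideal{\res_H^G(a)\mid a\in\cat P}$ \emph{and} are disjoint from the multiplicative family $\SET{\res_H^G(b)}{b\notin\cat P}$ — this is precisely \cite[Lem.~2.2]{Balmer05a}; maximality among ideals merely disjoint from the family does not by itself force $\cat Q$ to contain that ideal, which is what yields $(\res_H^G)^{-1}(\cat Q)\supseteq\cat P$. With these two adjustments your argument is complete and, as you note, independent of the later topological analysis.
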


\begin{proof}
	Since the restriction functor $\res_H^G$ preserves coproducts \ref{it:change-of-group}, its left adjoint $\ind_H^G$ \ref{it:adjoint} necessarily preserves compact objects (see~\cite[Thm.~5.1]{Neeman96} for instance). Hence the adjunction $\ind_H^G \dashv \res_H^G$ restricts to an adjunction
	\[
		\ind_H^G :\DHZH^c \adjto \DHZG^c : \res_H^G
	\]
	on the subcategories of compact objects. Moreover, as the category $\DHZG$ is rigidly compactly generated \ref{it:DHZG}, its subcategory of compact objects $\DHZG^c$ is a \emph{rigid} category (i.e.~all objects are dualizable) so the duality $D$ provides an equivalence between $\DHZG^c$ and its opposite category.  It follows that $D\ind_H^G D$ is \emph{right} adjoint to $\res_H^G$ on the categories of compact objects.  We can then invoke \cite[Thm.~1.7]{Balmer17} to conclude that the image of $\Spec(\res^G_H)$ equals $\supp(D\ind_H^G D\unit)$.  This coincides with $\supp(\ind_H^G(\unit))$ since $D\unit=\unit$ and $\supp(DX)=\supp(X)$ (by \cite[Prop.~2.7]{Balmer07} for instance). Finally, by~\ref{it:change-of-group} we have $\res_H^G \circ U_G \cong U_H \circ \res^G_H$.  Taking left adjoints, $\ind_H^G \circ F_H \cong F_G \circ \ind_H^G$ so $\ind_H^G(\unit) \cong \ind_H^G(F_H(\unit)) \cong F_G(\ind_H^G(\unit)) \cong F_G(G/H_+)$.
\end{proof}

\begin{Lem}\label{lem:map-for-inflation}
	Let $N \le K \le G$ with $N \lenormal G$. The map
	\[
		\Spec(\infl_{G/N}^G) : \Spec(\DHZG^c) \to \Spec(\DHZGN^c)
	\]
	sends $\cat P_G(K,\frakp)$ to $\cat P_{G/N}(K/N,\frakp)$.
\end{Lem}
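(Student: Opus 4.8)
The plan is to reduce the statement to an identity between tensor triangulated functors $\DHZGN \to \DHZ$ and then invoke functoriality of $\Spec$. Recall that $\cat P_G(K,\frakp) = \Spec(\phigeomb{K,G})(\frakp)$ by the definitions in~\ref{it:phiHG}, and that $\phigeomb{K,G} = \phigeomb{K}\circ\res_K^G$. Since $\infl_{G/N}^G : \DHZGN \to \DHZG$ and $\phigeomb{K,G} : \DHZG \to \DHZ$ are tensor triangulated functors preserving compact objects (by~\ref{it:change-of-group} and~\ref{it:phiHG}), the contravariant functoriality of $\Spec$ gives
\[
	\Spec(\infl_{G/N}^G)\circ\Spec(\phigeomb{K,G}) = \Spec\big(\phigeomb{K,G}\circ\infl_{G/N}^G\big).
\]
Evaluating both sides at $\frakp$, the lemma reduces to showing that $\phigeomb{K,G}\circ\infl_{G/N}^G \cong \phigeomb{K/N,G/N}$ as tensor triangulated functors $\DHZGN \to \DHZ$, since naturally isomorphic functors induce the same map on spectra.

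To prove this isomorphism I would first analyze $\res_K^G\circ\infl_{G/N}^G$. Because $N \le K$, the composite homomorphism $K \hookrightarrow G \twoheadrightarrow G/N$ factors as $K \twoheadrightarrow K/N \hookrightarrow G/N$, so two applications of~\ref{it:composite-of-maps} yield $\res_K^G\circ\infl_{G/N}^G \cong \infl_{K/N}^K\circ\res_{K/N}^{G/N}$ (this is exactly the relation already used in the proof of Lemma~\ref{lem:phigeom-and-restriction}). Hence
\[
	\phigeomb{K,G}\circ\infl_{G/N}^G \;=\; \phigeomb{K}\circ\res_K^G\circ\infl_{G/N}^G \;\cong\; \phigeomb{K}\circ\infl_{K/N}^K\circ\res_{K/N}^{G/N}.
\]
Now Lemma~\ref{lem:nested-phigeom}, which was proved for an arbitrary finite group, applies verbatim with $K$ in place of $G$ and gives $\phigeomb{K}\cong\phigeomb{K/N}\circ\phigeom{N,K}$; combined with the splitting $\phigeom{N,K}\circ\infl_{K/N}^K\cong\Id_{K/N}$ from~\ref{it:finite-loc}, this gives $\phigeomb{K}\circ\infl_{K/N}^K\cong\phigeomb{K/N}$. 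Substituting,
\[
	\phigeomb{K,G}\circ\infl_{G/N}^G \;\cong\; \phigeomb{K/N}\circ\res_{K/N}^{G/N} \;=\; \phigeomb{K/N,G/N},
\]
which is precisely the required isomorphism. Together with the first step this shows $\Spec(\infl_{G/N}^G)(\cat P_G(K,\frakp)) = \Spec(\phigeomb{K/N,G/N})(\frakp) = \cat P_{G/N}(K/N,\frakp)$.

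The argument is essentially bookkeeping with the formal properties \ref{it:change-of-group}--\ref{it:finite-loc}, so I do not expect a serious obstacle. The only points that require a little care are the factorization of the composite group homomorphism $K \to G/N$ through $K/N$ (which uses the hypothesis $N \le K$) and the correct matching of inflation and restriction functors to their underlying homomorphisms; one should also note explicitly that Lemma~\ref{lem:nested-phigeom} is being applied to the group $K$ rather than $G$, which is legitimate since it was stated and proved for an arbitrary finite group.
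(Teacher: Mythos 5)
Your proof is correct and follows essentially the same route as the paper: factor $K\to G\to G/N$ through $K/N$, use \ref{it:composite-of-maps} to reduce to $\phigeomb{K}\circ\infl_{K/N}^K\cong\phigeomb{K/N}$, and deduce that from Lemma~\ref{lem:nested-phigeom} together with the splitting in \ref{it:finite-loc}. The only difference is that you spell out explicitly that Lemma~\ref{lem:nested-phigeom} is applied to the group $K$ (with $N\lenormal K$), a point the paper leaves implicit.
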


\begin{proof}
	Unravelling the definitions and factoring the composite $K\to G\to G/N$ as $K\to K/N \to G/N$, property \ref{it:composite-of-maps} reduces our claim to the assertion that $\phigeomb{G} \circ \infl_{G/N}^G \cong \phigeomb{G/N}$. This follows from Lemma~\ref{lem:nested-phigeom} since $\infl_{G/N}^G$ splits $\phigeom{N,G}$.
\end{proof}

\begin{Lem}\label{lem:map-for-phigeom}
	Let $N \le K\le G$ with $N \lenormal G$. The map
	\[
		\Spec(\phigeom{N,G}) : \Spec(\DHZGN^c) \to \Spec(\DHZG^c)
	\]
	sends $\cat P_{G/N}(K/N,\frakp)$ to $\cat P_G(K,\frakp)$.
\end{Lem}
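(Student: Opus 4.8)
The plan is to reduce the statement to a natural isomorphism of composite functors $\DHZG \to \DHZ$ and then assemble it from the two structural lemmas already at hand. First I would unwind the definitions in \ref{it:phiHG}: by definition $\cat P_{G/N}(K/N,\frakp) = \Spec(\phigeomb{K/N,G/N})(\frakp)$, so functoriality of $\Spec$ gives
\[
	\Spec(\phigeom{N,G})\big(\cat P_{G/N}(K/N,\frakp)\big) = \Spec\big(\phigeomb{K/N,G/N}\circ\phigeom{N,G}\big)(\frakp).
\]
Since naturally isomorphic functors induce the same map on spectra, it therefore suffices to establish a natural isomorphism $\phigeomb{K/N,G/N}\circ\phigeom{N,G} \cong \phigeomb{K,G}$ of functors $\DHZG \to \DHZ$; indeed the right-hand side computes $\cat P_G(K,\frakp)$ by definition.

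For that isomorphism I would expand both sides using \ref{it:phiHG}. On the one hand $\phigeomb{K,G} = \phigeomb{K}\circ\res^G_K$. On the other, $\phigeomb{K/N,G/N}\circ\phigeom{N,G} = \phigeomb{K/N}\circ\res^{G/N}_{K/N}\circ\phigeom{N,G}$. Observe that $N$ is normal in $K$, being normal in $G$ and contained in $K$. Then Lemma~\ref{lem:phigeom-and-restriction} rewrites the middle portion as $\res^{G/N}_{K/N}\circ\phigeom{N,G} \cong \phigeom{N,K}\circ\res^G_K$, and Lemma~\ref{lem:nested-phigeom} applied to $N \lenormal K$ gives $\phigeomb{K/N}\circ\phigeom{N,K} \cong \phigeomb{K}$. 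Concatenating these isomorphisms yields $\phigeomb{K/N,G/N}\circ\phigeom{N,G} \cong \phigeomb{K}\circ\res^G_K = \phigeomb{K,G}$, which completes the argument.

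There is no genuine obstacle here: the proof is a bookkeeping exercise in composing natural isomorphisms. The only points requiring a moment's attention are checking that the hypotheses of Lemmas~\ref{lem:phigeom-and-restriction} and~\ref{lem:nested-phigeom} are met — in particular that $N \lenormal K$ — and keeping track of the variance so that the maps on spectra compose in the correct order.
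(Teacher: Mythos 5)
Your proof is correct and is exactly the paper's argument: the paper's proof of this lemma simply cites Lemma~\ref{lem:nested-phigeom}, Lemma~\ref{lem:phigeom-and-restriction} and the definitions, and you have spelled out precisely how those combine, including the observation that $N\lenormal K$ so the hypotheses apply. Nothing further is needed.
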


\begin{proof}
	This follows from Lemma~\ref{lem:nested-phigeom} and Lemma~\ref{lem:phigeom-and-restriction} and the definitions.
\end{proof}

\begin{Cor}\label{cor:quotient}
	Let $N \le K\le G$ with $N \lenormal G$. Then $\cat P_G(K,\frakp) \subseteq \cat P_G(G,\frakq)$ if and only if $\cat P_{G/N}(K/N,\frakp) \subseteq \cat P_{G/N}(G/N,\frakq)$.
\end{Cor}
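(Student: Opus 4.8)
The plan is to deduce both implications purely formally from the continuity of two maps on spectra that have already been identified in Lemma~\ref{lem:map-for-inflation} and Lemma~\ref{lem:map-for-phigeom}. The key topological input is the standard description of the Balmer topology: an inclusion of primes $\cat P \subseteq \cat P'$ is precisely the assertion that $\cat P$ lies in the closure $\overline{\{\cat P'\}}$. Since any continuous map $f$ satisfies $f(\overline{\{\cat P'\}}) \subseteq \overline{\{f(\cat P')\}}$, it follows that a continuous map of Balmer spectra respects the inclusion order on primes, i.e.\ $\cat P \subseteq \cat P'$ forces $f(\cat P) \subseteq f(\cat P')$. So the whole argument is: push the order relation through the appropriate continuous map and read off the image points from the two lemmas.

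For the forward implication, assuming $\cat P_G(K,\frakp) \subseteq \cat P_G(G,\frakq)$, I would apply the continuous map $\Spec(\infl_{G/N}^G) : \Spec(\DHZG^c) \to \Spec(\DHZGN^c)$. By Lemma~\ref{lem:map-for-inflation} — invoked once with the subgroup $K$ (legitimate since $N \le K \le G$ and $N \lenormal G$) and once with the subgroup $G$ — the two sides are carried to $\cat P_{G/N}(K/N,\frakp)$ and $\cat P_{G/N}(G/N,\frakq)$ respectively, so that $\cat P_{G/N}(K/N,\frakp) \subseteq \cat P_{G/N}(G/N,\frakq)$. For the reverse implication, assuming $\cat P_{G/N}(K/N,\frakp) \subseteq \cat P_{G/N}(G/N,\frakq)$, I would instead apply the continuous map $\Spec(\phigeom{N,G}) : \Spec(\DHZGN^c) \to \Spec(\DHZG^c)$ and invoke Lemma~\ref{lem:map-for-phigeom} (again with $K$ and with $G$) to recover $\cat P_G(K,\frakp) \subseteq \cat P_G(G,\frakq)$.

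I do not anticipate any genuine obstacle: the statement is essentially a bookkeeping exercise. The only points that need care are keeping track of the (contravariant) direction of $\Spec(-)$, recalling the convention that inclusion of primes matches the specialization order, and verifying that the subgroup hypotheses of Lemma~\ref{lem:map-for-inflation} and Lemma~\ref{lem:map-for-phigeom} are met for all four points $\cat P_G(K,\frakp)$, $\cat P_G(G,\frakq)$, $\cat P_{G/N}(K/N,\frakp)$, $\cat P_{G/N}(G/N,\frakq)$ (for the ``$G$''-points one simply takes $K=G$). One could alternatively observe that $\Spec(\infl_{G/N}^G)$ and $\Spec(\phigeom{N,G})$ restrict to mutually inverse bijections on the relevant sets of primes, since $\phigeom{N,G}\circ\infl_{G/N}^G \cong \Id$ by \ref{it:finite-loc}, but this stronger fact is not needed for the corollary.
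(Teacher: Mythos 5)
Your proposal is correct and is essentially the paper's own argument: the paper likewise notes that the induced maps on spectra preserve inclusions and then reads off the forward implication from Lemma~\ref{lem:map-for-inflation} and the reverse one from Lemma~\ref{lem:map-for-phigeom}. Your extra remarks (specialization order, and the splitting $\phigeom{N,G}\circ\infl_{G/N}^G \cong \Id$) are fine but not needed beyond what the paper records.
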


\begin{proof}
	The induced maps on spectra preserve inclusions.  Thus $(\Rightarrow)$ follows from Lemma~\ref{lem:map-for-inflation} and $(\Leftarrow)$ follows from Lemma~\ref{lem:map-for-phigeom}.
\end{proof}

\begin{Rem}
	Recall the prime ideals of the nonequivariant stable homotopy category of finite spectra $\SH^c$. They are of the form $\cat C_{p,n}$ where~$p$ is a prime number and $1 \le n \le \infty$ is a ``chromatic'' number.  Recall that $\cat C_{p,1} = \SH^{c,\textrm{tor}} =:\cat C_{0,1}$ is the subcategory of finite torsion spectra, independently of~$p$.  It is the unique generic point of $\Spec(\SH^c)$, while the points $\cat C_{p,\infty}$ are the closed points.  Similarly recall that the prime ideals of $\SH(G)^c$ are of the form $\cat P(H,p,n)$ for $H \le G$, $p$ a prime number and $1 \le n \le \infty$. Again, $\cat P(H,p,1)$ is independent of $p$ and sometimes written $\cat P(H,1)$ or $\cat P(H,0,1)$.
\end{Rem}

\begin{Prop}\label{prop:nonequiv-compat}
	The map $\Spec(\DHZ^c) \xra{\Spec(F)} \Spec(\SH^c)$ sends a prime ideal $\frakp \in \Spec \bbZ \cong \Spec(\DHZ^c)$ to $\cat C_{p,\infty}$ if $\frakp = (p)$ and to $\cat C_{0,1}$ if $\frakp=(0)$.
\end{Prop}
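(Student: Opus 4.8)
The plan is to compute $\Spec(F)$ straight from its definition as the continuous map induced on spectra by a tensor triangulated functor, using the very concrete description of $F$ recorded in~\ref{it:trivial}. By that item $\DHZ\simeq\Der(\bbZ)$ and $F\colon\SH\to\DHZ$ is extension of scalars along the unit $\Sphere\to\HZ$; on compact objects it is $X\mapsto \HZ\wedge X$, which sends a finite spectrum to its integral homology $H_\ast(X;\bbZ)$, viewed as a perfect complex of abelian groups. Under the Hopkins--Neeman identification $\Spec(\Der(\bbZ)^c)\cong\Spec\bbZ$ a prime $\frakp$ corresponds to the prime tensor-ideal $\cat P_\frakp=\SET{C\in\Der(\bbZ)^c}{C_\frakp\simeq 0}$, so that
\[
	\Spec(F)(\frakp)=\SET{X\in\SH^c}{F(X)\in\cat P_\frakp}=\SET{X\in\SH^c}{(\HZ\wedge X)_\frakp\simeq 0}.
\]

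The second step is a short homological observation: for a perfect complex $C$ over $\bbZ$ and a prime~$p$ one has $C_{(p)}\simeq 0$ if and only if $C\otimes^{\bbL}_\bbZ\bbF_p\simeq 0$ --- indeed both conditions say exactly that every $H_i(C)$ is finite of order prime to~$p$ (the second equivalence is the universal coefficient theorem; alternatively, since $\supp_\bbZ(C)$ is a closed subset of $\Spec\bbZ$, it avoids $(p)$ precisely when it avoids $\{(0),(p)\}$). Trivially $C_{(0)}=C\otimes_\bbZ\bbQ\simeq 0$ iff $C\otimes^{\bbL}_\bbZ\bbQ\simeq 0$, as $\bbQ$ is flat. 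Feeding this through $F$ and using the standard identifications $\bbF_p\otimes^{\bbL}_\bbZ(\HZ\wedge X)\simeq\HFp\wedge X$ and $\bbQ\otimes^{\bbL}_\bbZ(\HZ\wedge X)\simeq\rmH\bbQ\wedge X$ gives
\[
	\Spec(F)\big((p)\big)=\SET{X\in\SH^c}{\HFp\wedge X\simeq 0}
	\qquad\text{and}\qquad
	\Spec(F)\big((0)\big)=\SET{X\in\SH^c}{\rmH\bbQ\wedge X\simeq 0}.
\]

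The last step is to recognize these two prime tensor-ideals of $\SH^c$. The ideal $\SET{X}{\rmH\bbQ\wedge X\simeq 0}$ is by definition the subcategory $\SH^{c,\textrm{tor}}$ of finite torsion spectra, i.e.\ the generic point $\cat C_{0,1}$. The ideal $\SET{X}{\HFp\wedge X\simeq 0}$ is the kernel of mod-$p$ homology; since the $\HFp$-homology of a finite spectrum vanishes exactly when that spectrum becomes trivial after $p$-localization, the thick subcategory theorem (equivalently, Balmer's computation of $\Spec(\SH^c)$) identifies this kernel with the closed point $\cat C_{p,\infty}$ over~$p$. I expect the only non-formal ingredient here to be this last identification, which rests on the nilpotence and periodicity theorems of Hopkins--Smith; everything else is bookkeeping with $F$ and the universal coefficient theorem.
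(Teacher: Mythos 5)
Your proposal is correct, but it takes a genuinely different route from the paper. You compute $\Spec(F)$ directly: you unwind the Hopkins--Neeman--Thomason identification $\Spec(\Der(\bbZ)^c)\cong\Spec\bbZ$ sending $\frakp$ to $\SET{C}{C_\frakp\simeq 0}$, push this through $F(X)\simeq \HZ\wedge X$ and the universal coefficient theorem to get $\Spec(F)((p))=\SET{X\in\SHc}{\HFp\wedge X\simeq 0}$ and $\Spec(F)((0))=\SET{X\in\SHc}{\rmH\bbQ\wedge X\simeq 0}$, and then identify these with $\cat C_{p,\infty}$ and $\cat C_{0,1}$ using the Hopkins--Smith/Balmer description of $\Spec(\SHc)$ (in particular that $\cat C_{p,\infty}$ is exactly the kernel of mod-$p$ homology on finite spectra). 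The paper instead argues indirectly: it uses the Hurewicz theorem to see that $F$ is conservative on compacts, invokes Balmer's criterion to conclude that $\Spec(F)$ hits every closed point $\cat C_{p,\infty}$, and then uses naturality of the comparison map $\rho$ to $\Spec(\pi_0)$ --- together with the fact that $\Sphere\to\HZ$ is an isomorphism on $\pi_0$ --- to force $(0)\mapsto\cat C_{0,1}$ (the unique point of the fiber over $(0)$) and, by a counting argument in the fiber over $(p)$, $(p)\mapsto\cat C_{p,\infty}$. Your computation is more explicit and pins down the images without any surjectivity or fiber-counting argument, at the cost of leaning directly on the identification $\cat C_{p,\infty}=\SET{X}{\HFp\wedge X\simeq0}$; the paper's argument is softer, needing only conservativity plus knowledge of the fibers of $\rho$ over $(0)$ and $(p)$, which fits the black-box style it uses throughout Section~2. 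Both ultimately rest on the known chromatic description of $\Spec(\SHc)$, so neither avoids the Hopkins--Smith input.
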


\begin{proof}
	It follows from the Hurewicz theorem that the functor $\HZ \wedge - : \SH \to \SH$ is conservative on compact objects.  That is, if $X \in \SH^c$ then $\HZ \wedge X = 0$ if and only if $X=0$.  As a corollary, the functor $F:\SH^c \to \DHZ^c$ is conservative: if $F(X) = 0$ then $\HZ \wedge X \cong UF(X) = 0$, hence $X=0$.  Thus by \cite[Thm.~1.2]{Balmer17}, the induced map on spectra $\varphi:=\Spec(F):\Spec(\DHZ^c) \to \Spec(\SH^c)$ hits all the closed points $\cat C_{p,\infty}$ of $\Spec(\SH^c)$.  Now, the unit map $\Sphere \to \HZ$ induces an isomorphism of rings $\pi_0(\Sphere) \to \pi_0(\HZ)$ which, under the usual identifications of both sides with the ring of integers, is just the identity.  This is precisely the map on endomorphism rings $\End_{\SH}(\unit) \to \End_{\DHZ}(\unit)$ induced by the functor $F:\SH^c \to \DHZ^c$.  Since the comparison map $\rho:\Spec(\cat K) \to \Spec(\End_{\cat K}(\unit))$ of \cite[Section 5]{Balmer10b} is natural, we have a commutative diagram
	\[\begin{tikzcd}[row sep=scriptsize]
			\Spec(\DHZ^c) \ar[dd,"\rho"', "\cong"] \ar[r,"\varphi"]& \Spec(\SH^c) \ar[dd,"\rho"] \\\\
			\Spec(\pi_0(\HZ)) \ar[d,"\cong"] \ar[r,"\cong"] & \Spec(\pi_0(\Sphere)) \ar[d,"\cong"]\\
			\Spec(\bbZ) \ar[r,equals]	& \Spec(\bbZ)
	\end{tikzcd}\]
	and the left-hand comparison map is just the usual identification of the spectrum of $\DHZ^c \cong \Der(\bbZ)^c$ with the spectrum of the integers.  Thus the top map $\varphi$ sends the prime $(0)$ in $\Spec(\DHZ^c)$ to a point in the fiber (with respect to $\rho$) of $(0)$ in $\Spec(\SH^c)$.  There is only one such point in the fiber, namely $\cat C_{0,1} = \SH^{c,\textrm{tor}}$.  On the other hand, the prime $(p)$ in $\Spec(\DHZ^c)$ maps to a point in the fiber of $(p)$ in $\Spec(\SH^c)$.  Since all the closed points of $\Spec(\SHc)$ are hit, the closed point $\cat C_{p,\infty}$ in the fiber over $(p)$ must be hit.  Since $(p)$ in $\Spec(\DHZ^c)$  is the only point mapping to the fiber over $(p)$, the only possibility is that it maps to the closed point $\cat C_{p,\infty}$.
\end{proof}
\begin{Cor}\label{cor:comp-with-SHG}
	For any $H \le G$, the map $\Spec(\DHZG^c) \xra{\Spec(F_G)} \Spec(\SHG^c)$ sends $\cat P_G(H,p)$ to $\cat P_G(H,p,\infty)$ and $\cat P_G(H,0)$ to $\cat P_G(H,0,1)$.
\end{Cor}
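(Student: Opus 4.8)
The plan is to reduce the statement to the nonequivariant Proposition~\ref{prop:nonequiv-compat} by a short diagram chase, using the compatibility of $F$ with geometric fixed points recorded in~\eqref{eq:geom-compat2} together with the standard description of $\Spec(\SHG^c)$.

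First recall, as reviewed above and in \cite{BalmerSanders17}, that the prime ideal $\cat P_G(H,p,n)\in\Spec(\SHG^c)$ is by construction the preimage of the prime $\cat C_{p,n}\in\Spec(\SH^c)$ under the geometric $H$-fixed point functor $\phigeomb{H,G}=\phigeomb{H}\circ\res^G_H:\SHG\to\SH$; since this functor preserves compact objects, this says precisely that $\cat P_G(H,p,n)=\Spec(\phigeomb{H,G})(\cat C_{p,n})$ for the induced map $\Spec(\phigeomb{H,G}):\Spec(\SH^c)\to\Spec(\SHG^c)$. (In particular $\cat P_G(H,p,1)=\cat P_G(H,0,1)$, matching the collapse $\cat C_{p,1}=\cat C_{0,1}$.) On the derived side the prime $\cat P_G(H,\frakp)$ is defined in \ref{it:phiHG} in exactly the same way, namely $\cat P_G(H,\frakp)=\Spec(\phigeomb{H,G})(\frakp)$, where now $\phigeomb{H,G}=\phigeomb{H}\circ\res^G_H:\DHZG\to\DHZ$.

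Next, apply $\Spec(-)$ to the commutative square~\eqref{eq:geom-compat2} (in whose notation $\phigeomb{H}$ on $\SHG$ and $\DHZG$ denotes the geometric $H$-fixed point functors $\phigeomb{H,G}$ of \ref{it:phiHG}). As all four functors preserve compact objects, this produces a commutative square
\[
\begin{tikzcd}
\Spec(\DHZ^c) \ar[r,"\Spec(\phigeomb{H,G})"] \ar[d,"\Spec(F)"'] & \Spec(\DHZG^c) \ar[d,"\Spec(F_G)"] \\
\Spec(\SH^c) \ar[r,"\Spec(\phigeomb{H,G})"'] & \Spec(\SHG^c).
\end{tikzcd}
\]
Chasing a prime $\frakp\in\Spec\bbZ\cong\Spec(\DHZ^c)$ around this square, and using the definition of $\cat P_G(H,\frakp)$, gives
\[
\Spec(F_G)\bigl(\cat P_G(H,\frakp)\bigr)=\Spec(F_G)\bigl(\Spec(\phigeomb{H,G})(\frakp)\bigr)=\Spec(\phigeomb{H,G})\bigl(\Spec(F)(\frakp)\bigr).
\]
Proposition~\ref{prop:nonequiv-compat} identifies $\Spec(F)(\frakp)$ as $\cat C_{p,\infty}$ when $\frakp=(p)$ and as $\cat C_{0,1}$ when $\frakp=(0)$; substituting and invoking the first paragraph yields $\Spec(F_G)(\cat P_G(H,p))=\Spec(\phigeomb{H,G})(\cat C_{p,\infty})=\cat P_G(H,p,\infty)$ and $\Spec(F_G)(\cat P_G(H,0))=\Spec(\phigeomb{H,G})(\cat C_{0,1})=\cat P_G(H,0,1)$, as claimed.

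Since every ingredient is already in hand, I do not anticipate a genuine obstacle; the only point requiring care is the bookkeeping — identifying the two horizontal maps of the square as "the same" functor $\phigeomb{H,G}$ and recalling that the primes $\cat P_G(H,p,n)$ of $\SHG^c$ are \emph{defined} by pulling back $\Spec(\SH^c)$ along it. This is exactly what lets the nonequivariant computation of Proposition~\ref{prop:nonequiv-compat} propagate to all groups $G$ and all subgroups $H\le G$ simultaneously.
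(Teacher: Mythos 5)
Your proposal is correct and is exactly the argument the paper has in mind: its one-line proof cites precisely the two ingredients you use, namely the commutative square~\eqref{eq:geom-compat2} (apply $\Spec$ and chase $\frakp$ around it) and Proposition~\ref{prop:nonequiv-compat}, together with the defining identities $\cat P_G(H,\frakp)=\Spec(\phigeomb{H,G})(\frakp)$ and $\cat P_G(H,p,n)=\Spec(\phigeomb{H,G})(\cat C_{p,n})$. You have simply written out the bookkeeping that the paper leaves implicit.
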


\begin{proof}
	This follows immediately from~\eqref{eq:geom-compat2} and Proposition~\ref{prop:nonequiv-compat}.
\end{proof}

\begin{Rem}\label{rem:endo-unit}
	By formal nonsense, the adjunction in \ref{it:DHZG} provides an isomorphism 
	\[
		\End_{\DHZG}(\unit) \simeq \pi_0(U_G(\unit))
	\]
	of commutative rings, where $U_G(\unit)$ is regarded as a commutative monoid in $\SHG$ via the induced lax monoidal structure on the functor $U_G$.  Moreover, the map on endomorphism rings $\pi_0(\unit) = \End_{\SHG}(\unit) \to \End_{\DHZG}(\unit) \simeq \pi_0(U_G(\unit))$ induced by the functor $F_G$ is just post-composition by the unit of $U_G(\unit)$.  Property \ref{it:trivHZ} asserts that we have an isomorphism $U_G(\unit) \simeq \triv_G(\HZ)$ of commutative monoids in $\SHG$. The map on endomorphism rings can then be identified with the map
	\[
		\pi_0( (\triv_G(\Sphere)^G) \to \pi_0( \HZ \wedge \triv_G(\Sphere)^G)
	\]
	induced by the unit of the Eilenberg--MacLane spectrum $\HZ$. This is an isomorphism by the Hurewicz theorem since the spectrum $\triv_G(\Sphere)^G$ is connective (being a wedge sum of suspension spectra by the tom Dieck splitting theorem).  In this way, we have an identification $A(G) \simeq \End_{\SHG}(\unit) \simeq \End_{\DHZG}(\unit)$ between the Burnside ring and the endomorphism ring of the unit in $\DHZG$.
\end{Rem}

\begin{Cor}\label{cor:comp-with-AG}
	The comparison map $\rho : \Spec(\DHZG^c) \to \Spec(A(G))$ sends $\cat P(H,p)$ to $\frakp(H,p) \in \Spec(A(G))$ and sends $\cat P(H,0)$ to $\frakp(H,0) \in \Spec(A(G))$.
\end{Cor}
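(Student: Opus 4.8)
The plan is to deduce this from the naturality of Balmer's comparison map $\rho$ of \cite[Section~5]{Balmer10b}, together with the identifications recorded in Remark~\ref{rem:endo-unit}. Recall first that for $H\le G$ the point $\frakp(H,p)\in\Spec(A(G))$ (resp.\ $\frakp(H,0)$) is, by definition, the image of the prime $(p)\subset\bbZ$ (resp.\ $(0)\subset\bbZ$) under the map $\Spec(\phi^G_H):\Spec(\bbZ)\to\Spec(A(G))$ induced by the $H$-th mark homomorphism $\phi^G_H:A(G)\to\bbZ$, $[S]\mapsto|S^H|$; equivalently $\frakp(H,p)=(\phi^G_H)^{-1}((p))$ and $\frakp(H,0)=\ker(\phi^G_H)$. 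Hence it suffices to establish a commutative square
\[
\begin{tikzcd}[row sep=small]
\Spec(\DHZ^c)\ar[r,"\Spec(\phigeomb{H,G})"]\ar[d,"\cong"'] & \Spec(\DHZG^c)\ar[d,"\rho"] \\
\Spec(\bbZ)\ar[r,"\Spec(\phi^G_H)"'] & \Spec(A(G))
\end{tikzcd}
\]
with left vertical arrow the canonical identification: chasing the prime $\frakp$ around it and using $\cat P_G(H,\frakp)=\Spec(\phigeomb{H,G})(\frakp)$ from \ref{it:phiHG} then gives $\rho(\cat P_G(H,\frakp))=\Spec(\phi^G_H)(\frakp)$, which is $\frakp(H,p)$ for $\frakp=(p)$ and $\frakp(H,0)$ for $\frakp=(0)$.

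I would obtain this square by applying naturality of $\rho$ to the tensor triangulated functor $\phigeomb{H,G}:\DHZG\to\DHZ$ of \ref{it:phiHG}. This produces the square with bottom arrow $\Spec(\psi_H)$, where $\psi_H:A(G)\simeq\End_{\DHZG}(\unit)\to\End_{\DHZ}(\unit)\simeq\bbZ$ is the induced homomorphism on endomorphism rings of the unit (using the identification $A(G)\simeq\End_{\DHZG}(\unit)$ of Remark~\ref{rem:endo-unit} and $\End_{\DHZ}(\unit)=\pi_0(\HZ)=\bbZ$). The only substantive point is then to check that $\psi_H=\phi^G_H$. For this I would invoke the commuting square~\eqref{eq:geom-compat2}: since $F_G$ induces the identification $A(G)\simeq A(G)$ on $\End(\unit)$ (Remark~\ref{rem:endo-unit}) and $F$ induces the identity of $\bbZ$ on $\pi_0$ of the unit (as in the proof of Proposition~\ref{prop:nonequiv-compat}), the homomorphism $\psi_H$ coincides with the map on $\pi_0$ of the unit induced by the equivariant geometric fixed point functor $\phigeomb{H}\circ\res_H^G:\SHG\to\SH$.

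It remains only to recall the classical computation identifying this latter map with the mark homomorphism, and this is where I expect the (modest) effort to lie. Identifying $\End_{\SHG}(\unit)=A(G)$ via the tom Dieck splitting, the map factors as $A(G)\xra{\res_H^G}A(H)\to\bbZ$, where the second map sends $[H/K]$ to $|(H/K)^H|$, which equals $1$ if $K=H$ and $0$ otherwise (since $(H/K)^H\neq\emptyset$ forces $H\subseteq gKg^{-1}$ for some $g$), so that the composite sends $[S]\mapsto|S^H|=\phi^G_H(S)$; see e.g.\ \cite[Ch.~V]{LewisMaySteinbergerMcClure86}. Thus $\psi_H=\phi^G_H$ and the corollary follows. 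Alternatively, one may bypass the last two paragraphs by combining Corollary~\ref{cor:comp-with-SHG} with the naturality square for $\rho$ applied to $F_G$ — which induces the identity on $\End(\unit)=A(G)$ by Remark~\ref{rem:endo-unit} — thereby reducing to the known description of the comparison map $\Spec(\SHG^c)\to\Spec(A(G))$ from \cite{BalmerSanders17} on the points $\cat P_G(H,p,\infty)$ and $\cat P_G(H,0,1)$; the crux, namely pinning down the relevant map on endomorphism rings, is the same either way.
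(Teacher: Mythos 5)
Your argument is correct, but your primary route differs from the paper's. The paper's proof applies naturality of $\rho$ to the single functor $F_G:\SHG^c\to\DHZG^c$ (which induces the identification $A(G)\simeq\End_{\SHG}(\unit)\simeq\End_{\DHZG}(\unit)$ of Remark~\ref{rem:endo-unit}) and then simply quotes Corollary~\ref{cor:comp-with-SHG} together with the already-known description of the comparison map for $\SHG^c$ from \cite[Proposition~6.7]{BalmerSanders17} --- i.e.\ exactly the ``alternative'' you sketch in your last sentence. Your main argument instead applies naturality to $\phigeomb{H,G}:\DHZG^c\to\DHZ^c$ and pins down the induced ring map $A(G)\simeq\End_{\DHZG}(\unit)\to\End_{\DHZ}(\unit)\simeq\bbZ$ as the mark homomorphism $\phi^G_H$, using the square~\eqref{eq:geom-compat2}, Remark~\ref{rem:endo-unit}, and the classical tom Dieck/LMS computation that geometric fixed points induce $[S]\mapsto|S^H|$ on $\pi_0$ of the equivariant sphere; together with Dress's description $\frakp(H,p)=(\phi^G_H)^{-1}((p))$, $\frakp(H,0)=\ker\phi^G_H$, this gives the claim directly. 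All the ingredients you use are available in the paper (\ref{it:phiHG} guarantees $\phigeomb{H,G}$ is a tensor triangulated functor on compacts, and the left vertical identification is indeed $\rho$ for $\DHZ^c$ as in the proof of Proposition~\ref{prop:nonequiv-compat}), so the argument goes through. What the two approaches buy: the paper's is shorter because it leans on the equivariant computation of $\rho$ already done in \cite{BalmerSanders17}, whereas yours is more self-contained --- it effectively re-derives the relevant piece of that result from the mark-homomorphism computation --- at the cost of the (standard but not entirely free) identification of $\Phi^H$ on $\pi_0(\unit)$ with $\phi^G_H$.
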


\begin{proof}
	Naturality of the comparison map gives a commutative diagram
	\[
	\begin{tikzcd}[row sep=scriptsize]
		\Spec(\DHZG^c) \ar[dd,"\rho"'] \ar[r,"\Spec(F_G)"] &\Spec(\SHGc) \ar[dd,"\rho"] \\\\
		\Spec(\End_{\DHZG}(\unit)) \ar[d,"\cong"] \ar[r,"\cong"] & \Spec(\End_{\SHG}(\unit)) \ar[d,"\cong"] \\
		\Spec(A(G))	\ar[r, equals]& \Spec(A(G))
	\end{tikzcd}
	\]
	and the claim follows from Corollary~\ref{cor:comp-with-SHG} and \cite[Proposition~6.7]{BalmerSanders17} (see Remark~\ref{rem:endo-unit}).  The spectrum of the Burnside ring is recalled in \cite[Section 3]{BalmerSanders17}.
\end{proof}

\begin{Thm}\label{thm:spec-as-set}
	Let $G$ be a finite group. Every prime ideal of $\DHZG^c$ is of the form $\cat P(H,\frakp)$ for some $H \le G$ and $\frakp \in \Spec \bbZ$.  Moreover, the prime $\cat P(H,\frakp)$ is completely determined by the $G$-conjugacy class of $H$ and the prime ideal $\frakp$. That is, $\cat P(H,\frakp) = \cat P(K,\frakq)$ if and only if $H \sim_G K$ and $\frakp = \frakq$.
\end{Thm}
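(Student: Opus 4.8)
The plan is to follow the blueprint of the computation of $\Spec(\SHG^c)$ in \cite{BalmerSanders17}. Two statements must be proved: (i) every prime of $\DHZG^c$ has the form $\cat P_G(H,\frakp)$ for some $H\le G$ and $\frakp\in\Spec\bbZ$; and (ii) $\cat P_G(H,\frakp)=\cat P_G(K,\frakq)$ if and only if $H\sim_GK$ and $\frakp=\frakq$. In (ii) the ``if'' direction is already available: $\frakp=\frakq$ is trivial, while $\cat P_G(H,\frakp)=\cat P_G(K,\frakp)$ for $H\sim_GK$ is property~\ref{it:conjugacy}. So the real content is (i) together with the ``only if'' direction of (ii).

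\emph{Surjectivity.} Let $\cat P\in\Spec(\DHZG^c)$. Since $\DHZG^c=\thickt{F_G(G/H_+)\mid H\le G}$ by~\ref{it:DHZG}, no proper thick tensor-ideal contains all the generators $F_G(G/H_+)$, so $\cat P\in\supp(F_G(G/H_+))$ for at least one $H\le G$; by Proposition~\ref{prop:image-of-res} this means $\cat P=\Spec(\res_H^G)(\cat Q)$ for some $\cat Q\in\Spec(\DHZH^c)$. Among all such presentations choose one with $|H|$ minimal. The key claim is that then $\cat Q$ lies in the image of $\Spec(\phigeomb{H})\colon\Spec(\DHZ^c)\to\Spec(\DHZH^c)$. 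Indeed, by~\ref{it:finite-loc} (applied to the group $H$ and the subgroup $H\lenormal H$) the functor $\phigeomb{H}$ is, up to equivalence, the finite localization of $\DHZH$ at the thick tensor-ideal generated by $\SET{F_H(H/L_+)}{L\lneq H}$, so its image on spectra is exactly the set of primes of $\DHZH^c$ containing that thick ideal (the standard description of the spectrum of a Verdier quotient, as used in \cite{BalmerSanders17}). If $\cat Q$ failed to contain some $F_H(H/L_+)$ with $L\lneq H$, then $\cat Q\in\supp(F_H(H/L_+))$, which by Proposition~\ref{prop:image-of-res} is the image of $\Spec(\res_L^H)$; combined with $\res_L^G\cong\res_L^H\circ\res_H^G$ from~\ref{it:composite-of-maps} this would put $\cat P$ in the image of $\Spec(\res_L^G)$ with $|L|<|H|$, contradicting minimality. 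Hence $\cat Q=\Spec(\phigeomb{H})(\frakp)$ for some $\frakp\in\Spec\bbZ$, and since $\phigeomb{H,G}=\phigeomb{H}\circ\res_H^G$ gives $\Spec(\phigeomb{H,G})=\Spec(\res_H^G)\circ\Spec(\phigeomb{H})$, we conclude $\cat P=\Spec(\phigeomb{H,G})(\frakp)=\cat P_G(H,\frakp)$.

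\emph{Rigidity of the parametrization.} For the prime: the case $N=G$ of the splitting in~\ref{it:finite-loc} gives $\phigeomb{G}\circ\triv_G\cong\Id_{\DHZ}$, and $\res_H^G\circ\triv_G\cong\triv_H$ by~\ref{it:composite-of-maps}, so $\phigeomb{H,G}\circ\triv_G=\phigeomb{H}\circ\res_H^G\circ\triv_G\cong\phigeomb{H}\circ\triv_H\cong\Id_{\DHZ}$. Therefore $\Spec(\triv_G)$ is a retraction of every $\Spec(\phigeomb{H,G})$, whence $\Spec(\triv_G)(\cat P_G(H,\frakp))=\frakp$ independently of $H$; in particular $\cat P_G(H,\frakp)=\cat P_G(K,\frakq)$ forces $\frakp=\frakq$. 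For the subgroup: I would compute $\phigeomb{H,G}(F_G(G/L_+))$ for arbitrary $L\le G$. Using $\res_H^G\circ F_G\cong F_H\circ\res_H^G$ (\ref{it:change-of-group}), the Mackey double-coset decomposition of $\res_H^G(G/L_+)$, the identity~\eqref{eq:geom-compat2}, and the elementary fact that $\phigeomb{H}(\Sigma^\infty(H/M)_+)\cong\Sigma^\infty((H/M)^H)_+$ equals $\unit$ for $M=H$ and $0$ for $M\lneq H$ (cf.~\cite[Cor.~II.9.9]{LewisMaySteinbergerMcClure86}), one finds that $\phigeomb{H,G}(F_G(G/L_+))$ is a nonzero finite free object of $\DHZ^c\cong\Der(\bbZ)^c$ when $H$ is $G$-subconjugate to $L$ and is $0$ otherwise. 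Since a nonzero finite free object of $\Der(\bbZ)^c$ has full support, this yields
\[
	\cat P_G(H,\frakp)\in\supp(F_G(G/L_+))\iff H\text{ is }G\text{-subconjugate to }L,
\]
for every prime $\frakp$. Applying this with $L=H$ and with $L=K$ shows that $\cat P_G(H,\frakp)=\cat P_G(K,\frakq)$ forces $H$ and $K$ to be mutually $G$-subconjugate, hence $G$-conjugate.

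\emph{Main obstacle.} The crux is the explicit evaluation of $\phigeomb{H,G}(F_G(G/L_+))$, and in particular its vanishing precisely when $H$ is not $G$-subconjugate to $L$ --- this is exactly what forces $\cat P_G(H,\frakp)$ to remember the conjugacy class of $H$. The remaining ingredients are routine manipulations with finite localizations and supports; the one point that needs a little care is the identification of the image of the finite-localization map $\Spec(\phigeomb{H})$ with the set of primes containing its defining thick tensor-ideal.
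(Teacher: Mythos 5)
Your proof is correct, and it splits naturally into two halves that relate differently to the paper. The surjectivity half is essentially the paper's argument in disguise: the paper runs an induction on $|G|$, identifying $\SET{\cat P_G(G,\frakp)}{\frakp\in\Spec\bbZ}$ with the primes containing all $F_G(G/H_+)$, $H\lneq G$ (Neeman--Thomason applied to the finite localization \ref{it:finite-loc}), and handling the complement via Proposition~\ref{prop:image-of-res} and Lemma~\ref{lem:res}; your minimal-$|H|$ argument uses exactly the same three ingredients (Neeman--Thomason for $\phigeomb{H}$, Proposition~\ref{prop:image-of-res}, and \ref{it:composite-of-maps}) but replaces the induction on the group by a minimal-presentation argument for a single prime, which lets you get by with only the case $K=H$ of Lemma~\ref{lem:res}. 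The uniqueness half, however, takes a genuinely different route. The paper deduces it externally: it feeds $\cat P_G(H,\frakp)$ through $\Spec(F_G)$ (Corollary~\ref{cor:comp-with-SHG}, which rests on the Hurewicz/conservativity argument of Proposition~\ref{prop:nonequiv-compat}) and then quotes the known classification of primes of $\SHG^c$ from \cite[Thm.~4.14]{BalmerSanders17}, together with \ref{it:conjugacy}. You instead separate the two parameters intrinsically: $\frakp$ is recovered from the splitting $\phigeomb{H,G}\circ\triv_G\cong\Id$ (valid by \ref{it:finite-loc} and \ref{it:composite-of-maps}, with $\triv_G$ preserving compacts by \ref{it:change-of-group}), and the conjugacy class of $H$ is recovered by computing $\phigeomb{H,G}(F_G(G/L_+))$ via the Mackey formula, \eqref{eq:geom-compat2} and \cite[Cor.~II.9.9]{LewisMaySteinbergerMcClure86} --- the same toolkit the paper itself uses in Lemmas~\ref{lem:nested-phigeom} and~\ref{lem:phigeom-and-restriction} --- so that membership of $F_G(G/L_+)$ in $\cat P_G(H,\frakp)$ detects subconjugacy, and mutual subconjugacy gives $H\sim_G K$. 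What each approach buys: the paper's is shorter given that Corollary~\ref{cor:comp-with-SHG} is needed later anyway (for the topology and the comparison statements), while yours is more self-contained, avoiding both the comparison with the sphere and the imported classification for $\SHG^c$, at the cost of redoing the orbit computation that underlies that classification.
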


\begin{proof}
	We will prove the theorem by induction on the order $|G|$.  By construction~\ref{it:finite-loc}, the geometric fixed points $\phigeomb{G}:\DHZG \to \DHZ$ is a finite localization.  Hence by the Neeman--Thomason localization theorem \cite[Thm.~2.1]{Neeman92b}, the induced map 
	\[
		\Spec(\DHZ^c) \xra{\Spec(\phigeomb{G})} \Spec(\DHZG^c)
	\]
	is a homeomorphism onto the subset $V(F_G(G/H_+) \mid H \lneq G) \subseteq \Spec(\DHZG^c)$ consisting of those primes $\cat P \in \Spec(\DHZG^c)$ which contain $F_G(G/H_+)$ for all $H \lneq G$. In particular, 
	\[
		\SET{\cat P(G,\frakp)}{\frakp \in \Spec \bbZ} = V(F_G(G/H_+) \mid H \lneq G).
	\]
	The complement is thus given by
	\begin{align*}
		\Spec(\DHZG^c) \setminus \SET{\cat P(G,\frakp)}{\frakp \in \Spec \bbZ}
		&= \bigcup_{H \lneq G} \supp(F_G(G/H_+))\\
		&= \bigcup_{H \lneq G} \im(\Spec(\res_H^G))
	\end{align*}
	where the last equality is given by Proposition~\ref{prop:image-of-res}. By the inductive hypothesis, every prime in $\DHZH$ (for $H \lneq G$) is of the form $\cat P_H(K,\frakp)$ for some $K \le H$ and $\frakp \in \Spec(\bbZ)$.  By Lemma~\ref{lem:res}, it gets mapped to $\cat P_G(K,\frakp)$ under $\Spec(\res_H^G)$.  This completes the proof that every prime is of the required form. The uniqueness statement follows from~\ref{it:conjugacy}, Corollary~\ref{cor:comp-with-SHG} and \cite[Theorem~4.14]{BalmerSanders17}.
\end{proof}

\begin{Rem}\label{rem:covered}
	In other words, $\Spec(\DHZG^c)$ is covered by copies of $\Spec(\bbZ)$, one copy for each conjugacy class of subgroups $H \le G$.  These copies are \emph{disjoint}, so as a set $\Spec(\DHZG^c)$ is just the disjoint union of these copies of $\Spec(\bbZ)$.  However, the copies of $\Spec(\bbZ)$ are related by the topology of $\Spec(\DHZG^c)$.  Our next task is to determine this topology.  This will follow from a series of reductions culminating in Theorem~\ref{thm:topology}.
\end{Rem}

\begin{Rem}
	Understanding the topology boils down to understanding the inclusions among the primes (i.e.~understanding the irreducible closed sets) and the comparison map to the spectrum of the Burnside ring (see Remark~\ref{rem:endo-unit} and Corollary~\ref{cor:comp-with-AG}) greatly restricts the possible inclusions (Lemma~\ref{lem:restrict-inclusions} below).
\end{Rem}

\begin{Rem}
	A subgroup $H \le G$ is said to be a \emph{$p$-subnormal subgroup} if there exists a subnormal tower from $H$ to $G$ all of whose subquotients have order $p$.  We refer the reader to \cite[Section 3]{BalmerSanders17} for more details.
\end{Rem}

\begin{Lem}\label{lem:restrict-inclusions}
	Let $K,H \le G$ be two subgroups and $\frakp, \frakq \in \Spec \bbZ$.  Suppose $\cat P_G(K,\frakp) \subseteq \cat P_G(H,\frakq)$ in $\DHZG^c$. Then:
	\begin{enumerate}
		\item If $\frakp=(0)$ then $\frakq=(0)$ and $K \sim_G H$ (in which case the inclusion is an equality).
		\item If $\frakp=(p)$ then $K$ is $G$-conjugate to a $p$-subnormal subgroup of $H$ and $\frakq=(p)$ or $(0)$.
	\end{enumerate}
\end{Lem}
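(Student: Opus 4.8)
\emph{Approach.} The plan is to read off everything from the Burnside‑ring comparison map $\rho\colon\Spec(\DHZG^c)\to\Spec(A(G))$ of Corollary~\ref{cor:comp-with-AG}, the support computation of Proposition~\ref{prop:image-of-res}, the uniqueness clause of Theorem~\ref{thm:spec-as-set}, and the classical description of $\Spec(A(G))$ recalled in \cite[Section~3]{BalmerSanders17}. Two preliminary facts must be kept in mind. First, $\rho$ is \emph{inclusion‑reversing}: since $\rho(\cat P)=\SET{f\in A(G)}{\cone(f)\notin\cat P}$, an inclusion $\cat P_1\subseteq\cat P_2$ yields $\rho(\cat P_2)\subseteq\rho(\cat P_1)$. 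Second, in $\Spec(A(G))$ the minimal primes are exactly the $\frakp(L,0)$, one per $G$‑conjugacy class of subgroups (so $\frakp(L,0)=\frakp(L',0)\iff L\sim_G L'$), while the closed points are the $\frakp(L,p)$; the space has no chain of primes of length $>1$, so no $\frakp(L,p)$ is minimal, and $\frakp(L,p)=\frakp(L',p)$ --- as well as $\frakp(L,0)\subseteq\frakp(L',p)$ --- holds precisely when $\Op(L)\sim_G\Op(L')$.

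\emph{Part (a).} Apply $\rho$ to $\cat P_G(K,(0))\subseteq\cat P_G(H,\frakq)$: it gives $\frakp(H,\frakq)\subseteq\frakp(K,0)$, and since $\frakp(K,0)$ is minimal this is an equality. No closed point of $\Spec(A(G))$ is minimal, so $\frakq$ cannot be a prime ideal $(q)$; hence $\frakq=(0)$ and $\frakp(H,0)=\frakp(K,0)$, forcing $H\sim_G K$. Theorem~\ref{thm:spec-as-set} then gives $\cat P_G(K,(0))=\cat P_G(H,(0))$, so the inclusion is an equality.

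\emph{Part (b).} Here I would first use supports to locate $K$ inside $H$. By Lemma~\ref{lem:res}, $\cat P_G(H,\frakq)=\Spec(\res_H^G)(\cat P_H(H,\frakq))$ lies in $\im(\Spec(\res_H^G))=\supp(F_G(G/H_+))$ (Proposition~\ref{prop:image-of-res}), and this set is closed, so $\cat P_G(K,(p))\in\overline{\{\cat P_G(H,\frakq)\}}\subseteq\supp(F_G(G/H_+))$. By Theorem~\ref{thm:spec-as-set} and Lemma~\ref{lem:res} this image equals $\SET{\cat P_G(L,\frakr)}{L\le H,\ \frakr\in\Spec\bbZ}$, so $\cat P_G(K,(p))=\cat P_G(K',\frakr)$ for some $K'\le H$; uniqueness gives $\frakr=(p)$ and $K\sim_G K'$. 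Next, applying $\rho$ to the original inclusion gives $\frakp(H,\frakq)\subseteq\frakp(K,p)$ with $\frakp(K,p)$ a closed point (residue field $\Fp$); since there are no chains of length $>1$, this inclusion is either an equality --- whence $\frakq=(q)$ with residue field $\Fp$, i.e.\ $q=p$ --- or has $\frakp(H,\frakq)$ minimal --- whence $\frakq=(0)$; in either case $\Op(H)\sim_G\Op(K)$. This already settles $\frakq\in\{(p),(0)\}$. It then remains to observe the purely group‑theoretic point: as $K'\le H$, every $p'$‑element of $K'$ is a $p'$‑element of $H$, so $\Op(K')\le\Op(H)$; but $\Op(K')\sim_G\Op(K)\sim_G\Op(H)$, so these $G$‑conjugate subgroups have equal order and $\Op(K')=\Op(H)=:N$. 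Then $N\lenormal H$ with $H/N$ a $p$‑group and $N\le K'\le H$, so $K'/N$ is a subgroup of the $p$‑group $H/N$; any maximal chain of subgroups from $K'/N$ to $H/N$ has successive terms normal in one another of index $p$, so $K'/N$ is $p$‑subnormal in $H/N$, and lifting that tower along $H\to H/N$ exhibits $K'$ as $p$‑subnormal in $H$. Thus $K$, being $G$‑conjugate to $K'$, is $G$‑conjugate to a $p$‑subnormal subgroup of $H$.

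\emph{Main obstacle.} The real content is recognizing that $\rho$ detects exactly the relation $\Op(K)\sim_G\Op(H)$, and that this, combined with Proposition~\ref{prop:image-of-res} (which pins $K$ inside $H$ up to conjugacy and hence forces $\Op(K')\le\Op(H)$), is equivalent to $p$‑subnormality --- so that nothing about the height $\ge1$ part of $\Spec(\SHG^c)$ enters the argument. The one delicate point to get right is the direction in which $\rho$ transports inclusions.
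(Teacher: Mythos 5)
Your proof is correct, and it takes a genuinely different route from the paper's. The paper's own proof is a one-line citation that imports the key constraint from the equivariant stable homotopy category: it combines Corollary~\ref{cor:comp-with-SHG} with the known restrictions on inclusions among the primes $\cat P(H,p,n)$ of $\Spec(\SHG^c)$ from \cite[Prop.~6.9]{BalmerSanders17}, and then uses Corollary~\ref{cor:comp-with-AG} together with Dress's description of $\Spec(A(G))$. You bypass $\Spec(\SHG^c)$ entirely: the fact that $K$ is $G$-conjugate into $H$ comes from the closedness of $\supp(F_G(G/H_+))=\im\Spec(\res^G_H)$ via Proposition~\ref{prop:image-of-res}, Lemma~\ref{lem:res} and Theorem~\ref{thm:spec-as-set} (which is legitimate, since that theorem is proved earlier and independently of this lemma); the relations $\Op(K)\sim_G\Op(H)$ and $\frakq\in\{(p),(0)\}$ come from $\rho$ and the Burnside ring alone; and you then upgrade ``conjugate to a subgroup'' to ``conjugate to a $p$-subnormal subgroup'' by the elementary observation that $K'\le H$ forces $\Op(K')\le\Op(H)$, so conjugacy of these two gives $\Op(H)\le K'$ and hence $p$-subnormality through the $p$-group $H/\Op(H)$ --- the same group theory the paper later uses in Corollary~\ref{cor:comparison-with-AG}. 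You also got the one delicate point right: $\rho(\cat P)=\{f:\cone(f)\notin\cat P\}$ reverses inclusions, consistently with $\cat P(H,p)\subseteq\cat P(H,0)$ mapping to $\frakp(H,0)\subseteq\frakp(H,p)$. As for what each approach buys: the paper's version is shorter given that \cite[Prop.~6.9]{BalmerSanders17} is already in hand and keeps the chromatic comparison in view (it is reused for Corollary~\ref{cor:comparison-with-SHG}); yours is more self-contained and makes explicit that this lemma needs nothing beyond $\Spec(A(G))$ and the supports of the induced objects $F_G(G/H_+)$, which fits the paper's theme that $\Spec(\DHZG^c)$ is an ungluing of $\Spec(A(G))$.
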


\begin{proof}
	This follows from Corollary~\ref{cor:comp-with-SHG} and \cite[Proposition~6.9]{BalmerSanders17} together with Corollary~\ref{cor:comp-with-AG} and what is known about the inclusions among the prime ideals of the Burnside ring (e.g.~from \cite[Theorem~3.6]{BalmerSanders17} or the original \cite{Dress69}).
\end{proof}

\begin{Rem}\label{rem:top-strat}
	On the other hand, we know that $\cat P_G(H,p) \subseteq \cat P_G(H,0)$ for any subgroup ${H \le G}$ since the map $\Spec(\phigeomb{H}):\Spec(\DHZ^c) \to \Spec(\DHZG^c)$ preserves inclusions and the identification $\Spec(\DHZ^c) \cong \Spec(\bbZ)$ reverses inclusions.  Armed with Lemma~\ref{lem:restrict-inclusions} and this observation, all that remains to determine the topology of $\Spec(\DHZG^c)$ is to understand when we have an inclusion $\cat P_G(K,p) \subseteq \cat P_G(H,p)$ when $K$ is \mbox{$G$-conjugate} to a $p$-subnormal subgroup of $H$.  We will show that this inclusion always holds (see Proposition~\ref{prop:psubnormal-inclusion} and Theorem~\ref{thm:topology} below).  To prove this we will use a series of reductions which ultimately reduces the problem to the case $G=C_p$, the cyclic group of order $p$.
\end{Rem}

\begin{Rem}
	The following result explains how vanishing of the Tate construction relates to the geometry of the Balmer spectrum.
\end{Rem}

\begin{Prop}\label{prop:idempotents-and-splitting}
	Let $\cat T$ be a rigidly-compactly generated tensor triangulated category and let $\cat K := \cat T^c$ denote its subcategory of compact-rigid objects. For any Thomason subset $Y \subset \Spec(\cat K)$, let $\cat K_{Y} = \SET{x \in \cat K}{\supp(x) \subseteq Y}$ be the corresponding thick tensor-ideal of $\cat K$, and let $e_{Y} \to \unit\to f_{Y} \to \Sigma e_{Y}$ be the idempotent triangle in $\cat T$ for the associated finite localization. For any object $x \in \cat K$, the following are equivalent:
	\begin{enumerate}
		\item	The Tate construction $t_{Y}(x) = 0$ vanishes.
		\item	The exact triangle
				\[
					e_Y \otimes x \to x \to f_Y \otimes x \to \Sigma e_Y \otimes x
				\]
				splits; that is, $f_Y \otimes x \to \Sigma e_Y \otimes x$ is the zero map.
		\item	The support of $x$ is a disjoint union of closed sets
				\[
					\supp(x) = Z_1 \sqcup Z_2
				\] 
				with $Z_1 \subseteq Y$ and $Z_2 \cap Y = \emptyset$.
		\item	$\supp(x) \cap (\Spec(\cat K) \setminus Y)$ is Thomason.
	\end{enumerate}
\end{Prop}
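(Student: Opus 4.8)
\emph{Proof proposal.} I would prove $(1)\Leftrightarrow(2)$ directly from the Tate square, and then run the cycle $(2)\Rightarrow(3)\Rightarrow(4)\Rightarrow(2)$ by tensor-triangular geometry. For Step~1 recall that $t_Y(x)=f_Y\otimes\ihom(e_Y,x)$ and that the ``Tate square''
\[
\begin{tikzcd}
x\ar[r]\ar[d] & \ihom(e_Y,x)\ar[d]\\
f_Y\otimes x\ar[r] & t_Y(x),
\end{tikzcd}
\]
with maps induced by the counit $e_Y\to\unit$ and the unit $\unit\to f_Y$, is homotopy cocartesian: the cofibres of its two vertical maps are each identified with $\Sigma(e_Y\otimes x)$ via the norm equivalence $e_Y\otimes x\isoto e_Y\otimes\ihom(e_Y,x)$ (cf.\ \cite{BalmerFavi11} and \cite[Section~5]{BalmerSanders17}). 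Thus if $t_Y(x)=0$ the square degenerates to an equivalence $x\isoto\ihom(e_Y,x)\oplus(f_Y\otimes x)$ under which the localisation map $x\to f_Y\otimes x$ is a split epimorphism, so the localisation triangle splits; this is $(1)\Rightarrow(2)$. For $(2)\Rightarrow(1)$, note that $t_Y$ annihilates two classes of objects: the $f_Y$-local objects, since $\ihom(e_Y,-)$ vanishes on them (it is $\Hom$ out of an object of $\Loc_\otimes\langle\cat K_Y\rangle$ into a local object), and the objects of $\Loc_\otimes\langle\cat K_Y\rangle$ itself (for compact $k\in\cat K_Y$ one has $t_Y(k)\cong f_Y\otimes(\ihom(e_Y,\unit)\otimes k)$ with $\ihom(e_Y,\unit)\otimes k\in\Loc_\otimes\langle\cat K_Y\rangle$, hence killed by $f_Y\otimes-$; the general case follows since $f_Y\otimes\ihom(e_Y,\unit)\otimes-$ is exact, coproduct-preserving and a $\otimes$-ideal functor). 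When the localisation triangle splits, $x\cong(e_Y\otimes x)\oplus(f_Y\otimes x)$ is a sum of one object from each class, so $t_Y(x)=0$.

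Step~2, $(2)\Rightarrow(3)\Rightarrow(4)$: if the triangle splits, then $x\cong(e_Y\otimes x)\oplus(f_Y\otimes x)$ with both summands compact, and I set $Z_1:=\supp(e_Y\otimes x)$, $Z_2:=\supp(f_Y\otimes x)$, which are closed as supports of compact objects. Since $f_Y\otimes(e_Y\otimes x)=0$, the Neeman--Thomason localisation theorem places $e_Y\otimes x$ in $\cat K_Y$, so $Z_1\subseteq Y$; since $f_Y\otimes x$ is a compact $f_Y$-local object and the same theorem identifies $\Spec$ of the compacts of the Verdier quotient with $\Spec(\cat K)\setminus Y$, we get $Z_2\cap Y=\emptyset$. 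The tensor product formula gives $Z_1\cap Z_2=\supp(0)=\emptyset$ and $\supp(x)=Z_1\cup Z_2$, which is $(3)$. For $(3)\Rightarrow(4)$: then $\supp(x)\cap(\Spec(\cat K)\setminus Y)=Z_2$; the closed subspace $\supp(x)$ of the spectral space $\Spec(\cat K)$ is itself spectral, hence quasi-compact, so its complementary closed pieces $Z_1,Z_2$ are quasi-compact, whence $\Spec(\cat K)\setminus Z_2=(\Spec(\cat K)\setminus\supp(x))\cup Z_1$ is a union of two quasi-compact sets, hence quasi-compact; thus $Z_2$ is a closed set with quasi-compact complement, i.e.\ Thomason.

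Step~3, $(4)\Rightarrow(2)$: put $Y_1:=\supp(x)\cap Y$ and $Y_2:=\supp(x)\cap(\Spec(\cat K)\setminus Y)$. Both are Thomason ($\supp(x)$ is Thomason since $x$ is compact, $Y$ is Thomason by hypothesis, the intersection of two Thomason subsets is Thomason because $\supp(a)\cap\supp(b)=\supp(a\otimes b)$, and $Y_2$ is Thomason by $(4)$), they are disjoint, and their union is $\supp(x)$. As $x$ is compact with $\supp(x)\subseteq Y_1\sqcup Y_2$, we have $x\in\cat K_{Y_1\sqcup Y_2}$, so $x\cong e_{Y_1\sqcup Y_2}\otimes x$; and, $Y_1$ and $Y_2$ being disjoint, $e_{Y_1}\otimes e_{Y_2}\cong e_{\emptyset}=0$ and $e_{Y_1\sqcup Y_2}\cong e_{Y_1}\oplus e_{Y_2}$ (cf.\ \cite{BalmerFavi11}), whence $x\cong(e_{Y_1}\otimes x)\oplus(e_{Y_2}\otimes x)$ with both summands compact. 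Since $Y_1\subseteq Y$, the summand $e_{Y_1}\otimes x$ lies in $\Loc_\otimes\langle\cat K_Y\rangle$, so $f_Y\otimes(e_{Y_1}\otimes x)=0$ and $e_Y\otimes(e_{Y_1}\otimes x)\cong e_{Y_1}\otimes x$; and $e_Y\otimes(e_{Y_2}\otimes x)\cong e_{Y\cap Y_2}\otimes x=0$, so $f_Y\otimes(e_{Y_2}\otimes x)\cong e_{Y_2}\otimes x$. Applying $e_Y\otimes-$ and $f_Y\otimes-$ to the decomposition of $x$ therefore identifies the localisation triangle of $x$ with the split triangle $e_{Y_1}\otimes x\to(e_{Y_1}\otimes x)\oplus(e_{Y_2}\otimes x)\to e_{Y_2}\otimes x\xra{\,0\,}\Sigma e_{Y_1}\otimes x$, which is $(2)$. (One sees a posteriori that the pieces here have closed supports, so $(4)$ implies $(3)$ directly as well.)

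I expect the real obstacle to be Step~1: the Tate square and the norm equivalence underlying it are the genuine ``Tate-phenomenon'' input and are best imported wholesale from \cite{BalmerFavi11}. Everything after that is bookkeeping with supports and Thomason subsets; the one mild subtlety there --- reconciling the \emph{closed} sets demanded in $(3)$ with the \emph{Thomason} subsets in $(4)$ --- is handled by the quasi-compactness argument of Step~2 and the closing remark of Step~3.
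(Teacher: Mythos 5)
Your overall plan is sound, and most of it is correct: Steps 2 and 3 are fine, and your $(4)\Rightarrow(2)$ via Balmer--Favi idempotents ($e_{Y_1\sqcup Y_2}\simeq e_{Y_1}\oplus e_{Y_2}$ for disjoint Thomason sets) is a legitimate alternative to the paper's use of the generalized Carlson splitting theorem, while your $(1)\Rightarrow(2)$ via the Tate square replaces the paper's slicker observation that the kernel of $t_Y$ is thick and closed under tensoring with compact-rigids, so that $t_Y(x)=0$ forces $t_Y(Dx\otimes x)\simeq\ihom(f_Y\otimes x,\Sigma e_Y\otimes x)=0$. But there is a genuine error in your $(2)\Rightarrow(1)$: the claim that $t_Y$ annihilates all of $\Loc_\otimes\langle\cat K_Y\rangle$. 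Your argument only shows that the coproduct-preserving functor $f_Y\otimes\ihom(e_Y,\unit)\otimes-$ vanishes on that localizing ideal, and this functor agrees with $t_Y=f_Y\otimes\ihom(e_Y,-)$ on rigid objects only; $t_Y$ itself does not preserve coproducts (because $\ihom(e_Y,-)$ does not) and genuinely fails to vanish there. Indeed, applying $\ihom(e_Y,-)$ to the idempotent triangle and using $\ihom(e_Y,f_Y\otimes x)=0$ gives $\ihom(e_Y,e_Y\otimes x)\simeq\ihom(e_Y,x)$, hence $t_Y(e_Y\otimes x)\simeq t_Y(x)$ for every $x$; in particular $t_Y(e_Y)\simeq t_Y(\unit)$, which is nonzero already in $\SH(G)$ with $Y=\supp(G_+)$, where $e_Y=EG_+$ and $t_Y(\unit)$ is the Tate construction on the sphere. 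Worse, the object to which you apply the false claim is exactly $e_Y\otimes x$, for which the claim is literally equivalent to the conclusion $t_Y(x)=0$ you are trying to reach, so the step is circular as written.

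The repair is short: under hypothesis (2) the object $e_Y\otimes x$ is a direct summand of the compact object $x$, hence compact, and since it lies in $\Loc_\otimes\langle\cat K_Y\rangle$ it lies in $\cat K_Y$ by the Neeman--Thomason localization theorem ($\cat K_Y$ being thick); your correct computation $t_Y(k)\simeq f_Y\otimes\ihom(e_Y,\unit)\otimes k=0$ for compact $k\in\cat K_Y$ then finishes $(2)\Rightarrow(1)$. One further point to flag: the paper defines $t_Y:=\ihom(f_Y,\Sigma e_Y\otimes-)$, whereas you use $t_Y=f_Y\otimes\ihom(e_Y,-)$; these agree by the standard Warwick-duality identification (Greenlees), but since the proposition is stated with the former convention you should record that identification explicitly rather than leave it implicit.
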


\begin{proof}
	Recall that $t_Y : \cat T \to \cat T$ is defined by $t_Y := [f_Y,\Sigma e_Y \otimes -]$ where $[-,-]$ denotes the internal hom in $\cat T$ (see \cite[Definition~5.7]{BalmerSanders17} or \cite{Greenlees01}).

	(a) $\Rightarrow$ (b): The kernel of $t_Y : \cat T \to \cat T$ is a thick subcategory of $\cat T$ which is closed under tensoring with compact-rigid objects. Thus, $t_Y(x) = 0$ iff $t_Y(Dx \otimes x) = [f_Y \otimes x, \Sigma e_Y \otimes x] = 0$. This implies (b) since $\Hom_{\cat T}(a,b) \cong \Hom_{\cat T}(\unit,[a,b])$.

	(b) $\Rightarrow$ (c): If the exact triangle splits then $x \simeq (e_Y \otimes x) \oplus (f_Y \otimes x)$. In particular, the objects $e_Y \otimes x$ and $f_Y \otimes x$ are both contained in $\cat K$ (i.e.~are compact). Then defining $Z_1 := \supp(e_Y \otimes x)$ and $Z_2 := \supp(f_Y \otimes x)$, we have $\supp(x) = Z_1 \sqcup Z_2$, a disjoint union of closed sets. Finally, recall that $\Loc\langle \cat K_Y \rangle = e_Y \otimes \cat T$. Thus, for any $c \in \cat K$, $\supp(c) \subseteq Y$ is equivalent to $f_Y \otimes c=0$. Similarly, if $\supp(c) \cap Y = \emptyset$ then for any $d \in \cat K_Y$, $d \otimes c = 0$; hence $e_Y \otimes c=0$. Conversely, if $e_Y \otimes c=0$ then $d \otimes c \simeq d \otimes e_Y \otimes c =0$ for any $d \in \cat K_Y$. It follows that $\supp(c) \cap Y = \emptyset$ is equivalent to $e_Y \otimes c=0$. This proves (c) by considering $c:=e_Y \otimes x$ and $c:=f_Y \otimes x$.

	(c) $\Rightarrow$ (d): Observe that $\Spec(\cat K) \setminus Z_2 = Z_1 \cup (\Spec(\cat K)\setminus\supp(x))$ is a union of two quasi-compact subsets of $\Spec(\cat K)$, and hence is itself quasi-compact. The closed set $Z_2 = \supp(x) \cap (\Spec(\cat K)\setminus Y)$ thus has quasi-compact complement, and hence is a Thomason closed subset.  

	(d) $\Rightarrow$ (a): The hypothesis implies that
		\[
			\supp(x) = (\supp(x) \cap Y) \sqcup (\supp(x) \cap (\Spec(\cat K) \setminus Y))
		\]
		is a decomposition into disjoint Thomason sets. Then by the generalized Carlson theorem \cite[Theorem~2.11]{Balmer07} we have $x \simeq a\oplus b$ for two objects $a,b \in \cat K$ with $\supp(a) \subseteq Y$ and $\supp(b) \cap Y = \emptyset$. Then $t_Y(x) = t_Y(a) \oplus t_Y(b)$ vanishes since $f_Y \otimes Da =0$ and $e_Y \otimes b =0$.
\end{proof}

\begin{Prop}\label{prop:no-split-Cp}
	Let $e_G \to \unit \to f_G \to \Sigma e_G$ be the idempotent triangle in $\SHG$ associated to the trivial family of subgroups. (That is, $e_G = EG_+$ and $f_G = \widetilde{E}G$.) For $G=C_p$, this triangle does not split after tensoring with $\triv_G(\HFp)$; that is, the map
		\[
			\triv_G(\HFp) \otimes f_G \to \triv_G(\HFp) \otimes \Sigma e_G
		\]
	in $\SHG$ is not the zero map.
\end{Prop}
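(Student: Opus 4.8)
The plan is to argue by contradiction, leveraging the fact that the obstruction to splitting is an idempotent in a commutative ring that we can compute. Write $R := \triv_G(\HFp)$ for $G = C_p$, let $\delta\colon f_G \to \Sigma e_G$ be the connecting map of the idempotent triangle, and suppose toward a contradiction that $\id_R\otimes\delta = 0$ in $\SHG$. Base-change along the unit $\unit\to R$ turns $e_G\to\unit\to f_G\to\Sigma e_G$ into a distinguished triangle $R\otimes e_G\to R\to R\otimes f_G\to\Sigma R\otimes e_G$ of $R$-modules whose connecting map is the image of $\delta$; the vanishing of $\id_R\otimes\delta$ in $\SHG$ forces this connecting map to vanish already in $R\MMod$ (via the projection-formula identification $[R\otimes f_G,\Sigma R\otimes e_G]_{R\MMod}\cong[f_G,\Sigma R\otimes e_G]_{\SHG}$ and precomposition with the unit). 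Hence the triangle splits $R$-linearly, so $R\simeq(R\otimes e_G)\oplus(R\otimes f_G)$ as $R$-modules. Both summands are nonzero, since $\res^G_1(R\otimes e_G)\simeq\HFp\neq 0$ (as $E(C_p)$ is non-equivariantly contractible) and $\phigeomb{G}(R\otimes f_G)\simeq\HFp\neq 0$. This produces a nontrivial idempotent in the ring $\pi_0^{C_p}(R)=\End_{R\MMod}(R)$.

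It therefore suffices to show that $\pi_0^{C_p}(\triv_{C_p}\HFp)$ has no nontrivial idempotents, and for this I would identify it as a quotient of the local ring $\Fp[t]/(t^2)$. The unit $\Sphere\to\HFp$ fits into a cofiber sequence $\Sphere\to\HFp\to C$ in $\SH$ with $C$ being $1$-connective, because $\pi_0\Sphere\to\pi_0\HFp$ is surjective and $\pi_i=0$ for $i<0$ on both sides. Since $\triv_{C_p}$ preserves connectivity — visibly so from the tom Dieck splitting $(\triv_{C_p}C)^{C_p}\simeq C\vee(C\wedge\Sigma^\infty_+ BC_p)$ — the genuine fixed point spectrum $(\triv_{C_p}C)^{C_p}$ is again $1$-connective, so the long exact sequence of $\triv_{C_p}\Sphere\to\triv_{C_p}\HFp\to\triv_{C_p}C$ on genuine $C_p$-homotopy groups shows that the ring homomorphism $A(C_p)=\pi_0^{C_p}(\triv_{C_p}\Sphere)\to\pi_0^{C_p}(\triv_{C_p}\HFp)$ is surjective. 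The target is an $\Fp$-algebra (the tom Dieck splitting identifies it additively with $\Fp\oplus H_0(BC_p;\Fp)$, so $p=0$ there), so the map factors through a surjection $A(C_p)/p\twoheadrightarrow\pi_0^{C_p}(\triv_{C_p}\HFp)$. Since $A(C_p)=\bbZ[t]/(t^2-pt)$ with $t=[C_p/1]$, we have $A(C_p)/p\cong\Fp[t]/(t^2)$, which is local; hence its nonzero quotient $\pi_0^{C_p}(\triv_{C_p}\HFp)$ is a local ring and has no nontrivial idempotents, the desired contradiction.

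I expect the main obstacle to be the first step: the hypothetical splitting exists a priori only as a splitting of $C_p$-spectra, and an additive splitting of $C_p$-spectra does not by itself produce a ring decomposition of $\pi_0^{C_p}$. The crucial observation is that the connecting map is automatically $R$-linear, so that the obstruction genuinely lives in the commutative endomorphism ring $\pi_0^{C_p}(R)$, where the Burnside-ring computation applies. The remaining steps are a routine computation with the tom Dieck splitting together with the classical ring structure of the Burnside ring $A(C_p)$.
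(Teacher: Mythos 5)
Your proof is correct, but it reaches the contradiction by a different mechanism than the paper. The paper also assumes the triangle splits after tensoring with $\rmH:=\triv_{C_p}(\HFp)$, but then applies the zeroth homotopy Mackey functor: a splitting gives a section of $\piu_0(\rmH)\to\piu_0(\rmH\otimes f_G)$, where $\piu_0(\rmH)\cong\bbA\otimes\Fp$ has values $\Fp[X]/(X^2)$ at $C_p/C_p$ and $\Fp$ at $C_p/1$, while $\piu_0(\rmH\otimes f_G)$ has values $\Fp$ and $0$; compatibility with transfers forces $X\mapsto 0$, and compatibility with restrictions then yields the contradiction. You instead upgrade the splitting to an $R$-linear one (your adjunction-unit argument for why vanishing of the underlying connecting map kills the $R$-module connecting map is right), so the obstruction becomes an idempotent in $\End_{R\MMod}(R)\cong\pi_0^{C_p}(R)$, nontrivial because $\res^{C_p}_1(R\otimes e_G)\simeq\HFp$ and $\Phi^{C_p}(R\otimes f_G)\simeq\HFp$, and you rule it out because this ring is a nonzero quotient of the local ring $A(C_p)/p\cong\Fp[t]/(t^2)$. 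Your surjectivity-via-connectivity step is fine; just note that the tom Dieck splitting you invoke for $\triv_{C_p}C$ with $C$ not a suspension spectrum is the standard extension to arbitrary trivial-action spectra (alternatively, connectivity of $(\triv_{C_p}C)^{C_p}$ follows by cell induction from connectivity of $(\triv_{C_p}\Sphere)^{C_p}$, which is all you use). The paper's route is a bit more elementary --- no module categories or multiplicative structure on $\triv_{C_p}(\HFp)$ --- and exploits the two-level Mackey structure directly; yours isolates the obstruction as an idempotent in a single commutative ring and only needs that $\pi_0^{C_p}(\triv_{C_p}\HFp)$ is local, which makes the role of $\Fp[t]/(t^2)$ transparent and would adapt to other idempotent triangles and connective commutative algebras with local $\pi_0^G$.
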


\begin{proof}
	For notational simplicity, let $\rmH:=\triv_G(\HFp)$.  If the map $\rmH \to \rmH \otimes f_G$ has a section in $\SH(G)$, then the map of $G$-Mackey functors $\piu_0(\rmH) \to \piu_0(\rmH \otimes f_G)$ would have a section:
	\begin{equation}\label{eq:mackeysection}\begin{aligned}\begin{tikzcd}
		\piu_0(\rmH \otimes f_G) \ar[r,dashed,"\exists\sigma"] \ar[rr,bend right=20, "\id"'] & \piu_0(\rmH) \ar[r] & \piu_0(\rmH \otimes f_G).
	\end{tikzcd}\end{aligned}\end{equation}
	The Mackey functor $\piu_0(\rmH)$ can be identified with $\bbA \otimes \Fp$ where $\bbA$ denotes the Burnside ring $G$-Mackey functor $G/H \mapsto A(H)$. On the other hand, for $G=C_p$, the Mackey functor $\piu_0(\rmH \otimes f_G)$ satisfies 
	\[
		\piu_0(\rmH \otimes f_G)(G/G) = \pi_0^G(\rmH \otimes f_G) \cong \pi_0( \Phi^G(\rmH)) \cong \pi_0(\HFp) \cong \Fp
	\]
	and
	\[
		\piu_0(\rmH \otimes f_G)(G/\{1\}) = \pi_0^{\{1\}}(\rmH \otimes f_G)\cong \pi_0( \HFp \otimes \res^G_{\{1\}}(f_G)) = 0.
	\]
	Now $A(C_p)$ is the ring $\bbZ[X]/(X^2-pX)$ with restriction $A(C_p) \to A(\{1\})=\bbZ$ given by $X \mapsto p$ and with transfer $\bbZ =A(\{1\}) \to A(C_p)$ given by $1 \mapsto X$. A splitting~\eqref{eq:mackeysection} of $C_p$-Mackey functors would thus look like
	\[\begin{tikzcd}
		\Fp \ar[r,"\sigma"] \ar[d,shift right] & \Fp[X]/(X^2) \ar[d, shift right] \ar[r] & \Fp \ar[d, shift right] \\
		0 \ar[r] \ar[u, shift right] & \Fp \ar[u, shift right] \ar[r] & 0 \ar[u,shift right]
	\end{tikzcd}\]
	where the vertical maps represent restriction and transfer. Since the right-hand map commutes with transfers, it must map $X$ to $0$ in $\Fp$.  Hence, in order for the composite to be the identity, the left-hand map $\sigma$ must map $1 \in \Fp$ to an element of the form $1+mX \in \Fp[X]/(X^2)$.  Since the middle restriction map sends $X$ to $0 \in \Fp$, the element $1 +mX$ is mapped to $1 \in \Fp$.  On the other hand, since $\sigma$ commutes with restrictions, $1+mX$ must be mapped to $0 \in \Fp$. This is a contradiction.
\end{proof}

\begin{Prop}\label{prop:Cp}
	Consider $G=C_p$. Then $\cat P(1,p) \subseteq \cat P(C_p,p)$ in $\DHZCp$.
\end{Prop}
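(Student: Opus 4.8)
The plan is to apply Proposition~\ref{prop:idempotents-and-splitting} in the category $\cat T := \DHZCp$ (so $\cat K := \DHZCp^c$), taking for $Y$ the Thomason-closed subset $\supp(F_{C_p}(C_p/1_+))$ and for $x$ the compact object $\unit/p := \cone(\unit \xra{p} \unit)$. By property~\ref{it:finite-loc} (with $N = G = C_p$), the finite localization attached to $\cat K_Y$ is the geometric fixed point functor $\phigeomb{C_p}$, whose idempotent triangle $e_Y \to \unit \to f_Y \to \Sigma e_Y$ is the one discussed in Remark~\ref{rem:tensor-idempotents}. I would first show that condition~(b) of Proposition~\ref{prop:idempotents-and-splitting} fails for this $x$ — this is where Proposition~\ref{prop:no-split-Cp} enters — and then read off the claimed inclusion from the failure of condition~(c).

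For the non-splitting step, write $F := F_{C_p}$ and $U := U_{C_p}$. By \cite[Prop.~5.11]{BalmerSanders17} applied to the coproduct-preserving tensor triangulated functor $F$ (as in the proof of Lemma~\ref{lem:geom-commute}), the idempotent triangle $e_Y \to \unit \to f_Y \to \Sigma e_Y$ of $\DHZCp$ is the image under $F$ of the idempotent triangle $e_G \to \unit \to f_G \to \Sigma e_G$ of $\SHG$ for the trivial family, where $e_G = EG_+$ and $f_G = \widetilde{E}G$ as in Proposition~\ref{prop:no-split-Cp}. Since $F$ is exact and symmetric monoidal and $\unit/p = F(\triv_G(S/p))$ for $S/p := \cone(\Sphere \xra{p} \Sphere)$, the triangle
\[
	e_Y \otimes \unit/p \to \unit/p \to f_Y \otimes \unit/p \xra{\,\delta\,} \Sigma e_Y \otimes \unit/p
\]
in $\DHZCp$ is $F$ applied to the triangle $(e_G \to \unit \to f_G \to \Sigma e_G) \otimes \triv_G(S/p)$ in $\SHG$. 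Here I would use the projection formula in the form of a natural isomorphism $UF \cong U(\unit)\otimes(-)$: the canonical projection transformation $U(\unit)\otimes Z \to UF(Z)$ is invertible for $Z$ dualizable — hence, by rigid-compact generation of $\SHG$, for $Z$ compact — via the usual Yoneda argument using that $F$ preserves duals, and the full subcategory of $Z$ for which it is invertible is localizing, since both $UF$ and $U(\unit)\otimes(-)$ preserve coproducts ($UF$ because $F$ preserves compacts, property~\ref{it:DHZG}) and triangles; so it is all of $\SHG$. Combining this with $U(\unit)\cong\triv_G(\HZ)$ (property~\ref{it:trivHZ}) and $\triv_G(\HZ)\otimes\triv_G(S/p) = \triv_G(\HFp)$, applying the exact functor $U$ to the displayed triangle yields a distinguished triangle isomorphic to $(e_G \to \unit \to f_G \to \Sigma e_G)\otimes\triv_G(\HFp)$ in $\SHG$. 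By Proposition~\ref{prop:no-split-Cp} the connecting map of the latter is nonzero, so $U(\delta)\neq 0$ and hence $\delta\neq 0$: the displayed triangle does not split. Therefore condition~(b) of Proposition~\ref{prop:idempotents-and-splitting} fails for $x = \unit/p$, and with it condition~(c).

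To finish I would compute the support. Each $\phigeomb{H,C_p}$ is tensor triangulated, so $\phigeomb{H,C_p}(\unit/p) \cong \unit/p$, which under $\Der(\HZ) \cong \Der(\bbZ)$ is the complex $\Fp$ in degree zero; this lies in the prime ideal of $\Der(\HZ)^c$ over $\frakp \in \Spec\bbZ$ if and only if $\frakp \neq (p)$, so by the definition of $\cat P(H,\frakp)$ in~\ref{it:phiHG} we have $\unit/p \in \cat P(H,\frakp)$ iff $\frakp \neq (p)$. Hence, using Theorem~\ref{thm:spec-as-set}, $\supp(\unit/p) = \{\cat P(1,p), \cat P(C_p,p)\}$, a two-point set since $1 \not\sim_{C_p} C_p$; and by Proposition~\ref{prop:image-of-res} and Lemma~\ref{lem:res}, $Y = \{\cat P(1,\frakp) \mid \frakp \in \Spec\bbZ\}$, so $\cat P(1,p) \in Y$ while $\cat P(C_p,p) \notin Y$. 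Now suppose, for a contradiction, that $\cat P(1,p) \not\subseteq \cat P(C_p,p)$. We also have $\cat P(C_p,p) \not\subseteq \cat P(1,p)$, since by Lemma~\ref{lem:restrict-inclusions}(b) such an inclusion would force $C_p$ to be conjugate to a $p$-subnormal subgroup of the trivial group, which is absurd. Hence neither of the two points of $\supp(\unit/p)$ lies in the closure of the other, so each singleton $\{\cat P(1,p)\}$, $\{\cat P(C_p,p)\}$ is closed in $\supp(\unit/p)$, and therefore — $\supp(\unit/p)$ being closed in $\Spec(\cat K)$ — closed in $\Spec(\cat K)$. Taking $Z_1 := \{\cat P(1,p)\} \subseteq Y$ and $Z_2 := \{\cat P(C_p,p)\}$ (so $Z_2 \cap Y = \emptyset$) realizes $\supp(\unit/p) = Z_1 \sqcup Z_2$ as exactly the kind of decomposition forbidden by the failure of condition~(c) — a contradiction. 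Therefore $\cat P(1,p) \subseteq \cat P(C_p,p)$.

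The main obstacle is the non-splitting in the second paragraph: the entire argument hinges on transporting Proposition~\ref{prop:no-split-Cp} across the functor $F_{C_p}$, and the essential tool making that possible is the projection-formula identification $U_{C_p}F_{C_p} \cong \triv_G(\HZ)\otimes(-)$. By contrast, the support computation and the point-set topology of the two points of $\supp(\unit/p)$ that finish the proof are routine once Theorem~\ref{thm:spec-as-set} and Lemma~\ref{lem:restrict-inclusions} are available.
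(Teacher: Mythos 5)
Your proposal is correct and follows essentially the same route as the paper's proof: the object $\unit/p$ is exactly $F_{C_p}(\triv_{C_p}(M(p)))$, and the argument runs through the same ingredients — \cite[Prop.~5.11]{BalmerSanders17} to identify the idempotent triangle, Proposition~\ref{prop:idempotents-and-splitting} to translate the topology of the two-point support into a (non-)splitting statement, the projection formula $U_GF_G \cong \triv_G(\HZ)\otimes-$ to pass to $\SH(C_p)$, and Proposition~\ref{prop:no-split-Cp} as the key input. The only deviations (arguing the contrapositive, computing $\supp(\unit/p)$ directly via $\phigeomb{H}$ rather than via Corollary~\ref{cor:comp-with-SHG}, and proving rather than citing the projection formula) are cosmetic.
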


\begin{proof}
	Recall that if $F:\cat K \to \cat L$ is a tensor triangulated functor and $\varphi := \Spec(F) : \Spec(\cat L) \to \Spec(\cat K)$ is the induced map on spectra, then for any ${x \in \cat K}$, $\supp_{\cat L}(F(x)) = \varphi^{-1}(\supp_{\cat K}(x))$. Then consider the mod-$p$ Moore spectrum $M(p)$. Its support in $\Spec(\SH^c)$ is precisely $\overline{\{ \cat C_{p,2} \}}=\{\cat C_{p,n} \mid 2 \le n \le \infty\}$. We can then pass to the $G$-equivariant stable homotopy category by giving the mod-$p$ Moore spectrum a trivial $G$-action. By \cite[Cor.~4.6]{BalmerSanders17}, the support of $\triv_G(M(p))$ in $\SHG^c$ is $\SET{\cat P(H,p,n)}{H \le G,\, 2 \le n \le \infty} \subset \Spec(\SHGc)$. Finally, using $F_G:\SHGc \to \DHZG^c$, we can consider $F_G(\triv_G(M(p)))$ in $\DHZG^c$. By Corollary~\ref{cor:comp-with-SHG}, its support is $\SET{\cat P(H,p)}{H \le G}$. For $G=C_p$ this is precisely two points: 
		\[
			\supp(F_G(\triv_G(M(p))) = \{ \cat P(1,p), \cat P(C_p,p)\} \subset \Spec(\DHZG^c).
		\]
	We know from Lemma~\ref{lem:restrict-inclusions} that $\cat P(1,p)$ is a closed point and that $\cat P(G,p)$ is either a closed point or else $\overline{\{\cat P(G,p)\}} = \{\cat P(1,p),\cat P(G,p)\}$. We claim that the latter holds i.e.~that $\cat P(1,p)$ is contained in the closure of $\cat P(G,p)$. To this end, let
		\[
			e_G \to \unit \to f_G \to \Sigma e_G
		\]
	be the idempotent triangle in $\SH(G)$ associated to the Thomason closed subset $\supp(G_+) = \SET{ \cat P_G(1,p,n)}{\text{all } p, n}$ (that is, all the primes for the trivial subgroup). By \cite[Prop.~5.11]{BalmerSanders17}, $F_G(e_G) \to \unit \to F_G(f_G) \to \Sigma F_G(e_G)$ is the idempotent triangle in $\DHZG$ associated to $Y:=\supp(F_G(G_+)) = \SET{\cat P(1,\frakp)}{\text{ all } \frakp}$. Note that if $\cat P(1,p) \not\subseteq \cat P(G,p)$ then
		\[
			\supp(F_G(\triv_G(M(p)))) =\{\cat P(1,p)\}\sqcup \{\cat P(G,p)\} = \overline{\{\cat P(1,p)\}} \sqcup \overline{\{\cat P(G,p)\}}
		\]
	is a disjoint union of closed sets $Z_1\sqcup Z_2$ with $Z_1\subseteq Y$ and $Z_1 \cap Y = \emptyset$. Invoking Proposition~\ref{prop:idempotents-and-splitting} and letting $Z:=\supp(F_G(\triv_G(M(p))))=\{\cat P(1,p),\cat P(G,p)\}$, we see that $\cat P(1,p) \subseteq \cat P(G,p)$ if and only if $Z=\overline{\{\cat P(G,p)\}}$ if and only if $Z$ is irreducible if and only if $Z$ is connected if and only if the idempotent triangle
		\[
			F_G(e_G) \to \unit \to F_G(f_G) \to \Sigma F_G(e_G)
		\]
	does not split after tensoring with $F_G(\triv_G(M(p)))$. Suppose for a contradiction that it did split. Then passing back to $\SHG$ by applying $U_G$ and using that $U_G F_G = \triv_G(\HZ) \otimes - $ by \ref{it:trivHZ} and the projection formula \cite[Prop.~2.15]{BalmerDellAmbrogioSanders16}, it would follow that the sequence $e_G \to \unit \to f_G \to \Sigma e_G$ splits after tensoring with $\triv_G(\HZ) \otimes \triv_G(M(p)) \simeq \triv_G(\HZ \otimes M(p)) \simeq \triv_G(\HFp)$ which contradicts Proposition~\ref{prop:no-split-Cp}.
\end{proof}

\begin{Cor}\label{cor:p-group}
	Let $K$ be a subgroup of a finite $p$-group $G$. Then $\cat P_G(K,p) \subseteq \cat P_G(G,p)$ in $\DHZG$.
\end{Cor}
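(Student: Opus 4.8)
The plan is to reduce the statement to the case $G = C_p$, which is Proposition~\ref{prop:Cp}, using restriction and passage to quotients.

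The starting point is a purely group-theoretic observation: every subgroup $K$ of a finite $p$-group $G$ is $p$-subnormal, that is, there is a chain $K = K_0 \lnormal K_1 \lnormal \cdots \lnormal K_n = G$ with $[K_{i+1}:K_i] = p$ for all $i$ (this follows from the normalizer condition in nilpotent groups; see \cite[Section~3]{BalmerSanders17}). Since the map on spectra induced by a tensor triangulated functor preserves inclusions, in order to obtain $\cat P_G(K,p) \subseteq \cat P_G(G,p)$ it suffices to prove each successive inclusion $\cat P_G(K_i,p) \subseteq \cat P_G(K_{i+1},p)$ and then compose along the chain. So I would fix an index $i$ and set $H := K_{i+1}$ and $L := K_i$, so that $L \lnormal H$ and $H/L$, being a group of order $p$, is isomorphic to $C_p$.

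Next I would remove the ambient group $G$ via restriction. By Lemma~\ref{lem:res}, the continuous map $\Spec(\res_H^G) : \Spec(\DHZH^c) \to \Spec(\DHZG^c)$ sends $\cat P_H(L,p)$ to $\cat P_G(L,p)$ and $\cat P_H(H,p)$ to $\cat P_G(H,p)$; as it preserves inclusions, it is enough to establish $\cat P_H(L,p) \subseteq \cat P_H(H,p)$ in $\DHZH^c$. Applying Corollary~\ref{cor:quotient} with the group $H$, the normal subgroup $L \lnormal H$ and the subgroup $L$ itself (so that $L/L$ is trivial), this inclusion is equivalent to $\cat P_{H/L}(1,p) \subseteq \cat P_{H/L}(H/L,p)$, that is, to $\cat P_{C_p}(1,p) \subseteq \cat P_{C_p}(C_p,p)$ --- which is exactly Proposition~\ref{prop:Cp}. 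This completes the argument.

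I do not expect any genuine obstacle here, since the real content has already been packaged into Propositions~\ref{prop:no-split-Cp} and~\ref{prop:Cp}; what remains is formal. The only points deserving mild care are the group theory (that $K$ sits in $G$ via a chain of index-$p$ subnormal inclusions) and verifying that the hypotheses of Lemma~\ref{lem:res} and Corollary~\ref{cor:quotient} are literally met at each link of the chain, in particular that each subquotient $K_{i+1}/K_i$ really is $C_p$. Equivalently, one could run this as an induction on $|G|$: pick a central subgroup $N \le G$ of order $p$; if $N \le K$, pass to $G/N$ and combine the inductive hypothesis with Corollary~\ref{cor:quotient}; if $N \not\le K$, factor $\cat P_G(K,p) \subseteq \cat P_G(KN,p) \subseteq \cat P_G(G,p)$, handling the first inclusion by restricting to $KN$, where $K$ has become a subgroup of index $p$, and the second by the inductive hypothesis applied to $G/N$.
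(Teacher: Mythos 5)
Your proof is correct and follows essentially the same route as the paper: choose a subnormal chain $K = K_0 \lnormal_p \cdots \lnormal_p K_t = G$ with subquotients of order $p$, handle each link by combining Proposition~\ref{prop:Cp}, Corollary~\ref{cor:quotient} and Lemma~\ref{lem:res}, and compose the resulting inclusions. The only (immaterial) difference is the order in which you apply Lemma~\ref{lem:res} and Corollary~\ref{cor:quotient} at each link.
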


\begin{proof}
	As $G$ is a $p$-group, there is a subnormal tower ${K = K_0 \lnormal_p \cdots \lnormal_p K_t = G}$ where each subquotient has order $p$.  By Proposition~\ref{prop:Cp}, $\cat P_{K_i/K_{i-1}}(1,p)$ is contained in $\cat P_{K_i/K_{i-1}}(K_i/K_{i-1},p)$ for all $i=1,\ldots,t$.  Hence by Corollary~\ref{cor:quotient}, $\cat P_{K_i}(K_{i-1},p) \subseteq \cat P_{K_i}(K_i,p)$ for all $i=1,\ldots,t$. Hence $\cat P_G(K_{i-1},p) \subseteq \cat P_G(K_i,p)$ for all $i=1,\ldots,t$ by Lemma~\ref{lem:res}, so that $\cat P_G(K,p)\subseteq \cat P_G(G,p)$.
\end{proof}

\begin{Prop}\label{prop:K-in-G}
	If $K$ is a $p$-subnormal subgroup of a finite group $G$, then $\cat P_G(K,p) \subseteq \cat P_G(G,p)$ in $\DHZG$.
\end{Prop}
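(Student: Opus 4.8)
The plan is to argue by induction along a subnormal tower witnessing the $p$-subnormality of $K$, in exactly the same way as the proof of Corollary~\ref{cor:p-group}. The only difference is that there the tower came from $G$ being a $p$-group, whereas here such a tower is supplied directly by the hypothesis; nothing in that argument actually used $p$-group-ness beyond the existence of such a tower. Concretely, fix a subnormal chain
\[
	K = K_0 \lnormal_p K_1 \lnormal_p \cdots \lnormal_p K_t = G
\]
with $K_{i-1}\lenormal K_i$ and $|K_i/K_{i-1}| = p$ for $i = 1,\dots,t$.

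First, for each link of the chain, observe that $K_i/K_{i-1}$ is a group of order $p$, hence isomorphic to $C_p$, so Proposition~\ref{prop:Cp} gives $\cat P_{K_i/K_{i-1}}(1,p)\subseteq\cat P_{K_i/K_{i-1}}(K_i/K_{i-1},p)$. Next, apply Corollary~\ref{cor:quotient} with ambient group $K_i$, normal subgroup $N := K_{i-1}\lenormal K_i$, and with the subgroup ``$K$'' of that corollary taken to be $K_{i-1}$ itself (so that the hypothesis $N\le K\le G$ reads $K_{i-1}\le K_{i-1}\le K_i$): since $K_{i-1}/K_{i-1} = 1$ and $K_i/K_{i-1}\cong C_p$, this upgrades the previous inclusion to $\cat P_{K_i}(K_{i-1},p)\subseteq\cat P_{K_i}(K_i,p)$ in $\Der(\HZ_{K_i})^c$. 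Finally, push forward along the inclusion $K_i\le G$: the map $\Spec(\res_{K_i}^G)$ is continuous and hence preserves inclusions of primes, and by Lemma~\ref{lem:res} it sends $\cat P_{K_i}(K_{i-1},p)$ to $\cat P_G(K_{i-1},p)$ and $\cat P_{K_i}(K_i,p)$ to $\cat P_G(K_i,p)$. Thus $\cat P_G(K_{i-1},p)\subseteq\cat P_G(K_i,p)$ for every $i$, and concatenating these inclusions yields
\[
	\cat P_G(K,p) = \cat P_G(K_0,p)\subseteq\cat P_G(K_1,p)\subseteq\cdots\subseteq\cat P_G(K_t,p) = \cat P_G(G,p),
\]
which is the desired conclusion.

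I do not expect a genuine obstacle here: all three inputs --- Proposition~\ref{prop:Cp}, Corollary~\ref{cor:quotient}, and Lemma~\ref{lem:res} --- are already established, and none of them requires $G$ or the intermediate groups $K_i$ to be $p$-groups. The only point that needs a moment's care is the bookkeeping in the middle step: Corollary~\ref{cor:quotient} must be invoked with the \emph{ambient} group taken to be $K_i$ rather than $G$ and with $N = K_{i-1}$, which is legitimate precisely because $K_{i-1}$ is normal in $K_i$ even though it need not be normal in $G$. (Note also that this proposition, together with the standard fact that every subgroup of a $p$-group is $p$-subnormal, recovers Corollary~\ref{cor:p-group} as a special case.)
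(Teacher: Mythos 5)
Your proof is correct. Each step checks out: Corollary~\ref{cor:quotient} is invoked with ambient group $K_i$ and $N=K_{i-1}\lenormal K_i$, which is exactly the normality the tower supplies (and, as you note, all that is needed --- $K_{i-1}$ need not be normal in $G$); Proposition~\ref{prop:Cp} handles the order-$p$ quotient (the same implicit identification of $K_i/K_{i-1}$ with $C_p$ that the paper itself makes); and Lemma~\ref{lem:res}, together with the fact that $\Spec(\res_{K_i}^G)$ preserves inclusions of primes (it is $\cat Q\mapsto(\res_{K_i}^G)^{-1}(\cat Q)$), transports each link into $\Spec(\DHZG^c)$. Your route is, however, not the one the paper takes for this proposition. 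The paper proves the $p$-group case (Corollary~\ref{cor:p-group}) by precisely the tower induction you run, and then deduces the general statement by a different reduction: $p$-subnormality forces $\Op(G)\subseteq K$ (citing \cite[Lem.~3.3]{BalmerSanders17}), so one application of Corollary~\ref{cor:quotient} with $N=\Op(G)$ reduces to the $p$-group $G/\Op(G)$, where Corollary~\ref{cor:p-group} applies. You instead induct directly along the tower given by the hypothesis, which avoids the group-theoretic input about $\Op(G)$ altogether and, as you observe, makes Corollary~\ref{cor:p-group} a special case (every subgroup of a $p$-group being $p$-subnormal) rather than an ingredient. The paper's version buys a one-line deduction once the $p$-group corollary is in hand; yours is slightly more self-contained and makes explicit that the $p$-group hypothesis was never essential --- only the existence of the tower is.
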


\begin{proof}
	The fact that $K$ is $p$-subnormal in $G$ implies that $\Op(G) \subseteq K$ (see \cite[Lem.~3.3]{BalmerSanders17}). By Corollary~\ref{cor:quotient}, $\cat P_G(K,p) \subseteq \cat P(G,p)$ if and only if 
		\[
			\cat P_{G/\Op(G)}(K/\Op(G),p) \subseteq \cat P_{G/\Op(G)}(G/\Op(G),p).
		\]
	As $G/\Op(G)$ is a $p$-group, the claim follows from Corollary~\ref{cor:p-group}.
\end{proof}

\begin{Prop}\label{prop:psubnormal-inclusion}
	Let $K,H$ be subgroups of a finite group $G$. If $K$ is $G$-conjugate to a $p$-subnormal subgroup of $H$ then $\cat P_G(K,p) \subseteq \cat P_G(H,p)$ in $\DHZG$.
\end{Prop}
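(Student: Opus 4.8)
The plan is to bootstrap from the already-established Proposition~\ref{prop:K-in-G}, which treats the case where the ambient group is $H$ itself and $K$ is $p$-subnormal in $H$, and then transport the resulting inclusion of primes along the restriction functor using Lemma~\ref{lem:res}.

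First I would dispose of the conjugacy. By hypothesis there is $g\in G$ such that $K^g$ is a $p$-subnormal subgroup of $H$ (equivalently, $K$ is $G$-conjugate to such a subgroup); pick such a representative. Since $\cat P_G(K,p)=\cat P_G(K^g,p)$ by the conjugacy-invariance recorded in~\ref{it:conjugacy}, it suffices to treat the case where $K$ itself is a $p$-subnormal subgroup of $H$, so in particular $K\le H\le G$.

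Next I would apply Proposition~\ref{prop:K-in-G} with the group $H$ playing the role of the ambient group: since $K$ is $p$-subnormal in $H$, we obtain the inclusion $\cat P_H(K,p)\subseteq \cat P_H(H,p)$ in $\DHZH^c$. Finally, I would push this inclusion forward along the continuous map $\Spec(\res_H^G):\Spec(\DHZH^c)\to \Spec(\DHZG^c)$. By Lemma~\ref{lem:res} (used once with the subgroup $K$ and once with $H$ in the role of ``$K$''), this map sends $\cat P_H(K,p)$ to $\cat P_G(K,p)$ and $\cat P_H(H,p)$ to $\cat P_G(H,p)$; since a map of spectra induced by a tensor triangulated functor preserves inclusions of primes, the inclusion $\cat P_H(K,p)\subseteq \cat P_H(H,p)$ yields $\cat P_G(K,p)\subseteq \cat P_G(H,p)$, as desired.

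There is essentially no hard step: all the content has been front-loaded into Proposition~\ref{prop:K-in-G} (which itself rested on the $C_p$ computation of Proposition~\ref{prop:Cp} and the quotient/restriction reductions). The only point requiring care is the bookkeeping of which group plays which role when invoking Proposition~\ref{prop:K-in-G} and Lemma~\ref{lem:res}.
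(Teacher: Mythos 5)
Your proposal is correct and matches the paper's own proof: the paper likewise replaces $K$ by a $G$-conjugate $K'$ that is $p$-subnormal in $H$ (using the conjugacy-invariance of the primes), applies Proposition~\ref{prop:K-in-G} inside $H$ to get $\cat P_H(K',p)\subseteq\cat P_H(H,p)$, and transports this along $\Spec(\res_H^G)$ via Lemma~\ref{lem:res}. No gaps; the bookkeeping you flag is exactly the only point of care.
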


\begin{proof}
	By assumption $K \sim_G K'$ where $K' \le H$ is $p$-subnormal. By Proposition~\ref{prop:K-in-G}, $\cat P_H(K',p) \subseteq \cat P_H(H,p)$ in $\DHZH$. It then follows from Lemma~\ref{lem:res} that $\cat P_G(K,p) = \cat P_G(K',p) \subseteq \cat P_G(H,p)$ in $\DHZG$.
\end{proof}

\begin{Thm}\label{thm:topology}
	Let $G$ be a finite group, let $K,H \le G$ be subgroups and let $\frakp, \frakq \in \Spec \bbZ$. Then $\cat P_G(K,\frakp) \subseteq \cat P_G(H,\frakq)$ if and only if either
	\begin{enumerate}
		\item $\frakp = (p)$, $K$ is $G$-conjugate to a $p$-subnormal subgroup of $H$, and $\frakq = (p)$ or $(0)$; or
		\item $\frakp = (0)$, $\frakq=(0)$ and $K \sim_G H$ (in which case the primes are equal).
	\end{enumerate}
\end{Thm}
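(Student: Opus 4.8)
The plan is to establish the ``only if'' and ``if'' directions separately; essentially all of the substantive content has already been isolated in the preceding lemmas and propositions, so the proof of the theorem will amount to assembling them and matching up cases.

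For the forward implication I would invoke Lemma~\ref{lem:restrict-inclusions} directly. Suppose $\cat P_G(K,\frakp) \subseteq \cat P_G(H,\frakq)$. If $\frakp=(0)$, part~(a) of that lemma forces $\frakq=(0)$ and $K \sim_G H$, which is precisely alternative~(b) of the theorem; if $\frakp=(p)$, part~(b) forces $K$ to be $G$-conjugate to a $p$-subnormal subgroup of $H$ and $\frakq=(p)$ or $(0)$, which is precisely alternative~(a). (One uses Theorem~\ref{thm:spec-as-set} to know that every prime of $\DHZG^c$ has the form $\cat P_G(H,\frakp)$, so that these really are all the cases, and that the two alternatives are mutually exclusive up to the equality occurring in~(b).)

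For the reverse implication I would split into the two alternatives. In case~(b), $K \sim_G H$ gives $\cat P_G(K,0) = \cat P_G(H,0)$ by the uniqueness clause of Theorem~\ref{thm:spec-as-set}, so the inclusion (in fact an equality) is immediate. In case~(a) with $\frakq=(p)$, Proposition~\ref{prop:psubnormal-inclusion} immediately yields $\cat P_G(K,p) \subseteq \cat P_G(H,p)$. In case~(a) with $\frakq=(0)$, I would compose this with the inclusion $\cat P_G(H,p) \subseteq \cat P_G(H,0)$ recorded in Remark~\ref{rem:top-strat} to obtain $\cat P_G(K,p) \subseteq \cat P_G(H,0)$.

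I do not anticipate any genuine obstacle in this particular statement. The only substantive geometric input --- the inclusion $\cat P(1,p) \subseteq \cat P(C_p,p)$ for $G=C_p$, which ultimately rests on the non-splitting of the isotropy separation triangle after tensoring with $\triv_{C_p}(\HFp)$ (Proposition~\ref{prop:no-split-Cp}) --- has already been absorbed into Proposition~\ref{prop:Cp}, and the passage from $C_p$ to a $p$-subnormal subgroup of an arbitrary $G$ into Proposition~\ref{prop:psubnormal-inclusion}. The only care needed here is bookkeeping: verifying that the case distinctions in Lemma~\ref{lem:restrict-inclusions} and in the theorem statement align exactly, and that no inclusions other than those listed can occur.
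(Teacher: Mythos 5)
Your proposal is correct and follows exactly the paper's argument: the forward direction is Lemma~\ref{lem:restrict-inclusions}, and the reverse direction combines Proposition~\ref{prop:psubnormal-inclusion} with the inclusion $\cat P_G(H,p)\subseteq\cat P_G(H,0)$ from Remark~\ref{rem:top-strat} (plus the uniqueness clause of Theorem~\ref{thm:spec-as-set} for the case $\frakp=(0)$). The paper's proof is precisely this assembly, so there is nothing to add.
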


\begin{proof}
	This follows from Lemma~\ref{lem:restrict-inclusions}, Remark~\ref{rem:top-strat} and Proposition~\ref{prop:psubnormal-inclusion}.
\end{proof}

\begin{Rem}\label{rem:irreducibles}
	The irreducible closed subsets of $\Spec(\DHZG^c)$ can thus be completely described as follows:
	\begin{itemize}
		\item $\overline{\{ \cat P(H,p) \}} = \SET{\cat P(K,p)}{K \text{ a $p$-subnormal subgroup of } H}$; and
		\item $\overline{\{ \cat P(H,0) \}} = \{\cat P(H,0) \} \cup \bigcup_{p \text{ prime}} \overline{\{ \cat P(H,p)\}}$.
	\end{itemize}
	These irreducible closed subsets completely determine the topology:
\end{Rem}

\begin{Prop}\label{prop:noetherian-space}
	The space $\Spec(\DHZG^c)$ is noetherian. Consequently, the closed subsets are precisely the finite unions of irreducible closed sets (equivalently, the closures of finite subsets). Moreover, the Thomason subsets are just the specialization closed subsets, that is, arbitrary unions of closed sets.
\end{Prop}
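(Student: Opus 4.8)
The plan is to reduce everything to the set-level description provided by Theorem~\ref{thm:spec-as-set} and then invoke standard point-set topology. Write $X:=\Spec(\DHZG^c)$. For the first assertion, observe that by \ref{it:phiHG} each subset $S_H := \SET{\cat P_G(H,\frakp)}{\frakp\in\Spec\bbZ}$ is, by its very definition, the image of the continuous map $\Spec(\phigeomb{H,G}):\Spec(\DHZ^c)\to X$. Since $\Spec(\DHZ^c)\cong\Spec(\bbZ)$ by \ref{it:trivial} is a noetherian space, and since the image of a noetherian space under a continuous map, equipped with the subspace topology, is again noetherian, each $S_H$ is a noetherian subspace of $X$. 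By Theorem~\ref{thm:spec-as-set} (together with \ref{it:conjugacy}), $X = \bigcup_{[H]} S_H$ is a \emph{finite} union of such subspaces, indexed by the conjugacy classes of subgroups of $G$. A space that is a finite union of noetherian subspaces is noetherian: an arbitrary open $U\subseteq X$ meets each $S_H$ in an open subset of $S_H$, which is quasi-compact, so $U=\bigcup_{[H]}(U\cap S_H)$ is a finite union of quasi-compact subsets and hence quasi-compact; thus every open subset of $X$ is quasi-compact, which is equivalent to noetherianity.

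The two remaining clauses are then formal. In any noetherian space, noetherian induction shows that every closed subset is a finite union of irreducible closed subsets. Moreover $X=\Spec(\DHZG^c)$ is sober (as is any Balmer spectrum), so every irreducible closed subset has a unique generic point and is therefore of the form $\overline{\{\cat P\}}$ for a unique prime $\cat P\in X$ (these closures are listed explicitly in Remark~\ref{rem:irreducibles}). Hence a finite union $\overline{\{\cat P_1\}}\cup\cdots\cup\overline{\{\cat P_n\}}$ of irreducible closed subsets is exactly the closure $\overline{\{\cat P_1,\dots,\cat P_n\}}$ of a finite subset, and these coincide with all the closed subsets of $X$.

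Finally, recall that a Thomason closed subset is a closed subset with quasi-compact complement, a Thomason subset is an arbitrary union of Thomason closed subsets, and a specialization-closed subset is an arbitrary union of closed subsets (equivalently, of closures of points). Since $X$ is noetherian, every subset of $X$ is quasi-compact, so every closed subset is Thomason closed; hence every specialization-closed subset, being a union of closed subsets, is a union of Thomason closed subsets and so is Thomason. The converse inclusion holds by definition, so the Thomason subsets of $X$ are precisely the specialization-closed subsets.

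I do not anticipate a genuine obstacle: the only substantial input is Theorem~\ref{thm:spec-as-set}, which already packages the finite group into a finite union of copies of $\Spec\bbZ$, and everything else is general topology together with the standard facts that $\Der(\bbZ)^c$ has spectrum $\Spec\bbZ$ and that Balmer spectra are sober. The only points requiring a little care are checking that a continuous image of a noetherian space is noetherian in the subspace topology and that a finite union of noetherian subspaces is noetherian.
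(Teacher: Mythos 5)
Your proposal is correct and follows essentially the same route as the paper: cover $\Spec(\DHZG^c)$ by the images of the maps $\Spec(\phigeomb{H,G})$ from $\Spec(\DHZ^c)\cong\Spec(\bbZ)$, use that continuous images and finite unions of noetherian subspaces are noetherian, and then deduce the statements about closed and Thomason subsets from standard facts about noetherian (sober) spaces. The paper simply cites the standard references for these point-set facts, whereas you spell them out.
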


\begin{proof}
	By Theorem \ref{thm:spec-as-set}, the space $\Spec(\DHZG^c)$ is covered by the images of the continuous maps $\Spec(\phigeomb{H,G}):\Spec(\DHZ^c)\to\Spec(\DHZG^c)$ for $H \le G$.  The claim that $\Spec(\DHZG^c)$ is noetherian follows from the fact that $\Spec(\Der(\HZ)^c)\cong\Spec(\bbZ)$ is noetherian, that a continuous image of a noetherian space is noetherian, and that any space covered by finitely many noetherian subspaces is noetherian.  For the second claim just note that every subspace of a noetherian space is noetherian and that a noetherian space has finitely many irreducible components.  The description of the Thomason subsets is immediate from the definition as every subspace of a noetherian space is quasi-compact (cf.~\cite[Remark 12]{BalmerICM}).  All of this is standard: see \cite[\href{https://stacks.math.columbia.edu/tag/0050}{Section 0050}]{stacks-project} or \cite[\S 2.2]{EGA1}.
\end{proof}

\begin{Rem}\label{rem:complete-determination}
	Theorem~\ref{thm:spec-as-set} and Theorem~\ref{thm:topology} together with Proposition~\ref{prop:noetherian-space} thus provide a complete description of the topological space $\Spec(\DHZG^c)$ for any finite group $G$. We now explain the precise relationship, alluded to in the introduction, between the three spaces: $\Spec(\DHZG^c)$, $\Spec(\SH(G)^c)$, and $\Spec(A(G))$. It may be helpful to refer to the examples depicted on pages~\pageref{fig:SHCp} and \pageref{fig:ACp} in the introduction.
\end{Rem}

\begin{Cor}\label{cor:comparison-with-SHG}
	For any finite group $G$, the map
		\[
			\Spec(F_G):\Spec(\DHZG^c) \hookrightarrow \Spec(\SH(G)^c)
		\]
	is a homeomorphism of $\Spec(\DHZG^c)$ onto its image, which is the subspace of $\Spec(\SH(G)^c)$ consisting of the chromatic height 0 and chromatic height $\infty$ points.
\end{Cor}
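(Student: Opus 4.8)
The plan is to deduce the statement by assembling facts already established: the set-level computation of $\Spec(\DHZG^c)$ and its topology (Theorem~\ref{thm:spec-as-set}, Theorem~\ref{thm:topology}, Proposition~\ref{prop:noetherian-space}), the explicit formula for $\Spec(F_G)$ on points (Corollary~\ref{cor:comp-with-SHG}), and the corresponding results for $\Spec(\SH(G)^c)$ from \cite{BalmerSanders17}. Only the \emph{restriction} results on inclusions among the primes of $\SH(G)^c$ will be needed, not the full determination of its topology, so no ``blueshift'' input is required.

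First I would identify the image. By Theorem~\ref{thm:spec-as-set} every prime of $\DHZG^c$ has the form $\cat P(H,\frakp)$ with $H\le G$ and $\frakp\in\Spec\bbZ$, and Corollary~\ref{cor:comp-with-SHG} gives $\Spec(F_G)(\cat P(H,\frakp))=\cat P(H,p,\infty)$ when $\frakp=(p)$ and $=\cat P(H,0,1)$ when $\frakp=(0)$. Thus the image is $\SET{\cat P(H,p,\infty)}{H\le G,\ p\text{ prime}}\cup\SET{\cat P(H,0,1)}{H\le G}$, which by the classification of the primes of $\SH(G)^c$ is exactly the set of chromatic height $\infty$ points together with the chromatic height $0$ points. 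Injectivity follows: if $\Spec(F_G)(\cat P(H,\frakp))=\Spec(F_G)(\cat P(K,\frakq))$, then the two chromatic labels ($(p,\infty)$ versus $(0,1)$) are mutually distinguishable in $\SH(G)^c$ by \cite[Theorem~4.14]{BalmerSanders17}, forcing $\frakp=\frakq$ and $H\sim_G K$, hence $\cat P(H,\frakp)=\cat P(K,\frakq)$ by the uniqueness clause of Theorem~\ref{thm:spec-as-set}. So $\Spec(F_G)$ is a continuous injection onto the asserted subspace~$Z$.

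The main step is to promote this to a homeomorphism onto $Z$. Since $\Spec(\DHZG^c)$ is noetherian (Proposition~\ref{prop:noetherian-space}), every closed subset is a finite union of closures $\overline{\{x\}}$ of points, so it suffices to show that $\Spec(F_G)$ is a closed map onto $Z$, that is, $\Spec(F_G)(\overline{\{x\}})=\overline{\{\Spec(F_G)(x)\}}\cap Z$ for every $x$. The inclusion ``$\subseteq$'' is just continuity of $\Spec(F_G)$. For ``$\supseteq$'' the content is that $\Spec(F_G)$ \emph{reflects specialization}: if $\Spec(F_G)(y)\subseteq\Spec(F_G)(x)$ then $y\subseteq x$. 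Writing $x=\cat P(H,\frakq)$ and $y=\cat P(K,\frakp)$ and substituting the values from Corollary~\ref{cor:comp-with-SHG}, the hypothesis becomes an inclusion among the primes $\cat P(H',p',n')$ of $\SH(G)^c$ with $n'\in\{1,\infty\}$; feeding this into the restriction results of \cite[Prop.~6.9]{BalmerSanders17} --- via the comparison map $\rho\colon\Spec(\SH(G)^c)\to\Spec(A(G))$ and the known structure of the latter, exactly as in the proof of Lemma~\ref{lem:restrict-inclusions} --- yields precisely the combinatorial hypotheses of Theorem~\ref{thm:topology}: in the torsion case, $\frakp=(p)$, the subgroup $K$ is $G$-conjugate to a $p$-subnormal subgroup of $H$, and $\frakq\in\{(p),(0)\}$; in the rational case, $\frakp=\frakq=(0)$ and $K\sim_G H$. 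Theorem~\ref{thm:topology} then delivers $\cat P(K,\frakp)\subseteq\cat P(H,\frakq)$ in $\DHZG^c$, which is the required reflection. Hence $\Spec(F_G)$ is a closed continuous bijection onto $Z$, so a homeomorphism, and $Z$ is the chromatic height $0$ and chromatic height $\infty$ locus by the first paragraph.

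I expect the only real work to be bookkeeping: matching the cases $\frakp,\frakq\in\{(0),(p)\}$ of Theorem~\ref{thm:topology} against the corresponding cases of the Balmer--Sanders restriction result, and checking that $\Spec(F_G)$ loses no information --- for instance that an inclusion $\cat P(K,p,\infty)\subseteq\cat P(H,q,\infty)$ forces $p=q$, which one reads off from $\rho$ using that $\frakp(H,p)\cap\bbZ=(p)$. There is no conceptual obstacle, as the computation of the spectrum, the verification of the $p$-subnormal inclusions in $\DHZG^c$, and the restriction results for $\SH(G)^c$ are all already in place.
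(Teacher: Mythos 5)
Your argument is correct, and it follows the same basic strategy as the paper---compare the computed description of $\Spec(\DHZG^c)$ with the known structure of $\Spec(\SHG^c)$ via Corollary~\ref{cor:comp-with-SHG}---but with one genuine difference in what it needs from \cite{BalmerSanders17}. The paper's proof invokes, besides Theorem~\ref{thm:spec-as-set}, Theorem~\ref{thm:topology} and the restriction results \cite[Thm.~4.14, Prop.~6.9]{BalmerSanders17}, also \cite[Cor.~8.4]{BalmerSanders17}, i.e.\ the determination of which inclusions actually \emph{hold} among the height-$\infty$ (and height-$0$) primes of $\SHG^c$; with that in hand, the point-wise identification of Corollary~\ref{cor:comp-with-SHG} matches the specialization relations on both sides and the homeomorphism follows. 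You instead only use the \emph{necessity} constraints on inclusions in $\SHG^c$ (together with the Burnside-ring comparison), and recover the homeomorphism by showing $\Spec(F_G)$ is a closed map onto its image: closedness reduces, by noetherianity of the source (Proposition~\ref{prop:noetherian-space}) and the fact that closed sets are finite unions of point closures, to reflection of specializations, and that reflection is supplied by rerunning the proof of Lemma~\ref{lem:restrict-inclusions} with the $\SHG^c$-inclusion as input (legitimate, since that proof only ever uses the images of the primes in $\SHG^c$ and in $\Spec(A(G))$) and then feeding the resulting combinatorial conditions into the sufficiency statement Theorem~\ref{thm:topology}. So your route trades the height-$\infty$ topology of $\SHG^c$ for the paper's own Tate-splitting input (which underlies Theorem~\ref{thm:topology} anyway), making the corollary independent of \cite[Cor.~8.4]{BalmerSanders17}; the paper's route is shorter given that corollary is available, and in addition exhibits explicitly which inclusions among height-$0$/$\infty$ primes of $\SHG^c$ correspond to which inclusions in $\Spec(\DHZG^c)$. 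The only caveat worth flagging is the one you already note: you cannot cite Lemma~\ref{lem:restrict-inclusions} as stated (its hypothesis is an inclusion in $\DHZG^c$), but must repeat its argument starting from the inclusion of images, which works verbatim.
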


\begin{proof}
	This follows from Cor.~\ref{cor:comp-with-SHG} and our descriptions of the two spaces; in particular, from Thm.~\ref{thm:spec-as-set}, Thm.~\ref{thm:topology}, \cite[Thm.~4.14]{BalmerSanders17}, \cite[Prop.~6.9]{BalmerSanders17}, and \cite[Cor.~8.4]{BalmerSanders17}.
\end{proof}

\begin{Rem}
	From the second point of view, both $\Spec(\DHZG^c)$ and $\Spec(A(G))$ consist of a number of copies of $\Spec(\bbZ)$, one for each conjugacy class of subgroups of $G$, except that in $\Spec(A(G))$ the closed points $\frak{p}(K,p)$ and $\frak{p}(H,p)$ are glued together when $\Op(K) \sim_G \Op(H)$.  Stated differently, each point $\frak{p}(H,p)$ is identified with $\frak{p}(\Op(H),p)$.  For example, if $G$ is a $p$-group then $\Op(H)$ is trivial for every $H \le G$, so all the copies of $(p)$ --- one for each copy of $\Spec(\bbZ)$ --- are glued into a single point.  In contrast, if $p$ does not divide $|G|$ then $\Op(H) = H$ for all $H \le G$, so no gluing of the copies of $(p)$ occurs.  The picture given of $\Spec(A(C_p))$ on page~\pageref{fig:ACp} is indicative of the situation for any $p$-group.

	This gluing $\frak{p}(H,p)=\frak{p}(\Op(H),p)$ in $\Spec(A(G))$ manifests in $\Spec(\DHZG^c)$ by the fact that $\cat P(\Op(H),p)$ is contained in the closure of $\cat P(H,p)$.  Or, rather, the gluing in $\Spec(A(G))$ is explained by these topological relations in $\Spec(\DHZG^c)$.
\end{Rem}

\begin{Cor}\label{cor:comparison-with-AG}
	For any finite group $G$, the comparison map
		\[
			\rho : \Spec(\DHZG^c) \to \Spec(A(G))
		\]
	is a quotient map which identifies points of height $\ge 1$ whose closures intersect. In more detail, if $\cat P, \cat Q \in \Spec(\DHZG^c)$ are distinct points then $\rho(\cat P)=\rho(\cat Q)$ if and only if $\cat P$ and $\cat Q$ are points of height $\ge 1$ with $\overline{\{\cat P\}} \cap \overline{\{\cat Q\}} \neq \emptyset$ if and only if $\cat P = \cat P(H,p)$ and $\cat Q=\cat P(K,p)$ for some prime number $p$ and subgroups $H,K\le G$ such that $H \cap K$ is a $p$-subnormal subgroup of both $H$ and $K$.
\end{Cor}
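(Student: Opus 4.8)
The plan is to read everything off from descriptions already in hand: the points and topology of $\Spec(\DHZG^c)$ (Theorem~\ref{thm:spec-as-set}, Theorem~\ref{thm:topology}, Remark~\ref{rem:irreducibles}), the description of $\Spec(A(G))$ recalled in \cite[\S3]{BalmerSanders17} (originally \cite{Dress69}), and the value of $\rho$ on points (Corollary~\ref{cor:comp-with-AG}). I would begin by recalling that the primes of $A(G)$ are the $\frakp(H,0)$ and the $\frakp(H,p)$ ($p$ prime), with $\frakp(H,0)=\frakp(K,0)$ iff $H\sim_G K$ and $\frakp(H,p)=\frakp(K,p)$ iff $\Op(H)\sim_G\Op(K)$, no $\frakp(H,0)$ equal to any $\frakp(K,p)$, $\frakp(H,0)\subsetneq\frakp(H,p)$ and $\overline{\{\frakp(H,0)\}}=\{\frakp(H,0)\}\cup\{\frakp(H,p)\mid p\text{ prime}\}$; being a one-dimensional reduced ring finite over $\bbZ$, the $\frakp(H,0)$ are exactly the minimal primes and the $\frakp(H,p)$ exactly the closed points of $\Spec(A(G))$. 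Since $\rho$ sends $\cat P(H,p)\mapsto\frakp(H,p)$ and $\cat P(H,0)\mapsto\frakp(H,0)$, and by Theorem~\ref{thm:spec-as-set} and the above these exhaust both spectra, $\rho$ is surjective with fibers
\[
 \rho^{-1}(\frakp(H,0))=\{\cat P(H,0)\},\qquad \rho^{-1}(\frakp(H,p))=\SET{\cat P(K,p)}{\Op(K)\sim_G\Op(H)}.
\]
In particular, for distinct $\cat P,\cat Q$ one gets $\rho(\cat P)=\rho(\cat Q)$ if and only if $\cat P=\cat P(H,p)$ and $\cat Q=\cat P(K,p)$ for a common prime $p$ with $\Op(H)\sim_G\Op(K)$; these are precisely the points of the form $\cat P(H,p)$, i.e.\ the points lying over a height-$\ge1$ (equivalently, closed) point of $\Spec(A(G))$, which is what ``height $\ge1$'' means here.

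Next I would match the condition $\Op(H)\sim_G\Op(K)$ against the two remaining formulations. Throughout I use the elementary group-theoretic fact that if $L$ is a $p$-subnormal subgroup of $H$ then $\Op(L)=\Op(H)$ (from $\Op(H)\le L\le H$, \cite[Lem.~3.3]{BalmerSanders17}, and the fact that $\Op$ of a group is generated by its elements of order prime to~$p$), together with $\overline{\{\cat P(H,p)\}}=\SET{\cat P(L,p)}{L\ \text{a}\ p\text{-subnormal subgroup of}\ H}$ from Remark~\ref{rem:irreducibles}. For the closure condition: if $\Op(H)\sim_G\Op(K)$ then $\cat P(\Op(H),p)$ lies in $\overline{\{\cat P(H,p)\}}\cap\overline{\{\cat P(K,p)\}}$; conversely, if $\cat P=\cat P(H,p)$, $\cat Q=\cat P(K,q)$ have height $\ge1$ and their closures meet, then $q=p$ (every point of $\overline{\{\cat P(H,p)\}}$ has the shape $\cat P(-,p)$) and a common point $\cat P(L,p)=\cat P(L',p)$ with $L$ $p$-subnormal in $H$, $L'$ $p$-subnormal in $K$ gives $\Op(H)=\Op(L)\sim_G\Op(L')=\Op(K)$. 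For the intersection condition: after conjugating $K$ we may assume $\Op(H)=\Op(K)=:N$, so $N\le H\cap K\le H$ and $N\le H\cap K\le K$, whence $H\cap K$ is $p$-subnormal in both; conversely, if for $G$-conjugates $H',K'$ of $H,K$ the subgroup $H'\cap K'$ is $p$-subnormal in both, then $\Op(H')=\Op(H'\cap K')=\Op(K')$, so $\Op(H)\sim_G\Op(K)$. I would also point out that the ``height $\ge1$'' hypothesis is genuinely needed: $\overline{\{\cat P(H,0)\}}$ and $\overline{\{\cat P(K,0)\}}$ can meet (e.g.\ when $\Op(H)\sim_G\Op(K)$) while $\rho$ keeps $\cat P(H,0)$ and $\cat P(K,0)$ distinct.

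Finally, to see $\rho$ is a quotient map it suffices to show it is a closed continuous surjection; continuity and surjectivity are already in place. Since $\Spec(\DHZG^c)$ is noetherian (Proposition~\ref{prop:noetherian-space}), every closed subset is a finite union of sets $\overline{\{\cat P\}}$, so it is enough to check $\rho(\overline{\{\cat P\}})$ is closed for each point $\cat P$. Using Remark~\ref{rem:irreducibles} and the constancy of $\Op$ along $p$-subnormal chains, $\rho(\overline{\{\cat P(H,p)\}})$ is the single closed point $\{\frakp(\Op(H),p)\}$, while $\rho(\overline{\{\cat P(H,0)\}})=\{\frakp(H,0)\}\cup\{\frakp(\Op(H),p)\mid p\text{ prime}\}=\overline{\{\frakp(H,0)\}}$ by the description of $\Spec(A(G))$; both are closed. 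Hence $\rho$ is closed, and therefore a quotient map. I expect the main obstacle to be purely bookkeeping: carefully tracking $G$-conjugacy while passing among the three formulations ``$\Op(H)\sim_G\Op(K)$'', ``$H\cap K$ is $p$-subnormal in both $H$ and $K$'', and ``$\overline{\{\cat P\}}\cap\overline{\{\cat Q\}}\neq\emptyset$'', and making sure the height-$\ge1$ hypothesis is invoked exactly where it is needed rather than assumed throughout.
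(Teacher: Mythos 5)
Your proposal is correct and follows essentially the same route as the paper's proof: compute $\rho$ on points via Corollary~\ref{cor:comp-with-AG} and the Dress description of $\Spec(A(G))$, reduce the identifications to the condition $\Op(H)\sim_G\Op(K)$, translate that into the closure-intersection and $H\cap K$ formulations using Remark~\ref{rem:irreducibles} and the fact that $\Op$ is constant along $p$-subnormal inclusions, and conclude that $\rho$ is a quotient map by checking it is closed. The only difference is that you spell out the closedness check (via noetherianness and images of point-closures), which the paper leaves as "readily checked."
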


\begin{proof}
	The points of height 0 are precisely the points $\cat P(H,0)$ while the points of height $\ge 1$ are the points $\cat P(H,p)$.  (They can have height greater than 1 because of the inclusions among them.) According to Corollary~\ref{cor:comp-with-AG}, \cite[Theorem~3.6]{BalmerSanders17} and Theorem~\ref{thm:spec-as-set}, $\rho(\cat P(H,0)) \neq \rho(\cat P(K,p))$ for any $p$, while $\rho(\cat P(H,0)) = \rho(\cat P(K,0))$ iff $H \sim_G K$ iff $\cat P(H,0) = \cat P(K,0)$.  Also, $\rho(\cat P(H,p)) \neq \rho(\cat P(K,q))$ for $p \neq q$, while $\rho(\cat P(H,p)) = \rho(\cat P(K,p))$ iff $\Op(H) \sim_G \Op(K)$.  Thus the only identifications made by $\rho$ are for primes $\cat P(H,p)$ and $\cat P(K,p)$ corresponding to the same prime number~$p$, precisely when $\Op(H) \sim_G \Op(K)$.  By Remark~\ref{rem:irreducibles}, the closure of $\cat P(H,p)$ consists of those $\cat P(K,p)$ for $K$ a $p$-subnormal subgroup of $H$.  So if $\Op(H) \sim_G \Op(K)$ then $\cat P(\Op(H),p) = \cat P(\Op(K),p)$ is a point in the intersection $\overline{\{\cat P(H,p)\}}\cap\overline{\{\cat P(K,p)\}}$.  Conversely, if $\cat P=\cat P(H,p)$ and $\cat Q=\cat P(K,q)$ are height $\ge 1$ points with $\overline{\{\cat P\}}\cap \overline{\{\cat Q\}} \neq \emptyset$ then $p=q$ and there exists a subgroup $L$ which is $G$-conjugate to a $p$-subnormal subgroup of $H$ and also $G$-conjugate to a $p$-subnormal subgroup of $K$.  It follows that $\Op(H) \sim_G \Op(L) \sim_G \Op(K)$ and hence that $\rho(\cat P)=\rho(\cat Q)$.

	Also note that if $\Op(H) \sim_G \Op(K)$ then $\Op(H) = \Op(K^g)$ for some $g \in G$ and hence $H \cap K^g$ is a $p$-subnormal subgroup of both $H$ and $K^g$.  The converse also holds.  We conclude that $\rho(\cat P) = \rho(\cat Q)$ if and only if $\cat P = \cat P(H,p)$ and $\cat Q = \cat P(K,p)$ for some prime number $p$ and subgroups $H,K\le G$ satisfying $\Op(H)=\Op(K)$ (equivalently, with $H \cap K$ a $p$-subnormal subgroup of both $H$ and $K$).

	Finally, one readily checks using Remark~\ref{rem:irreducibles} that the surjective continuous map~$\rho$ is a closed map, hence a quotient map.
\end{proof}

\begin{Exa}\label{exa:D8}
	Consider $G=D_8$ the dihedral group of order $8$. Its lattice of conjugacy classes of subgroups is
		\[\xymatrix @=0.5pc{
			&&C_2 \ar@{-}[rr] && V_4 \ar@{-}[drr] \\
			1 \ar@{-}[urr] \ar@{-}[rr] \ar@{-}[drr] && C_2 \ar@{-}[rr] \ar@{-}[urr] \ar@{-}[drr] &&C_4 \ar@{-}[rr]&&D_8\\
			&&C_2  \ar@{-}[rr] && V_4 \ar@{-}[urr]&&
		}\]
	and the following diagram depicts the comparison map for the spectrum of $\Der(\HZ_{D_8})$ localized at the prime 2:
		\[
\xy
(15,10)*{\Spec(\Der(\HZ_{D_8})_{\scriptscriptstyle (2)}^c)};
	{\ar@{->}^-{\rho} (30,10)*{};(52.5,10)*{}};
(65,10)*{\Spec(A(D_8)_{\scriptscriptstyle (2)})};
	{\ar@{-} (0,0)*{};(0,-18)*{}};
		{\ar@{-} (0,0)*{};(10,4)*{}};
		{\ar@{-} (0,0)*{};(7,-4)*{}};
		{\ar@{-} (10,4)*{};(20,4)*{}};
		{\ar@{-} (20,4)*{};(27,0)*{}};
	{\ar@{-} (10,4)*{};(10,-14)*{}};
	{\ar@{-} (8.5,0)*{};(8.5,-18)*{}};
	{\ar@{-} (7,-4)*{};(7,-22)*{}};
	{\ar@{-} (20,4)*{};(20,-14)*{}};
	{\ar@{-} (18.5,0)*{};(18.5,-18)*{}};
	{\ar@{-} (17,-4)*{};(17,-22)*{}};
	{\ar@{-} (27,0)*{};(27,-18)*{}};
		{\ar@[white] (8.5,0)*{};(20,4)*{}};
		{\ar@<-0.1mm>@[white] (8.5,0)*{};(20,4)*{}};
		{\ar@<-0.2mm>@[white] (8.5,0)*{};(20,4)*{}};
		{\ar@<-0.3mm>@[white] (8.5,0)*{};(20,4)*{}};
		{\ar@<-0.4mm>@[white] (8.5,0)*{};(20,4)*{}};
		{\ar@<-0.5mm>@[white] (8.5,0)*{};(20,4)*{}};
		{\ar@< 0.1mm>@[white] (8.5,0)*{};(20,4)*{}};
		{\ar@< 0.2mm>@[white] (8.5,0)*{};(20,4)*{}};
		{\ar@< 0.3mm>@[white] (8.5,0)*{};(20,4)*{}};
		{\ar@< 0.4mm>@[white] (8.5,0)*{};(20,4)*{}};
		{\ar@< 0.5mm>@[white] (8.5,0)*{};(20,4)*{}};
		{\ar@{-} (8.5,0)*{};(20,4)*{}};
		{\ar@{-} (8.5,0)*{};(18.5,0)*{}};
		{\ar@{-} (0,0)*{};(8.5,0)*{}};
		{\ar@[white] (8.5,0)*{};(17,-4)*{}};
		{\ar@<-0.1mm>@[white] (8.5,0)*{};(17,-4)*{}};
		{\ar@<-0.2mm>@[white] (8.5,0)*{};(17,-4)*{}};
		{\ar@<-0.3mm>@[white] (8.5,0)*{};(17,-4)*{}};
		{\ar@<-0.4mm>@[white] (8.5,0)*{};(17,-4)*{}};
		{\ar@<-0.5mm>@[white] (8.5,0)*{};(17,-4)*{}};
		{\ar@< 0.1mm>@[white] (8.5,0)*{};(17,-4)*{}};
		{\ar@< 0.2mm>@[white] (8.5,0)*{};(17,-4)*{}};
		{\ar@< 0.3mm>@[white] (8.5,0)*{};(17,-4)*{}};
		{\ar@< 0.4mm>@[white] (8.5,0)*{};(17,-4)*{}};
		{\ar@< 0.5mm>@[white] (8.5,0)*{};(17,-4)*{}};
		{\ar@{-} (8.5,0)*{};(17,-4)*{}};
		{\ar@[white] (17,0)*{};(25.5,0)*{}};
		{\ar@<-0.1mm>@[white] (17,0)*{};(25.5,0)*{}};
		{\ar@<-0.2mm>@[white] (17,0)*{};(25.5,0)*{}};
		{\ar@<-0.3mm>@[white] (17,0)*{};(25.5,0)*{}};
		{\ar@<-0.4mm>@[white] (17,0)*{};(25.5,0)*{}};
		{\ar@<-0.5mm>@[white] (17,0)*{};(25.5,0)*{}};
		{\ar@< 0.1mm>@[white] (17,0)*{};(25.5,0)*{}};
		{\ar@< 0.2mm>@[white] (17,0)*{};(25.5,0)*{}};
		{\ar@< 0.3mm>@[white] (17,0)*{};(25.5,0)*{}};
		{\ar@< 0.4mm>@[white] (17,0)*{};(25.5,0)*{}};
		{\ar@< 0.5mm>@[white] (17,0)*{};(25.5,0)*{}};
		{\ar@{-} (17,0)*{};(25.5,0)*{}};
		{\ar@[white] (17,-4)*{};(27,0)*{}};
		{\ar@<-0.1mm>@[white] (17,-4)*{};(27,0)*{}};
		{\ar@<-0.2mm>@[white] (17,-4)*{};(27,0)*{}};
		{\ar@<-0.3mm>@[white] (17,-4)*{};(27,0)*{}};
		{\ar@<-0.4mm>@[white] (17,-4)*{};(27,0)*{}};
		{\ar@<-0.5mm>@[white] (17,-4)*{};(27,0)*{}};
		{\ar@< 0.1mm>@[white] (17,-4)*{};(27,0)*{}};
		{\ar@< 0.2mm>@[white] (17,-4)*{};(27,0)*{}};
		{\ar@< 0.3mm>@[white] (17,-4)*{};(27,0)*{}};
		{\ar@< 0.4mm>@[white] (17,-4)*{};(27,0)*{}};
		{\ar@< 0.5mm>@[white] (17,-4)*{};(27,0)*{}};
		{\ar@{-} (17,-4)*{};(27,0)*{}};
		{\ar@[white] (7,-4)*{};(17,-4)*{}};
		{\ar@<-0.1mm>@[white] (7,-4)*{};(17,-4)*{}};
		{\ar@<-0.2mm>@[white] (7,-4)*{};(17,-4)*{}};
		{\ar@<-0.3mm>@[white] (7,-4)*{};(17,-4)*{}};
		{\ar@<-0.4mm>@[white] (7,-4)*{};(17,-4)*{}};
		{\ar@<-0.5mm>@[white] (7,-4)*{};(17,-4)*{}};
		{\ar@< 0.1mm>@[white] (7,-4)*{};(17,-4)*{}};
		{\ar@< 0.2mm>@[white] (7,-4)*{};(17,-4)*{}};
		{\ar@< 0.3mm>@[white] (7,-4)*{};(17,-4)*{}};
		{\ar@< 0.4mm>@[white] (7,-4)*{};(17,-4)*{}};
		{\ar@< 0.5mm>@[white] (7,-4)*{};(17,-4)*{}};
		{\ar@{-} (7,-4)*{};(17,-4)*{}};
(0,0)*{\color{green}\bullet};
(0,0)*{\circ};
(10,4)*{\color{green}\bullet};
(10,4)*{\circ};
(8.5,0)*{\color{green}\bullet};
(8.5,0)*{\circ};
(7,-4)*{\color{green}\bullet};
(7,-4)*{\circ};
(20,4)*{\color{green}\bullet};
(20,4)*{\circ};
(18.5,0)*{\color{green}\bullet};
(18.5,0)*{\circ};
(17,-4)*{\color{green}\bullet};
(17,-4)*{\circ};
(27,0)*{\color{green}\bullet};
(27,0)*{\circ};
(0,-18)*{\color{green}\bullet};
(0,-18)*{\circ};
(10,-14)*{\color{green}\bullet};
(10,-14)*{\circ};
(8.5,-18)*{\color{green}\bullet};
(8.5,-18)*{\circ};
(7,-22)*{\color{green}\bullet};
(7,-22)*{\circ};
(20,-14)*{\color{green}\bullet};
(20,-14)*{\circ};
(18.5,-18)*{\color{green}\bullet};
(18.5,-18)*{\circ};
(17,-22)*{\color{green}\bullet};
(17,-22)*{\circ};
(27,-18)*{\color{green}\bullet};
(27,-18)*{\circ};
	{\ar@{-} (61,2)*{};(50,-18)*{}};
	{\ar@{-} (61,2)*{};(60,-14)*{}};
	{\ar@{-} (61,2)*{};(58.5,-18)*{}};
	{\ar@{-} (61,2)*{};(57,-22)*{}};
	{\ar@{-} (61,2)*{};(70,-14)*{}};
	{\ar@{-} (61,2)*{};(68.5,-18)*{}};
	{\ar@{-} (61,2)*{};(67,-22)*{}};
	{\ar@{-} (61,2)*{};(77,-18)*{}};
%
%
(50,-18)*{\color{green}\bullet};
(50,-18)*{\circ};
(60,-14)*{\color{green}\bullet};
(60,-14)*{\circ};
(58.5,-18)*{\color{green}\bullet};
(58.5,-18)*{\circ};
(57,-22)*{\color{green}\bullet};
(57,-22)*{\circ};
(70,-14)*{\color{green}\bullet};
(70,-14)*{\circ};
(68.5,-18)*{\color{green}\bullet};
(68.5,-18)*{\circ};
(67,-22)*{\color{green}\bullet};
(67,-22)*{\circ};
(77,-18)*{\color{green}\bullet};
(77,-18)*{\circ};
(61,2)*{\color{green}\bullet};
(61,2)*{\circ}
\endxy

		\]
	Closure goes up and to the left. 
	For example, the closure of the point $\cat P(C_4,0)$ consists of four points:
		\[
			\overline{\{ \cat P(C_4,0) \}} = \{ \cat P(C_4,0), \cat P(C_4,2), \cat P(C_2,2), \cat P(\{1\},2) \}.
		\]
	Similarly, the closure of $\cat P(V_4,0)$ consists of five points (for either choice of $V_4$)
	and the closure of $\cat P(C_2,2)$ consists of two points (for any choice of $C_2$).
	Observe that there is a unique closed point in $\Spec(A(D_8)_{(2)})$ and that the fiber over this point is a copy of the lattice of conjugacy classes of subgroups of $D_8$. The reason is that for $p=2$, a subgroup $K$ is a $p$-subnormal subgroup of $H$ if and only if $K$ is a subgroup of $H$.  On the other hand, at a prime $p \neq 2$, there is no gluing in the Burnside ring and the comparison map is the simple bijection of eight copies of $\Spec(\bbZ_{(p)})$:
		\[
\xy
(15,10)*{\Spec(\Der(\HZ_{D_8})_{\scriptscriptstyle (p)}^c)};
	{\ar@{->}^-{\rho} (30,10)*{};(52.5,10)*{}};
(65,10)*{\Spec(A(D_8)_{\scriptscriptstyle (p)})};
	{\ar@{-} (0,0)*{};(0,-18)*{}};
	{\ar@{-} (10,4)*{};(10,-14)*{}};
	{\ar@{-} (7,-4)*{};(7,-22)*{}};
	{\ar@{-} (8.5,0)*{};(8.5,-18)*{}};
	{\ar@{-} (20,4)*{};(20,-14)*{}};
	{\ar@{-} (18.5,0)*{};(18.5,-18)*{}};
	{\ar@{-} (17,-4)*{};(17,-22)*{}};
	{\ar@{-} (27,0)*{};(27,-18)*{}};
	{\ar@{-} (50,0)*{};(50,-18)*{}};
	{\ar@{-} (60,4)*{};(60,-14)*{}};
	{\ar@{-} (58.5,0)*{};(58.5,-18)*{}};
	{\ar@{-} (57,-4)*{};(57,-22)*{}};
	{\ar@{-} (70,4)*{};(70,-14)*{}};
	{\ar@{-} (68.5,0)*{};(68.5,-18)*{}};
	{\ar@{-} (67,-4)*{};(67,-22)*{}};
	{\ar@{-} (77,0)*{};(77,-18)*{}};
(0,0)*{\color{green}\bullet};
(0,0)*{\circ};
(10,4)*{\color{green}\bullet};
(10,4)*{\circ};
(8.5,0)*{\color{green}\bullet};
(8.5,0)*{\circ};
(7,-4)*{\color{green}\bullet};
(7,-4)*{\circ};
(20,4)*{\color{green}\bullet};
(20,4)*{\circ};
(18.5,0)*{\color{green}\bullet};
(18.5,0)*{\circ};
(17,-4)*{\color{green}\bullet};
(17,-4)*{\circ};
(27,0)*{\color{green}\bullet};
(27,0)*{\circ};
(0,-18)*{\color{green}\bullet};
(0,-18)*{\circ};
(10,-14)*{\color{green}\bullet};
(10,-14)*{\circ};
(8.5,-18)*{\color{green}\bullet};
(8.5,-18)*{\circ};
(7,-22)*{\color{green}\bullet};
(7,-22)*{\circ};
(20,-14)*{\color{green}\bullet};
(20,-14)*{\circ};
(18.5,-18)*{\color{green}\bullet};
(18.5,-18)*{\circ};
(17,-22)*{\color{green}\bullet};
(17,-22)*{\circ};
(27,-18)*{\color{green}\bullet};
(27,-18)*{\circ};
(50,-18)*{\color{green}\bullet};
(50,-18)*{\circ};
(60,-14)*{\color{green}\bullet};
(60,-14)*{\circ};
(58.5,-18)*{\color{green}\bullet};
(58.5,-18)*{\circ};
(57,-22)*{\color{green}\bullet};
(57,-22)*{\circ};
(70,-14)*{\color{green}\bullet};
(70,-14)*{\circ};
(68.5,-18)*{\color{green}\bullet};
(68.5,-18)*{\circ};
(67,-22)*{\color{green}\bullet};
(67,-22)*{\circ};
(77,-18)*{\color{green}\bullet};
(77,-18)*{\circ};
(50,0)*{\color{green}\bullet};
(50,0)*{\circ};
(60,4)*{\color{green}\bullet};
(60,4)*{\circ};
(58.5,0)*{\color{green}\bullet};
(58.5,0)*{\circ};
(57,-4)*{\color{green}\bullet};
(57,-4)*{\circ};
(70,4)*{\color{green}\bullet};
(70,4)*{\circ};
(68.5,0)*{\color{green}\bullet};
(68.5,0)*{\circ};
(67,-4)*{\color{green}\bullet};
(67,-4)*{\circ};
(77,0)*{\color{green}\bullet};
(77,0)*{\circ};
\endxy

		\]
\end{Exa}

\begin{Rem}
	The above example for $G=D_8$ is illustrative of the situation for any $p$-group $G$.  Localized at a prime $q \neq p$ nothing interesting happens: both spaces are just a disjoint union of copies of $\Spec(\bbZ_{(q)})$.  Localized at the prime $p$, on the other hand, the spectrum of the Burnside ring has a single closed point and the fiber over that unique closed point is a copy of the lattice of conjugacy classes of subgroups of~$G$.  Again, the reason is that the relation ``is conjugate to a $p$-subnormal subgroup of'' reduces to ``is conjugate to a subgroup of''.  For example, a diligent reader can immediately write down the 2-local comparison map for the quaternion group $G=Q_8$ once they recall the lattice of (conjugacy classes of) subgroups of $Q_8$.
\end{Rem}

\begin{Exa}\label{exa:S3}
	Take $G=S_3$ the symmetric group on three symbols. Its lattice of conjugacy classes of subgroups is
		\[\xymatrix @=0.5pc{
			&& &C_3 \ar@{-}[drrr] \\
			1 \ar@{-}[urrr] \ar@{-}[drrr] &&&&&&S_3\\
			&&&C_2 \ar@{-}[urrr] &&  &&
		}\]
	and there are two primes of interest: $p=2,3$. Here is the 2-local comparison map 
		\[
\xy
(10,10)*{\Spec(\Der(\HZ_{S_3})_{\scriptscriptstyle (2)}^c)};
	{\ar@{->}^-{\rho} (25,10)*{};(47.5,10)*{}};
(60,10)*{\Spec(A(S_3)_{\scriptscriptstyle (2)})};
	{\ar@{-} (0,0)*{};(0,-18)*{}};
		{\ar@{-} (0,0)*{};(7,-4)*{}};
		{\ar@{-} (10,4)*{};(17,0)*{}};
	{\ar@{-} (10,4)*{};(10,-14)*{}};
	{\ar@{-} (7,-4)*{};(7,-22)*{}};
	{\ar@{-} (17,0)*{};(17,-18)*{}};
(0,0)*{\color{green}\bullet};
(0,0)*{\circ};
(10,4)*{\color{green}\bullet};
(10,4)*{\circ};
(7,-4)*{\color{green}\bullet};
(7,-4)*{\circ};
(17,0)*{\color{green}\bullet};
(17,0)*{\circ};
(0,-18)*{\color{green}\bullet};
(0,-18)*{\circ};
(10,-14)*{\color{green}\bullet};
(10,-14)*{\circ};
(7,-22)*{\color{green}\bullet};
(7,-22)*{\circ};
(17,-18)*{\color{green}\bullet};
(17,-18)*{\circ};
	{\ar@{-} (54,0)*{};(50,-18)*{}};
	{\ar@{-} (54,0)*{};(57,-22)*{}};
	{\ar@{-} (64,4)*{};(60,-14)*{}};
	{\ar@{-} (64,4)*{};(67,-18)*{}};
%
%
(50,-18)*{\color{green}\bullet};
(50,-18)*{\circ};
(60,-14)*{\color{green}\bullet};
(60,-14)*{\circ};
(57,-22)*{\color{green}\bullet};
(57,-22)*{\circ};
(67,-18)*{\color{green}\bullet};
(67,-18)*{\circ};
(64,3.5)*{\color{green}\bullet};
(64,3.5)*{\circ};
(54,-0.5)*{\color{green}\bullet};
(54,-0.5)*{\circ}
\endxy

		\]
	and here is the 3-local comparison map
		\[
\xy
(10,10)*{\Spec(\Der(\HZ_{S_3})_{\scriptscriptstyle (3)}^c)};
	{\ar@{->}^-{\rho} (25,10)*{};(47.5,10)*{}};
(60,10)*{\Spec(A(S_3)_{\scriptscriptstyle (3)})};
	{\ar@{-} (0,0)*{};(0,-18)*{}};
		{\ar@{-} (0,0)*{};(10,4)*{}};
	{\ar@{-} (10,4)*{};(10,-14)*{}};
	{\ar@{-} (8.5,-4)*{};(8.5,-22)*{}};
	{\ar@{-} (18.5,0)*{};(18.5,-18)*{}};
%
(0,0)*{\color{green}\bullet};
(0,0)*{\circ};
(10,4)*{\color{green}\bullet};
(10,4)*{\circ};
(8.5,-4)*{\color{green}\bullet};
(8.5,-4)*{\circ};
(18.5,0)*{\color{green}\bullet};
(18.5,0)*{\circ};
(0,-18)*{\color{green}\bullet};
(0,-18)*{\circ};
(10,-14)*{\color{green}\bullet};
(10,-14)*{\circ};
(8.5,-22)*{\color{green}\bullet};
(8.5,-22)*{\circ};
(18.5,-18)*{\color{green}\bullet};
(18.5,-18)*{\circ};
	{\ar@{-} (59,1.5)*{};(50,-18)*{}};
	{\ar@{-} (59,1.5)*{};(61,-14)*{}};
	{\ar@{-} (58.5,-4.5)*{};(58.5,-22)*{}};
	{\ar@{-} (68.5,-0.5)*{};(68.5,-18)*{}};
%
%
(50,-18)*{\color{green}\bullet};
(50,-18)*{\circ};
(61,-14)*{\color{green}\bullet};
(61,-14)*{\circ};
(58.5,-22)*{\color{green}\bullet};
(58.5,-22)*{\circ};
(68.5,-18)*{\color{green}\bullet};
(68.5,-18)*{\circ};
(59,1.5)*{\color{green}\bullet};
(59,1.5)*{\circ};
(68.5,-0.5)*{\color{green}\bullet};
(68.5,-0.5)*{\circ};
(58.5,-4.5)*{\color{green}\bullet};
(58.5,-4.5)*{\circ};
\endxy

		\]
	Note that although $C_2$ has index $3$ in $S_3$, it is not a $3$-subnormal subgroup (since it is not a normal subgroup). The reader might find it interesting to compare with Example~8.14 in \cite{BalmerSanders17}.
\end{Exa}

\begin{Rem}
	Finally, we can translate our computation of $\Spec(\DHZG^c)$ into a classification of the thick tensor-ideal subcategories of $\DHZG^c$:
\end{Rem}

\begin{Thm}\label{thm:classification}
	Let $G$ be a finite group. Say that a subset $Y \subseteq \Con(G) \times \Spec(\bbZ)$ is \emph{admissible} if it satisfies the following closure properties:
	\begin{enumerate}
		\item If $(H,0) \in Y$ then $(H,p) \in Y$ for all prime numbers $p$.
		\item If $(H,p) \in Y$ then $(K,p) \in Y$ for all (conjugacy classes of) $p$-subnormal subgroups $K$ of $H$.
	\end{enumerate}
	There is an inclusion-preserving bijection between the set of admissible subsets of $\Con(G)\times \Spec(\bbZ)$ and the collection of thick tensor-ideal subcategories of $\DHZG^c$ given by
	\[
		Y \longmapsto \SET{X \in \DHZG^c}{\phigeomb{H}(X) \in \frak p \text{ if } (H,\frak p) \not\in Y}
	\]
	and
	\[
		\cat C \longmapsto \SET{ (H,\frak p) \in \Con(G) \times \Spec(\bbZ)}{\phigeomb{H}(X) \not\in \frak p \text{ for some } X \in \cat C}.
	\]
\end{Thm}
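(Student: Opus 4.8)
The plan is to deduce the theorem from Balmer's classification of thick tensor-ideals together with the computation of $\Spec(\DHZG^c)$ carried out above. Since $\DHZG$ is rigidly compactly generated \ref{it:DHZG}, its subcategory $\DHZG^c$ is an essentially small \emph{rigid} tensor triangulated category; in such a category every thick tensor-ideal is radical (if $x^{\otimes n}\in\cat C$ then $x$ is a retract of $x^{\otimes n}\otimes D(x^{\otimes n-1})\in\cat C$, so $\langle x\rangle=\langle x^{\otimes n}\rangle$), so Balmer's theorem (see \cite{Balmer07} and the references therein) provides an inclusion-preserving bijection between the thick tensor-ideals $\cat C\subseteq\DHZG^c$ and the Thomason subsets $Y\subseteq\Spec(\DHZG^c)$, given by $Y\mapsto\cat K_Y:=\SET{X\in\DHZG^c}{\supp(X)\subseteq Y}$ and $\cat C\mapsto\supp(\cat C):=\bigcup_{X\in\cat C}\supp(X)$. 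It therefore suffices to prove two things: first, that the Thomason subsets of $\Spec(\DHZG^c)$ are precisely the admissible subsets of $\Con(G)\times\Spec(\bbZ)$; and second, that under this identification the two maps of the statement agree with the two maps of Balmer's bijection.

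For the first point, Theorem~\ref{thm:spec-as-set} identifies $\Con(G)\times\Spec(\bbZ)$ with the underlying set of $\Spec(\DHZG^c)$ via $(H,\frakp)\mapsto\cat P_G(H,\frakp)$, and by Proposition~\ref{prop:noetherian-space} the Thomason subsets are exactly the specialization-closed ones, i.e.\ the subsets $Y$ with $\overline{\{\cat P\}}\subseteq Y$ for every $\cat P\in Y$. Remark~\ref{rem:irreducibles} (a reformulation of Theorem~\ref{thm:topology}) computes these closures: $\overline{\{\cat P_G(H,p)\}}$ consists of the $\cat P_G(K,p)$ with $K$ a $p$-subnormal subgroup of $H$, while $\overline{\{\cat P_G(H,0)\}}=\{\cat P_G(H,0)\}\cup\bigcup_p\overline{\{\cat P_G(H,p)\}}$. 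Reading off these two families of closures shows that $Y$ is specialization-closed if and only if it satisfies the closure conditions (a) and (b), that is, if and only if it is admissible.

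For the second point, recall from \ref{it:phiHG} that $\cat P_G(H,\frakp)=(\phigeomb{H,G})^{-1}(\frakp)$, where $\frakp\in\Spec\bbZ\cong\Spec(\DHZ^c)$ is regarded as a prime of $\DHZ^c$. Hence, for $X\in\DHZG^c$, the point $\cat P_G(H,\frakp)$ lies in $\supp(X)$ if and only if $X\notin(\phigeomb{H,G})^{-1}(\frakp)$, i.e.\ if and only if $\phigeomb{H,G}(X)\notin\frakp$ (this is the meaning of ``$\phigeomb{H}(X)\notin\frakp$'' in the statement). Consequently the condition ``$\phigeomb{H,G}(X)\in\frakp$ whenever $(H,\frakp)\notin Y$'' is literally the condition $\supp(X)\subseteq Y$, so the first map of the statement sends $Y$ to $\cat K_Y$; dually, $\SET{(H,\frakp)}{\phigeomb{H,G}(X)\notin\frakp\text{ for some }X\in\cat C}=\bigcup_{X\in\cat C}\supp(X)=\supp(\cat C)$, so the second map sends $\cat C$ to $\supp(\cat C)$. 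Combining this with the first point completes the argument.

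I do not expect a genuine obstacle: all the real content is already in Theorems~\ref{thm:spec-as-set} and~\ref{thm:topology} and Proposition~\ref{prop:noetherian-space}. The only place to be careful is the bookkeeping in the first point --- verifying that the order-theoretic notion of a specialization-closed subset translates exactly into the two combinatorial closure properties, and in particular noting that it is the generic point $(0)$ of $\Spec\bbZ$, which specializes to every $(p)$, that forces condition (a).
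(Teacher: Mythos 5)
Your proposal is correct and follows essentially the same route as the paper: identify the admissible subsets with the Thomason (equivalently, specialization-closed) subsets of $\Spec(\DHZG^c)$ via Theorem~\ref{thm:spec-as-set}, Remark~\ref{rem:irreducibles} and Proposition~\ref{prop:noetherian-space}, note that all thick tensor-ideals are radical by rigidity, and invoke Balmer's classification theorem. The only difference is that you spell out the radicality argument and the translation of the two maps through $\cat P_G(H,\frakp)=(\phigeomb{H,G})^{-1}(\frakp)$ explicitly, which the paper leaves to citations.
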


\begin{proof}
	We have an identification of sets $\Spec(\DHZG^c) \cong \Con(G)\times \Spec(\bbZ)$ by Theorem~\ref{thm:spec-as-set}.  By Remark~\ref{rem:irreducibles}, the ``admissible'' subsets $Y \subseteq \Con(G) \times \Spec(\bbZ)$ are precisely those corresponding to specialization-closed subsets of $\Spec(\DHZG^c)$. Moreover, these are precisely the Thomason subsets by Proposition~\ref{prop:noetherian-space}. Finally, all thick tensor-ideals of $\DHZG^c$ are radical by \cite[Prop.~2.4]{Balmer07} since all objects are dualizable~\ref{it:DHZG}.  In this way, the theorem is just a translation of the abstract thick subcategory classification theorem of \cite[Theorem~4.10]{Balmer05a}.
\end{proof}

\section{Construction of \texorpdfstring{$\DHZG$}{D(HZG)} and equivariant spectra}\label{sec:highly-structured} 
Our next goal is to construct the tensor triangulated category $\DHZG$ and establish properties \ref{it:DHZG}--\ref{it:trivHZ} from Section~\ref{sec:computation}.

\begin{Def}
	For a finite group $G$, let $\SpG$ denote the $\infty$-category of genuine $G$-spectra as constructed in \cite[Appendix C]{GepnerMeier}.  It is presentable, stable, has a symmetric monoidal structure and comes equipped with a symmetric monoidal left adjoint
		\begin{equation}\label{eq:Gsuspension}
		\begin{tikzcd}
			\Sigma^{\infty} \colon \mathcal{S}_*^G \to \SpG
		\end{tikzcd}
		\end{equation}
	where $\mathcal{S}_*^G$ denotes the symmetric monoidal $\infty$-category of pointed $G$-spaces.  By Elmendorf's theorem, $\mathcal{S}_*^G$ is equivalent as a symmetric monoidal $\infty$-category to $\Fun(\mathcal{O}(G)\op,\mathcal{S}_*)$ with its pointwise monoidal structure, where $\mathcal{O}(G)$ denotes the usual orbit category of transitive $G$-sets and $G$-equivariant maps.
\end{Def}

\begin{Rem}
	The symmetric monoidal $\infty$-category $\SpG$ can be constructed in several different ways.  For example, it is equivalent to the underlying symmetric monoidal $\infty$-category \cite[\S1.3.4 and \S4.1.7]{HALurie} of the simplicial model category of orthogonal $G$-spectra $\Sp_G^O$ \cite{MandellMay02}.  In particular, its homotopy category $\SH(G):=\Ho(\SpG)$ is equivalent as a tensor triangulated category to the usual equivariant stable homotopy category of \cite{LewisMaySteinbergerMcClure86} and \cite[Appendix B]{HillHopkinsRavenel16}.  The approach taken by \cite[Appendix C]{GepnerMeier} instead defines $\SpG$ as the colimit of a diagram of copies of $\mathcal{S}_*^G$ parametrized by a certain poset of $G$-representations.  That this is equivalent to the other construction follows from \cite[Prop.~C.4 and Prop.~C.9]{GepnerMeier}.  The advantage of this definition is that it leads (following work of Robalo~\cite{Robalo15}) to a very convenient method for constructing symmetric monoidal functors on $\Sp^G$.  More precisely, as established in \cite[Corollary C.7]{GepnerMeier} and \cite[Corollary 2.22]{Robalo15}, the functor~\eqref{eq:Gsuspension} enjoys the following universal property:
\end{Rem}

\begin{Thm}[Robalo, Gepner--Meier] \label{thm:robalo}
	Let $G$ be a finite group and let $\rho_G$ denote the regular representation of $G$.  Given a presentably symmetric monoidal \mbox{$\infty$-category} $\cat D$ and a symmetric monoidal left adjoint $F:\mathcal{S}_*^G \to \cat D$ with the property that $F(S^{\rho_G})$ is invertible, there exists an essentially unique symmetric monoidal left adjoint $\overline{F}:\SpG \to \cat D$ such that $\overline{F} \circ \Sigma^\infty \simeq F$ as symmetric monoidal functors.
\end{Thm}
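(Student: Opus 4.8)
The plan is to identify $\SpG$ with the formal inversion of the representation sphere $S^{\rho_G}$ in the realm of presentably symmetric monoidal $\infty$-categories under $\mathcal{S}_*^G$, so that the asserted property of $\Sigma^\infty$ becomes exactly the universal property of this localization. First I would record the elementary observation that a symmetric monoidal left adjoint $F \colon \mathcal{S}_*^G \to \cat D$ sends $S^{\rho_G}$ to an invertible object if and only if it sends $S^V$ to an invertible object for \emph{every} finite-dimensional orthogonal $G$-representation $V$: in any symmetric monoidal $\infty$-category an object $X$ is invertible as soon as $X\otimes Y$ is invertible for some $Y$ (take $Y\otimes(X\otimes Y)^{-1}$ as inverse), while a tensor product of invertibles is invertible; and since every irreducible $G$-representation occurs in $\rho_G$, each $V$ is a summand of $\rho_G^{\oplus n}$ for some $n$, so $S^V\wedge S^{V'}\simeq(S^{\rho_G})^{\wedge n}$ for a complement $V'$. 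Thus inverting $S^{\rho_G}$ is interchangeable with inverting the whole $RO(G)$-graded family of representation spheres.

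Next I would invoke the symmetric monoidal localization machinery of Robalo \cite{Robalo15} together with the Gepner--Meier presentation of $\SpG$ from \cite[Appendix C]{GepnerMeier}. The two inputs are: (i) for a presentably symmetric monoidal $\infty$-category $\mathcal{C}$ and an object $X$ for which the cyclic permutation of $X^{\otimes 3}$ is homotopic to the identity (Voevodsky's symmetry condition), the sequential colimit $\colim\bigl(\mathcal{C}\xra{X\otimes-}\mathcal{C}\xra{X\otimes-}\cdots\bigr)$, formed in $\mathrm{Pr}^{\mathrm L}$ and equipped with its induced symmetric monoidal structure, is a presentably symmetric monoidal $\infty$-category $\mathcal{C}[X^{-1}]$ in which $X$ is invertible, and restriction along $\mathcal{C}\to\mathcal{C}[X^{-1}]$ induces an equivalence between symmetric monoidal left adjoints $\mathcal{C}[X^{-1}]\to\cat D$ and those symmetric monoidal left adjoints $\mathcal{C}\to\cat D$ carrying $X$ to an invertible object; and (ii) $\SpG$ is the colimit over the poset of finite-dimensional $G$-representations of copies of $\mathcal{S}_*^G$ with transition maps given by smashing with representation spheres. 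Taking $\mathcal{C}=\mathcal{S}_*^G$ and $X=S^{\rho_G}$ in (i) requires checking the symmetry condition: the cyclic permutation of $(S^{\rho_G})^{\wedge 3}\simeq S^{3\rho_G}$ is induced by cyclically permuting three coordinate blocks, hence by an even permutation of the coordinates, so it lies in the identity component and is $G$-equivariantly homotopic to the identity. Because $\rho_G$ is cofinal in the poset of $G$-representations, the colimit in (ii) agrees with the sequential colimit along $S^{\rho_G}\wedge-$, and one obtains a symmetric monoidal equivalence $\SpG\simeq\mathcal{S}_*^G[(S^{\rho_G})^{-1}]$ compatible with $\Sigma^\infty$; this is \cite[Corollary C.7]{GepnerMeier} and \cite[Corollary 2.22]{Robalo15}.

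Combining these, a symmetric monoidal left adjoint $F\colon\mathcal{S}_*^G\to\cat D$ with $F(S^{\rho_G})$ invertible factors, essentially uniquely, through $\mathcal{S}_*^G\to\mathcal{S}_*^G[(S^{\rho_G})^{-1}]\simeq\SpG$ to give the required $\overline F$. I expect the main obstacle to be the verification --- at the level of $\infty$-categorical symmetric monoidal structures rather than point-set models --- that the sequential colimit genuinely carries the universal property of a symmetric monoidal localization; this is where Voevodsky's symmetry condition and the careful control of the induced monoidal structure on a filtered colimit in $\mathrm{Pr}^{\mathrm L}$ are essential, and it is the technical core of \cite{Robalo15} and \cite[Appendix C]{GepnerMeier}. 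A secondary technical point is the cofinality argument reducing the $RO(G)$-indexed stabilization to the single-object inversion of $S^{\rho_G}$.
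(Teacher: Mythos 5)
Your proposal is correct and follows exactly the route the paper relies on: the paper does not prove this statement itself but cites \cite[Corollary 2.22]{Robalo15} and \cite[Corollary C.7]{GepnerMeier}, and your sketch --- identifying $\SpG$ with the formal inversion $\mathcal{S}_*^G[(S^{\rho_G})^{-1}]$ via Robalo's cyclic-permutation criterion, the sequential colimit along $S^{\rho_G}\wedge-$ in $\mathrm{Pr}^{\mathrm{L}}$, and cofinality of the multiples of $\rho_G$ in the poset of representations --- is precisely the argument of those references. The one step deserving slightly more care is the $G$-equivariant homotopy from the cyclic permutation of $(S^{\rho_G})^{\wedge 3}$ to the identity: evenness of the induced coordinate permutation only places it in $SO(3|G|)$, not a priori in the identity component of the $G$-equivariant isometries, so one should add that it is the image of a $3$-cycle under $SO(3)\to O(\rho_G^{\oplus 3})^{G}$ acting on the multiplicity space in $\rho_G^{\oplus 3}\cong \mathbb{R}^3\otimes\rho_G$, whence connectedness of $SO(3)$ supplies a path through $G$-equivariant maps.
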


\begin{Rem}
	Here and in the sequel, when we say that a functor is essentially unique, we mean that the collection of such functors is parametrized by a contractible Kan complex.  Any two such choices will be equivalent (in a suitable $\infty$-category of functors) and will induce naturally isomorphic functors at the level of homotopy categories.
\end{Rem}

\begin{Rem}\label{rem:universal-Sp}
	We now recall the universal property of the stable $\infty$-category of nonequivariant spectra $\Sp$.  As explained in \cite[\S4.8.2]{HALurie}, the $\infty$-category~$\Prst$ of presentable stable $\infty$-categories and colimit preserving functors has a symmetric monoidal stucture whose commutative algebra objects are the presentably symmetric monoidal stable $\infty$-categories, that is, symmetric monoidal \mbox{$\infty$-categories} $\cat C^{\otimes}$ whose underlying $\infty$-category $\cat C$ is presentable and stable and has the property that the bifunctor $-\otimes-:\cat C \times \cat C \to \cat C$ commutes with small colimits in each variable.  The $\infty$-category of spectra $\Sp$ is the unit of $\Prst$ and consequently is the initial commutative algebra object in $\Prst$.  In other words, given any presentably symmetric monoidal stable $\infty$-category $\cat D^{\otimes} \in \CAlg(\Prst)$, there is an essentially unique symmetric monoidal functor $\Sp^{\otimes} \to \cat D^{\otimes}$ whose underlying functor $\Sp \to \cat D$ commutes with colimits \cite[Cor.~4.8.2.19]{HALurie}.
\end{Rem}

\begin{Exa}
	For any finite group $G$, there is an essentially unique symmetric monoidal functor
		\[
			\triv_G : \Sp \to \SpG
		\]
	which commutes with colimits.
\end{Exa}

\begin{Rem}\label{rem:module-induced}
	If $\cat C^{\otimes}$ is a presentably symmetric monoidal stable \mbox{$\infty$-category} then for any commutative algebra $A \in \CAlg(\cat C^{\otimes})$, the category of $A$-modules $A\MMod_{\cat C}^{\otimes}$ is also a presentably symmetric monoidal stable $\infty$-category \cite[Thm.~3.4.4.2]{HALurie}.  Moreover, the forgetful functor $U_A:A\MMod_{\cat C} \to \cat C$ has a left adjoint $F_A:\cat C \to A\MMod_{\cat C}$ which can be equipped with a symmetric monoidal structure, and the composite $U_AF_A$ is the functor $A\otimes -:\cat C \to \cat C$ (cf.~\cite[Cor.~4.2.3.7, Cor.~4.2.4.8, Thm.~4.5.2.1, \S 4.5.3]{HALurie}).  Furthermore, any symmetric monoidal functor $\theta:\cat C^{\otimes} \to \cat D^{\otimes}$ induces a functor $\CAlg(\cat C^{\otimes}) \to \CAlg(\cat D^{\otimes})$ between the $\infty$-categories of commutative algebra objects.  Moreover, for any $A \in \CAlg(\cat C^{\otimes})$ there is an induced symmetric monoidal functor $A\MMod_{\cat C}^\otimes \to \theta(A)\MMod_{\cat D}^\otimes$ for which both squares in
		\[
		\begin{tikzcd}
			\cat C \ar[r,"\theta"] \ar[d,shift right, "F"'] & \cat D \ar[d,shift right, "F"'] \\
			A\MMod_{\cat C} \ar[u,shift right,"U"'] \ar[r,"\theta"] & \theta(A)\MMod_{\cat D} \ar[u,shift right, "U"']
		\end{tikzcd}
		\]
	commute up to equivalence. Moreover, if $\theta : \cat C \to \cat D$ commutes with limits or colimits then so does the induced functor $\theta: A\MMod_{\cat C} \to \theta(A)\MMod_{\cat D}$ (cf.~\cite[Cor.~4.2.3.3 and Cor.~4.2.3.5]{HALurie}).
\end{Rem}

\begin{Def}\label{def:HZG}
	Let $\HZ \in \CAlg(\Sp)$ denote the Eilenberg--MacLane spectrum of the integers. For any finite group $G$, let $\HZ_G := \triv_G(\HZ) \in \CAlg(\SpG)$ and consider its $\infty$-category of modules $\HZ_G\MMod := \HZ_G\MMod_{\SpG}$. We define
		\[
			\DHZG := \Ho(\HZ_G\MMod)
		\]
	to be its homotopy category.
\end{Def}

\begin{Rem}\label{rem:free-forget-adj}
	By construction, we have a free-forgetful adjunction
		\[
		\begin{tikzcd}
			\SpG \ar[d, shift right, "F_G"'] \\
			\HZ_G\MMod \ar[u,shift right, "U_G"']
		\end{tikzcd}
		\]
	and $U_G(\unit) \cong \triv_G(\HZ)$ as commutative algebras in $\Sp^G$.
\end{Rem}

\begin{Rem}\label{rem:Amodules}
	If $\cat C^{\otimes} \in \CAlg(\Prst)$ is a presentably symmetric monoidal stable \mbox{$\infty$-category} then $\Ho(\cat C)$ has the structure of a triangulated category as well as a closed symmetric monoidal structure that is compatible with the triangulation (in the sense of \cite[Def.~A.2.1]{HoveyPalmieriStrickland97}).  Moreover, if $\cat C$ is compactly generated by a set of objects $\cat G$ then for any commutative algebra $A \in \CAlg(\cat C^{\otimes})$, the stable $\infty$-category $A\MMod_{\cat C}$ is compactly generated by the set $F_A(\cat G)$.  Note that an object of a stable $\infty$-category $\cat D$ is compact if and only if it is compact as an object of $\Ho(\cat D)$ in the usual triangulated category sense (see \cite[Prop.~1.4.4.1]{HALurie}).  Moreover, a set $\cat G$ of compact objects generates $\cat D$ under colimits if and only if $\cat G$ generates $\Ho(\cat D)$ in the usual triangulated category sense (see the proof of \cite[Cor.~1.4.4.2]{HALurie}).
\end{Rem}

\begin{Exa}\label{exa:generators}
	The stable $\infty$-category $\HZ_G\MMod$ is compactly generated by 
		\[
			\{ F_G(G/H_+) \mid H \le G\}.
		\]
	Moreover, since these compact generators are dualizable and the unit  $\unit$ is compact, it follows that an object of $\DHZG$ is dualizable if and only if it is compact (cf.~\cite[Theorem A.2.5]{HoveyPalmieriStrickland97}).  In other words, $\DHZG$ is a rigidly-compactly generated tensor triangulated category.
\end{Exa}

\begin{Rem}\label{rem:group-hom}
	Any homomorphism of groups $\alpha:G \to G'$ induces a left adjoint
		\begin{equation}\label{eq:group-hom-spaces}
			\alpha^*:\mathcal{S}_*^{G'} \to \mathcal{S}_*^G
		\end{equation}
	which can be constructed as follows. Restriction along $\alpha$ provides a functor from the category of $G'$-sets to the category of $G$-sets.  This functor always has a left adjoint which sends transitive $G$-sets to transitive $G'$-sets and hence restricts to a functor $\alpha_!:\mathcal{O}(G) \to \mathcal{O}(G')$ on the orbit categories.  Restricting along $\alpha_!$ provides the functor \eqref{eq:group-hom-spaces}.  Note that it preserves colimits hence is a left adjoint (since $\mathcal{S}_*$ is presentable).

	By Theorem~\ref{thm:robalo}, there is an essentially unique symmetric monoidal left adjoint
		\[
			\alpha^* : \Sp^{G'} \to \Sp^{G}
		\]
	such that the diagram
		\[
		\begin{tikzcd}
			\mathcal{S}_*^{G'} \ar[d,"\Sigma^\infty"'] \ar[r,"\alpha^*"] & \mathcal{S}_*^G \ar[d,"\Sigma^\infty"] \\
			\Sp^{G'} \ar[r,"\alpha^*"] & \Sp^G
		\end{tikzcd}
		\]
	commutes up to an equivalence of symmetric monoidal functors.

	By the universal property of Remark~\ref{rem:universal-Sp}, the composite
		\[
			\Sp \xra{\triv_{G'}} \Sp^{G'} \xra{\alpha^*} \Sp^G
		\]
	is equivalent as a symmetric monoidal functor to $\triv_G : \Sp \to \SpG$.  In particular, $\alpha^*(\HZ_{G'}) \cong \HZ_G$ as commutative algebras in $\SpG$.  Hence by Remark~\ref{rem:module-induced}, there is a symmetric monoidal left adjoint $\alpha^*:\HZ_{G'}\MMod \to \HZ_{G}\MMod$ such that both squares in 
		\[
		\begin{tikzcd}
			\Sp^{G'} \ar[d, shift right,"F_{G'}"'] \ar[r,"\alpha^*"]& \SpG \ar[d,shift right,"F_G"']\\
			\HZ_{G'}\MMod \ar[u,shift right,"U_{G'}"'] \ar[r,"\alpha^*"]& \HZ_G\MMod \ar[u, shift right,"U_G"']
		\end{tikzcd}
		\]
	commute up to equivalence.
\end{Rem}

\begin{Rem}\label{rem:composite-group-hom}
	If $G \xra{\alpha} G' \xra{\beta} G''$ is a composite of group homomorphisms, it follows from the constructions that the functor $(\beta \circ \alpha)^*$ is equivalent to $\alpha^* \circ \beta^*$ (in the relevant $\infty$-category of symmetric monoidal functors) at the level of spaces $\mathcal{S}_*^G$, spectra $\Sp^G$, and $\HZ_G\MMod$.
\end{Rem}

\begin{Rem}\label{rem:ind-name-adjoint}
	As usual, for a quotient $\alpha :G \to G/N$, we call $\infl_{G/N}^G := \alpha^*$ the inflation functor and for an inclusion $\alpha:H \hookrightarrow G$, we call $\res^G_H:=\alpha^*$ the restriction functor.  Since the functor $\res_H^G:\Sp^G \to \Sp^H$ preserves limits, the induced functor $\res_H^G:\HZ_G\MMod \to \HZ_H\MMod$ also preserves limits (Rem.~\ref{rem:module-induced}) and hence has a left adjoint (see \cite[Cor.~5.5.2.9 and Cor.~5.4.7.7]{HTTLurie}).
\end{Rem}

\begin{Rem}
	The smashing Bousfield localizations of a presentably symmetric monoidal stable $\infty$-category $\cat C$ correspond to the smashing Bousfield localizations of the tensor triangulated category $\Ho(\cat C)$.  More precisely, smashing Bousfield localizations correspond to idempotent commutative algebras (in $\cat C$ or $\Ho(\cat C)$ respectively) and the $\infty$-category of idempotent commutative algebras in $\cat C$ is equivalent to the ordinary category (in fact poset) of idempotent commutative algebras in~$\Ho(\cat C)$ (see \cite[\S 4.8.2]{HALurie} and \cite{BalmerFavi11}).
\end{Rem}

\begin{Prop}\label{prop:induced-localization}
	Let $\cat C^{\otimes} \in \CAlg(\Prst)$ be a presentably symmetric monoidal stable $\infty$-category and let $L:\cat C \to \cat C$ be a smashing Bousfield localization.  For any commutative algebra $A \in \CAlg(\cat C^{\otimes})$, the induced functor (Rem.~~\ref{rem:module-induced})
		\begin{equation}\label{eq:algebra-induced-localization}
			A\MMod_{\cat C} \to LA\MMod_{L\cat C}
		\end{equation}
	is a smashing Bousfield localization. Moreover, if $\cat C$ is rigidly-compactly generated and $L$ is the finite localization associated to a set $\cat G$ of compact objects in $\cat C$ then~\eqref{eq:algebra-induced-localization} is the finite localization associated to the set $F_A(\cat G)$ of compact objects in $A\MMod_{\cat C}$.
\end{Prop}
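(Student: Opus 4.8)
The plan is to reduce everything to the dictionary between smashing Bousfield localizations and idempotent commutative algebras recalled in the remark preceding the statement. Write $E := L\unit \in \CAlg(\cat C^{\otimes})$ for the idempotent algebra corresponding to $L$, so that $L\cat C$ is (equivalent to) the $\infty$-category $E\MMod_{\cat C}$ and $L$ itself is the symmetric monoidal colimit-preserving functor $E \otimes - : \cat C \to E\MMod_{\cat C} \hookrightarrow \cat C$. Under the induced map $\CAlg(\cat C^{\otimes}) \to \CAlg((L\cat C)^{\otimes})$ the algebra $A$ goes to $LA \simeq E \otimes A$, which carries canonical structures of algebra over $E$ and, via the unit map $A \to E\otimes A$, over $A$.

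The first step is to identify the functor \eqref{eq:algebra-induced-localization} with a base-change functor. By transitivity of module categories (cf.\ \cite[\S4.8]{HALurie}) applied to the map $E \to E\otimes A$ of commutative algebras in $\cat C$ (together with $L\cat C \simeq E\MMod_{\cat C}$), there is a symmetric monoidal equivalence
\[
	LA\MMod_{L\cat C} \;\simeq\; (E \otimes A)\MMod_{\cat C};
\]
applying transitivity once more along $A \to E\otimes A$ presents the right-hand side as the $\infty$-category of $(E\otimes A)$-modules inside $A\MMod_{\cat C}$. Under these identifications I claim the functor \eqref{eq:algebra-induced-localization} of Remark~\ref{rem:module-induced} becomes the base-change (free-module) functor $M \mapsto (E\otimes A)\otimes_A M$. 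Since both functors preserve colimits and $A\MMod_{\cat C}$ is compactly generated by $\SET{F_A(X)}{X \text{ a compact generator of }\cat C}$, it suffices to compare them on these generators: each sends $F_A(X)$ to $E\otimes X$ --- the functor \eqref{eq:algebra-induced-localization} by the commuting square of free functors in Remark~\ref{rem:module-induced}, and base change by the computation $(E\otimes A)\otimes_A(A\otimes X)\simeq E\otimes X$.

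Now $E\otimes A$ is an \emph{idempotent} commutative algebra in $A\MMod_{\cat C}$: because $E$ is idempotent in $\cat C$, the multiplication $(E\otimes A)\otimes_A(E\otimes A) \simeq (E\otimes E)\otimes A \to E\otimes A$ is an equivalence. Hence, by the dictionary recalled before the statement, base change along $A \to E\otimes A$ is a smashing Bousfield localization of $A\MMod_{\cat C}$; by the previous step this is precisely \eqref{eq:algebra-induced-localization}, which proves the first assertion. For the second assertion, assume $\cat C$ is rigidly-compactly generated and $L$ is the finite localization associated to a set $\cat G$ of compact objects, so that $E = L\unit = f_{\cat J}$ is the right idempotent of the idempotent triangle $e_{\cat J}\to\unit\to f_{\cat J}$ attached to the thick $\otimes$-ideal $\cat J \subseteq \cat C^c$ generated by $\cat G$. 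The free-module functor $F_A : \cat C \to A\MMod_{\cat C}$ is a coproduct-preserving tensor triangulated functor between rigidly-compactly generated tensor triangulated categories (cf.\ Remark~\ref{rem:Amodules} and Example~\ref{exa:generators}), so \cite[Prop.~5.11]{BalmerSanders17} shows that $F_A(e_{\cat J})\to\unit\to F_A(f_{\cat J})$ is the idempotent triangle for the finite localization of $A\MMod_{\cat C}$ associated to $F_A(\cat G)$. But $F_A(f_{\cat J}) = F_A(E) \simeq E\otimes A$ is the right idempotent of the smashing localization \eqref{eq:algebra-induced-localization} found above, and a smashing localization is determined by its right idempotent; therefore \eqref{eq:algebra-induced-localization} is the finite localization associated to $F_A(\cat G)$.

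The main obstacle is the identification in the first step: verifying that, after transitivity of module categories, \eqref{eq:algebra-induced-localization} genuinely is the base-change functor along $A \to E\otimes A$ --- i.e.\ that $LA\MMod_{L\cat C}$ is the $\infty$-category of $LA$-modules \emph{inside} $A\MMod_{\cat C}$ and that the comparison square of free functors upgrades the matching from objects to functors. Granting this, both parts are formal: the first from the smashing/idempotent-algebra dictionary, the second from \cite[Prop.~5.11]{BalmerSanders17}.
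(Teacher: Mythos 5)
Your overall route coincides with the paper's: identify the induced functor with extension of scalars along the idempotent algebra $F_A(L\unit)\simeq L\unit\otimes A$ in $A\MMod_{\cat C}$, invoke the dictionary between idempotent algebras and smashing localizations for the first claim, and apply \cite[Prop.~5.11]{BalmerSanders17} to $F_A$ for the second. The second half of your argument, and the observation that $L\unit\otimes A$ is idempotent over $A$, are fine and are essentially what the paper does.

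The genuine gap is exactly the step you flag as ``the main obstacle'', and your proposed justification for it does not work: you cannot identify the functor \eqref{eq:algebra-induced-localization} with base change along $A\to L\unit\otimes A$ merely by checking that both colimit-preserving functors send each compact generator $F_A(X)$ to an object equivalent to $L\unit\otimes A\otimes X$. A colimit-preserving functor out of a compactly generated stable $\infty$-category is determined by its restriction to the compact objects \emph{as a functor}, not by the images of a set of generators as objects; object-wise agreement says nothing about the induced maps on mapping spectra between the generators or their coherences, so no conclusion about the two functors being equivalent can be drawn. To close this one needs an actual comparison of functors: either a natural transformation which is then checked to be an equivalence on generators, or an identification through canonical equivalences. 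The paper does the latter: it uses \cite[Prop.~4.8.2.10]{HALurie} to recognize $L$ itself as extension of scalars along the idempotent algebra $L\unit$, and then the compatibility of nested module categories (\cite[\S 3.4.1, \S 3.2.4]{HALurie}) to identify the functor of Remark~\ref{rem:module-induced} induced by $F_{L\unit}$, under the chain of equivalences $F_{L\unit}(A)\MMod_{L\unit\MMod_{\cat C}}\simeq (L\unit\otimes A)\MMod_{\cat C}\simeq (A\otimes L\unit)\MMod_{\cat C}\simeq F_A(L\unit)\MMod_{A\MMod_{\cat C}}$, with extension of scalars along $F_A(L\unit)$ --- an identification of functors, not merely of values. (Alternatively one could equip both functors with $A\MMod_{\cat C}$-linear structures and use that colimit-preserving linear functors out of $A\MMod_{\cat C}$ are determined by their value on the unit, but producing those structures is precisely the coherence work your generator count tries to bypass.) With that identification supplied, the remainder of your proof is correct and matches the paper's.
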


\begin{proof}
	Given two algebras $A,B \in \CAlg(\cat C)$, the extension-of-scalars $F_B : \cat C \to B\MMod_{\cat C}$ induces a functor $A\MMod_{\cat C} \to F_B(A)\MMod_{B\MMod_{\cat C}}$ by Remark~\ref{rem:module-induced} which under the equivalences
		\begin{align*}
			F_B(A)\MMod_{B\MMod_{\cat C}} &\simeq U_BF_B(A)\MMod_{\cat C} \\
			 &\simeq (B\otimes A)\MMod_{\cat C} \\
			 &\simeq (A\otimes B)\MMod_{\cat C} \\
			 &\simeq U_AF_A(B)\MMod_{\cat C}\\
			 &\simeq F_A(B)\MMod_{A\MMod_{\cat C}}
		\end{align*}
	is the extension-of-scalars $A\MMod_{\cat C} \to F_A(B)\MMod_{A\MMod_{\cat C}}$ associated to $F_A(B) \in \CAlg(A\MMod_{\cat C})$. (Nesting of module categories behaves as expected for symmetric monoidal $\infty$-categories: see \cite[\S3.4.1 and \S3.2.4]{HALurie}.) Next recall from \cite[Prop.~4.8.2.10]{HALurie} that up to symmetric monoidal equivalence, the smashing Bousfield localization $L:\cat C \to L\cat C$ is nothing but extension-of-scalars $\cat C \to L\unit\MMod_{\cat C}$ with respect to the idempotent algebra~$L\unit \in \CAlg(\cat C)$. Taking $B=L\unit$ above, we find that the the functor $A\MMod_{\cat C} \to LA\MMod_{L\cat C}$ induced by $L:\cat C \to L \cat C$ is the smashing Bousfield localization associated to the idempotent algebra $F_A(L\unit) \in \CAlg(A\MMod_{\cat C})$.

	To prove the second part, it suffices to check (at the level of homotopy categories) that $F_A(L\unit)$ is the smashing idempotent associated to the indicated finite localization.  This is the content of \cite[Proposition 5.11]{BalmerSanders17}.
\end{proof}

\begin{Rem}
	The following useful proposition is a variation on ideas that have appeared in  a few different places (e.g.~\cite[\S5.3]{MathewNaumannNoel17}).
\end{Rem}

\begin{Prop}\label{prop:monadicequivalence}
	Let $F:\cat C \to \cat D$ be a symmetric monoidal functor between presentably symmetric monoidal stable $\infty$-categories which admits a right adjoint~$G$.  If~$\cat C$ is generated by a set of compact-rigid objects then the functor $F$ is an equivalence if and only if 
		\begin{enumerate}
			\item the functor $F$ sends a set of compact generators of $\cat C$ to a set of compact generators of $\cat D$, and
			\item the commutative algebra $G(\unit_{\cat D})$ is equivalent to the commutative algebra $\unit_{\cat C}$.
		\end{enumerate}
\end{Prop}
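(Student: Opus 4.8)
The plan is to use the $\infty$-categorical Barr--Beck--Lurie monadicity theorem together with the rigidity hypothesis on $\cat C$. The ``only if'' direction is trivial (an equivalence is symmetric monoidal, so it sends compact generators to compact generators and $G(\unit_{\cat D}) = G F(\unit_{\cat C}) \simeq \unit_{\cat C}$). For the ``if'' direction, observe first that since $\cat C$ is compactly generated and $F$ preserves colimits (being a left adjoint), condition (a) implies that $F$ preserves \emph{all} compact objects: the compact generators of $\cat C$ are dualizable, hence their images are dualizable, hence compact in $\cat D$ (using that $\unit_{\cat D}$ is compact, which follows from (a) applied to the unit, or simply because $F$ is monoidal so $F(\unit_{\cat C}) \simeq \unit_{\cat D}$ is among the compact generators). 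Dually, $G$ then preserves colimits, since its left adjoint $F$ preserves compact objects and $\cat C$ is compactly generated.

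Next I would argue that $F$ is conservative. The right adjoint $G$ is colimit-preserving and lax symmetric monoidal; its essential image generates... more precisely, since $F$ sends a set of compact generators of $\cat C$ to a set of compact generators of $\cat D$, the functor $G$ is conservative: if $G(Y) = 0$ then for each compact generator $c$ of $\cat C$ we have $\Map_{\cat D}(F(c), Y) \simeq \Map_{\cat C}(c, G(Y)) = 0$, and as the $F(c)$ generate $\cat D$ this forces $Y = 0$. Wait --- it is $G$ that is conservative here, which is what we want for monadicity of the adjunction $F \dashv G$: by the Barr--Beck--Lurie theorem (\cite[Thm.~4.7.3.5]{HALurie}), since $G$ is conservative and preserves colimits (in particular preserves $G$-split geometric realizations), the comparison functor exhibits $\cat D$ as the $\infty$-category of modules over the monad $T := GF$ acting on $\cat C$. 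Now $T = GF \simeq G(\unit_{\cat D}) \otimes -$ as an endofunctor of $\cat C$ (by the projection formula, since $F$ is symmetric monoidal with right adjoint $G$, so $GF(X) \simeq G(\unit_{\cat D} \otimes F(X)) \simeq G(\unit_{\cat D}) \otimes X$), and the monad structure on $T$ is that of the commutative algebra $G(\unit_{\cat D}) \in \CAlg(\cat C)$. By hypothesis (b), $G(\unit_{\cat D}) \simeq \unit_{\cat C}$ as commutative algebras, so $T$ is equivalent (as a monad) to the identity monad. Therefore $\cat D \simeq T\text{-mod}(\cat C) \simeq \unit_{\cat C}\MMod_{\cat C} \simeq \cat C$, and unwinding the comparison shows this equivalence is implemented by $F$.

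The main obstacle is the bookkeeping needed to promote the bare monadic equivalence $\cat D \simeq T\text{-mod}(\cat C)$ to the \emph{symmetric monoidal} statement that $F$ itself is an equivalence, and to ensure the identification of the monad $T$ with $G(\unit_{\cat D})\otimes -$ is compatible with the algebra structures so that hypothesis (b) can be applied. One must check that the equivalence of monads $GF \simeq \unit_{\cat C}\otimes - = \Id$ is an equivalence of monads and not merely of underlying endofunctors; this is where the hypothesis that $G(\unit_{\cat D}) \simeq \unit_{\cat C}$ \emph{as commutative algebras} (not just as objects) is essential, and one should cite the identification of the monad $GF$ with the algebra $G(\unit_{\cat D})$ from the discussion of Remark~\ref{rem:module-induced} (cf.~\cite[\S4.5]{HALurie}). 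Once that compatibility is in hand, the conclusion that $F$ is an equivalence is formal. A minor secondary point requiring care: verifying that $G$ preserves colimits, which as noted follows from $F$ preserving compact objects via condition (a), combined with compact generation of $\cat C$ --- this is the standard adjoint-functor argument (e.g.~\cite[Thm.~5.1]{Neeman96} at the triangulated level, or directly in $\Prst$).
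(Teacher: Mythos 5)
Your proposal is correct and follows essentially the same route as the paper: establish that (a) makes $G$ colimit-preserving and conservative, apply the Barr--Beck--Lurie theorem, use the projection formula to identify the monad $GF$ with the algebra $G(\unit_{\cat D})$, and conclude via (b) that $F$ is extension of scalars along an algebra equivalent to $\unit_{\cat C}$. The compatibility point you flag (identifying $GF$ with $G(\unit_{\cat D})\otimes-$ as monads, not just endofunctors) is exactly what the paper handles by citing \cite[Lemma~2.8]{BalmerDellAmbrogioSanders15}, and the projection formula being an equivalence is checked there at the level of homotopy categories via \cite[Prop.~2.15]{BalmerDellAmbrogioSanders16}.
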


\begin{proof}
	The $(\Rightarrow)$ direction is immediate once we recognize that the unit $\eta:\unit_{\cat C} \to GF(\unit_{\cat C})\simeq G(\unit_{\cat D})$ is a map of algebras.  On the other hand, hypothesis (a) implies that the right adjoint $G$ preserves colimits and is, moreover, conservative.  Hence the adjunction $F \dashv G$ is monadic by the Barr--Beck--Lurie Theorem \cite[Theorem~4.7.3.5]{HALurie}.  Now the projection formula $G(x)\otimes y \to G(x \otimes F(y))$ is an equivalence under our assumptions (as can be checked at the level of homotopy categories \cite[Prop.~2.15]{BalmerDellAmbrogioSanders16}) and the natural equivalence $G(\unit)\otimes y \simeq GF(y)$ provides an isomorphism between the monad associated to the algebra object $G(\unit)\in\CAlg(\cat C)$ and the monad~$GF$ of the adjunction (see the proof of \cite[Lemma~2.8]{BalmerDellAmbrogioSanders15}).  The functor~$F$ is thus, up to equivalence, just extension of scalars $\cat C \to G(\unit)\MMod_{\cat C}$ with respect to the algebra $G(\unit)$. This is an equivalence by the second hypothesis.
\end{proof}

\begin{Prop}\label{prop:HZGfiniteloc}
	Let $N \lenormal G$ be a normal subgroup and let 
		\[
			L:\HZ_G\MMod \to \HZ_G\MMod
		\]
	be the finite localization associated to the set $\SET{F_G(G/H_+)}{{H \not\supseteq N}}$. The composite
		\begin{equation}\label{eq:HZGcomp}
			\HZ_{G/N}\MMod \xra{\infl_{G/N}^G} \HZ_G\MMod \xra{L} L(\HZ_G\MMod)
		\end{equation}
	is an equivalence of symmetric monoidal $\infty$-categories.
\end{Prop}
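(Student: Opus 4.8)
The plan is to reduce the statement to the analogous fact for genuine $G$-spectra and then transport it along Proposition~\ref{prop:induced-localization} and the module-category functoriality recorded in Remark~\ref{rem:module-induced}.

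First recall the unlinearized version. Let $L_{\Sp}:\SpG\to\SpG$ denote the finite localization associated to $\SET{\Sigma^\infty G/H_+}{H\not\supseteq N}$. It is classical that the composite
\[
	\Sp^{G/N}\xra{\infl_{G/N}^G}\SpG\xra{L_{\Sp}}L_{\Sp}(\SpG)
\]
is an equivalence of symmetric monoidal $\infty$-categories --- that is, the $N$-geometric fixed point functor exhibits $\Sp^{G/N}$ as this finite localization of $\SpG$; see \cite[Cor.~II.9.9]{LewisMaySteinbergerMcClure86}, \cite[App.~B]{HillHopkinsRavenel16} and \cite[\S5.3]{MathewNaumannNoel17} (the passage between the $\Ho$-level statement and the $\infty$-categorical one being automatic, since a smashing localization of a compactly generated tensor triangulated category lifts uniquely to the $\infty$-categorical level). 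Write $\theta:=L_{\Sp}\circ\infl_{G/N}^G\colon\Sp^{G/N}\isoto L_{\Sp}(\SpG)$ for this symmetric monoidal equivalence.

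Next apply Proposition~\ref{prop:induced-localization} with $\cat C=\SpG$, with $L=L_{\Sp}$ (the finite localization at $\cat G:=\SET{\Sigma^\infty G/H_+}{H\not\supseteq N}$), and with the commutative algebra $A=\HZ_G\in\CAlg(\SpG)$. Since $F_{\HZ_G}(\cat G)=\SET{F_G(G/H_+)}{H\not\supseteq N}$, the proposition identifies the functor $L$ of the statement with the induced localization $\HZ_G\MMod\to L_{\Sp}(\HZ_G)\MMod_{L_{\Sp}(\SpG)}$, compatibly with the localization functors out of $\HZ_G\MMod$. By Remark~\ref{rem:group-hom} we have $\infl_{G/N}^G(\HZ_{G/N})\cong\HZ_G$ in $\CAlg(\SpG)$, hence $\theta(\HZ_{G/N})=L_{\Sp}(\infl_{G/N}^G(\HZ_{G/N}))\cong L_{\Sp}(\HZ_G)$ in $\CAlg(L_{\Sp}(\SpG))$. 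Feeding the symmetric monoidal equivalence $\theta$ into the module-category functoriality of Remark~\ref{rem:module-induced} therefore produces a symmetric monoidal equivalence
\[
	\HZ_{G/N}\MMod\isoto\theta(\HZ_{G/N})\MMod_{L_{\Sp}(\SpG)}=L_{\Sp}(\HZ_G)\MMod_{L_{\Sp}(\SpG)}\simeq L(\HZ_G\MMod).
\]
Since the construction in Remark~\ref{rem:module-induced} is compatible with composition of symmetric monoidal functors and $\theta=L_{\Sp}\circ\infl_{G/N}^G$ by definition, this equivalence is precisely the composite~\eqref{eq:HZGcomp}; as a symmetric monoidal functor which is an equivalence of underlying $\infty$-categories is automatically a symmetric monoidal equivalence, this finishes the proof.

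The main obstacle is the classical input in the second paragraph --- that the geometric fixed points exhibit $\Sp^{G/N}$ as the indicated finite localization of $\SpG$ --- and, more precisely, being careful that this statement is available in the particular model of $\SpG$ taken from \cite{GepnerMeier}. A more self-contained alternative would be to apply Proposition~\ref{prop:monadicequivalence} directly to the symmetric monoidal left adjoint~\eqref{eq:HZGcomp}: hypothesis~(a) reduces to the observation that $\infl_{G/N}^G$ carries the compact generator $F_{G/N}((G/N)/(K/N)_+)$ of $\HZ_{G/N}\MMod$ to $F_G(G/K_+)$ for $N\le K\le G$, so that $L$ sends these to a generating set of $L(\HZ_G\MMod)$ (the images of the surviving generators under a finite localization), while hypothesis~(b) unwinds to the identification $\phigeom{N,G}(\HZ_G)\simeq\HZ_{G/N}$ of commutative algebras and hence again rests on the same geometric-fixed-point input.
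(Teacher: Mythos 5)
Your overall strategy is the same as the paper's: first the unlinearized statement that $\Sp^{G/N}\to L_{\Sp}(\SpG)$ is a symmetric monoidal equivalence, then transport to module categories via Proposition~\ref{prop:induced-localization} and Remark~\ref{rem:module-induced} (with the observation that a symmetric monoidal equivalence $\theta$ induces an equivalence $A\MMod\simeq\theta(A)\MMod$); that second half of your argument matches the paper's almost verbatim. The one place you diverge is the treatment of the unlinearized input: the paper does not simply cite it, precisely because of the model-comparison issue you flag, but proves it at the level of $\infty$-categories by applying Proposition~\ref{prop:monadicequivalence} to the composite~\eqref{eq:SpGcomp}, so that the only classical input is the homotopy-category computation that $\unit\to(\widetilde{E\cat F}[{\not\supseteq}N])^N$ is an equivalence (\cite[Prop.~II.9.10(ii)]{LewisMaySteinbergerMcClure86}, \cite[Thm.~6.11]{MathewNaumannNoel17}). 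Your stated reason for the homotopy-to-$\infty$ passage (``a smashing localization lifts uniquely'') is not quite the relevant point: what one actually needs is that an exact symmetric monoidal functor of stable $\infty$-categories is an equivalence as soon as it is one on homotopy categories, \emph{together with} an identification of the inflation functor built from Theorem~\ref{thm:robalo} with the classical one on homotopy categories --- and it is exactly this identification that is not immediate in the Gepner--Meier model. Your fallback suggestion (apply Proposition~\ref{prop:monadicequivalence} directly, checking generators and the algebra identification) is essentially the paper's argument, so the proposal is correct modulo replacing the ``automatic'' claim by that monadicity argument.
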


\begin{proof}
	We first establish the analogous statement for $\Sp^{G/N}$ namely that if $L:\Sp^G \to \Sp^G$ is the finite localization associated to the set $\SET{G/H_+}{H \not\supseteq N}$ then the composite
		\begin{equation}\label{eq:SpGcomp}
			\Sp^{G/N} \xra{\infl_{G/N}^G} \Sp^G\xra{L} L(\Sp^G)
		\end{equation}
	is an equivalence of symmetric monoidal $\infty$-categories.  This is well-known at the level of homotopy categories (see \cite[Cor.~II.9.6]{LewisMaySteinbergerMcClure86}) but we will provide a proof that holds at the level of $\infty$-categories.  For simplicity of notation, let $f^*$ denote the composite \eqref{eq:SpGcomp} and let $f_*$ denote a right adjoint.  The functor $f^*$ has a symmetric monoidal structure (being a composite of such) and we just need to establish that it is an equivalence.  First note (e.g.~by Remark~\ref{rem:Amodules}) that the smashing localization~$L$ maps the set of compact generators $\SET{G/H_+}{H \le G}$ of $\Sp^G$ to a set of compact generators for $L(\Sp^G)$ which is thus compactly generated by $\SET{L(G/H_+)}{H \supseteq N}$.  Note that these are precisely the images under $f^*$ of the generators of $\Sp^{G/N}$.  We can thus invoke Proposition~\ref{prop:monadicequivalence} if we prove that the homomorphism of algebras $\Sphere_{G/N}=\unit \to f_*f^*\unit = (\widetilde{E\cat F}[{\not\supseteq}N])^N$ is an equivalence.  This can be checked in the homotopy category, where it is well-known (see \cite[Prop.~II.9.10.(ii)]{LewisMaySteinbergerMcClure86} or the proof of \cite[Thm.~6.11]{MathewNaumannNoel17}).

	The result now follows from Proposition~\ref{prop:induced-localization}.  Indeed, \eqref{eq:HZGcomp} is obtained from \eqref{eq:SpGcomp} by taking $A:= \HZ_{G/N}\in \CAlg(\Sp^{G/N})$ and invoking Remark~\ref{rem:module-induced} and Proposition~\ref{prop:induced-localization} (for the second functor).  Moreover, note (either directly or as the special case of Proposition~\ref{prop:induced-localization} correponding to a trivial finite localization) that if $\theta:\cat C \to \cat D$ is a symmetric monoidal equivalence, then the induced functor $A\MMod_{\cat C} \to \theta(A)\MMod_{\cat D}$ is a symmetric monoidal equivalence.  Thus, the fact that \eqref{eq:SpGcomp} is an equivalence implies that \eqref{eq:HZGcomp} is an equivalence.
\end{proof}

\begin{Rem}\label{rem:conjugate}
	For any $g \in G$, consider the inner automorphism $\alpha:=c_g = (-)^g : G \xra{\sim} G$.  The induced automorphism of the category of $G$-sets is naturally isomorphic to the identity functor.  Consequently, the induced functor $\alpha^*:\mathcal{S}_*^G \to \mathcal{S}_*^G$ of Remark~\ref{rem:group-hom} is equivalent (as a symmetric monoidal functor) to the identity functor.  It follows that the same is true for the induced functors $c_g^* : \SpG\to\SpG$ and $c_g^* : \HZ_G\MMod \to \HZ_G\MMod$.
\end{Rem}

\begin{Rem}
	To summarize, we have established \ref{it:DHZG} through \ref{it:trivHZ} as follows:
	\ref{it:DHZG} by Definition \ref{def:HZG}, Remark~\ref{rem:free-forget-adj} and Example \ref{exa:generators};
	\ref{it:trivial} by construction (Definition \ref{def:HZG});
	\ref{it:change-of-group} by Remark~\ref{rem:group-hom};
	\ref{it:composite-of-maps} by Remark~\ref{rem:composite-group-hom};
	\ref{it:adjoint} by Remark~\ref{rem:ind-name-adjoint};
	\ref{it:finite-loc} by Proposition~\ref{prop:HZGfiniteloc};
	\ref{it:phiHG} is just a definition;
	\ref{it:conjugacy} by Remark~\ref{rem:conjugate};
	and \ref{it:trivHZ} by Remark~\ref{rem:free-forget-adj}.
\end{Rem}

\begin{Rem}
	The approach we have taken is not the only way to construct the tensor triangulated category $\DHZG$.  An alternative approach is to consider the inflation functor $\epsilon_G^*:\Sp^O \to \Sp_G^O$ in the context of orthogonal $G$-spectra.  We can take a cofibrant replacement $\HZ^c \to \HZ$ of associative algebras with respect to the model structure given in \cite[Section III.7]{MandellMay02}.  The homotopy category $\Ho(\epsilon_G^*\HZ^c\MMod)$ is then a model for $\DHZG$. However, there are subtleties concerning the commutative structures on $\epsilon_G^* \HZ^c$.  One can show that $\epsilon_G^*\HZ^c$ is not a genuine $G$-$E_\infty$-ring spectrum (see Example \ref{exa: not genuine} below).  It does however have a naive $E_\infty$-structure (see \cite{Blumberg-Hill}) and this is good enough to produce the symmetric monoidal structure on $\DHZG$.  Alternatively one can do the same thing using the stable model category $\Sp_{\Sigma}^G$ of $G$-equivariant symmetric spectra of \cite{Hausmann} based on simplicial sets.  The latter is monoidally Quillen equivalent to $\Sp_G^O$ but has the advantage that it is combinatorial.  It turns out however that making this all work brings forth a number of point-set level technicalities and some of the delicate issues related to model categories of modules over naive equivariant $E_\infty$-ring spectra that are still not covered in the literature.  Instead we have chosen to construct $\DHZG$ using simple universal properties and stable $\infty$-categories.
\end{Rem}

\begin{Exa} \label{exa: not genuine}
	We show that $\epsilon_G^*\HZ^c$ for $G=C_2$ cannot be modelled by a genuine $C_2$-$E_\infty$-ring spectrum.  Indeed, if this were the case, then $\epsilon_{C_2}^*\HZ^c$ would admit a strictly commutative orthogonal spectrum model as a $C_2$-equivariant ring spectrum.  We would then have a commutative diagram in the homotopy category of spectra
		\[
		\begin{tikzcd}
			\HZ \ar[d,equals] \ar[r,"\Delta"] & \Phi^{C_2}(N \epsilon_{C_2}^*\HZ^c) \ar[r] \ar[d] & \Phi^{C_2}(\epsilon_{C_2}^*\HZ^c) \cong \HZ \ar[d] \\
			\HZ \ar[r,"\Delta"] & (N \epsilon_{C_2}^*\HZ^c)^{tC_2} \ar[r] & \HZ^{tC_2}
		\end{tikzcd}
		\]
	where $N$ is the Hill--Hopkins--Ravenel norm, the top $\Delta$ is the Hill--Hopkins--Ravenel diagonal \cite[Proposition B.209]{HillHopkinsRavenel16} and $(-)^{tC_2}$ is the Tate construction \cite{GreenleesMay95b,NikSch}.  The lower $\Delta$ is the Tate diagonal of \cite{NikSch}. It follows from \cite[Theorem IV.1.15]{NikSch} that the lower horizontal composite (which is the Tate valued Frobenius) splits as a sum containing all even Steenrod squares if we use the splitting of $\HZ^{tC_2}$ coming from the canonical $\HZ$-module structure. On the other hand, with respect to the same splitting, the right-hand vertical map is the inclusion of a summand and hence does not contain any non-trivial Steenrod squares. This gives a contradiction. 
\end{Exa}

\section{Equivalence with the categories of Kaledin and Barwick}\label{sec:equivalence} 
Our goal in this section is to prove that the category $\DHZG$ constructed in Section~\ref{sec:highly-structured} and whose spectrum was computed in Section~\ref{sec:computation} is equivalent to Kaledin's category of derived Mackey functors \cite{Kaledin11}.  We will achieve this in two steps by passing first through the category of spectral Mackey functors \cite{Barwick17,BGS20}.

\subsection*{Spectral Mackey Functors}
Recall the \emph{effective Burnside $\infty$-category} $\A(\Fin_G)$ introduced by \cite{Barwick17}. Its objects are finite $G$-sets and its \mbox{$n$-simplices} are \mbox{$n$-fold} spans of finite $G$-sets (see Definition \ref{def: effective Burnside category} below).  It is a semi-additive \mbox{$\infty$-category} with biproduct given by the disjoint union and with a symmetric monoidal structure provided by \cite[Section 2]{BGS20}. A \emph{spectral Mackey functor} is an additive functor $\A(\Fin_G) \to \Sp$.  The $\infty$-category of spectral Mackey functors
	\[
		\Fun_{\add}(\A(\Fin_G), \Sp)
	\] 
is a smashing localization of the functor category $\Fun(\A(\Fin_G),\Sp)$. As such, it is stable, presentable and can be equipped with the localized Day convolution product (see \cite{GlasDay} and \cite[Lemma~3.7]{BGS20}).  It also comes equipped with a symmetric monoidal left adjoint 
	\[
		\Sigma^{\infty} \colon \mathcal{S}_*^G \to \Fun_{\add}(\A(\Fin_G), \Sp)
	\]
which inverts representation spheres (see \cite[Proposition~A.10]{ClausenMathewNaumannNoel20pp}
and \cite[Appendix A]{Nardinthesis}) and whose right adjoint $\Omega^{\infty}$ is induced by the inclusion
	\[
		\mathcal{O}(G)^{op} \to \A(\Fin_G).
	\] 

\begin{Prop}\label{prop:universal-to-spectral-mackey}
	For any finite group $G$, the essentially unique colimit-preserving symmetric monoidal functor
		\begin{equation}\label{eq:spectralcomp}
			\bbF \colon \Sp^G \to \Fun_{\add}(\A(\Fin_G),\Sp)
		\end{equation}
	which commutes with $\Sigma^\infty$ is an equivalence of symmetric monoidal $\infty$-categories.
\end{Prop}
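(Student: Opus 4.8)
The plan is to recognize $\bbF$ as the comparison functor furnished by Theorem~\ref{thm:robalo} and then prove it is an equivalence by verifying the two hypotheses of the monadicity criterion, Proposition~\ref{prop:monadicequivalence}. Existence and essential uniqueness of $\bbF$ come for free: the functor $\Sigma^\infty\colon\mathcal{S}_*^G\to\Fun_{\add}(\A(\Fin_G),\Sp)$ is a symmetric monoidal left adjoint which inverts representation spheres, so in particular $\Sigma^\infty(S^{\rho_G})$ is invertible, and Theorem~\ref{thm:robalo} produces the essentially unique symmetric monoidal left adjoint $\bbF$ with $\bbF\circ\Sigma^\infty\simeq\Sigma^\infty$. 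Let $\mathbb{U}$ denote its right adjoint, which exists since $\bbF$ preserves colimits between presentable $\infty$-categories. As $\Sp^G$ is rigidly-compactly generated by the dualizable objects $\{\Sigma^\infty G/H_+\mid H\le G\}$, Proposition~\ref{prop:monadicequivalence} reduces the claim to two points.

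First I would check hypothesis (a): that $\bbF$ carries this set of compact generators to a set of compact generators of $\Fun_{\add}(\A(\Fin_G),\Sp)$. Because $\bbF$ commutes with $\Sigma^\infty$, we have $\bbF(\Sigma^\infty G/H_+)\simeq\Sigma^\infty G/H_+$ in the Mackey-functor category, so the content is simply that the orbit suspension spectra generate $\Fun_{\add}(\A(\Fin_G),\Sp)$ under colimits. This I would extract from Barwick's framework \cite{Barwick17,BGS20}: $\A(\Fin_G)$ is a semiadditive $\infty$-category in which every object is a finite coproduct of orbits $G/H$; the functor category $\Fun(\A(\Fin_G),\Sp)$ is compactly generated by the corepresentable functors; and passing to the additive localization, semiadditivity identifies the corepresentable at $T\sqcup T'$ with the biproduct of those at $T$ and $T'$, so the generators reduce to the corepresentables at the orbits, which are precisely the $\Sigma^\infty G/H_+$.

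Next I would check hypothesis (b): the commutative algebra $\mathbb{U}(\unit)$ should be equivalent to $\unit_{\Sp^G}=\Sphere_G$. There is a canonical map of commutative algebras $\Sphere_G\to\mathbb{U}(\unit)$ — the mate of the equivalence $\bbF(\Sphere_G)\simeq\unit$ — and it suffices to prove it is an equivalence of underlying $G$-spectra. Computing homotopy Mackey functors, for each $H\le G$ the adjunction together with $\bbF(\Sigma^\infty G/H_+)\simeq\Sigma^\infty G/H_+$ and the fact that $\Sigma^\infty G/H_+$ corepresents evaluation at $G/H$ among additive functors give
\[
	\pi_n^H\mathbb{U}(\unit)\;\cong\;\pi_n\Map_{\Fun_{\add}(\A(\Fin_G),\Sp)}(\Sigma^\infty G/H_+,\unit)\;\cong\;\pi_n\,\unit(G/H).
\]
One then invokes the identification, belonging to the theory of spectral Mackey functors, of the tensor unit $\unit$ with the Burnside Mackey ring spectrum, whose value at $G/H$ is $\Sphere_G^H$ (equivalently $\Omega^\infty\unit\simeq\Omega^\infty\Sphere_G$ as pointed $G$-spaces), compatibly with the Burnside-ring module structures. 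Since $\Sphere_G$ and $\mathbb{U}(\unit)$ are both connective and the comparison map is then an isomorphism on every $\pi_n^H$, it is an equivalence, hence an equivalence of commutative algebras. With (a) and (b) established, Proposition~\ref{prop:monadicequivalence} concludes that $\bbF$ is an equivalence of symmetric monoidal $\infty$-categories.

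The step I expect to be the main obstacle is (b): it turns on the known but genuinely non-formal identification of the tensor unit of $\Fun_{\add}(\A(\Fin_G),\Sp)$ with the equivariant sphere — in essence, the statement that the spectral Burnside Mackey functor has the correct homotopy groups (the Burnside ring in degree zero, via the tom Dieck splitting and Segal's theorem on $H$-fixed points). The generation statement (a) and the reduction via Proposition~\ref{prop:monadicequivalence} are comparatively formal. An alternative strategy would be to show directly that $\Fun_{\add}(\A(\Fin_G),\Sp)$ enjoys the same universal property as $\Sp^G$ in Theorem~\ref{thm:robalo}, whence $\bbF$ is forced to be an equivalence; but making that precise appears to require the same input about the unit.
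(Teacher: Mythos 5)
Your reduction is sound as far as it goes: existence and uniqueness of $\bbF$ via Theorem~\ref{thm:robalo}, generation of $\Fun_{\add}(\A(\Fin_G),\Sp)$ by the orbit suspension spectra (the paper cites Nardin for this), and the monadicity criterion of Proposition~\ref{prop:monadicequivalence} are all fine, and this is a genuinely different strategy from the paper, which works at the level of homotopy categories and proves directly, by induction on the order of subgroups using geometric fixed points and the structure of $[\Sigma^\infty G/H_+,\Sigma^\infty G/K_+]_*^G$ (conjugations, restrictions, transfers), that $\bbF$ is fully faithful on generators. The problem is your step (b). Unwinding the adjunction, the map you must show is an isomorphism, namely $\pi_n^H(\Sphere_G)\to\pi_n^H\,\mathbb{U}(\unit)\cong\pi_n\bigl(\unit(G/H)\bigr)$, is exactly the map \eqref{eq:fullyfaithful} with target $\Sigma^\infty G/G_+$; so hypothesis (b) is not a side condition but the entire content of the proposition, concentrated in the unit. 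Your justification --- both sides are connective, their homotopy Mackey functors are abstractly isomorphic (Burnside ring in degree zero, tom Dieck/Segal splitting above), ``compatibly with the Burnside-ring module structures'' --- does not close this: an abstract isomorphism of homotopy groups of source and target never shows that a \emph{given} map is an equivalence. Initiality of the Burnside Green functor does handle $\pi_0$ (this is how the paper argues for $\HA_G$ in Section~\ref{sec:ordinary-derived}), but in positive degrees you need to know that the unit map itself realizes the identification of $\unit(G/H)$ with $(\Sphere_G)^H$, i.e.\ that $\bbF$ carries the tom Dieck splitting of $\pi_*^H\Sphere_G$ (indexed by transfers from subgroups of $H$) isomorphically onto the corresponding splitting of the spectral Burnside Mackey functor. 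That is precisely the non-formal Segal--tom Dieck/Guillou--May-type comparison, and establishing it requires tracking that $\bbF$ preserves transfers and an induction over isotropy --- which is, in substance, the argument the paper actually gives.

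Two ways to repair it: either import the comparison theorem wholesale (Guillou--May, or \cite[Appendix~A]{ClausenMathewNaumannNoel20pp}), in which case $\bbF$ is an equivalence immediately by the essential uniqueness in Theorem~\ref{thm:robalo} and your monadicity scaffolding becomes superfluous (legitimate, but it turns the proposition into a citation, whereas the paper explicitly avoids using the full strength of those results); or supply the missing argument that the unit map is a $\pi_*^H$-isomorphism for all $H$ --- for instance by showing $\bbF$ preserves transfers (duals of restrictions, using monoidality) and running an induction on $|H|$ with isotropy separation, at which point you have essentially reproduced the paper's proof inside hypothesis (b).
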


\begin{proof} 
	The existence and essential uniqueness of the functor $\bbF$ follows from the universal property of Theorem~\ref{thm:robalo}.  To prove the claim it suffices to check that~$\bbF$ is an equivalence of underlying $\infty$-categories.  Moreover, since $\bbF$ is an exact functor between stable $\infty$-categories, it suffices to check that the induced functor on homotopy categories
		\[
			\bbF \colon \Ho(\Sp^G) \to \Ho(\Fun_{\add}(\A(\Fin_G),\Sp))
		\]
	is an equivalence. Note that both homotopy categories are equivalent as triangulated categories to the homotopy category of orthogonal $G$-spectra. For $\Sp^G$ this is established in \cite[Appendix C]{GepnerMeier} while for spectral Mackey functors it goes back to \cite{GuillouMay17pp}.  We do not need the full strength of these results (just certain facts about the homotopy categories, such as the fact that they are both generated by suspension spectra of orbits) but morally what the following proof really does is establish that any system of endofunctors on the equivariant stable homotopy categories satisfying certain compatibility properties must be an equivalence. It is a rigidity result. With these comments in mind, let us continue.

	Note that the tensor triangulated category $\Ho(\Sp^G)$ is rigidly-compactly generated by 
		\[
			\{ \Sigma^{\infty}(G/H_+) \mid H \le G\}
		\]
	and, by construction, the universal functor $\bbF$ commutes with suspension spectra:
		\[
			\bbF(\Sigma^{\infty}(G/H_+)) \simeq \Sigma^{\infty}(G/H_+).
		\]
	Moreover, these suspension spectra rigidly-compactly generate the tensor triangulated category of spectral Mackey functors $\Ho(\Fun_{\add}(\A(\Fin_G),\Sp))$ by \cite[Lemma A.8]{Nardinthesis}. Since $\bbF$ preserves coproducts, a thick subcategory argument reduces the problem of showing that $\bbF$ is an equivalence to the problem of showing that
		\begin{equation}\label{eq:fullyfaithful}
			\bbF \colon [\Sigma^{\infty}(G/H_+), \Sigma^{\infty}(G/K_+)]_*^G \to  [\bbF(\Sigma^{\infty}(G/H_+)), \bbF(\Sigma^{\infty}(G/K_+))]_*^G 
		\end{equation}
	is an isomorphism for all $H, K \leq G$. Here we use the notation $[-,-]_*^G$ to denote graded morphisms in the homotopy categories.

	The degree zero morphisms $[\Sigma^{\infty}(G/H_+), \Sigma^{\infty}(G/K_+)]_0^G$ are in both cases generated by conjugations, restrictions and transfers subject to the same relations.  The functor $\bbF$ preserves conjugations and restrictions since they come from morphisms of $G$-spaces and $\bbF \Sigma^{\infty}\simeq\Sigma^{\infty}$.  Transfers are preserved since transfers are duals of restrictions and $\bbF$ commutes with duality (being a symmetric monoidal functor).  Hence we conclude that
		\[
			\bbF \colon [\Sigma^{\infty}(G/H_+), \Sigma^{\infty}(G/K_+)]_0^G \to  [\bbF(\Sigma^{\infty}(G/H_+)), \bbF(\Sigma^{\infty}(G/K_+))]_0^G
		\]
	is an isomorphism for any two subgroups $H$ and $K$.

	Now, to prove that \eqref{eq:fullyfaithful} is an isomorphism in general, it suffices by \cite[Proposition 5.1.1]{Pat16} to establish the $H=K$ case. For this, we use induction on the order of $H$.  The fact that $\Sp$ is the free stable $\infty$-category on one generator \cite[Corollary 1.4.4.6]{HALurie} implies that the diagram
		\[
		\begin{tikzcd}
			\Sp \ar[r,"\bbF"] \ar[d,"G_+ \wedge -"'] &  \Fun_{\add}(\A(\Fin_1),\Sp) \ar[d,"G_+ \wedge -"] \\
			\SpG \ar[r,"\bbF"] &  \Fun_{\add}(\A(\Fin_G),\Sp)
		\end{tikzcd}
		\]
	commutes up to equivalence and, moreover, that the top arrow is an equivalence. Furthermore, for both homotopy categories we have an isomorphism
		\[
			[\Sigma^{\infty}G_+, \Sigma^{\infty}G_+]_*^G \cong [\mathbb{S}, \mathbb{S}]_* \otimes [\Sigma^{\infty}G_+, \Sigma^{\infty}G_+]_0^G 
		\]
	induced by the functor $G_+\wedge-$. Combining the latter two results we conclude that
		\[
			\bbF \colon [\Sigma^{\infty}G_+, \Sigma^{\infty}G_+]_*^G \to  [\bbF(\Sigma^{\infty}G_+), \bbF(\Sigma^{\infty}G_+)]_*^G
		\]
	is an isomorphism. 

	To do the induction step we need the geometric fixed point functors. For $\Sp^G$ they are defined in the proof of Proposition~\ref{prop:HZGfiniteloc} while for the category of spectral Mackey functors $\Fun_{\add}(\A(\Fin_G),\Sp)$ they are defined in \cite[Example A.20]{Nardin16pp}. Both are smashing localizations, hence are symmetric monoidal, and the essential uniqueness of Theorem \ref{thm:robalo} implies that $\bbF \phigeom{N,G} \simeq \phigeom{N,G} \bbF$. For a subgroup $H \le G$ with normalizer $N(H)$ and Weyl group $W(H):=N(H)/H$, let $\phigeom{H,G}$ denote the composite $\phigeom{H,N(H)} \circ \res^G_{N(H)}$. We will use the notation $E\mathcal{P}(H)$ for the classifying space of the family of proper subgroups of $H$. Then consider the following diagram (in which we have suppressed the $\Sigma^{\infty}$ symbols): 

	\newsavebox{\tempdiagramm}
	\begin{lrbox}{\tempdiagramm}
	\(\begin{tikzcd}
		{[G/H_+, G \times_H E\mathcal{P}(H)_+]_*^G} \ar[d,"\bbF"] \ar[r,hook,"\proj_*"] &  {[G/H_+, G/H_+]_*^G} \ar[d,"\bbF"]  \ar[r, twoheadrightarrow, "\phigeom{H,G}"] & {[W(H)_+, W(H)_+]_*^{W(H)}} \ar[d,"\bbF"]  \\ 
		{[\bbF(G/H_+), \bbF(G \times_H E\mathcal{P}(H)_+)]_*^G} \ar[r,hook,"\bbF(\proj)_*"] &  {[\bbF(G/H_+), \bbF(G/H_+)]_*^G} \ar[r, twoheadrightarrow, "\phigeom{H,G}"] & {[\bbF(W(H)_+), \bbF(W(H)_+)]_*^{W(H)}.} 
	\end{tikzcd}\) 
	\end{lrbox}

	\smallskip
	\noindent\resizebox{\linewidth}{!}{\usebox{\tempdiagramm}}
	\smallskip

	\noindent
	It follows from \cite[Proposition 6.3.2]{Pat16} that the top row is a short exact sequence. Since $\bbF$ commutes with suspension spectra, so is the bottom row (as it is isomorphic to the analog of the top row in the homotopy category of spectral Mackey functors).  The left square commutes by the functoriality of $\bbF$ and the right square commutes by the fact that~$\bbF$ commutes with geometric fixed points.  Since $G \times_H E\mathcal{P}(H)$ has smaller isotropy than $H$ (see~\cite[Proof of Lemma 7.2.2]{Pat16}), the induction hypothesis implies that the left-hand map is an isomorphism.  On the other hand, the right-hand map is an isomorphism by the base case of the induction.  It follows that the middle map is also an isomorphism and this completes the proof.
\end{proof}

\begin{Rem}\label{rem:equiv-with-orthogonal}
	Combined with \cite[Proposition C.9]{GepnerMeier}, Proposition~\ref{prop:universal-to-spectral-mackey} provides a symmetric monoidal equivalence between the symmetric monoidal $\infty$-category of orthogonal $G$-spectra and the symmetric monoidal \mbox{$\infty$-category} of spectral \mbox{$G$-Mackey} functors.  Such an equivalence is also established by different methods in \cite[Appendix A]{ClausenMathewNaumannNoel20pp}.
\end{Rem}

\begin{Rem}
	Evaluation at the object $G/G \in \A(\Fin_G)$ admits a symmetric monoidal left adjoint
		\begin{equation*}
			F_1 \colon \Sp \to \Fun(\A(\Fin_G), \Sp)
		\end{equation*}
	whose composition with the localization
		\[
			L:\Fun(\A(\Fin_G),\Sp) \to \Funadd(\A(\Fin_G),\Sp)
		\]
	is the essentially unique symmetric monoidal left adjoint $\LF_1$ making the diagram
		\[
		\begin{tikzcd}
			 \mathcal{S}_* \ar[d,"\triv_G"']  \ar[r,"\Sigma^{\infty}"] &   \Sp \ar[d,"\LF_1"] \\ \mathcal{S}_*^G \ar[r,"\Sigma^{\infty}"] & \Fun_{\add}(\A(\Fin_G), \Sp) 
		\end{tikzcd}
		\]
	commute up to an equivalence of symmetric monoidal functors. This follows by taking right adjoints and noting that the inclusion functor $\{G/G\} \to \A(\Fin_G)$ factors through the  nerve of $\mathcal{O}(G)\op$. Moreover the universal property also implies that $\LF_1$ coincides with the composite
		\[
			\Sp \xra{\triv_G} \Sp^G \xra{\simeq} \Funadd(\A(\Fin_G),\Sp)
		\]
	up to equivalence of symmetric monoidal functors. An immediate consequence is:
\end{Rem}

\begin{Cor} \label{cor: DHZ is F1Zmod}
	There is an equivalence of symmetric monoidal $\infty$-categories
		\[
			\HZ_G\MMod \simeq \LF_1\HZ\MMod.
		\]
\end{Cor}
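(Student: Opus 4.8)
The plan is to transport the module category along the symmetric monoidal equivalence $\bbF \colon \Sp^G \xra{\simeq} \Funadd(\A(\Fin_G),\Sp)$ furnished by Proposition~\ref{prop:universal-to-spectral-mackey}. The first step is to record the general principle that a symmetric monoidal equivalence $\theta \colon \cat C^{\otimes} \xra{\simeq} \cat D^{\otimes}$ of presentably symmetric monoidal stable $\infty$-categories induces, for every commutative algebra $A \in \CAlg(\cat C^{\otimes})$, a symmetric monoidal equivalence of module categories $A\MMod_{\cat C} \xra{\simeq} \theta(A)\MMod_{\cat D}$. This is the degenerate case (a trivial finite localization) of Proposition~\ref{prop:induced-localization}, and it is also noted in passing in the proof of Proposition~\ref{prop:HZGfiniteloc}; alternatively it follows at once from the functoriality of module categories recalled in Remark~\ref{rem:module-induced} applied to $\theta$ together with a chosen inverse.

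Next I would apply this with $\theta := \bbF$ and $A := \HZ_G = \triv_G(\HZ)$, which gives a symmetric monoidal equivalence $\HZ_G\MMod = \HZ_G\MMod_{\Sp^G} \simeq \bbF(\HZ_G)\MMod_{\Funadd(\A(\Fin_G),\Sp)}$. It then remains to identify $\bbF(\HZ_G)$ with $\LF_1(\HZ)$ as a commutative algebra in $\Funadd(\A(\Fin_G),\Sp)$. For this I would invoke the remark immediately preceding the statement, which identifies $\LF_1$ with the composite symmetric monoidal functor $\bbF \circ \triv_G$ up to equivalence of symmetric monoidal functors; since a symmetric monoidal functor carries commutative algebras to commutative algebras functorially, this yields $\bbF(\HZ_G) = \bbF(\triv_G(\HZ)) \simeq \LF_1(\HZ)$ in $\CAlg(\Funadd(\A(\Fin_G),\Sp))$. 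Combining the two displays produces the asserted equivalence $\HZ_G\MMod \simeq \LF_1\HZ\MMod$.

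I do not expect a genuine obstacle here: the real content has already been absorbed into Proposition~\ref{prop:universal-to-spectral-mackey} and into the identification $\LF_1 \simeq \bbF \circ \triv_G$, both of which are in hand. The one point deserving care is to insist that the comparison $\bbF(\HZ_G) \simeq \LF_1(\HZ)$ be taken in $\CAlg$ rather than merely between underlying objects, so that the resulting equivalence of module categories is symmetric monoidal; this is precisely why it matters that $\LF_1 \simeq \bbF \circ \triv_G$ holds at the level of symmetric monoidal functors and not just at the level of underlying functors.
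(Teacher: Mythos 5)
Your argument is correct and is exactly what the paper intends: the corollary is stated as an immediate consequence of the preceding remark, whose content is precisely the identification $\LF_1 \simeq \bbF \circ \triv_G$ as symmetric monoidal functors, combined with the equivalence $\bbF$ of Proposition~\ref{prop:universal-to-spectral-mackey} and the transport of module categories along a symmetric monoidal equivalence (the trivial case of Proposition~\ref{prop:induced-localization}, as also noted in the proof of Proposition~\ref{prop:HZGfiniteloc}). Your emphasis on taking the identification $\bbF(\HZ_G)\simeq \LF_1(\HZ)$ in $\CAlg$ rather than on underlying objects is the right point of care and matches the paper's reasoning.
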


\begin{Rem}
	Our next task is to compare $\LF_1\HZ\MMod$ with $\HZ$-valued spectral Mackey functors: $\Funadd(\A(\Fin_G),\HZ\MMod)$.
\end{Rem}

\begin{Prop} \label{prop: general on modules} 
	Let $\cat C^{\otimes}$ and $\cat D^{\otimes}$ be symmetric monoidal $\infty$-categories with $\cat C$ small, $\cat D$ presentable and $-\otimes- \colon \cat D \times \cat D \to \cat D$ preserving colimits in each variable. Equip $\Fun(\cat C, \cat D)$ with the Day convolution.
	\begin{enumerate}
		\item	The left adjoint $F_\unit : \cat D \to \Fun(\cat C, \cat D)$ to evaluation at the unit $\unit \in \cat D$ has a symmetric monoidal structure.
		\item	Let $A \in \CAlg(\cat D)$ be a commutative algebra. There exists an essentially unique symmetric monoidal equivalence
					\[
						\Fun(\cat C, A\MMod_{\cat D}) \simeq {F_\unit A}\MMod_{\Fun(\cat C,\cat D)}
					\]
				where the left-hand side is equipped with the Day convolution. Moreover, this equivalence commutes with the free-forgetful adjunctions to $\Fun(\cat C,\cat D)$.
\end{enumerate}
\end{Prop}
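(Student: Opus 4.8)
The plan is to handle the two parts separately, with~(b) reduced to a monadicity argument.

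\emph{Part (a).} The functor $F_\unit$ is the left adjoint of $\ev_\unit\colon\Fun(\cat C,\cat D)\to\cat D$, and $\ev_\unit$ is the restriction functor $j^{*}$ along the symmetric monoidal functor $j$ from the terminal $\infty$-category selecting the unit object $\unit_\cat C\in\cat C$; here $\Fun(\{\ast\},\cat D)$ equipped with its Day convolution is just $\cat D^{\otimes}$. Since left Kan extension along a symmetric monoidal functor is canonically symmetric monoidal for the Day convolution structures (see~\cite{GlasDay}), the functor $F_\unit\simeq j_{!}$ inherits a symmetric monoidal structure. (Note that $\ev_\unit$ itself is only lax symmetric monoidal, so the description of $F_\unit$ as a left Kan extension, rather than as an adjoint, is what makes this work.) In particular $F_\unit$ sends $A\in\CAlg(\cat D)$ to $F_\unit A\in\CAlg(\Fun(\cat C,\cat D))$, and a short co-Yoneda computation identifies $F_\unit\unit_\cat D$ with the unit of the Day convolution.

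\emph{Part (b).} Start from the free--forgetful adjunction $F_A\colon\cat D\adjto A\MMod_\cat D\colon U_A$ of Remark~\ref{rem:module-induced}, in which $F_A$ is symmetric monoidal and colimit-preserving and $U_A$ is lax symmetric monoidal, conservative and colimit-preserving (the tensor product of $\cat D$ preserves colimits variablewise by hypothesis, so \cite[\S4.2.3]{HALurie} applies). Post-composition yields an adjunction
\[
	F_A\circ(-)\colon\Fun(\cat C,\cat D)\adjto\Fun(\cat C,A\MMod_\cat D)\colon U_A\circ(-)
\]
in which $F_A\circ(-)$ is symmetric monoidal for the two Day convolutions (by functoriality of Day convolution in the target variable along the colimit-preserving symmetric monoidal functor $F_A$; cf.~\cite{GlasDay}), while $U_A\circ(-)$ is conservative and preserves all colimits, both properties being detected objectwise. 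Hence the Barr--Beck--Lurie theorem \cite[Thm.~4.7.3.5]{HALurie} exhibits $\Fun(\cat C,A\MMod_\cat D)$ as the $\infty$-category of modules in $\Fun(\cat C,\cat D)$ over the monad $T:=(U_A\circ(-))\circ(F_A\circ(-))$.

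It remains to identify $T$, as a monad, with $F_\unit A\otimes(-)$ for the Day convolution. As $F_A\circ(-)$ is symmetric monoidal with right adjoint $U_A\circ(-)$, the projection formula realizes $T$ as the monad attached to the commutative algebra $(U_A\circ(-))(\unit_{\Fun(\cat C,A\MMod_\cat D)})$, exactly as in the proof of Proposition~\ref{prop:monadicequivalence} and \cite[Lemma~2.8]{BalmerDellAmbrogioSanders15}; here the required projection formula is checked by expanding both sides through the Day convolution coend and using that $U_A$ preserves colimits, which reduces it to the ordinary projection formula for $F_A\colon\cat D\to A\MMod_\cat D$. The Day unit of $\Fun(\cat C,A\MMod_\cat D)$ is the left Kan extension of the unit object $A$ of $A\MMod_\cat D$ along the inclusion of $\{\unit_\cat C\}$, and $U_A\circ(-)$ carries it to $F_\unit A$ with its transported commutative algebra structure. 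This produces the symmetric monoidal equivalence $\Fun(\cat C,A\MMod_\cat D)\simeq F_\unit A\MMod_{\Fun(\cat C,\cat D)}$; it commutes with the free--forgetful adjunctions by construction, and is essentially unique among such since it is the comparison equivalence of two monadic adjunctions over $\Fun(\cat C,\cat D)$ with identified monads.

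\emph{Main obstacle.} The delicate point is the identification of $T$ with $F_\unit A\otimes(-)$ \emph{as monads} --- equivalently, the projection formula for the post-composed adjunction; everything else is formal. A slicker alternative that bypasses it is to argue in $\mathrm{Pr}^{\mathrm L}$: one has $\Fun(\cat C,\cat D)\simeq\cat D\otimes\Fun(\cat C,\mathcal S)$, and $F_\unit$ is the base change of $\cat D$ along the unit map $\mathcal S\to\Fun(\cat C,\mathcal S)$ of the presentably symmetric monoidal $\infty$-category $\Fun(\cat C,\mathcal S)$ with Day convolution; base change for module categories in $\CAlg(\mathrm{Pr}^{\mathrm L})$ \cite[\S4.8.5]{HALurie} then gives
\[
	F_\unit A\MMod_{\Fun(\cat C,\cat D)}\simeq A\MMod_\cat D\otimes_{\cat D}\Fun(\cat C,\cat D)\simeq A\MMod_\cat D\otimes\Fun(\cat C,\mathcal S)\simeq\Fun(\cat C,A\MMod_\cat D)
\]
as symmetric monoidal $\infty$-categories, naturally in all of the data, which also delivers the compatibility with the free--forgetful adjunctions.
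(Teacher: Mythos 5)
Your main argument is correct and is essentially the paper's own proof: the paper likewise deduces (b) from monadicity of the post-composition adjunction $F_*\dashv U_*$ (with conservativity, colimit-preservation and the projection formula all verified pointwise, the paper using the mapping-space universal property of Day convolution where you expand the coend) and then identifies $U_*$ of the Day unit with $F_\unit A$; the only real differences are that for (a) the paper simply cites \cite[Corollary~3.8]{Niko16pp} where you argue via monoidality of left Kan extension along the unit inclusion, and that the paper nails down the commutative algebra structure on $U_*(\unit)$ by an explicit commutative diagram (using that $F_*\circ F_\unit$ is symmetric monoidal, hence unital) rather than your brief appeal to a ``transported'' algebra structure. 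Your closing $\mathrm{Pr}^{\mathrm L}$/Lurie tensor product sketch would be a genuinely different route, but as you present it only as an aside, the main proof stands as the same approach.
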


\begin{proof}
	Part (a) is established by \cite[Corollary 3.8]{Niko16pp}. To prove part (b) note that the free-forgetful adjunction $F : \cat D \adjto A\MMod_{\cat D} : U$ induces an adjunction on functor categories
		\[
			F_* \colon \Fun(\cat C, \cat D)  \adjto \Fun(\cat C, A\MMod_{\cat D}) : U_*
		\]
	by post-composition. We then apply the monadic machinery used in the proof of Proposition~\ref{prop:monadicequivalence} (cf.~\cite[Corollary 4.8.5.21]{HALurie}).  The Day convolution product preserves colimits in each variable by \cite[Lemma 2.13]{GlasDay} and $F_*$ is symmetric monoidal by \cite[Corollary 3.7]{Niko16pp}.  Since equivalences and colimits in functor categories are detected pointwise, it follows that the right adjoint $U_*$ preserves colimits and is conservative. The projection formula can also be verified pointwise by using the equivalence 
		\[
			\Map_{\Fun(\cat C, \cat D)}({\mathcal F} \otimes U_*({\mathcal G}), {\mathcal H}) \simeq \Map_{\Fun(\cat C \times \cat C, \cat D)}(\otimes^{\cat D} \circ ({\mathcal F} \times U_*({\mathcal G})), {\mathcal H} \circ \otimes^{\cat C})
		\]
	of mapping spaces (see e.g.~\cite[Corollary 3.6]{Niko16pp}) and the fact that $U_*$ is also a left adjoint. Thus the adjunction $F_* \dashv U_*$ is monadic.

	Finally, to see that $F_\unit A$ is equivalent as a commutative algebra to the image under $U_*$ of the unit of the Day convolution on $\Fun(\cat C,A\MMod)$ just observe that in the following commutative diagram
		\[
		\begin{tikzcd}
			\cat D \ar[d,"UF"'] \ar[r,"F_\unit"] & \Fun(\cat C,\cat D) \ar[d,"U_*F_*"] \ar[r,"F_*"] & \Fun(\cat C,A\MMod_{\cat D}) \ar[d, "U_*"] \\
			\cat D \ar[r, "F_\unit"] & \Fun(\cat C,\cat D) \ar[r,equal] & \Fun(\cat C, \cat D)
		\end{tikzcd}
		\]
	the top row is symmetric monoidal and hence, in particular, preserves units.
\end{proof}

As a consequence we obtain

\begin{Prop} \label{prop: F1modules}
	There is an equivalence of symmetric monoidal $\infty$-categories
		 \[
		 	\LF_1\HZ\MMod \simeq \Fun_{\add}(\A(\Fin_G), \HZ\MMod).
		\]
\end{Prop}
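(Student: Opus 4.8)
The plan is to deduce this from Proposition~\ref{prop: general on modules} and Proposition~\ref{prop:induced-localization}. First I would apply Proposition~\ref{prop: general on modules} with $\cat C := \A(\Fin_G)$ (essentially small), $\cat D := \Sp$ and $A := \HZ$. Since the monoidal unit of $\A(\Fin_G)$ is the orbit $G/G$, the left adjoint $F_\unit$ appearing there is exactly the functor $F_1 \colon \Sp \to \Fun(\A(\Fin_G),\Sp)$ left adjoint to evaluation at $G/G$, so part (b) gives an essentially unique symmetric monoidal equivalence
\[
	\Fun(\A(\Fin_G),\HZ\MMod) \;\simeq\; F_1\HZ\MMod_{\Fun(\A(\Fin_G),\Sp)}
\]
compatible with the free--forgetful adjunctions down to $\Fun(\A(\Fin_G),\Sp)$; in particular it identifies the forgetful functor $U_*$ obtained by postcomposing with $\HZ\MMod \to \Sp$ with the module forgetful functor $U_{F_1\HZ}$.

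Next I would feed this into Proposition~\ref{prop:induced-localization}, applied to the presentably symmetric monoidal stable $\infty$-category $\Fun(\A(\Fin_G),\Sp)$ (with Day convolution; that the tensor is cocontinuous in each variable is cited in the proof of Proposition~\ref{prop: general on modules}), the smashing Bousfield localization $L$ onto $\Funadd(\A(\Fin_G),\Sp)$, and the commutative algebra $F_1\HZ$. It tells us that the induced functor
\[
	F_1\HZ\MMod_{\Fun(\A(\Fin_G),\Sp)} \longrightarrow (\LF_1\HZ)\MMod_{\Funadd(\A(\Fin_G),\Sp)} = \LF_1\HZ\MMod
\]
is a smashing Bousfield localization of symmetric monoidal $\infty$-categories (using $L\Fun(\A(\Fin_G),\Sp) = \Funadd(\A(\Fin_G),\Sp)$ and $LF_1\HZ = \LF_1\HZ$). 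Composing with the equivalence of the previous paragraph produces a symmetric monoidal smashing localization $\ell \colon \Fun(\A(\Fin_G),\HZ\MMod) \to \LF_1\HZ\MMod$, and it remains only to identify its essential image.

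By the proof of Proposition~\ref{prop:induced-localization}, $\ell$ is the localization at the idempotent algebra $F_{F_1\HZ}(L\unit)$. A short computation — both functors preserve colimits in $M$ and agree on free modules — gives $U_{F_1\HZ}(F_{F_1\HZ}(L\unit)\otimes_{F_1\HZ} M) \simeq L\unit \otimes U_{F_1\HZ}(M) = L\,U_{F_1\HZ}(M)$, so, since $U_{F_1\HZ}$ is conservative, an object $M$ is $\ell$-local if and only if $U_{F_1\HZ}(M)$, equivalently $U_* M$, is $L$-local, i.e.\ additive as a functor $\A(\Fin_G) \to \Sp$. On the other hand, the forgetful functor $\HZ\MMod \to \Sp$ is conservative and exact, hence preserves finite biproducts, so $F \colon \A(\Fin_G) \to \HZ\MMod$ is additive if and only if $U_* F$ is additive. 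Combining the two characterizations, the local objects of $\ell$ are precisely the additive $\HZ\MMod$-valued Mackey functors, so $\ell$ is the additivization onto $\Funadd(\A(\Fin_G),\HZ\MMod)$ with its localized Day convolution, which yields the asserted symmetric monoidal equivalence.

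The one substantive point I expect is this last matching of the local objects of $\ell$ with the additive functors; it hinges on the compatibility of the smashing localization $L$ with the forgetful functor along the free algebra $F_1\HZ$ (the small computation above), while everything preceding it is a formal assembly of the two cited propositions.
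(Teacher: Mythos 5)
Your proposal is correct and follows essentially the same route as the paper's proof: apply Proposition~\ref{prop: general on modules} to get the symmetric monoidal equivalence $F_1\HZ\MMod \simeq \Fun(\A(\Fin_G),\HZ\MMod)$ compatible with the forgetful functors, invoke Proposition~\ref{prop:induced-localization} to see that $F_1\HZ\MMod \to \LF_1\HZ\MMod$ is a smashing localization, and then identify its local objects via the projection formula and conservativity of the forgetful functor, using that additivity is detected by the forgetful functor to $\Sp$. The only difference is that you spell out the small projection-formula computation which the paper leaves implicit.
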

 
\begin{proof}
	Proposition \ref{prop: general on modules} provides an equivalence of symmetric monoidal $\infty$-categories
		\begin{equation}\label{eq:firstequiv}
			F_1\HZ\MMod \simeq \Fun(\A(\Fin_G), \HZ\MMod)
		\end{equation}
	which commutes with the forgetful functors to $\Fun(\A(\Fin_G),\Sp)$. Then consider
		\[
		\begin{tikzcd}[column sep=small]
			& \Fun(\A(\Fin_G),\Sp) \ar[d,shift right] \ar[dl,bend right=10, shift right] \ar[r,"L"] & \Funadd(\A(\Fin_G),\Sp) \ar[d,shift right] \\
			\Fun(\A(\Fin_G),\HZ\MMod) \ar[ur,bend left=10,shift right] \ar[r,"\simeq"] & F_1\HZ\MMod \ar[u,shift right] \ar[r] & \LF_1\HZ\MMod
		\end{tikzcd}
		\]
	where $L$ denotes the smashing localization onto the full subcategory of additive functors. By Proposition~\ref{prop:induced-localization}, the induced functor $F_1\HZ\MMod \to \LF_1\HZ\MMod$ is also a smashing localization. Moreover, since the forgetful functor is conservative, an application of the projection formula shows that the local objects of the induced localization are precisely those that are sent under the forgetful functor to a local object of the original localization (that is, to an additive functor). As additivity is detected by the forgetful functors, we conclude that the bottom row is the smashing localization of $\Fun(\A(\Fin_G),\HZ\MMod)$ whose local objects are the additive functors. In particular, $\Funadd(\A(\Fin_G,\HZ\MMod)$ with its localized Day convolution is equivalent to $\LF_1\HZ\MMod$ as a symmetric monoidal $\infty$-category.
\end{proof}
 
\begin{Cor} \label{cor: first comparison} 
	There is an equivalence of symmetric monoidal $\infty$-categories
 		\[
			\HZ_G\MMod \simeq \Funadd(\A(\Fin_G),\HZ\MMod)
		\]
	and, consequently, an equivalence of tensor triangulated categories
		\[
			\DHZG \simeq \Ho(\Fun_{\add}(\A(\Fin_G), \HZ\MMod))
		\]
	for any finite group $G$.
\end{Cor}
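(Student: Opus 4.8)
The plan is to obtain the first equivalence by simply concatenating the two equivalences already established, and then to pass to homotopy categories for the consequence. By Corollary~\ref{cor: DHZ is F1Zmod} there is an equivalence of symmetric monoidal $\infty$-categories $\HZ_G\MMod \simeq \LF_1\HZ\MMod$, and by Proposition~\ref{prop: F1modules} there is an equivalence of symmetric monoidal $\infty$-categories $\LF_1\HZ\MMod \simeq \Funadd(\A(\Fin_G),\HZ\MMod)$. Composing these gives the desired symmetric monoidal equivalence $\HZ_G\MMod \simeq \Funadd(\A(\Fin_G),\HZ\MMod)$, and since both input results hold for an arbitrary finite group $G$, so does the composite.

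For the consequence about tensor triangulated categories, I would recall that $\DHZG := \Ho(\HZ_G\MMod)$ by Definition~\ref{def:HZG}, and that the homotopy category of any presentably symmetric monoidal stable $\infty$-category carries a canonical closed symmetric monoidal structure compatible with its triangulation (Remark~\ref{rem:Amodules}). Since an equivalence of symmetric monoidal $\infty$-categories between such categories induces an equivalence of the associated tensor triangulated homotopy categories, applying $\Ho(-)$ to the equivalence from the first part yields $\DHZG \simeq \Ho(\Funadd(\A(\Fin_G),\HZ\MMod))$.

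There is essentially no obstacle here; all the real work has been carried out in the preceding propositions, and this corollary is just a bookkeeping step. The only point worth a remark is that $\Funadd(\A(\Fin_G),\HZ\MMod)$ is genuinely a presentably symmetric monoidal stable $\infty$-category, so that its homotopy category really is a tensor triangulated category and the statement makes sense: this holds because it is a smashing localization of $\Fun(\A(\Fin_G),\HZ\MMod)$ equipped with the Day convolution, which is presentably symmetric monoidal and stable as $\HZ\MMod$ is (the Day convolution of a presentably symmetric monoidal target over a small source is again presentably symmetric monoidal, and a smashing localization preserves these properties). Hence the equivalence of the first part restricts to an equivalence of the underlying tensor triangulated homotopy categories, as asserted.
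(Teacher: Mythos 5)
Your proposal is correct and matches the paper's own argument, which simply cites Corollary~\ref{cor: DHZ is F1Zmod} and Proposition~\ref{prop: F1modules} and composes the two symmetric monoidal equivalences. The extra remarks about passing to homotopy categories and presentability are sound but are exactly the bookkeeping the paper leaves implicit.
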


\begin{proof}
	This follows from Corollary~\ref{cor: DHZ is F1Zmod} and Proposition~\ref{prop: F1modules}.
\end{proof}

\begin{center}
$\ast\ast\ast$
\end{center}
 
 \subsection*{Comparison to Kaledin's derived Mackey functors}
Our next task is to show that the tensor triangulated category 
	\[
		\Ho(\Fun_{\add}(\A(\Fin_G), \HZ\MMod))
	\]
is equivalent to Kaledin's category of derived Mackey functors from \cite{Kaledin11}. This section is based on the arguments of \cite[Sections 3--5]{Kaledin11} modified as appropriate to the $\infty$-categorical context to facilitate comparison with Barwick's spectral Mackey functors.

\begin{Rem}
	Kaledin \cite{Kaledin11} provides two different constructions of the triangulated category of derived Mackey functors $\DerKal(G)$.  His first definition is in terms of an $A_\infty$-category associated to the $(2,1)$-category of spans of finite \mbox{$G$-sets}, while a second approach uses a Waldhausen type construction on the category of finite \mbox{$G$-sets.} We will take the latter construction as our starting point (see Definition~\ref{def:kaledin} below).  Nevertheless, as the monoidal structure \cite{Kaledin11} constructs on $\DerKal(G)$ uses the $A_\infty$-approach, we will ultimately need to recall this construction as well (see Definition~\ref{def:DMQ}).
\end{Rem}
 
\begin{Def}
	For a small category $\CC$ with pullbacks, Kaledin \mbox{\cite[Section 4]{Kaledin11}} defines a category $\SC$ as a subcategory of the Grothendieck fibration associated with 
		\[
			\Fun(-,\CC){\op}:\Delta{\op}\rightarrow \Cat.
		\]
	In more detail, $\SC$ is the category whose objects are the pairs
		\[
			([n], X_0\longrightarrow X_1\longrightarrow\cdots \longrightarrow X_n)
		\]
	where $X_0\longrightarrow X_1\longrightarrow\cdots \longrightarrow X_n$ is a diagram in $\CC$ for $n\geq 0$. A morphism $(\alpha, f):([n],X_\bullet)\rightarrow ([m], Y_\bullet)$ consists of a map $\alpha:[n]\rightarrow [m]$ in $\Delta$ and $f:\alpha^\ast (Y_\bullet)\rightarrow X_\bullet$ such that for each $i\geq j$ the commutative square
		\begin{center}
		\begin{tikzcd}
			Y_{\alpha(j)}\arrow[r, "f_j"]\arrow[d]	&	X_j\arrow[d]\\
			Y_{\alpha(i)}\arrow[r, "f_i"]			&	X_i			
		\end{tikzcd}
		\end{center}
	is cartesian. The morphism is called \emph{special} if $\alpha$ is the inclusion of an end-segment and all the maps $f_i$ are isomorphisms. We denote the set of special morphisms by~$I$. 
\end{Def}

\begin{Rem}
	The case of interest is $\cat C = \Fin_G$ the category of finite $G$-sets.  Our first task is to relate Kaledin's category $S\Fin_G$ to Barwick's $\infty$-category $\A(\Fin_G)$.  This will be achieved in Theorem~\ref{thm:burnside} which will recognize $\A(\Fin_G)$ as the infinity-categorical localization of $S\Fin_G$ with respect to the class of special morphisms.  We begin with some general notation and then recall the definition of $\A(\Fin_G)$.
\end{Rem}

\begin{Not}
	Given an $\infty$-category $\DD$, we use the notation
		\[
			\Map_{\DD}(x,y)=\{x\}\times_{\DD}\Fun(\Delta^1,\DD)\times_{\DD}\{y\}
		\]
	for the mapping space between two objects $x,y \in \DD$ (see \cite[\S 1.2.2]{HTTLurie}). Furthermore, we will use the symbol $\lvert \DD\lvert$ for the Kan complex replacement (which is equivalent to inverting all arrows or to geometric realization) and $\DD^\sim\subset\DD$ will denote the $\infty$-groupoid core (the maximal Kan subcomplex). 
\end{Not}

\begin{Def} \label{def: effective Burnside category}
	Let $\cat C$ be a small category with pullbacks. The $\infty$-category $\A(\CC)$ is defined in simplicial degree $n$ to be the subset of \emph{cartesian diagrams}
		\[
			\A(\CC)_n \subset \Fun(\operatorname{Tw}([n]){\op}, \CC)
		\]
	where $\operatorname{Tw}([n])$ denotes the \emph{twisted arrow category} of $[n]$ with objects $(i,j)$ for $0\leq i \leq j\leq n$ and exactly one morphism $(i_1,j_1)\rightarrow (i_2,j_2)$ for $i_2\leq i_1$ and $j_1\leq j_2$. Cartesian here means that a diagram $X$ is contained in $\A(\CC)_n$ if and only if the square
		\[\xymatrix{
			X_{ij} \ar[r] \ar[d] & X_{kj} \ar[d] \\ X_{il} \ar[r]  & X_{kl}
		}\]
	is cartesian for all integers $0 \leq i \leq k \leq l \leq j \leq n$. 
\end{Def}

\begin{Rem}
	The mapping spaces in $\A(\CC)$ can be concretely identified as spans in the following way. Given objects $X$ and $Y$ of $\CC$, we denote by $\Span_{\CC}(X,Y)$ the category with objects the spans $X\leftarrow Z\rightarrow Y$ and morphisms $Z\rightarrow Z'$ lying over $X$ and $Y$. There is a natural equivalence of Kan complexes 
		\[
			\Map_{\A(\CC)}(X,Y)\overset{\simeq}{\longrightarrow}{\N}\Span_{\CC}(X,Y)^{\sim}
		\]
	(see \cite[3.7]{Barwick17}) which sends an $n$-simplex $f:\Delta^1\times\Delta^n\rightarrow \A(\CC)$ to the $n$-tuple of isomorphisms
		\[
			f\vert_{\Delta^1\times\{0\}}\overset{f_0}{\longrightarrow} f\vert_{\Delta^1\times\{1\}}\overset{f_1}{\longrightarrow} \cdots \overset{f_{n-1}}{\longrightarrow} f\vert_{\Delta^1\times\{n\}}
		\]
	in $\Span_{\CC}(X,Y)$. Here the map $f_i$ is the vertical composition in the commutative diagram
		\begin{center}
		\begin{tikzcd}
			X							&	Z_{i+1}\arrow[l]\arrow[r]							&	Y	\\
			X\arrow[u, equal]\arrow[d, equal]	&	W_i	\arrow[u,"\cong"']\arrow[d, "\cong"]\arrow[l]\arrow[r]	&	Y	\arrow[u, equal]\arrow[d, equal]	\\
			X							&	Z_{i}\arrow[l]\arrow[r]								&	Y
		\end{tikzcd}
		\end{center}
	in $\CC$ encoded by the two 2-simplices of $f\vert_{\Delta^1\times\{i,i+1\}}$ in the Burnside category (thinking of a square subdivided with a diagonal into two triangles), where the bottom and top row are the spans associated with $f\vert_{\Delta^1\times\{i\}}$ and $f\vert_{\Delta^1\times\{i+1\}}$.
\end{Rem}

\begin{Cons}\label{cons:comparison-to-burnside}
	We define a functor ${\N} \SC\rightarrow \A(\CC)$ as follows: On objects it sends $([n], X_\bullet)$ to $X_n$ and on morphisms it sends $(\alpha, f):([n],X_\bullet)\rightarrow ([m], Y_\bullet)$ to the span
		\[
			X_n\overset{f_n}{\longleftarrow} Y_{\alpha(n)}\longrightarrow Y_m\text{.}
		\]
	It sends a 2-simplex
		\[\xymatrix{
			([n_0], X_{0, \bullet}) \ar[r]^{\alpha_0} & ([n_1], X_{1, \bullet}) \ar[r]^{\alpha_1} &  ([n_2], X_{2, \bullet})
		}\]
	to
		\begin{center}
		\begin{tikzcd}[column sep=small]
			&&	X_{2, \alpha_1(\alpha_0(n_0))}\arrow[ld]\arrow[rd]																			\\
			&	X_{1,\alpha_0(n_0)}\arrow[ld]\arrow[rd]		&&			X_{2,\alpha_1(n_1)}\arrow[ld]\arrow[rd]	&				\\
			X_{0,n_0}		&&			X_{1,n_1}		&&			X_{2,n_2}.		&&	
		\end{tikzcd}
		\end{center}
	A general $k$-simplex 
		\[\xymatrix{
			([n_0], X_{0, \bullet}) \ar[r]^{\alpha_0} & ([n_1], X_{1, \bullet}) \ar[r]^-{\alpha_1} & \dots  \ar[r]^-{\alpha_{k-1} } & ([n_k], X_{k, \bullet})
		}\] 
	is sent to the obvious diagram $Y$ with
		\[
			Y_{j, i+j} = X_{i+j, \alpha_{j+i-1}(\alpha_{j+i-2}(\cdots \alpha_{j} (n_j)))}
		\]
	for $0 \leq i \leq k-j$. The cartesian condition on morphisms in $\SC$ gives the cartesian condition for $\A(\CC)$. The functor ${\N} \SC\rightarrow \A(\CC)$ sends special morphisms to equivalences. 
\end{Cons}

\begin{Thm}\label{thm:burnside}
	The functor ${\N} \SC\rightarrow \A(\CC)$ exhibits the effective Burnside $\infty$-category as the $\infty$-categorical localization ${\N} \SC[I^{-1}]\simeq \A(\CC)$ of $\SC$ with respect to the class of special morphisms $I$.
\end{Thm}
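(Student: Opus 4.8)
The functor $\N\SC\to\A(\CC)$ of Construction~\ref{cons:comparison-to-burnside} carries every special morphism to an equivalence, so it factors through an essentially unique functor $\overline{\phi}\colon \N\SC[I^{-1}]\to\A(\CC)$, and the plan is to prove that $\overline{\phi}$ is an equivalence by checking essential surjectivity and full faithfulness. Essential surjectivity is immediate: $\overline{\phi}$ sends $([n],X_0\to\cdots\to X_n)$ to $X_n$, which already exhausts the objects of $\A(\CC)$; alternatively, inclusion of the final vertex together with the identity map is a special morphism $([0],X_n)\to([n],X_\bullet)$, so in $\N\SC[I^{-1}]$ every object is equivalent to one of the form $([0],X)$.

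For full faithfulness one must identify $\Map_{\N\SC[I^{-1}]}(([0],X),([0],Y))$ with $\Map_{\A(\CC)}(X,Y)\simeq \N\Span_\CC(X,Y)^{\sim}$. The route I would take is to compute the mapping spaces of the localization via an $\infty$-categorical calculus of left fractions for the class $I$. Concretely one verifies that $I$ is closed under composition and contains the equivalences; that a morphism $f\colon([0],Y)\to D$ together with a special morphism $s\colon([0],Y)\to([m],W_\bullet)$ can be completed to a commuting square by grafting the initial segment $W_0\to\cdots\to W_{m-1}$ of $W_\bullet$ onto $D$, pulled back along the legs of $f$ — here the cartesian-square conditions built into the morphisms of $\SC$ are exactly what make this construction work — and that the dual cancellation condition holds. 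Granting this, the mapping space in the localization is the homotopy colimit
\[
\Map_{\N\SC[I^{-1}]}(([0],X),([0],Y))\ \simeq\ \colim_{\ ([0],Y)\xra{\ \mathrm{sp}\ }([m],W_\bullet)\ }\ \Map_{\N\SC}\bigl(([0],X),([m],W_\bullet)\bigr)
\]
over the category $\cat{R}_Y$ of special morphisms out of $([0],Y)$.

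The last step — identifying this colimit with the span groupoid — is the main obstacle, and it is the $\infty$-categorical shadow of Kaledin's comparison. Unwinding definitions, an object of $\cat{R}_Y$ is a diagram $W_0\to\cdots\to W_{m-1}\to Y$ in $\CC$, the set $\Map_{\N\SC}(([0],X),([m],W_\bullet))$ is the set of pairs $(j,\,g\colon W_j\to X)$, and $\overline{\phi}$ sends such a pair to the span $X\xleftarrow{g}W_j\to Y$ with second leg the composite in $W_\bullet$; the morphisms in the category of elements of this diagram are the maps of resolutions compatible with $(j,g)$, and their cartesian-square constraints force them to induce isomorphisms of the associated spans. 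One then checks that the resulting functor to $\Span_\CC(X,Y)^{\sim}$ induces an equivalence on classifying spaces: every span is the image of the length-one resolution $(Z\to Y)$ with $j=0$, any two resolutions with isomorphic associated spans are linked by a zig-zag of resolution maps, and the residual homotopy is accounted for by the automorphisms of a span. The crux throughout — and the reason the localization reproduces $\A(\CC)$ rather than something larger — is that the cartesian-square conditions in $\SC$ guarantee that the ``padding'' $W_0\to\cdots\to W_{m-1}$ in a resolution carries no homotopical information beyond the span it assembles to.
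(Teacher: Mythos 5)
The skeleton of your plan (essential surjectivity via $([0],X)$, mapping spaces in the localization as a colimit over special enlargements of the target, then comparison with the span groupoid) does match the paper's strategy, but the step on which everything hinges --- justifying the colimit formula by an $\infty$-categorical calculus of left fractions for $I$ --- fails, because the square-filling (Ore) condition you assert is false. Take $\CC=\Fin_G$, $Y=G/G$, and consider the two special morphisms $s\colon([0],Y)\to([1],\,Y\sqcup Y\to Y)$ and $f\colon([0],Y)\to([1],\,\emptyset\to Y)$ out of the same object. A filling would require a special morphism $s'\colon([1],\emptyset\to Y)\to([k],E_\bullet)$ and some morphism $f'\colon([1],Y\sqcup Y\to Y)\to([k],E_\bullet)$ with $s'f=f's$. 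Since a special morphism out of $([1],\emptyset\to Y)$ only prepends objects mapping to $\emptyset$, every vertex of $E_\bullet$ except the top one is empty and the top one is $Y$; commutativity forces $f'$ to hit the top vertex, and the cartesian condition on $f'$ then demands a vertex of $E_\bullet$ isomorphic to $E_k\times_Y(Y\sqcup Y)\cong Y\sqcup Y$, which is impossible. Your grafting recipe breaks down at exactly this point: pulling the segment $W_0\to\cdots\to W_{m-1}$ back along the leg $D_{\alpha_f(0)}\to Y$ of $f$ produces a chain ending at $D_{\alpha_f(0)}$, and this can be spliced onto the front of $D$ only when $\alpha_f(0)=0$ (otherwise there is no map from the pulled-back chain to $D_0$). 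The condition $\alpha_f(0)=0$ is precisely Kaledin's complementary class $P$, and the square-filling is his pushout axiom for a pair in $P\times I$; it is not available against arbitrary morphisms, and in particular not against morphisms of $I$ itself, as the example shows.

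This is exactly why the paper does not argue via fractions. It uses the complementary pair $\langle P,I\rangle$ on $\SC$ --- closure, the initial objects $\iota b\to b$, the unique factorization of every morphism as (a map in $P$) followed by (a map in $I$), and the $P$-against-$I$ pushout axiom --- to construct the localization functor $\Spcl=(\pi_1)_!\pi_2^*$ on $\Fun(\SC,\DD)$, prove it localizes at the $I$-inverting functors, and deduce by Yoneda the very colimit formula you wrote down. The same factorization axiom is then what identifies that colimit with the classifying space of the cospan category $Q_I(X,Y)$ (a left Kan extension statement along $Q_I(X,Y)\to\Phi_{([0],Y)}$), after which the comparison with $\N\Span_{\CC}(X,Y)^{\sim}$ is an adjunction argument. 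So the formula you assert is true, but your justification of it is not, and your final step (``any two resolutions with isomorphic associated spans are linked by a zig-zag\dots'') is likewise only a $\pi_0$-level argument where an equivalence of classifying spaces is required. To repair the proof you need to import the auxiliary class $P$ and the unique $I$-$P$ factorization --- essentially the complementary-pair machinery --- rather than a calculus of fractions for $I$ alone.
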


In order to prove Theorem \ref{thm:burnside}, we consider the following more general situation:

\begin{Def}[{{\cite[Definition 4.3]{Kaledin11}}}] \label{Def: complementary}
	Let $\Phi$ be a small category with distinguished classes of morphisms $I$ and $P$. Then $\langle P,I\rangle$ forms a \emph{complementary pair} if the following axioms are satisfied:
	\begin{enumerate}
		\item	The classes $P$ and $I$ are closed under composition and contain all isomorphisms.
		\item	For every object $b\in \Phi$, the full subcategory $\Phi^I_{/b}\subset \Phi_{/b}$ of the slice category consisting of the maps in $I$ admits an initial object $i_b:\iota (b)\rightarrow b$.
		\item	Every map $f$ in $\Phi$ factors uniquely (up to unique isomorphism) as $f=i(f)\circ p(f)$ with $i(f)\in I$ and $p(f)\in P$.\footnote{Note that there is a typo in part (iii) of \cite[Definition~4.3]{Kaledin11}.}
		\item	For every pair of morphisms $b_1\overset{p}{\leftarrow}b\overset{i}{\rightarrow} b_2$ in $\Phi$ with $p\in P$ and $i\in I$, there exists a pushout square
					\begin{center}
					\begin{tikzcd}
						b	\arrow[r, "p"]\arrow[d, "i"]	&	b_1\arrow[d, "i'"]	\\
						b_2	\arrow[r, "p'"]					&	b_{12}
					\end{tikzcd}
					\end{center}
				with $i'\in I$ and $p'\in P$.
	\end{enumerate}
\end{Def}

\begin{Exa}\label{exa:complementary}
	Take $\Phi = \SC$, $I$ the class of special morphisms, and $P$ the collection of morphisms $(\alpha, f):([n],X_\bullet)\rightarrow ([m], Y_\bullet))$ such that $\alpha(0)=0$.  This pair  $\langle P,I\rangle$ is complementary by \cite[Lemma 4.8]{Kaledin11}.
\end{Exa}

\begin{Cons}
	Let $R\Phi$ be the category of diagrams $b_1\overset{i_1}{\longleftarrow} b \overset{i_2}{\longrightarrow} b_2$ such that $i_1, i_2\in I$. Using the canonical projections $\pi_1,\pi_2:R\Phi\rightarrow \Phi$ we construct for every presentable $\infty$-category $\DD$ an endofunctor
		\[
			\Spcl:\Fun(\Phi,\DD)\rightarrow \Fun(\Phi,\DD)
		\]
	as the composition $\Spcl=(\pi_1)_!\circ(\pi_2)^\ast$ of restriction along $\pi_2$ with left Kan extension along $\pi_1$. The common section $\delta:\Phi\rightarrow R\Phi$ of the projections $\pi_1, \pi_2$ provides a natural transformation $\tau:\Id\rightarrow \Spcl$ which is defined as
		\[
			\Id=\delta^\ast\circ\pi_2^\ast\rightarrow\delta^\ast\circ\pi_1^\ast\circ(\pi_1)_!\circ\pi_2^\ast=(\pi_1)_!\circ\pi_2^\ast.
		\]
\end{Cons}

\begin{Lem}\label{Lemma:SpFormula}
	The value of $\Spcl$ on a functor $F\in\Fun(\Phi, \DD)$ can be computed at $b\in\Phi$ as the colimit
		\[
			\Spcl(F)(b)\simeq\colim_{\Phi_b}\pi^\ast F
		\]
	where $\Phi_b$ is the full subcategory of the under category $\Phi_{\iota b/}$ with objects the maps $\iota b \rightarrow \tilde b\in I$ and $\pi:\Phi_b\rightarrow \Phi$ is the projection to $\Phi$.  Under this identification the natural transformation $\tau$ corresponds to the canonical map
		\[
			(\eta_F)(b):Fb\rightarrow \colim_{\Phi_b}\pi^\ast F
		\]
	into the colimit system at the initial map $\iota b \rightarrow b$.
\end{Lem}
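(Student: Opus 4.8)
The plan is to evaluate the left Kan extension $\Spcl(F)=(\pi_1)_!(\pi_2)^*F$ pointwise and then trim the indexing diagram in two cofinality reductions, using the three substantive axioms of a complementary pair (the initial object $i_b\colon\iota b\to b$ of $\Phi^I_{/b}$, unique $\langle P,I\rangle$-factorization of morphisms, and pushouts of $P$-maps along $I$-maps). By the pointwise formula for left Kan extensions into a presentable $\infty$-category \cite[\S4.3.3]{HTTLurie}, $\Spcl(F)(b)$ is the colimit of $(\pi_2^*F)(-)$ over the comma $\infty$-category $R\Phi\times_\Phi\Phi_{/b}$ whose objects are pairs $\bigl((b_1\xleftarrow{i_1}c\xrightarrow{i_2}b_2),\ \phi\colon b_1\to b\bigr)$; this object is sent to $F(b_2)$. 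We must identify this colimit with $\colim_{\Phi_b}\pi^*F$.

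First I would show that $\pi_1\colon R\Phi\to\Phi$ is a coCartesian fibration. Given $r=(b_1\xleftarrow{i_1}c\xrightarrow{i_2}b_2)$ and a morphism $\phi\colon b_1\to b_1'$, factor $\phi\circ i_1=i'\circ p$ with $i'\colon c'\to b_1'$ in $I$ and $p\colon c\to c'$ in $P$, and form the pushout of $b_2\xleftarrow{i_2}c\xrightarrow{p}c'$, which exists and has comparison maps of the expected types by the pushout axiom; together these yield a morphism $r\to r'=(b_1'\xleftarrow{i'}c'\to b_2\sqcup_c c')$ in $R\Phi$ lying over $\phi$, whose coCartesian universal property follows formally from the uniqueness clauses of the factorization and pushout axioms. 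Since $\pi_1$ is coCartesian, the inclusion of the fibre $(R\Phi)_b:=\pi_1^{-1}(b)$ into $R\Phi\times_\Phi\Phi_{/b}$ admits a left adjoint, given by coCartesian transport along the structure maps $\phi$; being therefore a right adjoint, it is cofinal \cite[4.1.3.1]{HTTLurie}. Hence $\Spcl(F)(b)\simeq\colim_{(R\Phi)_b}(F\circ\pi_2)$, the colimit over $I$-spans $b\xleftarrow{i_1}c\xrightarrow{i_2}b_2$ (with morphisms fixing $b$) of $F(b_2)$.

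Next I would produce a cofinal functor $(R\Phi)_b\to\Phi_b$ realizing the remaining reduction. It sends an $I$-span $b\xleftarrow{i_1}c\xrightarrow{i_2}b_2$ to the object $\iota b\xrightarrow{\,i_2\circ u\,}b_2$ of $\Phi_b$, where $u\colon\iota b\to c$ is the unique morphism over $b$ from the initial object $i_b$ of $\Phi^I_{/b}$ to the object $i_1\colon c\to b$. One checks that every morphism of $\Phi^I_{/b}$ automatically lies in $I$ (again by uniqueness of factorizations), so $u\in I$ and $i_2\circ u\in I$; functoriality of the assignment is a brief diagram chase using uniqueness of $u$; and $\pi^*F$ pulls back along this functor to the restriction of $F\circ\pi_2$ to $(R\Phi)_b$. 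For cofinality, fix $x=(\iota b\xrightarrow{j_0}\tilde b_0)\in\Phi_b$: the comma $\infty$-category $(R\Phi)_b\times_{\Phi_b}(\Phi_b)_{x/}$ has an initial object, namely the $I$-span $b\xleftarrow{i_b}\iota b\xrightarrow{j_0}\tilde b_0$ with its evident structure morphism, since by initiality of $i_b$ in $\Phi^I_{/b}$ any morphism out of it is forced to have apex-leg the canonical map $u$ and second leg exactly the morphism prescribed by the comma datum. Combining with the previous step, $\Spcl(F)(b)\simeq\colim_{\Phi_b}\pi^*F$.

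Finally, to identify $\tau$, observe that $\delta(b)=(b\xleftarrow{\id}b\xrightarrow{\id}b)$ lies in the fibre $(R\Phi)_b$ and that, by construction of $\tau$ from the unit of $(\pi_1)_!\dashv\pi_1^*$, the component $\tau_b\colon Fb\to\Spcl(F)(b)$ is the structure map of the colimit at $\delta(b)$. Since $\delta(b)$ has $c=b$ and $i_1=\id$, the map $u$ for it is $i_b$, so the functor of the previous step carries $\delta(b)$ to the object $\iota b\xrightarrow{i_b}b$ of $\Phi_b$; thus under the identification $\tau_b$ is precisely the canonical map into $\colim_{\Phi_b}\pi^*F$ at the object $\iota b\to b$, as claimed. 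The step I expect to be the main obstacle is verifying that the lifts built from the factorization and pushout axioms are genuinely coCartesian --- this requires assembling the relevant $2$-cells and checking the universal property with care, and one must also note that a Grothendieck opfibration of $1$-categories nerves to a coCartesian fibration of $\infty$-categories (and that $\Phi_{/b}$, $\Phi_b$ and the comma categories here are the $\infty$-categorical slices of nerves of $1$-categories). A secondary point of bookkeeping is the variance in the two ``right adjoint $\Rightarrow$ cofinal'' invocations, together with the auxiliary fact that all morphisms of $\Phi^I_{/b}$ already belong to $I$.
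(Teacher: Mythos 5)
Your argument is correct and follows essentially the same route as the paper: the pointwise formula for the left Kan extension, reduction to the fiber $\pi_1^{-1}(b)$ using that $\pi_1$ is (weakly) op-fibered, and then the comparison of the fiber with $\Phi_b$ — your cofinality check via initial objects in the comma categories is exactly the adjunction $\pi_1^{-1}(b)\rightleftarrows\Phi_b$ the paper invokes, and your identification of $\tau$ via $\delta(b)$ matches the paper's. The only difference is bookkeeping detail (e.g.\ the remark that all morphisms of $\Phi^I_{/b}$ lie in $I$), which the paper leaves implicit.
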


\begin{proof}
	We use the pointwise formula for Kan extensions \cite[Lemma 4.3.2.13 and Definition 4.3.3.2]{HTTLurie}. It follows from the axioms that the projection $\pi_1:R\Phi\rightarrow \Phi$ is a weak op-fibration (cofibered functor). Hence the left Kan extension can be computed at $b\in\Phi$ as the colimit (using cofinality, see e.g.~\cite[Theorem 4.1.3.1]{HTTLurie})
		\[
			\Spcl(F)(b)=(\pi_1)_!\pi_2^\ast(F)(b)\simeq \colim_{\pi_1^{-1}(b)}\pi_2^\ast F
		\]
	over the fiber $\pi_1^{-1}(b)$. Moreover, there is an adjunction
		\[
			\pi_1^{-1}(b) \rightleftarrows \Phi_b
		\]
	where the right adjoint sends $b\leftarrow \tilde b\rightarrow \tilde{\tilde b}$ to the composite $\iota b\rightarrow \tilde b\rightarrow \tilde{\tilde b}$ with the unique arrow $\iota b\rightarrow \tilde b$ lying over $b$.
\end{proof}

\begin{Prop}
	Let $\Phi$ be a small category together with a complementary pair $\langle P,I\rangle$ of classes of maps and let $\DD$ be a presentable $\infty$-category. Then $\Spcl$ is a localization functor with local objects the functors $F\in \Fun^I(\Phi, \DD)$ that invert the maps in $I$. 
\end{Prop}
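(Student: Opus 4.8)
The plan is to prove the two halves of the statement separately: first that $\Spcl$ is a localization functor (in the $\infty$-categorical sense, i.e.\ $\Spcl$ is idempotent and the natural transformation $\tau : \Id \to \Spcl$ exhibits it as the unit of an adjunction onto its image), and second that its essential image consists precisely of the functors inverting $I$. For the first part I would invoke the standard criterion (see \cite[\S 5.2.7]{HTTLurie}): a functor $L$ equipped with $\tau : \Id \to L$ is a localization if and only if $L\tau$ and $\tau L$ are equivalent equivalences on $L$; equivalently, it suffices to show that $\tau_{\Spcl(F)}$ is an equivalence and that $\Spcl(\tau_F)$ is an equivalence for all $F$. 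Using the explicit colimit formula of Lemma~\ref{Lemma:SpFormula}, $\Spcl(F)(b) \simeq \colim_{\Phi_b} \pi^\ast F$, this reduces to a cofinality statement about the comma categories $\Phi_b$. Concretely, one checks that for the functor $G := \Spcl(F)$, the canonical map $Gb \to \colim_{\Phi_b} \pi^\ast G$ is an equivalence because the colimit system is already constant on the subcategory of objects $\iota b \to \tilde b$ with $\tilde b$ in the image of the section $\delta$ — this is where axiom (d) (the pushout axiom for the complementary pair) is used, to see that $\Phi_{\iota \tilde b/}^I$ maps cofinally into $\Phi_{\iota b/}^I$ when $\iota b \to \tilde b$ lies in $I$.

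For the identification of local objects, first observe that $\Spcl(F)$ always inverts $I$: if $f : b \to b'$ is in $I$ then $\iota b \cong \iota b'$ (by axiom (b), since $\iota$ is determined by an initial object in a slice), and post-composition induces an equivalence $\Phi_{b} \to \Phi_{b'}$ of the relevant comma categories, hence $\Spcl(F)(b) \simeq \Spcl(F)(b')$ compatibly with $f$. Thus the essential image of $\Spcl$ is contained in $\Fun^I(\Phi,\DD)$. Conversely, if $F$ already inverts all maps in $I$, then in the colimit $\colim_{\Phi_b}\pi^\ast F$ every transition map is an equivalence (each arrow of $\Phi_b$ is a map in $I$ between the targets), and $\Phi_b$ has the initial object $\iota b \to b$; hence the colimit is computed by this initial object and the canonical map $Fb \to \Spcl(F)(b)$ is an equivalence, i.e.\ $\tau_F$ is an equivalence. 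Since $\Spcl$ is idempotent (first part) with unit $\tau$, a functor is local exactly when $\tau_F$ is an equivalence, so the local objects are precisely $\Fun^I(\Phi,\DD)$.

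The main obstacle I expect is the cofinality argument underlying idempotency: showing that for $g : \iota b \to \tilde b$ in $I$ the induced functor on comma categories $\Phi_{\tilde b}\to \Phi_b$ (more precisely, the inclusion of the full subcategory of factorizations through $\tilde b$) is cofinal, so that re-applying $\Spcl$ does not change the value. This is exactly the place where all four axioms of a complementary pair get used together — the unique factorization (c) to identify $\iota(-)$ functorially, the initiality in (b) to get well-defined comparison maps, and the pushout axiom (d) to verify the connectedness/weak-contractibility of the relevant slice categories needed for Quillen's Theorem A / \cite[Theorem 4.1.3.1]{HTTLurie}. Everything else — the colimit formula, the behaviour on $I$-maps, the characterization of local objects once idempotency is known — is then routine. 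I would also remark that this is the $\infty$-categorical analogue of \cite[Lemma 4.4 and Proposition 4.5]{Kaledin11}, and that the $1$-categorical arguments there transcribe with only the usual replacements (replace "unique" factorizations by "essentially unique", initial objects by initial objects in $\infty$-categories, colimits by homotopy colimits).
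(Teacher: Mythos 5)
Your treatment of the two ``easy'' facts --- that $\Spcl(F)$ inverts $I$ for every $F$ (via the unique isomorphism $\iota b\cong\iota b'$ over a map in $I$, which identifies the relevant index categories), and that $\tau_F$ is an equivalence when $F$ already inverts $I$ --- follows the same route as the paper, and your derivation of the identification of local objects from idempotency is fine. But there is a genuine gap at precisely the step you flag as ``the main obstacle''. The criterion you invoke requires \emph{two different} maps $\Spcl(F)\to\Spcl(\Spcl(F))$ to be equivalences: $\tau_{\Spcl(F)}$ and $\Spcl(\tau_F)$. The first is immediate from what you have already shown ($\Spcl(F)$ inverts $I$, and $\tau$ is an equivalence on $I$-inverting functors); the computation in your first paragraph, of the canonical map $Gb\to\colim_{\Phi_b}\pi^*G$ for $G=\Spcl(F)$, only re-proves this. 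The second is the real content and is never established: you defer it to a cofinality statement that is not carried out and that in any case aims at the wrong difficulty. For an object $\iota b\to\tilde b$ of $\Phi_b$, the unique isomorphism $\iota b\cong\iota\tilde b$ already gives an equivalence $\Phi_{\tilde b}\simeq\Phi_b$ (no cofinality argument and no appeal to axiom (d) is needed), so it is easy to see that $\Spcl(\Spcl(F))(b)$ is \emph{abstractly} equivalent to $\Spcl(F)(b)$; what must be proved is that the particular map $\Spcl(\tau_F)$ realizes this equivalence, and pointwise identifications of colimits do not by themselves control that map. The paper resolves exactly this point by a formal argument: for $I$-inverting $G$ one has $\pi_2^*G\simeq\pi_1^*G$, under which $\Spcl(\tau_F)$ is identified with the unit $(\pi_1)_!\pi_2^*F\to(\pi_1)_!\pi_1^*(\pi_1)_!\pi_2^*F$; this map admits the counit as a left inverse, and the counit is shown to be an equivalence by identifying it, via the same zigzag, with a retraction of the already-established equivalence $\tau_G$. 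Some argument of this kind (or an honest coherence argument assembling your pointwise identifications into the specific map $\Spcl(\tau_F)$) is needed to complete the proof.

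Two smaller inaccuracies in your converse direction: the initial object of $\Phi_b$ is $\mathrm{id}_{\iota b}$, not $\iota b\to b$ (the latter is initial in $\Phi^I_{/b}$, a different category), and the morphisms of $\Phi_b$ need not lie in $I$ --- none of the axioms gives right cancellation for $I$. The correct reason the diagram $\pi^*F$ is essentially constant is that every object of $\Phi_b$ receives a map in $I$ from $\iota b$, so the constant functor at $F(\iota b)$ maps to $\pi^*F$ by pointwise equivalences; one then concludes because $\Phi_b$ is weakly contractible (having an initial object), as in \cite[Corollary 4.4.4.10]{HTTLurie}, not because ``colimits are computed at initial objects'', which is false in general. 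Finally, axiom (d) does enter, but earlier than you expect: together with (c) it is what makes $\pi_1$ cofibered, underlying the pointwise colimit formula of Lemma~\ref{Lemma:SpFormula}, rather than the idempotency step.
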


\begin{proof}
	We first claim that if $F\in\Fun(\Phi,\DD)$ inverts morphisms in $I$, then the map
		\[
			\tau_F:F\longrightarrow\Spcl(F)
		\]
	is an equivalence. This follows from the object-wise description of Lemma \ref{Lemma:SpFormula} together with the observation that the natural transformation from the constant functor is an equivalence $\underline {F(\iota c)}\simeq\pi^\ast F$ of functors $\Phi_c\rightarrow \DD$ whose source has an initial object \cite[Corollary 4.4.4.10]{HTTLurie}.  Moreover, for any $F\in \Fun(\Phi, \DD)$ the functor $\Spcl(F)$ inverts morphism in $I$, since for $i:c\rightarrow c'\in I$ there is a unique isomorphism $\iota c\cong \iota c'$ lying over $i$, which induces an equivalence of the relevant slice categories \cite[Section 4.3]{Kaledin11}. This shows that the essential image of $\Spcl$ is given by the $I$-inverting functors and that
		\[
			\tau_{\Spcl(F)}:\Spcl(F)\rightarrow \Spcl(\Spcl(F))
		\]
	is an equivalence for all $F:\Phi\rightarrow \DD$. It remains to show that the map
		\[
			\Spcl(\tau_F):\Spcl(F)\rightarrow \Spcl(\Spcl(F))
		\]
	is an equivalence. Under the zigzag of equivalences 
		\[
			\Spcl(G)=(\pi_1)_!(\pi_2)^\ast G\simeq (\pi_1)_!(\pi_1)^\ast G
		\]
	for $I$-inverting $G$ it is given by the unit map
		\[
			(\pi_1)_!\pi_2^\ast F\xrightarrow{(\pi_1)_!\eta_{\pi_2^\ast F}}(\pi_1)_!\pi_1^\ast (\pi_1)_!\pi_2^\ast F,
		\]
	which can be seen pointwise by evaluating $\eta_{\pi^\ast_2F}$ at the zig-zag of (horizontal) morphisms
		\begin{center}
		\begin{tikzcd}
			c_1							&	c_{12}\arrow[l]\arrow[r]							&	c_2	\\
			c_{12}	\arrow[u]\arrow[d]		&	c_{12}\arrow[u, equal]\arrow[d]\arrow[l, equal]\arrow[r]	&	c_2	\arrow[u, equal]\arrow[d, equal]	\\
			c_2							&	c_2\arrow[l, equal]\arrow[r, equal]		&	c_2	
		\end{tikzcd}
	\end{center}
	in $R\Phi$. The counit $\epsilon_{\Spcl(F)}$ is a left inverse and so it suffices to show that it is an equivalence. Furthermore, for any $I$-inverting functor $G$, under the zigzag of equivalences $\Spcl(G)\simeq (\pi_1)_!\pi_1^\ast G$, the natural map $\tau_G$ corresponds to
		\[
			G=\delta^\ast\pi_1^\ast G\xrightarrow{\delta^\ast\eta_{\pi_1^\ast G}}\delta^\ast\pi_1^\ast(\pi_1)_!\pi_1^\ast G=(\pi_1)_!\pi_1^\ast G.
		\]
	This is an equivalence (using the first paragraph of the proof) with a left inverse $\epsilon_G=\delta^\ast\pi_1^\ast \epsilon_{G}$, which is thus also an equivalence.
\end{proof}

\begin{Rem}
	A formal consequence of the Yoneda Lemma \cite[5.1.3]{HTTLurie} is that the mapping spaces in the $\infty$-categorical localization $(\N\Phi)[I^{-1}]$ can be identified as
		\[
			\Map_{(\N\Phi)[I^{-1}]}(c_1,c_2)\simeq \Spcl(\Phi(c_1,-))(c_2).
		\]
\end{Rem}

\begin{Def}
	Let $Q_I(b_1,b_2)$ be the category of cospans $b_1\overset{p}{\rightarrow}b\overset{i}{\leftarrow}b_2$ with $p\in P$ and $i\in I$. Forgetting the map $p$ and composing with the map $\iota b_2 \to b_2$ defines a functor $j:Q_I(b_1,b_2)\rightarrow \Phi_{b_2}$. Recall that we also have the functor $\pi : \Phi_{b_2} \to \Phi$. An object $c \in \CC$ is called \emph{simple} if the map $\iota c \to c$ is an isomorphism. 
\end{Def}

\begin{Lem}
	For a simple $b_2$, the natural transformation
		\[
			\underline{\Delta^0}\rightarrow j^\ast\pi^\ast\Phi(b_1,-),\quad (p,i)\mapsto p
		\]
	of functors $Q_I(b_1,b_2)\rightarrow \mathcal S$ exhibits the functor $\pi^\ast \Phi(b_1,-)$ as a left Kan extension
		\[
			\pi^\ast\Phi(b_1,-)\simeq j_!(\underline{\Delta^0})
		\]
	along $j$. Here $\underline{\Delta^0}$ is the constant functor associated with the point and $ \mathcal S$ the $\infty$-category of spaces.
\end{Lem}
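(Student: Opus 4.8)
The plan is to verify the defining universal property of the left Kan extension pointwise, via the colimit formula of \cite[Lemma~4.3.2.13 and Definition~4.3.3.2]{HTTLurie}. Thus it suffices to show that for every object $\beta=(\iota b_2\overset{u}{\to}\tilde b)$ of $\Phi_{b_2}$ the canonical comparison map
\[
	\colim_{\;Q_I(b_1,b_2)\times_{\Phi_{b_2}}(\Phi_{b_2})_{/\beta}}\underline{\Delta^0}\;\longrightarrow\;\pi^\ast\Phi(b_1,-)(\beta)=\Phi(b_1,\tilde b)
\]
is an equivalence. Since $\Phi$ is an ordinary category the target is a discrete space, while the source is the classifying space $\lvert\N(j\downarrow\beta)\rvert$ of the comma category $j\downarrow\beta$, whose objects are cospans $b_1\overset{p}{\to}b\overset{i}{\leftarrow}b_2$ with $p\in P$, $i\in I$, together with a map $g\colon b\to\tilde b$ satisfying $g\circ i=u$, and whose morphisms are the evident compatible maps $b\to b'$; the comparison functor records $g\circ p\in\Phi(b_1,\tilde b)$. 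The statement therefore reduces to showing that for each $h\in\Phi(b_1,\tilde b)$ the fibre $\mathcal F_h\subseteq j\downarrow\beta$, i.e.\ the full subcategory on those objects with $g\circ p=h$, is nonempty and has weakly contractible nerve.

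To produce objects of $\mathcal F_h$ I would use that $b_2$ is simple. Recall from the proof of the preceding localization statement that $\iota$ carries maps in $I$ to isomorphisms; together with the complementary-pair axiom furnishing an initial object in each slice $\Phi^I_{/b}$, this forces the structure map $u\colon\iota b_2\to\tilde b$ to be canonically isomorphic to the initial object $\iota(\tilde b)\to\tilde b$ of $\Phi^I_{/\tilde b}$. Now factor $h$ uniquely as $h=i(h)\circ p(h)$ with $b_1\overset{p(h)}{\to}c\overset{i(h)}{\to}\tilde b$, $p(h)\in P$, $i(h)\in I$. Since $i(h)$ lies in $I$ and $u$ is initial in $\Phi^I_{/\tilde b}$ there is a unique morphism $i^\ast\colon b_2\to c$ over $\tilde b$, and $i^\ast\in I$ because $\iota$ identifies the initial $I$-objects over $c$ and over $\tilde b$. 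Then $(c,p(h),i^\ast,i(h))$ is an object of $\mathcal F_h$; in particular $\mathcal F_h\neq\emptyset$, so the comparison functor is already surjective on $\pi_0$.

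The remaining and, I expect, hardest point is the contractibility of $\mathcal F_h$. The natural route is to prove that $(c,p(h),i^\ast,i(h))$ is \emph{terminal} in $\mathcal F_h$. Given $(b,p,i,g)\in\mathcal F_h$ one factors $g=i(g)\circ p(g)$ and observes that $i(g)\circ(p(g)\circ p)$ is a $P$-$I$ factorization of $g\circ p=h$; uniqueness of such factorizations then yields a canonical isomorphism identifying the middle object with $c$, under which $\phi:=p(g)$ becomes a morphism $(b,p,i,g)\to(c,p(h),i^\ast,i(h))$ in $\mathcal F_h$ — compatibility with the $I$-legs being automatic from the initiality of $u$ in $\Phi^I_{/\tilde b}$. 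The subtle part is the \emph{uniqueness} of $\phi$: any competitor must, again after applying the unique $P$-$I$ factorization, have $P$-part forced to equal $p(g)$ and $I$-part forced to be an isomorphism over $\tilde b$, and one must then argue this isomorphism is trivial by playing the constraints $\phi\circ p=p(h)$ and $\phi\circ i=i^\ast$ against the uniqueness clauses of the complementary-pair axioms. Once terminality is established, $\lvert\N(\mathcal F_h)\rvert\simeq\ast$ for every $h$, the pointwise comparison maps are equivalences, and the displayed natural transformation exhibits $\pi^\ast\Phi(b_1,-)\simeq j_!(\underline{\Delta^0})$. (Should a genuine terminal object resist, one can instead extract contractibility of $\mathcal F_h$ by a direct connectedness-plus-acyclicity argument, but the terminal-object route is the cleanest.)
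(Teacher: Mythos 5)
Your route is essentially the paper's: the paper also evaluates the Kan extension pointwise and shows that the comparison functor from the slice over an object of $\Phi_{b_2}$ to the discrete category $\Phi(b_1,b')$ is a left adjoint, with right adjoint "given by the unique factorization and simplicity of $b_2$" — and since a functor into a discrete category admits a right adjoint exactly when every fiber has a terminal object, your candidate $(c,p(h),i^*,i(h))$ is precisely the paper's right adjoint, and your fiberwise contractibility is the same statement as the paper's adjunction-induced equivalence of classifying spaces.

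The one step you flagged as open — uniqueness of the morphism to the candidate, i.e.\ terminality — does go through, by using the uniqueness clause of axiom (c) twice rather than trying to kill an automorphism by hand. Let $\varphi\colon(b,p,i,g)\to(c,p(h),i^*,i(h))$ be any morphism in the fiber and factor $\varphi=i(\varphi)\circ p(\varphi)$. Then $p(h)=\varphi\circ p=i(\varphi)\circ(p(\varphi)\circ p)$ is a $P$--$I$ factorization of the $P$-map $p(h)$; comparing with the factorization $\mathrm{id}_c\circ p(h)$, the unique compatible isomorphism is forced to be $i(\varphi)$ itself, so $i(\varphi)$ is invertible and hence $\varphi\in P$. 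Consequently $g=i(h)\circ\varphi$ is a $P$--$I$ factorization of $g$; comparing it with a fixed factorization $g=i(g)\circ p(g)$ through $d$ gives an isomorphism $\nu\colon d\to c$ with $\varphi=\nu\circ p(g)$ and $i(h)\circ\nu=i(g)$. Now the constraint $\varphi\circ p=p(h)$ reads $\nu\circ(p(g)\circ p)=p(h)$, so $\nu$ is a compatible isomorphism between the two factorizations $i(g)\circ(p(g)\circ p)$ and $i(h)\circ p(h)$ of $h$; by uniqueness in axiom (c) there is exactly one such $\nu$, hence $\varphi=\nu\circ p(g)$ is uniquely determined. (The remaining constraint $\varphi\circ i=i^*$ is automatic: both sides are maps $b_2\to c$ over $\tilde b$ out of $(b_2,u)\cong(\iota\tilde b,i_{\tilde b})$, which is initial in $\Phi^I_{/\tilde b}$.) So no fallback connectedness-plus-acyclicity argument is needed, and your proof agrees with the paper's.
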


\begin{proof}
	We fix a map $i':\iota b_2\rightarrow b'\in I$ and consider the slice category $j_{/i'}$ with objects the diagrams
		\begin{center}
		\begin{tikzcd}
			b_1	\arrow[r, "p"]					&	b\arrow[d]		&	b_2	\arrow[l, "i", swap]		&	  \iota b_2\arrow[l]\arrow[lld, "i'"]		\\
										&	b'	
		\end{tikzcd}
		\end{center}
	Sending such a diagram to the composite $b_1\rightarrow b'$ defines a functor to the discrete category $\Phi(b_1,b')=\pi^\ast\Phi(b_1,-)(i')$, which is a left adjoint by unique factorization (Property (c) of Definition \ref{Def: complementary}). In fact the right adjoint is given by the unique factorization and using that $b_2$ is simple. The claim now follows from the pointwise formula for Kan extensions \cite[Lemma 4.3.2.13 and Definition 4.3.3.2]{HTTLurie} and cofinality \cite[Theorem 4.1.3.1]{HTTLurie}. 
\end{proof}

\begin{Cor}\label{cor:corep}
	The localization of the corepresented functor $\Phi(c_1,-)$ evaluated at a simple object $c_2$ is given by
		\[
			\Spcl(\Phi(c_1,-))(c_2)\simeq \lvert Q_I(c_1,c_2)\rvert
		\]
	and the universal arrow is induced by the functor $\Phi(c_1,c_2)\rightarrow Q_I(c_1,c_2)$ that sends $f=i\circ p:c_1\rightarrow c_2$ to the cospan
		\[
			c_1\overset{p}{\longrightarrow}c\leftarrow \iota c \cong c_2\text{.}
		\]
\end{Cor}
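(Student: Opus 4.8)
The plan is to read this off from Lemma~\ref{Lemma:SpFormula} together with the preceding Lemma, so that the corollary becomes essentially a formal consequence once the identifications are tracked. First I would specialise Lemma~\ref{Lemma:SpFormula} to the corepresented functor $F:=\Phi(c_1,-)\colon\Phi\to\mathcal S$ and the object $b:=c_2$, obtaining $\Spcl(\Phi(c_1,-))(c_2)\simeq\colim_{\Phi_{c_2}}\pi^\ast\Phi(c_1,-)$, where $\pi\colon\Phi_{c_2}\to\Phi$ is the projection and $\Phi_{c_2}\subseteq\Phi_{\iota c_2/}$ is the full subcategory on the maps in $I$. Since $c_2$ is simple we have $\iota c_2\cong c_2$, so the hypotheses of the preceding Lemma are met and it identifies $\pi^\ast\Phi(c_1,-)\simeq j_!(\underline{\Delta^0})$ as functors $\Phi_{c_2}\to\mathcal S$, where $j\colon Q_I(c_1,c_2)\to\Phi_{c_2}$ is the comparison functor there.

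Next I would use that forming a colimit commutes with left Kan extension — the colimit over a small $\infty$-category is the left Kan extension to the terminal category, and left Kan extensions compose — so that $\colim_{\Phi_{c_2}}j_!(\underline{\Delta^0})\simeq\colim_{Q_I(c_1,c_2)}\underline{\Delta^0}$. The colimit of the constant $\Delta^0$-valued functor over a small category is its geometric realization, i.e.~$\lvert Q_I(c_1,c_2)\rvert$ in the notation fixed above. Stringing these equivalences together gives $\Spcl(\Phi(c_1,-))(c_2)\simeq\lvert Q_I(c_1,c_2)\rvert$, which is the first assertion.

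For the universal arrow I would invoke the last clause of Lemma~\ref{Lemma:SpFormula}: under the colimit identification, the unit $\tau_F$ evaluated at $c_2$ is the leg of the colimit cocone at the \emph{initial} object of $\Phi_{c_2}$, which by simplicity of $c_2$ is the map $\iota c_2\xrightarrow{\sim}c_2$. Unravelling the proof of the preceding Lemma, the value of $j_!(\underline{\Delta^0})$ at this initial object is computed by the pointwise formula as $\lvert j_{/(\iota c_2\cong c_2)}\rvert$, the comparison natural transformation $\underline{\Delta^0}\to j^\ast\pi^\ast\Phi(c_1,-)$ sends a cospan $c_1\xrightarrow{p}c\xleftarrow{i}c_2$ to the point $p\in\Phi(c_1,c)$, and the adjunction appearing in that proof identifies $\lvert j_{/(\iota c_2\cong c_2)}\rvert$ with $\Phi(c_1,c_2)$, the object of $j_{/(\iota c_2\cong c_2)}$ attached to a morphism $f\colon c_1\to c_2$ being the one determined by its unique factorization $f=i(f)\circ p(f)$ of Definition~\ref{Def: complementary}(c). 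Chasing $f$ through this chain, one reads off that $\tau_F(c_2)$ sends $f$ to the class in $\lvert Q_I(c_1,c_2)\rvert$ of the cospan $c_1\xrightarrow{p(f)}c\leftarrow\iota c\cong c_2$, which is precisely the arrow induced by the functor in the statement.

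I expect the main obstacle to be exactly this last bookkeeping step — keeping the projections $\pi$ and $j$, the comma category $j_{/(\iota c_2\cong c_2)}$, and the two natural transformations straight, and in particular verifying that the leg of the colimit cocone at the initial object of $\Phi_{c_2}$ corresponds, under the left Kan extension identification and the cofinality used to evaluate it, to the factorization functor $f\mapsto(c_1\xrightarrow{p(f)}c\leftarrow\iota c\cong c_2)$ rather than to some reindexing of it. Everything else (the pointwise formula for Kan extensions, cofinality, interchange of colimit with $j_!$, and the recognition of a constant-diagram colimit as a classifying space) is formal and has already been used in the immediately preceding proofs, so those steps should go through without friction.
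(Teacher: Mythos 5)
Your argument is exactly the deduction the paper intends: the corollary is stated without proof as an immediate consequence of Lemma~\ref{Lemma:SpFormula} and the preceding lemma, namely $\Spcl(\Phi(c_1,-))(c_2)\simeq\colim_{\Phi_{c_2}}\pi^\ast\Phi(c_1,-)\simeq\colim_{\Phi_{c_2}}j_!(\underline{\Delta^0})\simeq\colim_{Q_I(c_1,c_2)}\underline{\Delta^0}\simeq\lvert Q_I(c_1,c_2)\rvert$, with the universal arrow tracked through the cocone leg at the initial object of $\Phi_{c_2}$ and the factorization adjunction, just as you describe. Your proposal is correct and takes essentially the same route, including the final bookkeeping identifying $\tau_F(c_2)$ with the functor $f=i\circ p\mapsto(c_1\xrightarrow{p}c\leftarrow\iota c\cong c_2)$.
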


\begin{Exa}
	In the case of the complementary pair from Example~\ref{exa:complementary} (where $\Phi = \SC$) the map from the above corollary is given as follows:
		\begin{align*}
			\SC(X,Y)=\CC(Y,X)&\longrightarrow Q_I(X,Y)	\\
			\quad (Y\rightarrow X)=(([0],X) \to ([0],Y)) &\longmapsto ([0],X)\rightarrow ([0],Y)=([0],Y)
		\end{align*}
	for simple objects $([0],X)$ and $([0],Y)$, i.e.\ $X,Y\in \CC$.
\end{Exa}

We can now show that the effective Burnside category $\A(\CC)$ is obtained from $\SC$ by inverting the special maps:
\begin{proof}[Proof of Theorem \ref{thm:burnside}]
	The functor ${\N} \SC\rightarrow \A(\CC)$ is clearly essentially surjective and so it suffices to consider its effect on mapping spaces. By inspection of the definitions one sees that the composite
		\[
			\Map_{{\N}\SC}(X,Y)\longrightarrow\Map_{\A(\CC)}(X,Y)\overset{\simeq}{\longrightarrow}\N\Span_{\CC}(X,Y)^{\sim}
		\]
	is induced by the map $\SC(X,Y)\rightarrow \Span_{\CC}(X,Y)$ that sends $X\overset{f}{\longleftarrow}Y$ to the span $X\overset{f}{\longleftarrow}Y=Y$. It factors as the composition
		\[
			\SC(X,Y)\longrightarrow Q_I(X,Y)\longrightarrow \Span_{\CC}(X,Y)^{\sim}
		\]
	of the map from Corollary \ref{cor:corep} with the second functor defined by sending a cospan 
		\[
			([0],X)\rightarrow ([n],Z_\bullet)\leftarrow([0],Y)
		\]
	to the span
		\[
			X\overset{f}{\longleftarrow}Z_0\rightarrow Z_n\cong Y\text .
		\]
	This is a right adjoint and so induces an equivalence on classifying spaces. Hence the above composite is a universal arrow. It follows from Corollary \ref{cor:corep} that 
		\[
			\Map_{({\N}\SC)[I^{-1}]}(X,Y)\simeq \Spcl(\SC(X,-))(Y)\overset{\simeq}{\longrightarrow}\Map_{\A(\CC)}(X,Y)
		\]
	is a weak equivalence. Finally, we note that any object in ${\N}\SC[I^{-1}]$ is equivalent to an object of the form $([0],X)$. 
\end{proof}

\begin{Rem}\label{rem:SCmonoidal}
	Let $\CC$ be a small category with pullbacks and a terminal object $\ast$. The cartesian product of $\CC$ does not induce a symmetric monoidal structure on $\SC$. It only yields a functor
		\[
			\SC\times_\Delta \SC\cong S(\CC\times \CC)\rightarrow \SC.
		\]
	However it passes to a product
		\begin{center}
		\begin{tikzcd}
			S(\CC\times \CC)	\arrow[r]\arrow[d]								&	\SC	\arrow[d]	\\
			(\SC)[I^{-1}]\times (\SC)[I^{-1}]		\arrow[r, dashed]			&	(\SC)[I^{-1}]
		\end{tikzcd}
		\end{center}
	on the localization. Here the left vertical arrow exhibits the target as the localization $S(\CC\times \CC)[I^{-1}]$ which follows from the description of the mapping spaces in the proof of Theorem \ref{thm:burnside}.  We now promote this to a full symmetric monoidal structure. 
\end{Rem}

\begin{Cons}\label{cons:monoidal}
	We use notation from \cite[Section 2.1.1]{HALurie}. The symbol $\langle n\rangle$ stands for the pointed set $\{\ast, 1,2, \dots, n\}$ and $ \langle n\rangle^\circ$ for the set $\{1,2, \dots, n\}$. We write $\Fin_\ast$ for the category of finite pointed sets.

	The product on $\CC$ can be rectified to a functor
		\[
			\CC^{\times}:\Fin_\ast\longrightarrow \Cat, \quad \langle n\rangle\mapsto \Fun^{\Pi}(P(\langle n\rangle^\circ),\CC)
		\]
	where $P(-)$ denotes the poset of all subsets and $\Fun^{\Pi}(P(\langle n\rangle^\circ), \CC)$ is the full subcategory of those functors $F:P(\langle n\rangle^\circ)\rightarrow \CC$ such that for every $S\subseteq \langle n\rangle^\circ$ restricting to the elements of $S$ exhibits $F(S)\cong \prod_{s\in S} F(\{s\})$ as a product ($F(\emptyset)=\ast$). In particular, evaluating at the elements of $\langle n\rangle^\circ$ induces an equivalence
		\[
			\Fun^{\Pi}(P(\langle n\rangle^\circ), \CC)\overset{\simeq}{\longrightarrow}\CC^n.
		\]
	The construction $S(-)[I^{-1}]$ preserves products and equivalences. Applying it objectwise, we thus obtain a functor
		\[
			\SC^\times[I^{-1}]:{\N}\Fin_\ast\rightarrow \Cat_\infty
		\]
	that still satisfies the Segal conditions $\SC^\times[I^{-1}](\langle n\rangle^\circ)\simeq (\SC)[I^{-1}]^n$ and thus encodes a symmetric monoidal structure on $\SC[I^{-1}]$. Similarly, we obtain a `pointwise' symmetric monoidal structure
		\[
			 \A^\times\hspace{-0.4ex}(\CC)=\A \circ \CC^\times:{\N}\Fin_\ast\rightarrow\Cat_\infty
		\]
	using the fact that $\A$ preserves products and sends equivalences of ordinary categories to equivalences of $\infty$-categories.
\end{Cons}

\begin{Cor}\label{cor:monoidal-burnside}
	The comparison functor ${\N}\SC\rightarrow \A(\CC)$ of Theorem \ref{thm:burnside} induces an equivalence $\SC^\times[I^{-1}]\xra{\sim}  \A^\times\hspace{-0.4ex}(\CC)$ of symmetric monoidal $\infty$-categories. 
\end{Cor}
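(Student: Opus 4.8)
The plan is to obtain the equivalence by applying Theorem~\ref{thm:burnside} levelwise over $\N\Fin_\ast$, using that both symmetric monoidal structures are built from the same diagram $\CC^\times:\Fin_\ast\to\Cat$ of Construction~\ref{cons:monoidal} and that the comparison functor of Construction~\ref{cons:comparison-to-burnside} is natural in its input category.

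First I would set up the requisite functoriality. Let $\Cat^{\mathrm{pb}}$ be the category of small categories that admit pullbacks, with pullback-preserving functors as morphisms. A pullback-preserving functor $\phi:\CC\to\CC'$ carries cartesian squares to cartesian squares, hence induces maps of quasi-categories $\N\SC\to\N S\CC'$ and $\A(\CC)\to\A(\CC')$; and since $\phi$ visibly sends special morphisms to special morphisms, the former descends to the localizations. In this way $\CC\mapsto\N\SC[I^{-1}]$ and $\CC\mapsto\A(\CC)$ define functors $\N(\Cat^{\mathrm{pb}})\to\Cat_\infty$, and because the formulas of Construction~\ref{cons:comparison-to-burnside} are manifestly compatible with applying $\phi$, the comparison assembles into a natural transformation $\N S(-)[I^{-1}]\Rightarrow\A(-)$ between these two functors.

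Next I would check that $\CC^\times$ factors through $\Cat^{\mathrm{pb}}$: for each $\langle n\rangle$ the category $\Fun^\Pi(P(\langle n\rangle^\circ),\CC)$ has pullbacks, formed objectwise from those of $\CC$ (and such objectwise pullbacks again restrict to products of their values, so remain inside $\Fun^\Pi$), while each structure map $\langle n\rangle\to\langle m\rangle$ of $\Fin_\ast$ induces a functor that is computed objectwise by restriction along a map of posets, hence preserves pullbacks. Thus $\CC^\times$ lifts to a functor $\N\Fin_\ast\to\N(\Cat^{\mathrm{pb}})$, and post-composing with the natural transformation of the previous paragraph produces a natural transformation
\[
	\SC^\times[I^{-1}]\;\Longrightarrow\;\A^\times(\CC)
\]
of functors $\N\Fin_\ast\to\Cat_\infty$, i.e.\ a symmetric monoidal functor, whose value at $\langle n\rangle$ is the comparison functor of Construction~\ref{cons:comparison-to-burnside} for the category $\Fun^\Pi(P(\langle n\rangle^\circ),\CC)$ (with its special morphisms). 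By Theorem~\ref{thm:burnside} this is an equivalence for every $n$. Since both $\SC^\times[I^{-1}]$ and $\A^\times(\CC)$ satisfy the Segal condition (Construction~\ref{cons:monoidal}), a symmetric monoidal functor between them that is an equivalence of underlying $\infty$-categories at every $\langle n\rangle\in\Fin_\ast$ is an equivalence of symmetric monoidal $\infty$-categories; at $\langle 1\rangle$ it recovers the equivalence $\N\SC[I^{-1}]\simeq\A(\CC)$ of Theorem~\ref{thm:burnside}.

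I expect the only genuine work to lie in the functoriality claims above — in particular, verifying that the structure maps of $\CC^\times$ are pullback-preserving and that $\N S(-)[I^{-1}]$ is honestly functorial, so that applying it to $\CC^\times$ reproduces the symmetric monoidal $\infty$-category $\SC^\times[I^{-1}]$ of Construction~\ref{cons:monoidal} and not merely an object-wise collection of localizations. Granting this bookkeeping, the corollary is a formal consequence of Theorem~\ref{thm:burnside}.
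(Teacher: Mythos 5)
Your proposal is correct and is essentially the paper's own (implicit) argument: the corollary carries no separate proof precisely because Construction~\ref{cons:monoidal} applies $S(-)[I^{-1}]$ and $\A$ objectwise to the same diagram $\CC^\times$, so naturality of the comparison functor of Construction~\ref{cons:comparison-to-burnside} in pullback-preserving functors plus a levelwise application of Theorem~\ref{thm:burnside} (using the Segal conditions) gives the symmetric monoidal equivalence. Your explicit checks that each $\Fun^{\Pi}(P(\langle n\rangle^\circ),\CC)$ has objectwise pullbacks, that the structure maps preserve them, and that the localization is taken functorially are exactly the bookkeeping the paper leaves to the reader.
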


\begin{Rem}
	The symmetric monoidal structure we have constructed on $\A(\CC)$ agrees with the one constructed in \cite[Section 2]{BGS20}. To explain this, we need to recall the relative nerve construction (see \cite[Section 3.2.5]{HTTLurie}). 
\end{Rem}

\begin{Def}
	Let $f:\DD \rightarrow \operatorname{Set}_\Delta$ be a functor from a small category $\DD$ to the category of simplicial sets. The \emph{relative nerve} of $f$ is the simplicial set $\N_f$ with an $n$-simplex consisting of a chain
	\[
		d_0\overset{\phi_0}{\longrightarrow}d_1\overset{\phi_1}\longrightarrow\cdots\overset{\phi_{n-1}}\longrightarrow d_n
	\]
	in $\DD$ together with a collection of diagrams $\{\tau_J:\Delta^J\rightarrow f(d_j)\}_{J\subseteq [n]}$ indexed by the non-empty subsets of $[n]$, where $j\in J$ denotes the maximal element of $J$. These are compatible in the following sense: For any inclusion of non-empty subsets $J' \subset J \subset [n]$, with maximal elements $j' \leq j$, the diagram
	\[\xymatrix{
		\Delta^{J'} \ar@{^{(}->}[d]  \ar[r]^{\tau_{J'}}  & f(d_{j'}) \ar[d] \\ \Delta^{J} \ar[r]^{\tau_J} & f(d_j)
	}\]
	commutes. 
\end{Def}

\begin{Rem}
	Let $\DD$ be a small category and $F:\DD \rightarrow \operatorname{Cat}$ a functor such that $f(d)$ has pullbacks for any object $d$ and for any $\phi \colon d \to d'$, the functor $F(\phi)$ preserves pullbacks. Then we get a functor $\A \circ F \colon \DD \to \operatorname{Set}_\Delta$. It follows from \cite[3.2.5.21]{HTTLurie} that the canonical map $\N_{{\A}{F}} \to {\N}{\DD}$ is a cocartesian fibration classified by the functor $\A \circ F \colon {\N}{\DD} \to \Cat_{\infty}$. Now suppose that $\CC$ is a small category with pullbacks and a terminal object. Consider the special case of the latter $\DD=\Fin_\ast$ and $F=\Fun^{\Pi}(P(\langle -\rangle^\circ), \CC)$. A direct but tedious inspection shows that $\N_{{\A}{F}}$ agrees with the total space $\A(\CC)^\otimes$ of the cocartesian fibration of \cite[Proposition 2.14 and Notation 2.6]{BGS20}. As a consequence we obtain:
\end{Rem}

\begin{Cor}\label{cor:Aeffmonoidalagrees}
	The cocartesian fibration $\A(\CC)^\otimes\rightarrow \N\Fin_\ast$ is classified by the functor
	\[
		\A^{\times}\hspace{-0.4ex}(\CC)=\A \circ \CC^\times:\N\Fin_\ast\rightarrow \Cat_\infty.
	\]
	Consequently, the symmetric monoidal structure on $\A(\cat C)$ defined in \cite{BGS20} agrees with the symmetric monoidal structure provided by Construction~\ref{cons:monoidal}.
\end{Cor}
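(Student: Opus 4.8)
The plan is to reduce everything to straightening--unstraightening. A symmetric monoidal structure on $\A(\CC)$ presented as a cocartesian fibration over $\N\Fin_\ast$ is determined up to equivalence by the functor $\N\Fin_\ast \to \Cat_\infty$ classifying it, and the structure of Construction~\ref{cons:monoidal} is \emph{by definition} the one classified by $\A^{\times}(\CC) = \A \circ \CC^{\times}$. So it suffices to prove that the cocartesian fibration $\A(\CC)^\otimes \to \N\Fin_\ast$ of \cite{BGS20} is classified by $\A^{\times}(\CC)$; the agreement of the two structures is then formal.

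For this I would use Lurie's relative nerve \cite[\S 3.2.5]{HTTLurie}. Take $\DD = \Fin_\ast$ and the functor $\A \circ F : \Fin_\ast \to \operatorname{Set}_\Delta$ with $F = \Fun^{\Pi}(P(\langle - \rangle^{\circ}), \CC)$. One first checks that $\A \circ F$ lands in $\infty$-categories: each $\Fun^{\Pi}(P(\langle n\rangle^{\circ}),\CC) \simeq \CC^{n}$ has pullbacks, computed componentwise in $\CC$ (which is assumed to have pullbacks and a terminal object), and every $F(\phi)$ preserves pullbacks, being assembled from projections and cartesian products, so that $(\A \circ F)(\phi)$ is a functor of $\infty$-categories. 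Then \cite[Prop.~3.2.5.21]{HTTLurie} gives that $\N_{{\A}{F}} \to \N\Fin_\ast$ is a cocartesian fibration classified by $\A \circ F = \A^{\times}(\CC)$.

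The main work is to identify the relative nerve $\N_{{\A}{F}}$ with the total space $\A(\CC)^\otimes$ of the cocartesian fibration of \cite[Prop.~2.14 and Notation~2.6]{BGS20}. This is a direct but bookkeeping-heavy comparison of $n$-simplices: an $n$-simplex of $\N_{{\A}{F}}$ is a chain $\langle k_0\rangle \to \cdots \to \langle k_n\rangle$ in $\Fin_\ast$ together with a compatible family of diagrams $\tau_J : \Delta^J \to (\A \circ F)(\langle k_j \rangle)$ for nonempty $J \subseteq [n]$ with maximum $j$; unwinding the definition of $\A$ (cartesian functors out of twisted-arrow categories) and of $F$ (product decompositions indexed by subsets of $\langle k_j\rangle^{\circ}$), this datum matches, on the nose, the $n$-simplices of $\A(\CC)^\otimes$ as described in \cite{BGS20}. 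I expect this to be the only real obstacle: one must verify that the two indexing conventions for the product decompositions and the various cartesian/Segal conditions coincide literally, not merely up to equivalence. Once the $n$-simplices are identified compatibly with the projections to $\N\Fin_\ast$, the identification of cocartesian fibrations is automatic.

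Granting this, the Corollary follows: $\A(\CC)^\otimes \to \N\Fin_\ast$ is classified by $\A^{\times}(\CC)$, which is the defining datum of the symmetric monoidal structure of Construction~\ref{cons:monoidal}, so the two symmetric monoidal structures on $\A(\CC)$ agree. As a sanity check one can confirm directly that both fibrations have fiber $\A(\CC)^{n}$ over $\langle n\rangle$, that the inert maps act by the evident projections, and that the active map $\langle 2\rangle \to \langle 1\rangle$ pushes forward by $\A$ applied to the cartesian product $\CC \times \CC \to \CC$, which is the tensor product in both presentations.
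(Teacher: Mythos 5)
Your proposal follows the paper's own argument essentially verbatim: invoke the relative nerve and \cite[Prop.~3.2.5.21]{HTTLurie} to see that $\N_{\A F}\to\N\Fin_\ast$ is a cocartesian fibration classified by $\A^{\times}(\CC)$, identify $\N_{\A F}$ with the total space $\A(\CC)^\otimes$ of \cite{BGS20} by direct inspection of simplices, and conclude via straightening that the two symmetric monoidal structures agree. The paper likewise leaves the simplex-by-simplex identification as a ``direct but tedious inspection,'' so your treatment is correct and at the same level of detail.
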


\begin{Cor} \label{cor: Comparison1}
	For any finite group $G$, there is an equivalence of symmetric monoidal $\infty$-categories
	\[
		\Fun_{\add}(\A(\Fin_G), \HZ\MMod) \simeq \Fun_{\add}(S(\Fin_G)[I^{-1}], \HZ\MMod)
	\]
	where both sides are equipped with localized Day convolutions. 
\end{Cor}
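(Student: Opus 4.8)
The plan is to deduce this formally from Corollary~\ref{cor:monoidal-burnside} together with the functoriality of Day convolution in the source category. First I would invoke Corollary~\ref{cor:monoidal-burnside} to fix a symmetric monoidal equivalence $\Psi\colon S(\Fin_G)^\times[I^{-1}] \xrightarrow{\sim} \A^\times(\Fin_G)$ of small symmetric monoidal $\infty$-categories; in particular its underlying functor $\Psi\colon S(\Fin_G)[I^{-1}] \xrightarrow{\sim} \A(\Fin_G)$ is an equivalence of $\infty$-categories.

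Next I would recall that Day convolution is contravariantly functorial for symmetric monoidal functors of the (small) source: given a presentably symmetric monoidal $\infty$-category $\cat D$ with $\otimes$ preserving colimits in each variable, and a symmetric monoidal functor $\phi\colon \cat A^\otimes \to \cat B^\otimes$ of small symmetric monoidal $\infty$-categories, restriction along $\phi$ underlies a symmetric monoidal functor $\phi^*\colon \Fun(\cat B,\cat D) \to \Fun(\cat A,\cat D)$ for the Day convolution structures, which is an equivalence whenever $\phi$ is (see \cite{GlasDay} and \cite[\S3]{Niko16pp}). Applying this with $\phi=\Psi$ and $\cat D = \HZ\MMod$ produces a symmetric monoidal equivalence $\Psi^*\colon \Fun(\A(\Fin_G),\HZ\MMod) \xrightarrow{\sim} \Fun(S(\Fin_G)[I^{-1}],\HZ\MMod)$ for the (unlocalized) Day convolutions.

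It then remains to match up the additive functors and pass to the localized structures. Since $\Psi$ is an equivalence of $\infty$-categories it preserves finite products, finite coproducts, and biproducts; in particular $S(\Fin_G)[I^{-1}]$ is semi-additive (being equivalent to the semi-additive category $\A(\Fin_G)$), and a functor $F\colon \A(\Fin_G)\to\HZ\MMod$ is additive if and only if $F\circ\Psi$ is. Hence $\Psi^*$ carries the full subcategory $\Fun_{\add}(\A(\Fin_G),\HZ\MMod)$ of local objects onto $\Fun_{\add}(S(\Fin_G)[I^{-1}],\HZ\MMod)$; being a symmetric monoidal equivalence restricting to an equivalence of the local subcategories, it descends to a symmetric monoidal equivalence of the corresponding Bousfield localizations, i.e.\ of the two $\infty$-categories of additive functors equipped with their localized Day convolutions (cf.\ the discussion preceding Proposition~\ref{prop:induced-localization} and \cite[Lemma~3.7]{BGS20}). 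This yields the asserted equivalence.

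I do not expect a genuine obstacle here: the one step that is not pure bookkeeping is the functoriality of Day convolution for symmetric monoidal functors of the source, together with the fact that it takes equivalences to equivalences, and this is standard and available in the cited references. So the proof should amount to citing Corollary~\ref{cor:monoidal-burnside} and then running this routine verification.
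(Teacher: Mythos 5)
Your route is essentially the paper's: use the symmetric monoidal comparison between $S(\Fin_G)[I^{-1}]$ and the effective Burnside $\infty$-category, restrict along it to get a Day-convolution equivalence of functor categories, note that additive functors correspond, and pass to the localized Day convolutions via \cite[Lemma~3.7]{BGS20}. Those steps are fine.

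However, there is one concrete missing ingredient. Corollary~\ref{cor:monoidal-burnside} produces a symmetric monoidal equivalence $S(\Fin_G)^\times[I^{-1}] \xrightarrow{\sim} \A^\times(\Fin_G)$, where the target carries the ``pointwise'' symmetric monoidal structure of Construction~\ref{cons:monoidal}. But the Day convolution on $\Fun_{\add}(\A(\Fin_G),\HZ\MMod)$ that the corollary is actually about --- the one used to define $\HZ$-valued spectral Mackey functors and fed into Corollary~\ref{cor: first comparison} and ultimately Theorem~\ref{thm:equivalence} --- is taken with respect to the symmetric monoidal structure on $\A(\Fin_G)$ constructed in \cite[Section~2]{BGS20} as a cocartesian fibration $\A(\Fin_G)^\otimes \to \N\Fin_\ast$. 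A priori these are two different symmetric monoidal structures on the same underlying $\infty$-category, and your argument only yields the statement for the Construction~\ref{cons:monoidal} structure. The bridge is Corollary~\ref{cor:Aeffmonoidalagrees}, which (via the relative nerve) identifies the \cite{BGS20} cocartesian fibration as being classified by $\A^\times(\Fin_G)$, so the two structures agree; this is precisely the citation the paper's own proof uses. With that identification added, your argument goes through as written.
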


\begin{proof} 
	Corollary~\ref{cor:Aeffmonoidalagrees} provides an equivalence of symmetric monoidal \mbox{$\infty$-categories}
	\[
		\Fun(\A(\Fin_G), \HZ\MMod) \simeq \Fun(S(\Fin_G)[I^{-1}], \HZ\MMod)
	\]
	where both sides are equipped with Day convolution products. Additive functors correspond under this equivalence and after localizing we get the desired compatibility of localized Day convolution products \cite[Lemma 3.7]{BGS20}. 
\end{proof}

\begin{Def}\label{def:kaledin}
	Consider the derived category $\Der(\Fun({S}{\Fin_G},\Ab))$ of the abelian category of all functors ${S}{\Fin_G}\to\Ab$.  Note that the objects of the derived category can be regarded as functors ${S}{\Fin_G} \to \Ch(\bbZ)$.  The triangulated category of derived Mackey functors in the sense of Kaledin can be defined as the full subcategory
	\[
		\DerKal(G) \subset \Der(\Fun({S}{\Fin_G},\Ab))
	\]
	consisting of all functors ${S}{\Fin_G} \to \Ch(\bbZ)$ which send the special morphisms of ${S}{\Fin_G}$ to quasi-isomorphisms and which are ``additive'' in the sense that their restriction along the embedding $\Fin_G\op \hookrightarrow {S}{\Fin_G}$ results in a functor $\Fin_G\op \to \Ch(\bbZ)$ that preserves products up to quasi-isomorphism. In Kaledin's notation this category is denoted $\mathcal{D}\mathcal{S}_{\add}(\Fin_G,\Ab)$ and defined in \cite[Def.~4.1 and Def.~4.11]{Kaledin11}.  It follows from \cite[Thm.~4.2 and Prop.~4.7]{Kaledin11} that it is equivalent to the category defined in \cite[Def.~3.3]{Kaledin11} (cf.~Remark~\ref{rem:derkal-equiv} below).
\end{Def}

\begin{Rem}\label{rem:derived-functor-category}
	For any small category $\cat C$, the abelian category 
	\[
		\Ch(\Fun(\cat C,\Ab))\equiv \Fun(\cat C,\Ch(\bbZ))
	\]
	can be equipped with the projective model structure. The underlying $\infty$-category $\Der^\infty(\Fun(\cat C,\Ab))$ is equivalent to the functor $\infty$-category $\Fun({\N}{\cat C},\Der^\infty(\bbZ))$, where $\Der^\infty(\bbZ)$ denotes the underlying $\infty$-category of $\Der(\bbZ)$.  In particular, we have an equivalence of triangulated categories
	\begin{equation}\label{eq:underlying-functor-category}
		\Der(\Fun(\cat C,\Ab)) \cong \Ho(\Fun({\N}{\cat C},\Der^\infty(\bbZ))).
	\end{equation}
	If $I \subset\Mor(\cat C)$ is a class of morphisms, then under the equivalence \eqref{eq:underlying-functor-category}, the full subcategory of $\Der(\Fun(\cat C,\Ab))$ consisting of functors that send morphisms in $I$ to quasi-isomorphisms is equivalent to the full subcategory of the right-hand side consisting of functors of $\infty$-categories ${\N}{\cat C} \to \Der^\infty(\bbZ)$ which send morphisms in $I$ to equivalences in $\Der^\infty(\bbZ)$.  Similarly, if $\cat C$ is semi-additive, the full subcategory of $\Der(\Fun(\cat C,\Ab))$ consisting of functors which are additive up to quasi-isomorphism is equivalent to the full subcategory of additive functors ${\N}{\cat C} \to \Der^\infty(\bbZ)$.  Also note that there is a symmetric monoidal equivalence $\Der^\infty(\bbZ) \cong \HZ\MMod$ by \cite[Thm.~7.1.2.13]{HALurie} which is an $\infty$-categorical version of a theorem of Schwede and Shipley \cite{SS03}.
\end{Rem}

\begin{Prop} \label{prop: Comparison2} 
	There is an equivalence of triangulated categories
	\[
		\Ho(\Funadd(\A(\Fin_G),\HZ\MMod)) \cong \DerKal(G)
	\]
	for any finite group $G$.
\end{Prop}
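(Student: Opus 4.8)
The plan is to trace Kaledin's definition through the dictionary of Remark~\ref{rem:derived-functor-category} and then use Theorem~\ref{thm:burnside} to replace $S\Fin_G$ by the effective Burnside $\infty$-category. First I would apply Remark~\ref{rem:derived-functor-category} with $\cat C = S\Fin_G$: this gives an equivalence of triangulated categories $\Der(\Fun(S\Fin_G,\Ab)) \cong \Ho(\Fun(\N S\Fin_G,\Der^\infty(\bbZ)))$ under which the functors sending the special morphisms $I$ to quasi-isomorphisms correspond to functors of $\infty$-categories $\N S\Fin_G \to \Der^\infty(\bbZ)$ inverting $I$. Moreover, since the identification $\Der^\infty(\Fun(\cat C,\Ab)) \simeq \Fun(\N\cat C,\Der^\infty(\bbZ))$ is compatible with restriction along functors of ordinary categories and detects finite products (computed levelwise), Kaledin's ``additivity up to quasi-isomorphism'' condition on the restriction along $\Fin_G\op \hookrightarrow S\Fin_G$ (where $X\mapsto([0],X)$) translates into the condition that the corresponding $\infty$-functor, restricted along $\N(\Fin_G\op)\hookrightarrow \N S\Fin_G$, preserves finite products. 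Unwinding Definition~\ref{def:kaledin} in this way, $\DerKal(G)$ is identified with the homotopy category of the full subcategory $\cat K \subseteq \Fun(\N S\Fin_G,\Der^\infty(\bbZ))$ of functors $F$ such that (i) $F$ inverts $I$ and (ii) the restriction of $F$ along $\N(\Fin_G\op)\hookrightarrow \N S\Fin_G$ preserves finite products.

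Next I would compute $\cat K$. By the universal property of the $\infty$-categorical localization, restriction along $q\colon \N S\Fin_G \to \N S\Fin_G[I^{-1}]$ identifies the full subcategory of $I$-inverting functors $\N S\Fin_G \to \Der^\infty(\bbZ)$ with all of $\Fun(\N S\Fin_G[I^{-1}],\Der^\infty(\bbZ))$, which by Theorem~\ref{thm:burnside} is $\Fun(\A(\Fin_G),\Der^\infty(\bbZ))$. It then remains to check that under this identification condition (ii) becomes Barwick's additivity condition. For the embeddings: Construction~\ref{cons:comparison-to-burnside} sends $([0],X)$ to $X$ and the morphism $(\id_{[0]},f)\colon([0],X)\to([0],Y)$ associated to a map $f\colon Y\to X$ in $\Fin_G$ to the span $X\overset{f}{\longleftarrow}Y=Y$; hence the composite $\N(\Fin_G\op)\hookrightarrow \N S\Fin_G \xra{q}\A(\Fin_G)$ is precisely the standard contravariant embedding of $\Fin_G\op$ into $\A(\Fin_G)$. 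So (ii) says exactly that the corresponding functor $M\colon \A(\Fin_G)\to\Der^\infty(\bbZ)$ has the property that $M|_{\Fin_G\op}$ preserves finite products. Since $\A(\Fin_G)$ and $\Der^\infty(\bbZ)$ are semi-additive and in $\A(\Fin_G)$ the disjoint union $X\sqcup Y$ is the biproduct of $X$ and $Y$, with projections lying in the contravariant copy of $\Fin_G\op$ and the empty $G$-set serving as the zero object, a standard argument about the effective Burnside construction (cf.~\cite{Barwick17}) shows that this is equivalent to $M$ preserving finite biproducts, i.e.\ to $M$ being additive in the sense of Barwick. Therefore $\cat K \simeq \Funadd(\A(\Fin_G),\Der^\infty(\bbZ))$.

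Finally I would conclude: using the symmetric monoidal equivalence $\Der^\infty(\bbZ)\simeq\HZ\MMod$ recorded in Remark~\ref{rem:derived-functor-category} and the fact that post-composition with an equivalence induces an equivalence on additive functor categories, we obtain $\cat K \simeq \Funadd(\A(\Fin_G),\HZ\MMod)$, and passing to homotopy categories yields the asserted equivalence of triangulated categories $\Ho(\Funadd(\A(\Fin_G),\HZ\MMod)) \cong \DerKal(G)$. (Alternatively, once $\cat K$ has been identified with the full subcategory of $I$-inverting functors on $S\Fin_G$ whose restriction to $\Fin_G\op$ preserves finite products, one may feed this into Corollary~\ref{cor: Comparison1} to reach $\Funadd(\A(\Fin_G),\HZ\MMod)$ directly.) The only step requiring genuine care is the comparison of additivity conditions in the second paragraph — matching Kaledin's ad hoc condition imposed on the non-additive category $S\Fin_G$ with Barwick's biproduct-preservation on the semi-additive category $\A(\Fin_G)$ — but this reduces, via the compatibility of the two embeddings of $\Fin_G\op$ coming from Construction~\ref{cons:comparison-to-burnside}, to the standard characterization of additive functors out of an effective Burnside category.
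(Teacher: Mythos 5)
Your proposal is correct and follows essentially the same route as the paper's proof: translate Definition~\ref{def:kaledin} into $\infty$-categorical terms via Remark~\ref{rem:derived-functor-category}, use the universal property of the localization together with Theorem~\ref{thm:burnside} to pass to $\A(\Fin_G)$, identify the composite $\Fin_G\op \hookrightarrow S\Fin_G \to \A(\Fin_G)$ with the standard embedding, and match Kaledin's additivity condition with Barwick's. Your second paragraph is the same additivity comparison the paper makes (the embedding sends products to direct sums and is surjective on objects), so there is no substantive difference.
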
 

\begin{proof}
	According to Definition~\ref{def:kaledin} and Remark~\ref{rem:derived-functor-category}, $\DerKal(G)$ is equivalent to the full subcategory of $\Ho(\Fun({\N}{S}{\Fin_G},\HZ\MMod))$ consisting of those functors which send the special morphisms in ${S}{\Fin_G}$ to equivalences in $\HZ\MMod$ and whose restriction along ${\N}{\Fin_G\op} \hookrightarrow {\N}{S}{\Fin_G}$ sends products to direct sums.  By the universal property of localization we have an equivalence of $\infty$-categories
	\[
		\Fun^I({\N}{S}{\Fin_G},\HZ\MMod) \simeq \Fun({\N}{S}(\Fin_G)[I^{-1}],\HZ\MMod)
	\]
	and since ${\N}{S}{\Fin_G}[I^{-1}] \simeq \A(\Fin_G)$ by Theorem~\ref{thm:burnside}, it remains to understand the additivity condition. To this end note that the composite 
	\[
		\Fin_G\op \hookrightarrow {S}{\Fin_G} \twoheadrightarrow {S}{\Fin_G}[I^{-1}] \simeq \A(\Fin_G)
	\]
	is the (opposite) of the usual embedding. This embedding $\Fin_G\op \hookrightarrow \A(\Fin_G)$ sends products to direct sums (see the proof of \cite[Prop.~4.3]{Barwick17}) and, consequently, since the embedding is surjective on objects, a functor $\A(\Fin_G) \to \HZ\MMod$ preserves direct sums if and only if the composite $\Fin_G\op \hookrightarrow \A(\Fin_G) \to \HZ\MMod$ sends products to direct sums. The diagram
	\[\begin{tikzcd}
		& {\N}{S}{\Fin_G} \ar[dr, twoheadrightarrow] \ar[rr,"F"] & & \HZ\MMod \\
		{\N}{\Fin_G\op} \ar[ur] \ar[rr] &&\A(\Fin_G) \ar[ur,"\overline{F}"']
	\end{tikzcd}\]
	then shows that for a functor $F:{\N}{S}{\Fin_G} \to \HZ\MMod$ which maps special morphisms to equivalences, the induced functor $\overline{F}$ is additive if and only if the restriction of $F$ to $\Fin_G\op$ preserves products. Putting everything together, we conclude that the homotopy category of 
	\[
		\Funadd({\N}{S}{\Fin_G}[I^{-1}],\HZ\MMod) \simeq \Funadd(\A(\Fin_G),\HZ\MMod)
	\]
	is equivalent to the triangulated category $\DerKal(G)$ of Definition~\ref{def:kaledin}.
\end{proof}

\begin{Cor} \label{cor: Comparison3}
	For any finite group $G$, there is an equivalence of triangulated categories
	\[
		\DHZG \cong \DerKal(G)
	\]
\end{Cor}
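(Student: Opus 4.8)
The plan is to simply concatenate the two equivalences established immediately above. Corollary~\ref{cor: first comparison} furnishes an equivalence of tensor triangulated categories $\DHZG \simeq \Ho(\Fun_{\add}(\A(\Fin_G),\HZ\MMod))$, and Proposition~\ref{prop: Comparison2} furnishes an equivalence of triangulated categories $\Ho(\Fun_{\add}(\A(\Fin_G),\HZ\MMod)) \cong \DerKal(G)$. Composing these two equivalences yields the asserted equivalence $\DHZG \cong \DerKal(G)$, and there is nothing further to check for the statement as phrased (which only claims an equivalence of triangulated categories).

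First I would recall that the chain of identifications underlying Corollary~\ref{cor: first comparison} runs $\HZ_G\MMod \simeq \LF_1\HZ\MMod$ (Corollary~\ref{cor: DHZ is F1Zmod}, which rests on Proposition~\ref{prop:universal-to-spectral-mackey} identifying $\Sp^G$ with spectral Mackey functors) and then $\LF_1\HZ\MMod \simeq \Fun_{\add}(\A(\Fin_G),\HZ\MMod)$ (Proposition~\ref{prop: F1modules}, via the module-category manipulations of Proposition~\ref{prop: general on modules}). Passing to homotopy categories gives the first half. Then Proposition~\ref{prop: Comparison2} translates $\Fun_{\add}(\A(\Fin_G),\HZ\MMod)$ into Kaledin's category using Theorem~\ref{thm:burnside} (which presents $\A(\Fin_G)$ as the localization $\N S\Fin_G[I^{-1}]$) together with the identification $\Der^\infty(\bbZ)\simeq \HZ\MMod$ and a comparison of the additivity conditions.

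There is no genuine obstacle at this final step: all the substantive work—the universal-property identification of $\Sp^G$ with spectral Mackey functors, the monadic descent to module categories, and the localization theorem $\N S\Fin_G[I^{-1}]\simeq\A(\Fin_G)$ together with the unwinding of Kaledin's ``additive'' condition—has already been carried out. The only point meriting a sentence of care is that the equivalence of Proposition~\ref{prop: Comparison2} is a priori only an equivalence of triangulated categories (Kaledin's monoidal structure is built via the $A_\infty$-approach and would require the further identifications of Remark~\ref{rem:derkal-equiv} and Definition~\ref{def:DMQ} to compare monoidally), so the corollary is stated at the level of triangulated categories; the symmetric monoidal refinement is deferred. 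Hence the proof is: \emph{This follows by combining Corollary~\ref{cor: first comparison} and Proposition~\ref{prop: Comparison2}.}
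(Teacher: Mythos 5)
Your proof matches the paper's: the paper's own proof of this corollary is exactly the composition you describe (it additionally cites Corollary~\ref{cor: Comparison1}, but as you note the statements of Corollary~\ref{cor: first comparison} and Proposition~\ref{prop: Comparison2} already compose directly to give the claim). Your remark that the monoidal comparison with Kaledin's $A_\infty$-construction is deferred is also consistent with the paper, which handles it separately in Theorem~\ref{thm:equivalence}.
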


\begin{proof} 
	This follows from Corollary \ref{cor: first comparison}, Corollary \ref{cor: Comparison1} and Proposition \ref{prop: Comparison2}.
\end{proof}

\begin{Rem}
	The last thing that remains is to compare the monoidal structure of $\DHZG$ with the monoidal structure of $\DerKal(G)$ defined in \cite[Section 5.2]{Kaledin11}.  This monoidal structure is actually constructed on an equivalent triangulated category (denoted $\DerKal_Q(G)$ below) which Kaledin constructs using $A_\infty$-categories.  We briefly recall the construction.
\end{Rem}

\begin{Rem}\label{rem:Ainftycat}
	Every 2-category $\cat C$ gives rise to an associated $A_\infty$-category $\mathcal{B}(\cat C)$ which has the same objects and whose complex of morphisms (for a pair of objects $X$ and $Y$) is the simplicial chain complex $C_\bullett(\cat C(X,Y))$ of the nerve of the category $\cat C(X,Y)$.  See \cite[Sections 1.5--1.6]{Kaledin11} for details.  We can then consider its derived category of $A_\infty$-modules $\Der(\mathcal{B}(\cat C))$, that is, the derived category of $A_\infty$-functors from $\mathcal{B}(\cat C)$ to the dg-category $\Ch(\bbZ)$ (see \cite{Keller01} for instance).
\end{Rem}

\begin{Exa}\label{exa:1-cat-bar}
	Applied to an ordinary category $\cat C$, the resulting $A_\infty$-category $\mathcal{B}(\cat C)$ is quasi-isomorphic to the additive category $\bbZ[\cat C]$ (obtained by linearizing the hom sets) regarded as a dg-category whose morphism complexes are concentrated in degree~0.  In this case, an $A_\infty$-module is just an ordinary functor $\cat C \to \Ch(\Z)$ and the derived category $\Der(\mathcal{B}(\cat C))$ is simply the derived functor category $\Der(\Fun(\cat C,\Ab))$.
\end{Exa}

\begin{Def}\label{def:DMQ}
	Let $Q(\Fin_G)$ denote the $(2,1)$-category of spans of finite $G$-sets whose 2-morphisms are the isomorphisms of spans.  The usual embedding $\Fin_G\op \hookrightarrow Q(\Fin_G)$ can be regarded as a 2-functor and it induces a map between the associated $A_\infty$-categories $\bbZ[\Fin_G\op] \to \mathcal{B}(Q(\Fin_G))$.  Kaledin considers the full subcategory
	\[
		\DerKal_Q(G) \subset \Der(\mathcal{B}(Q(\Fin_G)))
	\]
	 of the derived category of $A_\infty$-modules consisting of those $A_\infty$-functors 
	\[
		\mathcal{B}(Q(\Fin_G))\to\Ch(\Z)
	\]
	that are additive in the sense that their restriction (to an ordinary functor) $\Fin_G\op \to \Ch(\Z)$ preserves products up to quasi-isomorphism (see \cite[Definition~3.2]{Kaledin11}).
\end{Def}

\begin{Rem}\label{rem:derkal-equiv}
	The category $\DerKal_Q(G)$ is equivalent to the triangulated category $\DerKal(G)$ defined in Definition~\ref{def:kaledin}. This is established in \cite[Section 4]{Kaledin11} but we reformulate the result in a way that suits our purposes. Regarding $S(\Fin_G)$ as a discrete $2$-category, there is a $2$-functor $\phi:S(\Fin_G)\to Q(\Fin_G)$ given on objects and morphisms exactly like the functor ${\N}{S(\Fin_G)} \to \A(\Fin_G)$ from Construction~\ref{cons:comparison-to-burnside} (see \cite[Section 4.2]{Kaledin11}).  This functor $\phi$ preserves products (disjoint unions) when restricted to $\Fin_G\op$ and sends morphisms in $I$ to equivalences. Further, it induces an $A_\infty$-functor $\Z S(\Fin_G) \to \mathcal{B}(Q(\Fin_G))$ where $\Z S(\Fin_G)$ is the linearization of the category $S(\Fin_G)$ (see Example~\ref{exa:1-cat-bar}). Restricting along this $A_\infty$-functor provides a triangulated functor
	\begin{equation}\label{eq:DMcomparison}
		\chi:\DerKal_Q(G) \to \DerKal(G)
	\end{equation}
	which one proves is an equivalence by combining \cite[Theorem~4.2]{Kaledin11} and \cite[Proposition~4.7]{Kaledin11}.
\end{Rem}

\begin{Cons}\label{cons:symkaledin}
	We now recall the symmetric monoidal structure that \cite[Section 5.2]{Kaledin11} constructs on $\DerKal_Q(G)$. The Cartesian product of finite $G$-sets induces a $2$-functor
		\[
			m \colon  Q(\Fin_G \times \Fin_G) \to {Q}(\Fin_G)
		\]
	which provides a functor
		\[
			m^* \colon \Der(\mathcal{B}({Q}(\Fin_G))) \to \Der(\mathcal{B}({Q}({\Fin_G} \times {\Fin_G}))).
		\]
	Let $m_!^{\add}$ denote the left adjoint of the composite functor
		\[
			\DerKal_Q(G) \hookrightarrow \Der(\mathcal{B}({Q}(\Fin_G))) \overset{m^*}\to \Der(\mathcal{B}({Q}({\Fin_G} \times {\Fin_G}))).
		\]
	Using the Alexander--Whitney map one can define an external product
		\[
			\Der(\mathcal{B}(Q(\Fin_G))) \times \Der(\mathcal{B}(Q(\Fin_G))) \overset{\boxtimes}\to  \Der(\mathcal{B}(Q(\Fin_G \times \Fin_G))))
		\]
	and the symmetric monoidal product on $\DerKal_Q(G)$ is defined to be the composite $m_!^{\add} \circ \boxtimes \circ (i\times i)$ where $i:\DerKal_Q(G) \hookrightarrow \Der(\mathcal{B}(Q(\Fin_G)))$ denotes the inclusion. The original category $\DerKal(G)$ then obtains a symmetric monoidal structure via the equivalence \ref{eq:DMcomparison}.
\end{Cons}

\begin{Thm}\label{thm:equivalence}
	There is an equivalence of tensor triangulated categories
		\[
			\DHZG \cong \DerKal(G)
		\]
	for any finite group $G$.
\end{Thm}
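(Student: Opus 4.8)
The equivalence of underlying triangulated categories $\DHZG \cong \DerKal(G)$ has already been established in Corollary~\ref{cor: Comparison3}, so the only thing that remains is to verify that it is compatible with the tensor structures. Recall that Kaledin's symmetric monoidal structure (Construction~\ref{cons:symkaledin}) is built on the equivalent category $\DerKal_Q(G)$ and transported to $\DerKal(G)$ along the equivalence $\chi$ of Remark~\ref{rem:derkal-equiv}. The plan is therefore to enhance the composite
\[
	\DHZG \;\cong\; \Ho(\Funadd(\A(\Fin_G),\HZ\MMod)) \;\cong\; \DerKal(G) \;\cong\; \DerKal_Q(G)
\]
coming from Corollary~\ref{cor: first comparison}, Proposition~\ref{prop: Comparison2} and Remark~\ref{rem:derkal-equiv}, and to check that under it the tensor triangulated structure on $\DHZG$ --- which by Corollary~\ref{cor: first comparison} is the homotopy category of $\Funadd(\A(\Fin_G),\HZ\MMod)$ with its localized Day convolution --- agrees with the structure of Construction~\ref{cons:symkaledin}.

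The first step is to identify $\DerKal_Q(G)$ together with its tensor structure with the homotopy category of $\Funadd(\A(\Fin_G),\HZ\MMod)$. The $A_\infty$-category $\mathcal{B}(Q(\Fin_G))$ has morphism complexes $C_\bullett(\Span_{\Fin_G}(X,Y)^\sim)$, which compute $C_\bullett$ of the mapping spaces $\Map_{\A(\Fin_G)}(X,Y)$; thus $\mathcal{B}(Q(\Fin_G))$ is the $\HZ$-linearization of $\A(\Fin_G)$, and using the symmetric monoidal equivalence $\Der^\infty(\bbZ)\cong\HZ\MMod$ of Remark~\ref{rem:derived-functor-category} its derived category of $A_\infty$-modules is equivalent to $\Fun(\A(\Fin_G),\HZ\MMod)$. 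Under this identification the additivity condition cutting out $\DerKal_Q(G)$ corresponds to the subcategory of additive functors --- the bookkeeping is the same localization/additivity argument as in the proof of Proposition~\ref{prop: Comparison2} --- and compatibility with $\chi$ holds because $\chi$ is induced by the linearized comparison $\Z S(\Fin_G)\to\mathcal{B}(Q(\Fin_G))$ which refines the functor ${\N}{S(\Fin_G)}\to\A(\Fin_G)$ of Construction~\ref{cons:comparison-to-burnside} (cf.~Remark~\ref{rem:derkal-equiv}).

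The second step is to match the two symmetric monoidal structures, and the point is that both are induced by the Cartesian product of finite $G$-sets. On the Day convolution side, the tensor product of functors $F,G$ is the left Kan extension of the external product $F\boxtimes G\colon\A(\Fin_G)\times\A(\Fin_G)\simeq\A(\Fin_G\times\Fin_G)\to\HZ\MMod$ along the functor $\A(\Fin_G\times\Fin_G)\to\A(\Fin_G)$ induced by $\times\colon\Fin_G\times\Fin_G\to\Fin_G$ --- this is precisely the symmetric monoidal structure $\A^\times(\Fin_G)$ of Construction~\ref{cons:monoidal}, which by Corollary~\ref{cor:Aeffmonoidalagrees} coincides with that of \cite{BGS20} --- followed by the localization onto additive functors. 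On Kaledin's side, $\boxtimes$ together with the Alexander--Whitney map computes exactly the $\HZ\MMod$-valued external product under $\Der^\infty(\bbZ)\cong\HZ\MMod$, the $(2,1)$-category $Q(\Fin_G\times\Fin_G)$ is $Q(\Fin_G)\times Q(\Fin_G)$, the $2$-functor $m$ is the span version of $\times$, and $m_!^{\add}$ is the additive left Kan extension. Hence $m_!^{\add}\circ\boxtimes\circ(i\times i)$ is a model for the localized Day convolution; identifying the unit objects (the representable functor at the one-point $G$-set $G/G$, which is the monoidal unit of $\A(\Fin_G)$ and corresponds to $\HZ_G$) and checking the coherence data then yields the desired equivalence of tensor triangulated categories.

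I expect the main obstacle to be this last step: making rigorous the dictionary between Kaledin's strictly point-set $A_\infty$-module formalism --- with the explicit Alexander--Whitney external product and the derived functor $m_!^{\add}$ --- and the $\infty$-categorical Day convolution. Concretely, one must verify that the Alexander--Whitney map realizes the symmetric monoidal structure of $\HZ\MMod$ after passing through $\Der^\infty(\bbZ)\cong\HZ\MMod$, and that $m_!^{\add}$ is genuinely computed by the \emph{localized} left Kan extension rather than merely being abstractly left adjoint to $m^*$ on the additive subcategory. Once these compatibilities are in place the statement follows by combining them with Corollary~\ref{cor: Comparison3}.
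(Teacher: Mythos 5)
You correctly reduce the problem to showing that the equivalence $\chi:\DerKal_Q(G)\to\DerKal(G)$ of Remark~\ref{rem:derkal-equiv} is symmetric monoidal for the localized Day convolution coming from Corollary~\ref{cor: first comparison}, and you correctly locate the crux (Alexander--Whitney versus the $\infty$-categorical external product, and $m_!^{\add}$ versus the localized Day convolution). But your first step --- ``since $\mathcal{B}(Q(\Fin_G))$ has morphism complexes computing chains on the mapping spaces of $\A(\Fin_G)$, its derived category of $A_\infty$-modules is equivalent, compatibly with the tensor structures, to $\Fun(\A(\Fin_G),\HZ\MMod)$'' --- is not bookkeeping. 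It is a monoidal rectification statement comparing $A_\infty$-functors out of a chain-level linearization of an $\infty$-category with $\infty$-functors into $\HZ\MMod\simeq\Der^\infty(\bbZ)$, and no such result is available in the paper; the ``same argument as Proposition~\ref{prop: Comparison2}'' does not apply, because that proposition deals with functors out of the ordinary $1$-category $S(\Fin_G)$, where Example~\ref{exa:1-cat-bar} and Remark~\ref{rem:derived-functor-category} identify $A_\infty$-modules with ordinary functors, not with $A_\infty$-modules over the genuinely non-formal $\mathcal{B}(Q(\Fin_G))$. Together with the two compatibilities you yourself flag at the end (that the Alexander--Whitney product realizes the monoidal structure of $\HZ\MMod$, and that $m_!^{\add}$ is computed by the localized left Kan extension), this means the heart of the theorem is still open in your proposal.

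The paper's proof avoids the rectification entirely by never leaving the $1$-categorical side: all comparisons with Kaledin's $A_\infty$-constructions are made by restricting along the $A_\infty$-functor $\bbZ S(\Fin_G)\to\mathcal{B}(Q(\Fin_G))$ induced by $\phi$, and since $\bbZ S(\Fin_G)$ has morphism complexes concentrated in degree zero, the Alexander--Whitney external product agrees there with the naive one on the nose (it is an isomorphism in degree zero). One then assembles a large diagram comparing Kaledin's product $m_!^{\add}\circ\boxtimes\circ(i\times i)$ on $\DerKal_Q(G)$ with $t_!\circ\boxtimes$ (the Day convolution) followed by the additive localization on $\Der(\Fun(S(\Fin_G)[I^{-1}],\mathrm{Ab}))$; the square relating $m^*$ and $\phi^*$ commutes on the level of right adjoints, and since Kaledin's Theorems 4.2 and 4.7 show the functors $\phi^*$ are equivalences, commutativity passes to the left adjoints, identifying $m_!^{\add}$ with the localized Day convolution and hence proving $\chi$ monoidal. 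Replacing your step 1 by this restrict-along-$\phi$ device (and the adjoint-passing argument for region (3)) is the missing idea needed to turn your outline into a proof.
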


\begin{proof}
	We have already established a symmetric monoidal equivalence
		\begin{equation*}
			\HZ_G\MMod \simeq \Funadd(S(\Fin_G)[I^{-1}],\HZ\MMod)
		\end{equation*}
	which provides a triangulated equivalence $\DHZG \cong \DerKal(G)$ at the level of homotopy categories (Cor.~\ref{cor: Comparison3}). It remains to show that the triangulated equivalence $\chi:\DerKal_Q(G) \xra{\sim} \DerKal(G)$ of Remark~\ref{rem:derkal-equiv} is symmetric monoidal when $\DerKal(G)$ is equipped with the symmetric monoidal structure induced by the localized Day convolution product on $\Funadd(S(\Fin_G)[I^{-1}],\HZ\MMod)$.

	To this end, note that we have a strictly commutative diagram of $2$-functors
		\begin{equation}\label{eq:first-dia}
		\begin{tikzcd}
			S(\Fin_G) \times S(\Fin_G) \ar[r,"\phi \times \phi"] & Q(\Fin_G) \times Q(\Fin_G) \\
			S(\Fin_G \times \Fin_G) \ar[d,"m"] \ar[u] \ar[r,"\phi"] & Q(\Fin_G \times  \Fin_G) \ar[u,"\sim" labelrotatebelow] \ar[d,"m"] \\ S(\Fin_G) \ar[r,"\phi"] &  Q(\Fin_G)
		\end{tikzcd}
		\end{equation}
	where $m \colon S(\Fin_G \times \Fin_G) \to  S(\Fin_G)$ is induced from the Cartesian product. We can apply the $\mathcal{B}(-)$ construction (Rem.~\ref{rem:Ainftycat}) to obtain a corresponding diagram of $A_\infty$-categories.  For example, the bottom arrow of the diagram becomes the \mbox{$A_\infty$-functor} $\bbZ S(\Fin_G) \to \mathcal{B}Q(\Fin_G)$ which in turn provides a functor $\Der(\mathcal{B}Q(\Fin_G)) \to \Der(\bbZ S(\Fin_G)) \cong \Der(\Fun(S(\Fin_G),\Ab))$ by restriction (Example~\ref{exa:1-cat-bar}).  Moreover, we can add the following piece to the top of the diagram
		\begin{equation}\label{eq:extra-dia}
		\begin{tikzcd}[column sep=large]
			{\bbZ {S}(\Fin_G)} \otimes {\bbZ S(\Fin_G)} \ar[r,"\mathcal{B}(\phi) \otimes \mathcal{B}(\phi)"] & {\mathcal{B}Q(\Fin_G)} \otimes {\mathcal{B}Q(\Fin_G)} \\
			\bbZ[{{S}(\Fin_G)} \times {{S}(\Fin_G)}] \ar[u,"\sim" labelrotateabove] \ar[r,"\mathcal{B}(\phi\times\phi)"] & \mathcal{B}({Q(\Fin_G)} \times {Q(\Fin_G)}) \ar[u]
		\end{tikzcd}
		\end{equation}
	where $\otimes$ denotes tensor product of $A_\infty$-categories and where the right-hand vertical functor is induced by the Alexander--Whitney map. That the diagram commutes simply follows from the observation that the Alexander--Whitney map is an isomorphism in degree zero (given by the Cartesian product of basis sets). Finally, let's arm ourselves with the following commutative diagram of $\infty$-categories
		\begin{equation}\label{eq:extra2-dia}
		\begin{tikzcd}
			S(\Fin_G)\times S(\Fin_G) \ar[r,"q\times q"] & S(\Fin_G)[I^{-1}] \times S(\Fin_G)[I^{-1}] \\
			S(\Fin_G \times \Fin_G) \ar[d,"m"] \ar[u] \ar[r,"q"] & S(\Fin_G\times \Fin_G)[I^{-1}] \ar[u,"\sim" labelrotatebelow] \ar[d,"m"] \\
			S(\Fin_G) \ar[r,"q"] & S(\Fin_G)[I^{-1}]
		\end{tikzcd}
		\end{equation}
	where we have omitted nerves in the notation (cf.~Rem.~\ref{rem:SCmonoidal}). We are now prepared to compare the monoidal structures. Writing 
		\[
			\Der(\cat D) := \Der(\Fun(\cat D,\Ab)) \cong \Ho(\Fun({\N}{\cat D},\HZ\MMod))
		\]
	for an ordinary category $\cat D$ and then using the abbreviations
		\[
			\DM := \DerKal(G), \quad \DMQ := \DerKal_Q(G), \quad \text{ and } \qquad \cat C := \Fin_G,
		\]
	consider the following diagram

	\newsavebox{\tempdiagrammm}
	\begin{lrbox}{\tempdiagrammm}
	\(\begin{tikzcd}[column sep=small]
		\DMQ \times \DMQ \ar[r,hook,"i\times i"] \ar[ddr, draw=none, start anchor=center, end anchor=center]{}[description]{(1)} \ar[dd,"\sim"', "\chi\times\chi"] & \Der(\mathcal{B}Q \cat C) \times \Der(\mathcal{B}Q \cat C)  \ar[dd,"\sim"',"\phi^*\times\phi^*"] \ar[r,"\boxtimes"] & \Der(\mathcal{B}Q(\cat C\times \cat C))\ar[d,"\sim"] \ar[r,"m_{!}^{\add}"]\ar[dr, draw=none, start anchor=center, end anchor=center]{}[description]{(3)} & \DMQ \ar[d,"\sim"', "\chi"] \\
		& \ar[r, draw=none, start anchor=center, end anchor=center]{}[description]{(2)} &\Der(S(\cat C \times \cat C)[I^{-1}]) \ar[d,"\sim"] \ar[r,"\widetilde{m}_!^{\add}"]\ar[dr, draw=none, start anchor=center, end anchor=center]{}[description]{(4)} & \DM \\
		\DM \times \DM \ar[r,hook,"i\times i"] & \Der(\SC[I^{-1}])\times \Der(\SC[I^{-1}]) \ar[r,"\boxtimes"] & \Der(\SC[I^{-1}]\times \SC[I^{-1}]) \ar[r,"t_!"] & \Der(\SC[I^{-1}]). \ar[u,"\add"']
	\end{tikzcd}
	\)
	\end{lrbox}
	\smallskip

	\noindent\resizebox{\linewidth}{!}{\usebox{\tempdiagrammm}}

	\medskip
	\noindent
	Here $\widetilde{m}_!^{\add}$ denotes the left adjoint of 
		\[
			\DerKal(G) \hookrightarrow \Der(S(\Fin_G)[I^{-1}]) \xra{m^*} \Der(S(\Fin_G \times \Fin_G)[I^{-1}])
		\]
	(cf.~Cons.~\ref{cons:symkaledin}) and $t_!$ denotes the left adjoint of restriction along
		\[
			S(\Fin_G)[I^{-1}] \times S(\Fin_G)[I^{-1}] \to S(\Fin_G)[I^{-1}].
		\]
	The lower $\boxtimes$ is the external product $\Fun(\cat A,\cat B) \times \Fun(\cat A,\cat B) \to \Fun(\cat A \times \cat A,\cat B)$ for the symmetric monoidal $\infty$-categories $\cat A = S(\Fin_G)[I^{-1}]$ and $\cat B = \HZ\MMod$.  The composite $t_! \circ \boxtimes$ is the Day convolution on $\Fun(S(\Fin_G)[I^{-1}],\HZ\MMod)$ (see~\cite{Niko16pp}).

	The commutativity of the diagram (up to isomorphism) can be established as follows. Region (1) comes directly from the definition of the comparison functor~$\chi$ (Rem.~\ref{rem:derkal-equiv}) while the more involved region (2) can be checked using the commutative diagrams \eqref{eq:first-dia}, \eqref{eq:extra-dia} and \eqref{eq:extra2-dia}. For the commutativity of (3) note that
		\[
		\begin{tikzcd}
			\DerKal_Q(G) \ar[d,"\chi"] \ar[r,hook] & 
			\Der(\mathcal{B}Q(\Fin_G)) \ar[d,"\phi^*"] \ar[r,"m^*"]
			&\Der(\mathcal{B}Q(\Fin_G \times \Fin_G)) \ar[d,"\phi^*"] \\
			\DerKal(G) \ar[r,hook] & \Der(S(\Fin_G)[I^{-1}]) \ar[r,"m^*"] & \Der(S(\Fin_G \times \Fin_G)[I^{-1}])
		\end{tikzcd}
		\]
	commutes. Kaledin \cite[Section 4]{Kaledin11} establishes that all three vertical functors are equivalences. Hence, we can replace the top and bottom rows with their left adjoints and the diagram still commutes.  Finally, the commutativity of region~(4) is immediate from the definitions.  This completes the proof that the localized Day convolution on $\DerKal(G)$ coincides with the symmetric monoidal structure on $\DerKal(G) \cong \DerKal_Q(G)$ constructed by Kaledin.
\end{proof}

\section{Modules over the Burnside ring Mackey functor}\label{sec:ordinary-derived} 
Instead of considering modules over the equivariant ring spectrum $\HZ_G := \triv_G(\HZ)$, a natural alternative is to consider modules over the equivariant ring spectrum $\HA_G$, that is, the Eilenberg-MacLane $G$-spectrum associated to the Burnside $G$-Mackey functor $\bbA_G$.  As observed by Greenlees--Shipley \cite[Section 5]{GreenleesShipley14}, the derived category of $\HA_G$-modules is equivalent to the ordinary derived category of $G$-Mackey functors.  We will begin by providing a new proof of this fact --- one which takes the monoidal structures into account --- and then explain how the story changes with $\HZ_G$ replaced by $\HA_G$ (i.e.~with the category of derived Mackey functors replaced with the ordinary derived category of Mackey functors).

\begin{Rem}\label{rem:heart-of-SpG}
	The ordinary abelian category of $G$-Mackey functors $\Mack(G)$ is equivalent to the heart of the standard $t$-structure on $\Sp^G$.  Under this equivalence, every Mackey functor $\mathcal{M}\in \Mack(G)$ is associated to its Eilenberg-MacLane \mbox{$G$-spectrum} ${\rmH}{\mathcal{M}} \in \Sp^G$.  The inclusion ${\N}{\Mack(G)} \cong (\Sp^G)^\heartsuit \hookrightarrow \Sp^G$ is lax symmetric monoidal and hence induces a functor 
	\[
		{\N}{\CAlg}(\Mack(G)) = \CAlg({\N}{\Mack}(G)) \to \CAlg(\Sp^G).
	\]
	Thus the Eilenberg-MacLane $G$-spectrum of a commutative Green functor has the structure of a commutative algebra in the symmetric monoidal \mbox{$\infty$-category}~$\Sp^G$.
\end{Rem}

\begin{Rem}
	The homotopy category $\Ho(\A(\Fin_G))$ of the effective Burnside \mbox{$\infty$-category} (Def.~\ref{def: effective Burnside category}) is the ordinary effective Burnside category $\BGeff$ whose objects are finite $G$-sets and whose morphisms are isomorphism classes of spans.  It is a semi-additive category whose group completion is the usual Burnside category~$\BG$.  Since the category of abelian groups is additive, there is an equivalence $\Fun_{\add}(\BG,\Ab)\cong\Funadd(\BGeff,\Ab)$.  In other words, although $G$-Mackey functors are usually defined to be additive functors $\BG \to \Ab$, they can equivalently be defined as functors $\BGeff \to \Ab$ which are ``additive'' in the sense that they preserve biproducts (equivalently, preserve products).
\end{Rem}

\begin{Rem}\label{rem:representable-mackey-functors}
	For each finite $G$-set $T \in \BG$, we have the evaluation functor 
	\[
	\begin{aligned}
		\ev_T:\Mack(G)&\to\Ab \\
		 \mathcal{M} &\mapsto \mathcal{M}(T)
	 \end{aligned}
	\]
	which, by the Yoneda lemma, is representable: $\ev_T \cong \Hom_{\Mack(G)}(\mathcal{M}_T,-)$ where $\mathcal{M}_T := \Hom_{\BG}(T,-) \in \Mack(G)$ is the Mackey functor represented by $T \in \BG$.  
\end{Rem}

\begin{Exa}\label{exa:burnside-mackey}
	The Burnside $G$-Mackey functor $\bbA_G$ is the representable Mackey functor $\mathcal{M}_{G/G}$.  It is the unit for the Day convolution product on $\Mack(G)$ (see Example~\ref{exa:box-product} below).
\end{Exa}

\begin{Def}
	For any finite group $G$, let $\HA_G \in \CAlg(\Sp^G)$ denote the Eilenberg--MacLane $G$-spectrum associated to the Burnside $G$-Mackey functor $\bbA_G$ (see Remark~\ref{rem:heart-of-SpG} and Example~\ref{exa:burnside-mackey}). We let 
	\[
		\Der(\HA_G) := \Ho(\HA_G\MMod_{\Sp^G})
	\]
	denote the homotopy category of the $\infty$-category of $\HA_G$-modules.
\end{Def}

\begin{Lem}\label{lem:compact-generators-for-derived}
	The triangulated category $\Der(\Mack(G))$ is compactly generated by the set of Mackey functors $\SET{ \mathcal{M}_{G/H}}{H \le G}$ regarded as complexes concentrated in degree 0.
\end{Lem}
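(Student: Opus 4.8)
The plan is to reduce the statement to a standard fact about derived categories of functor categories into abelian groups. Recall that $\Mack(G) = \Fun_{\add}(\BG,\Ab)$ is a Grothendieck abelian category with enough projectives; indeed, by Remark~\ref{rem:representable-mackey-functors} the evaluation functors $\ev_T = \Hom_{\Mack(G)}(\mathcal{M}_T,-)$ are exact and jointly conservative, so the representable Mackey functors $\mathcal{M}_T$ for $T \in \BG$ are projective, and every $\mathcal{M} \in \Mack(G)$ admits an epimorphism from a (possibly infinite) direct sum of such $\mathcal{M}_T$'s. Moreover it suffices to use the finitely many $T = G/H$ for $H \le G$ (up to conjugacy), since every finite $G$-set is a finite coproduct of orbits and $\mathcal{M}_{T \sqcup T'} \cong \mathcal{M}_T \oplus \mathcal{M}_{T'}$.

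First I would verify compactness: for $H \le G$, the functor $\Hom_{\Der(\Mack(G))}(\mathcal{M}_{G/H}, -)$ commutes with arbitrary coproducts. In degree $0$ this is immediate from the Yoneda isomorphism $\Hom_{\Mack(G)}(\mathcal{M}_{G/H}, \mathcal{M}) \cong \mathcal{M}(G/H) = \ev_{G/H}(\mathcal{M})$, since evaluation at a fixed object commutes with (termwise) coproducts in a functor category. For the higher Ext-groups, one uses that $\mathcal{M}_{G/H}$ is projective, so $\Hom_{\Der}(\mathcal{M}_{G/H}, X) = H^0(\Hom_{\Mack(G)}(\mathcal{M}_{G/H}, X^\bullet)) = H^0(\ev_{G/H}(X^\bullet))$ for any complex $X$, and $\ev_{G/H}$ is exact and coproduct-preserving, hence commutes with taking homology of a coproduct of complexes. (Equivalently: a projective object of a Grothendieck category whose representing functor preserves coproducts is a compact object of the derived category.)

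Next I would show this set generates. Suppose $X \in \Der(\Mack(G))$ satisfies $\Hom_{\Der(\Mack(G))}(\mathcal{M}_{G/H}, \Sigma^n X) = 0$ for all $H \le G$ and all $n \in \bbZ$. By the computation above this says $H^{-n}(\ev_{G/H}(X^\bullet)) = 0$ for all $n$, i.e. $\ev_{G/H}(X^\bullet)$ is acyclic. Since the evaluation functors $\ev_{G/H}$ for $H$ ranging over (conjugacy classes of) subgroups are jointly conservative and exact, acyclicity of all $\ev_{G/H}(X^\bullet)$ forces $H^*(X^\bullet) = 0$, so $X \cong 0$ in the derived category. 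Thus the set $\SET{\mathcal{M}_{G/H}}{H \le G}$ generates.

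I do not anticipate a serious obstacle here; the only point requiring a little care is the interaction between the (possibly unbounded) derived category and the compactness claim — one must make sure "compact = the Yoneda functor preserves coproducts" is applied to the correct model of $\Der(\Mack(G))$ (e.g. the unbounded derived category of the Grothendieck abelian category $\Mack(G)$, or equivalently $\Ho$ of its derived $\infty$-category), and that projectivity of $\mathcal{M}_{G/H}$ genuinely computes derived Homs as $H^0$ of an evaluated complex. Both are standard once one records that $\Mack(G)$ has enough projectives with the $\mathcal{M}_{G/H}$ among them. Alternatively, one can cite the general principle that for a small additive (or semi-additive) category $\cat A$, $\Der(\Fun_{\add}(\cat A,\Ab))$ is compactly generated by the representable functors, which is exactly the present situation with $\cat A = \BG$ (finitely generated, up to the coproduct decomposition, by the orbits $G/H$).
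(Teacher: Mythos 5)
Your proposal is correct and takes essentially the same route as the paper's proof: exactness of the evaluation functors gives projectivity of the representables $\mathcal{M}_{G/H}$, the Yoneda isomorphism identifies $\Hom_{\Der(\Mack(G))}(\mathcal{M}_{G/H}[n],X)$ with the homology of $\ev_{G/H}(X)$, and then generation follows from joint conservativity of the evaluations at orbits while compactness follows from the fact that evaluation commutes with coproducts. The extra care you flag about the unbounded derived category is handled exactly as you suggest (a single projective in degree $0$ is K-projective), and the paper treats this point in the same implicit way.
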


\begin{proof}
	Since every finite $G$-set is a finite coproduct of orbits, a Mackey functor $\mathcal{N}$ is zero in $\Mack(G)$ if and only if $\ev_{G/H}(\mathcal{N}) = 0$ for all $H \le G$.  Since the functor $\ev_{G/H}:\Mack(G) \to \Ab$ is exact, the representing object $\mathcal{M}_{G/H}$ is projective, and we have
		\begin{equation}\label{eq:mackeygenerators}
			\Hom_{\Der(\Mack(G))}(\mathcal{M}_{G/H}[n],\mathcal{N}_\bbullet) \cong H_n(\ev_{G/H}(\mathcal{N}_\bbullet)) \cong \ev_{G/H}(H_n(\mathcal{N}_\bbullet))
		\end{equation}
	for any complex of Mackey functors $\mathcal{N}_\bbullet$. A complex is thus zero in $\Der(\Mack(G))$ if and only if \eqref{eq:mackeygenerators} vanishes for all $H \le G$ and $n \in \bbZ$.  Morever, \eqref{eq:mackeygenerators} also shows that the $\mathcal{M}_{G/H}[0]$ are compact objects of $\Der(\Mack(G))$ since the right-hand side commutes with coproducts.
\end{proof}

\begin{Rem}
	If $\cat C$ is a small symmetric monoidal additive category, then the category of additive functors $\Funadd(\cat C,\Ab)$ is closed symmetric monoidal with respect to the additive Day convolution.  The tensor product is given by the coend
		\[
			(F \otimes_{\add} G)(c) = \int^{(c_1,c_2)} F(c_1) \otimes G(c_2) \otimes \cat C(c_1 \otimes c_2,c)
		\]
	which implicitly uses that the target category $\Ab$ is copowered over the enriching category ($\Ab$ itself).  This is not the same as the Day convolution on $\Fun(\cat C,\Ab)$ that does not use the $\Ab$-enrichment.  For example, the unit of the additive Day convolution on $\Funadd(\cat C,\Ab)$ is the functor which maps $c \in \cat C$ to the abelian group $\cat C(\unit,c)$, while the unit of the non-enriched Day convolution on $\Fun(\cat C,\Ab)$ is the functor which maps $c \in \cat C$ to the free abelian group generated by the set $\cat C(\unit,c)$.  
\end{Rem}

\begin{Exa}\label{exa:box-product}
	The category of Mackey functors $\Mack(G) = \Funadd(\BG,\Ab)$ is closed symmetric monoidal under the additive Day convolution (with respect to the symmetric monoidal structure on $\BG$ induced from the cartesian product of finite $G$-sets). This symmetric monoidal structure is sometimes called the ``box product'' of Mackey functors.  The unit is the Mackey functor corepresented by the unit of the monoidal structure on $\BG$, that is, the Burnside Mackey functor ${\bbA_G=\mathcal{M}_{G/G}}$.  We equip the derived category $\Der(\Mack(G))$ with the derived symmetric monoidal structure.
\end{Exa}

\begin{Thm}
	There is an equivalence of tensor triangulated categories
	\[
		\Der(\HA_G) \cong \Der(\Mack(G))
	\]
	for any finite group $G$.
\end{Thm}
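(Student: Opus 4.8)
The plan is to identify $\Der(\HA_G)$ with the derived category of its heart, recognise that heart as $\Mack(G)$, and then upgrade the resulting triangulated equivalence (which is essentially the theorem of Greenlees--Shipley recalled above) to an equivalence of tensor triangulated categories; the monoidal bookkeeping is the new ingredient. First I would equip $\Der(\HA_G)=\Ho(\HA_G\MMod_{\Sp^G})$ with the $t$-structure created by the forgetful functor: an $\HA_G$-module is connective (resp.\ coconnective) exactly when its underlying $G$-spectrum is; this is legitimate because $\HA_G$ is connective, and the $t$-structure is right complete and compatible with filtered colimits since the standard $t$-structure on $\Sp^G$ is. As $\HA_G=\rmH\bbA_G$ is discrete, the heart of this $t$-structure is the category of $\bbA_G$-modules inside $(\Sp^G)^\heartsuit\cong\Mack(G)$ for the box product (Remark~\ref{rem:heart-of-SpG}, Example~\ref{exa:box-product}); since $\bbA_G$ is the unit of the box product (Example~\ref{exa:burnside-mackey}), this heart is $\Mack(G)$ itself. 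Moreover the box product on $\Mack(G)$ is recovered from the relative smash product $\otimes_{\HA_G}$ by passing to $H_0$, so the identification of hearts is monoidal.

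Next I would check that the standard compact generators of $\HA_G\MMod_{\Sp^G}$, namely the free modules $F_{\HA_G}(\Sigma^\infty_G(G/H_+))$ for $H\le G$ (Remark~\ref{rem:Amodules}), are \emph{discrete} and, under the identification of the heart, coincide with the representable Mackey functors $\mathcal M_{G/H}$ (Remark~\ref{rem:representable-mackey-functors}). Indeed $\res^G_H\HA_G\cong\HA_H$ (restriction is $t$-exact and carries the Burnside Mackey functor of $G$ to that of $H$), the functor $\ind_H^G$ is $t$-exact by the Wirthm\"uller isomorphism (being simultaneously left and right adjoint to the $t$-exact functor $\res^G_H$), and the projection formula gives
\[
	F_{\HA_G}(\Sigma^\infty_G(G/H_+))\;\cong\;\HA_G\wedge\ind_H^G(\Sphere_H)\;\cong\;\ind_H^G(\HA_H)\;\cong\;\rmH(\ind_H^G\bbA_H)\;\cong\;\rmH\mathcal M_{G/H},
\]
using that induction of representable Mackey functors is representable. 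By Lemma~\ref{lem:compact-generators-for-derived} the $\mathcal M_{G/H}$ are compact and projective in $\Mack(G)$ and compact in $\Der(\Mack(G))$. I would then invoke the $\infty$-categorical recognition theorem (the stable form of Schwede--Shipley, \cite[\S1.3.3 and \S7.1]{HALurie}, \cite{SS03}): a presentable stable $\infty$-category with a right-complete $t$-structure compatible with filtered colimits, whose heart is a Grothendieck abelian category generated under colimits by a set of objects compact in the ambient category and projective in the heart, is canonically ($t$-exactly) equivalent to the derived $\infty$-category of its heart. Applied to $\HA_G\MMod_{\Sp^G}$ this yields a $t$-exact equivalence $\HA_G\MMod_{\Sp^G}\simeq\Der^\infty(\Mack(G))$ which is the identity on hearts, hence $\Der(\HA_G)\cong\Der(\Mack(G))$ as triangulated categories.

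The remaining step, which I expect to be the real obstacle, is to promote this to a symmetric monoidal equivalence for the derived box product on $\Der^\infty(\Mack(G))$. Here I would use that $\mathcal M_{G/H}\boxtimes\mathcal M_{G/K}\cong\mathcal M_{G/H\times G/K}$ is again a finite biproduct of representables, so the full additive subcategory spanned by the $\mathcal M_{G/H}$ (a copy of the effective Burnside category, with $\mathcal M_{G/G}=\bbA_G$ as unit) is closed under $\boxtimes$ and consists of projectives; consequently the derived box product is computed by projective resolutions, makes $\Der^\infty(\Mack(G))$ presentably symmetric monoidal compatibly with the $t$-structure and restricting to $\boxtimes$ on $\Mack(G)$, and is the essentially unique such structure (being determined on the generating compact projectives, where a box product of projectives is projective, hence discrete). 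The structure $\otimes_{\HA_G}$ transported along the equivalence has the same two properties (compatibility with the $t$-structure since $\HA_G$ is connective; restriction to $\boxtimes$ on the heart by the first paragraph), so the $t$-exact, heart-preserving equivalence is automatically symmetric monoidal. Making the universal property of the symmetric monoidal derived $\infty$-category precise is the delicate point; an alternative that stays closer to the methods of this paper would instead construct a colimit-preserving symmetric monoidal functor $\Der^\infty(\Mack(G))\to\HA_G\MMod_{\Sp^G}$ directly --- from the universal property of the derived category generated by the monoidal category of representables together with a Proposition~\ref{prop: general on modules}-style comparison of functor and module categories --- sending $\mathcal M_{G/H}$ to $\HA_G\wedge\Sigma^\infty_G(G/H_+)$, and then apply Proposition~\ref{prop:monadicequivalence} (it sends compact generators to compact generators and the right adjoint of the unit to the unit) to conclude it is an equivalence.
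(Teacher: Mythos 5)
Your route to the underlying triangulated equivalence is genuinely different from the paper's and is essentially sound: you identify the free modules $\HA_G\wedge\Sigma^\infty(G/H_+)$ with the Eilenberg--MacLane spectra of the representables $\mathcal M_{G/H}$ (the $t$-exactness of induction via the Wirthm\"uller isomorphism is the right justification) and then argue tilting-style that $\HA_G\MMod_{\SpG}$ is the derived $\infty$-category of its heart $\Mack(G)$. One caveat: the recognition principle you invoke needs more than ``compact in the ambient category and projective in the heart''--- what the Schwede--Shipley/Lurie argument actually requires is that the mapping \emph{spectra} between the chosen compact generators, computed in $\HA_G\MMod_{\SpG}$, are discrete. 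This does hold here, but it is a consequence of your identification of the generators with Eilenberg--MacLane spectra (genuine fixed points of such are discrete), not of projectivity in the heart alone; as stated your hypothesis is not a citable theorem. With that repair, the triangulated statement is exactly the Greenlees--Shipley argument, reorganized.

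The genuine gap is in the monoidal upgrade, which is the actual content of the theorem beyond \cite{GreenleesShipley14}. Your primary argument --- that a $t$-exact, heart-preserving equivalence between two presentably symmetric monoidal structures, each compatible with the $t$-structure and restricting to the box product on the heart, ``is automatically symmetric monoidal'' --- has no justification: a symmetric monoidal refinement is an infinite hierarchy of coherence data and is not determined by its effect on the heart together with $t$-compatibility; no uniqueness theorem of this form is available, and you acknowledge as much. Your fallback (build a colimit-preserving symmetric monoidal functor $\Der^{\infty}(\Mack(G))\to\HA_G\MMod_{\SpG}$ on the representables and conclude by Proposition~\ref{prop:monadicequivalence}) has the right shape, but it leaves precisely the hard step undone: to get such a functor you need either a symmetric monoidal universal property of $\Der^{\infty}(\Mack(G))$ or a \emph{coherent} symmetric monoidal identification of the full subcategory of free $\HA_G$-modules on orbits with the linearized Burnside category, and producing those coherences is the whole difficulty. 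The paper avoids needing any universal property of $\Der^{\infty}(\Mack(G))$ by constructing the comparison in the opposite direction, out of $\SpG$, where Theorem~\ref{thm:robalo} already supplies one: the Mackey-functor-valued Bredon chain functor $\underline{\widetilde{C}}_\bullett(-,\bbA_G)\colon\mathcal{S}_*^G\to\Der^{\infty}(\Mack(G))$ is a lax symmetric monoidal left Quillen functor (Eilenberg--Zilber) whose structure maps are weak equivalences, hence gives a symmetric monoidal left adjoint of $\infty$-categories by \cite[Thm.~A.7]{NikSch}; one checks $\underline{\widetilde{C}}_\bullett(S^{\rho_G},\bbA_G)$ is invertible using cellular chains and self-duality of orbits, extends over $\SpG$, and then finishes by monadicity and the identification $R(\unit)\simeq\HA_G$ (connectivity plus $\bbA_G$ being the initial commutative Green functor --- the one step your sketch shares with the paper). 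Until you actually construct a symmetric monoidal comparison functor with all its coherences, in one direction or the other, the tensor triangulated equivalence is not proved.
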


\begin{proof} 
	For any pointed simplicial presheaf $Y \in \Fun(\mathcal{O}(G)\op, \sSet_*)$, the relative normalized simplicial chain complex $\widetilde{C}_{\bullett}(Y) := C_\bullett(Y{(-)}, \ast)$ can be regarded as a complex of coefficient systems. This provides a functor
		\begin{equation}\label{eq:bredonchains}
			\widetilde{C}_\bullett(-) : \Fun(\mathcal{O}(G)\op, \sSet_*) \to \Ch(\Fun(\mathcal{O}(G)\op, \Ab))
		\end{equation}
	which, using the Eilenberg--Zilber map, is lax symmetric monoidal with respect to the pointwise smash product on
		\[
			\Fun(\mathcal{O}(G)\op, \sSet_*)
		\]
	and the pointwise monoidal structure on 
		\[
			\Ch(\Fun(\mathcal{O}(G)\op,\Ab)) = \Fun(\mathcal{O}(G)\op,\Ch(\bbZ)).
		\]
	To pass from coefficient systems to Mackey functors, we use the induction functor $i_! \colon  \Fun(\mathcal{O}(G)\op, \Ab) \to \Mack(G)$, which is left adjoint to the restriction functor along $i : \mathcal{O}(G)\op \hookrightarrow \BG$. The induction functor $i_!$ is symmetric monoidal with respect to the pointwise monoidal structure on the category of coefficient systems and the box product monoidal structure on the category of Mackey functors (see Example~\ref{exa:box-product}). It then induces a symmetric monoidal functor
		\begin{equation}\label{eq:mackeyfy}
			i_! : \Ch(\Fun(\mathcal{O}(G)\op, \Ab)) \to \Ch(\Mack(G))
		\end{equation}
	on the categories of chain complexes.

	These categories are symmetric monoidal model categories when equipped with the projective model structures, and the functors \eqref{eq:bredonchains} and \eqref{eq:mackeyfy} are left Quillen functors. We denote the composite $i_!(\widetilde{C}_\bullett(-))$ by $\underline{\widetilde{C}}_\bullett(-,\bbA_G)$. This choice of notation can be explained as follows: If $X$ is a pointed $G$-simplicial set then $G/H \mapsto X^H$ gives a cofibrant pointed simplicial presheaf on $\mathcal{O}(G)$ and a straightforward calculation (using \cite[Section 3]{MPN06}, for example) shows that the associated complex of Mackey functors $i_!(\widetilde{C}_\bullett(X^{(-)}))$ is nothing but the Mackey-valued Bredon chain complex of $X$ with coefficients in $\bbA_G$ (see \cite[Section II.9]{tomDieck87}).

	All told, we have a lax symmetric monoidal left Quillen functor
		\[
			\underline{\widetilde{C}}_\bullett(-, \mathbb{A}_G) \colon \Fun(\mathcal{O}(G)\op, \sSet_*) \to  \Ch(\Mack(G))
		\]
	whose lax monoidal structure maps are weak equivalences. By \cite[Theorem~A.7]{NikSch} (see also \cite[Example 4.1.7.6]{HALurie}), we obtain a symmetric monoidal left adjoint 
		\begin{equation}\label{eq:mackeychains}
			\underline{\widetilde{C}}_\bullett(-, \mathbb{A}_G) \colon \mathcal{S}_*^G \to \Der^{\infty}(\Mack(G))
		\end{equation}
	between the underlying symmetric monoidal $\infty$-categories. We would like to show that \eqref{eq:mackeychains} induces a symmetric monoidal functor $\Sp^G \to \Der^\infty(\Mack(G))$. To invoke Theorem~\ref{thm:robalo}, we need to show that $\underline{\widetilde{C}}_\bullett(S^{\rho_G}, \mathbb{A}_G)$ is invertible. This can be checked at the level of homotopy categories. Indeed, given a finite $G$-CW spectrum $X$ (i.e.~a finite $G$-spectrum with a preferred cellular decomposition in the homotopy category), we can define the cellular chain complex $\underline{C}^{cell}_\bullett(X, \mathbb{A}_G)$ using the relative equivariant stable homotopy groups (see \cite[Section~4]{BDP17}) and it is straightforward to check that the following equivalences hold in $\Der(\Mack(G))$:
		\[
			\underline{C}^{cell}_\bullett(\Sigma^{\infty}(G/G_+), \mathbb{A}_G) \simeq \mathcal{M}_{G/G}[0]=\mathbb{A}_G[0]
		\]
	and
		\[
			\underline{C}^{cell}_\bullett(X \wedge Y, \mathbb{A}_G)  \simeq  \underline{C}^{cell}_\bullett(X, \mathbb{A}_G) \otimes \underline{C}^{cell}_\bullett(Y, \mathbb{A}_G).
		\]
	Moreover, it follows from \cite[Section 4]{BDP17} that $\underline{\widetilde{C}}_\bullett(S^{\rho_G}, \mathbb{A}_G)$ is quasi-isomorphic to  $\underline{C}^{cell}_\bullett(S^{\rho_G}, \mathbb{A}_G)$.  Since orbits are self-dual, a cellular structure on $S^{\rho_G}$ induces a cellular structure on $S^{-\rho_G}$ and the cellular chain complex $\underline{C}^{cell}_\bullett(S^{-\rho_G}, \mathbb{A}_G)$ is an inverse of $\underline{\widetilde{C}}_\bullett(S^{\rho_G}, \mathbb{A}_G)$ in $\Der(\Mack(G))$. We can thus invoke Theorem~\ref{thm:robalo} and assert that there is an essentially unique symmetric monoidal left adjoint
		\[
			L  \colon  \Sp^G \to \Der^{\infty}(\Mack(G))
		\]
	such that $L\circ \Sigma^\infty \simeq \underline{\widetilde{C}}_\bullett(-, \mathbb{A}_G)$. If $R$ denotes a right adjoint to $L$ then 
		\begin{equation}\label{eq:computing-right}
			\pi_*^H R(\mathcal{N}_\bbullet) \cong H_*(\mathcal{N}_\bbullet(G/H))
		\end{equation}
	for any complex of Mackey functors $\mathcal{N}_\bbullet$ since $L(\Sigma^{\infty} G/H_+) \simeq \mathcal{M}_{G/H}[0]$.  Moreover, since $L$ sends a set of compact generators to a set of compact generators (see Lemma~\ref{lem:compact-generators-for-derived}), the right adjoint $R$ commutes with colimits and is conservative.  Hence the adjunction is monadic by the Barr--Beck--Lurie Theorem \cite[Theorem~4.7.3.5]{HALurie}.  Since the projection formula holds, we conclude that there is a symmetric monoidal equivalence
		\[
			\Der^{\infty}(\Mack(G)) \simeq R(\unit) \MMod_{\Sp^G}
		\]
	and it remains to show that $R(\unit)$ is equivalent as a commutative algebra to~$\HA_G$. The unit map $\mathbb{S} \to R(\unit)=R(\bbA_G[0])$ is a morphism of commutative algebras and truncates to an isomorphism $\pi_0(\mathbb{S}) \xra{\sim} \pi_0(R(\bbA_G[0]))$ in the heart (see Rem.~\ref{rem:heart-of-SpG} and \cite[Exa.~2.2.1.10]{HALurie}), since both sides are abstractly isomorphic to $\mathbb{A}_G$ and the latter is the initial commutative Green functor.  This provides an isomorphism $\HA_G \xra{\sim} R(\unit)$ since $R(\unit)$ has non-trivial homotopy groups only in degree 0.
\end{proof}

\begin{Rem}
	Many of the basic features of the category $\Der(\HZ_G)$ developed in Section~\ref{sec:highly-structured} hold just as well for $\Der(\HA_G)$ by simply replacing all instances of $\HZ_G$ with~$\HA_G$.  Indeed, properties \ref{it:DHZG}--\ref{it:adjoint} all hold for $\Der(\HA_G)$.  The crucial property that does \emph{not} hold is property \ref{it:finite-loc} which says that we obtain $\DHZ$ if we kill off all the generators of $\Der(\HZ_G)$ associated to proper subgroups.  From the authors' point of view, the geometric fixed point functor~$\phigeomb{G}$ of an equivariant category \emph{is} this localization killing off all the generators for proper subgroups (morally, killing everything that comes by induction from proper subgroups).  For $\SH(G)$ the result of this localization is the category associated with the trivial subgroup: the nonequivariant stable homotopy category $\SH$.  Property \ref{it:finite-loc} asserts that the same is true for the category of derived Mackey functors $\Der(\HZ_G)$: the result of the localization is $\Der(\HZ_1)\cong\Der(\HZ)$.  We will show below that property \ref{it:finite-loc} fails for $\Der(\HA_G)$ even for the smallest nontrivial group $G=C_2$.  Nevertheless, Proposition~\ref{prop:induced-localization} does provide a general description of the localization, as follows:
\end{Rem}

\begin{Cor}
	For any finite group $G$, the finite localization of $\HA_G\MMod_{\Sp^G}$ associated to the set $\SET{F_G(G/H_+)}{H \lneq G}$ is, up to equivalence, the functor on module categories 
		\[
			\HA_G\MMod_{\Sp^G} \to \phigeomb{G}(\HA_G)\MMod_{\Sp}
		\]
	induced by the geometric fixed point functor $\phigeomb{G}:\Sp^G \to \Sp$. In particular,
		\[
			\Der(\HA_G) / \Loc_{\otimes}\langle F_G(G/H_+) \;|\; H \lneq G \rangle \cong \Der(\phigeomb{G}(\HA_G)).
		\]
\end{Cor}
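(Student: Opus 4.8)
The plan is to deduce this immediately from Proposition~\ref{prop:induced-localization}, following verbatim the strategy of the proof of Proposition~\ref{prop:HZGfiniteloc} but with $\HZ_G$ replaced by $\HA_G$. The only property of $\HZ_G$ used in Section~\ref{sec:highly-structured} to identify the relevant localization was that it is a commutative algebra object in $\Sp^G$, so nothing is lost in this substitution. The key geometric input, already recorded in the proof of Proposition~\ref{prop:HZGfiniteloc} with $N=G$, is that if $L:\Sp^G\to\Sp^G$ denotes the finite localization associated to the set $\SET{G/H_+}{H\lneq G}$ of compact objects, then the composite $\Sp=\Sp^{G/G}\xra{\infl}\Sp^G\xra{L}L(\Sp^G)$ is a symmetric monoidal equivalence, and under this identification $L$ is, up to equivalence, the geometric fixed point functor $\phigeomb{G}:\Sp^G\to\Sp$.

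First I would apply Proposition~\ref{prop:induced-localization} to the presentably symmetric monoidal stable $\infty$-category $\cat C^{\otimes}=\Sp^G$, which is rigidly-compactly generated by $\SET{G/H_+}{H\le G}$, taking $A:=\HA_G\in\CAlg(\Sp^G)$ and taking $L:=\phigeomb{G}$ to be the smashing localization above. The second part of Proposition~\ref{prop:induced-localization} then identifies the induced functor on module categories (Remark~\ref{rem:module-induced})
\[
	\HA_G\MMod_{\Sp^G}\longrightarrow \phigeomb{G}(\HA_G)\MMod_{L(\Sp^G)}
\]
with the finite localization associated to the set $F_{\HA_G}(\SET{G/H_+}{H\lneq G})=\SET{F_G(G/H_+)}{H\lneq G}$ of compact objects in $\HA_G\MMod_{\Sp^G}$.

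Next I would post-compose with the symmetric monoidal equivalence $L(\Sp^G)\simeq\Sp$. Since a symmetric monoidal equivalence $\theta:\cat C\xra{\sim}\cat D$ induces a symmetric monoidal equivalence $A\MMod_{\cat C}\xra{\sim}\theta(A)\MMod_{\cat D}$ (as already noted in the proof of Proposition~\ref{prop:HZGfiniteloc}), this rewrites the codomain as $\phigeomb{G}(\HA_G)\MMod_{\Sp}$, where now $\phigeomb{G}(\HA_G)$ is the image of $\HA_G$ under the genuine geometric fixed point functor $\Sp^G\to\Sp$, and the resulting composite $\HA_G\MMod_{\Sp^G}\to\phigeomb{G}(\HA_G)\MMod_{\Sp}$ is (up to equivalence) exactly the functor on module categories induced by $\phigeomb{G}:\Sp^G\to\Sp$ via Remark~\ref{rem:module-induced}. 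This establishes the first assertion. The ``in particular'' statement follows by passing to homotopy categories: a finite localization of presentably symmetric monoidal stable $\infty$-categories induces, at the level of triangulated homotopy categories, the Verdier quotient by the localizing tensor-ideal generated by the chosen compact objects (combine Remark~\ref{rem:Amodules} with the Neeman--Thomason localization theorem), which gives $\Der(\HA_G)/\Loc_{\otimes}\langle F_G(G/H_+)\mid H\lneq G\rangle\cong\Der(\phigeomb{G}(\HA_G))$.

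There is no real obstacle: the substantive work is entirely contained in Proposition~\ref{prop:induced-localization} and in the proof of Proposition~\ref{prop:HZGfiniteloc}, and the content of the corollary is precisely the observation that this machinery applies to any commutative algebra in $\Sp^G$, in particular to $\HA_G$. The only point requiring care is the bookkeeping in the third paragraph, namely matching the abstract codomain $\phigeomb{G}(\HA_G)\MMod_{L(\Sp^G)}$ produced by Proposition~\ref{prop:induced-localization} with the concrete category $\phigeomb{G}(\HA_G)\MMod_{\Sp}$ appearing in the statement; this is handled by the equivalence $L(\Sp^G)\simeq\Sp$ recalled in the proof of Proposition~\ref{prop:HZGfiniteloc} together with the fact that symmetric monoidal equivalences induce equivalences on module categories.
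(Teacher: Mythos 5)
Your proposal is correct and is essentially the paper's own argument: the paper states this corollary as an immediate consequence of Proposition~\ref{prop:induced-localization} applied to $A=\HA_G$, together with the identification (from the proof of Proposition~\ref{prop:HZGfiniteloc}) of the finite localization of $\Sp^G$ at $\SET{G/H_+}{H\lneq G}$ with the geometric fixed point functor $\phigeomb{G}:\Sp^G\to\Sp$. Your bookkeeping of the codomain via the equivalence $L(\Sp^G)\simeq\Sp$ and the passage to homotopy categories for the ``in particular'' clause matches the intended reading.
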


\begin{Rem}
	In other words, the target of the ``geometric fixed point'' functor~$\phigeomb{G}$ associated to the category $\Der(\HA_G)$ is the derived category $\Der(\phigeomb{G}(\HA_G))$ of the ring spectrum $\phigeomb{G}(\HA_G) \in \CAlg(\Sp)$.  More generally, for any subgroup $H \le G$, the target of the geometric fixed point functor $\Phi^H$ on $\Der(\HA_G)$ is the derived category of the ring spectrum $\Phi^H(\HA_G) \cong \Phi^H(\HA_H) \in \CAlg(\Sp)$.  The heart of the issue is that the equivariant Eilenberg-MacLane spectra $\HA_G$ do not behave well with respect to geometric fixed points.  For categorical fixed points, it is immediate from the definition that the non-equivariant spectrum $(\HA_G)^H$  is the Eilenberg-MacLane spectrum ${\rmH}A(H)$ associated to the Burnside ring $A(H)$.  In contrast, the geometric fixed points $\Phi^H(\HA_G)$ seem more mysterious and more complicated.
\end{Rem}

\begin{Prop}\label{prop:counter-example}
	The homotopy ring $\pi_* \Phi^{C_2}\HA_{C_2}$ is isomorphic to the graded ring $\mathbb{Z}[x]/(2x)$ where $x$ has degree two.  In particular, $\Phi^{C_2}\HA_{C_2}$ is not equivalent to $\HZ$.
\end{Prop}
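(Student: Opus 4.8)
The plan is to compute $\pi_* \Phi^{C_2}\HA_{C_2}$ directly by combining the isotropy separation sequence with known computations of $\pi_*^{C_2}\HA_{C_2}$ and the homotopy of the Borel-completed spectrum. Recall that for any $G$-spectrum $X$ there is an isotropy separation cofiber sequence $EG_+ \wedge X \to X \to \widetilde{E}G \wedge X$, whose geometric fixed points $\Phi^{C_2}(X) \simeq (\widetilde{E}C_2 \wedge X)^{C_2}$ sit in a cofiber sequence relating $\Phi^{C_2}(X)$, $(EC_2_+ \wedge X)^{C_2} \simeq (X^e)_{hC_2}$, and $X^{C_2}$. More usefully, there is the Tate square, which for $X = \HA_{C_2}$ exhibits $\pi_*\Phi^{C_2}\HA_{C_2}$ in terms of $\pi_*^{C_2}\HA_{C_2}$ and the homotopy fixed point / Tate spectra of the underlying spectrum $\HA_{C_2}^e \simeq \HZ$ with its trivial $C_2$-action. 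So the first step is to assemble these ingredients.

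First I would recall that the underlying spectrum of $\HA_{C_2}$ is $\HZ$ with trivial $C_2$-action (since $\bbA_{C_2}(C_2/e) = A(e) = \bbZ$), so $\pi_*(\HZ^{hC_2}) = H^{-*}(C_2;\bbZ)$ and $\pi_*(\HZ^{tC_2}) = \hat{H}^{-*}(C_2;\bbZ)$, which is $\bbZ/2$ concentrated in even degrees (negative and nonnegative), i.e. $\bbF_2[u^{\pm 1}]$ with $|u| = -2$ in cohomological grading, so $|u| = 2$ homologically. Second, I would recall the well-known computation of $\pi_\star^{C_2}\HA_{C_2}$ (the $RO(C_2)$-graded homotopy, or at least the integer-graded part): in particular $\pi_0^{C_2}\HA_{C_2} = A(C_2) = \bbZ[t]/(t^2-2t)$ and $\pi_n^{C_2}\HA_{C_2} = 0$ for $n \neq 0$, since $\HA_{C_2}$ is the Eilenberg--MacLane spectrum of a Mackey functor and hence has homotopy concentrated in degree $0$ for integer gradings. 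Third, I would use that $\Phi^{C_2}(X)^{\wedge}$ relates to the Tate construction: there is a cofiber sequence $(X_{hC_2}) \to X^{C_2} \to \Phi^{C_2}(X)$ when $X$ is suitably bounded, more precisely $\Phi^{C_2}(X) \simeq \mathrm{cofib}\big((X^e)_{hC_2} \to X^{C_2}\big)$ via the Adams isomorphism identifying $(EC_2_+ \wedge X)^{C_2}$ with the homotopy orbits of the underlying spectrum.

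The key computation is then: $\Phi^{C_2}\HA_{C_2} \simeq \mathrm{cofib}\big( \HZ_{hC_2} \xra{\mathrm{tr}} \HA_{C_2}^{C_2} \big) = \mathrm{cofib}\big(\HZ_{hC_2} \to \HZ[0]\big)$ where the target is $\HZ$ (since $(\HA_{C_2})^{C_2}$ has $\pi_0 = A(C_2)$... wait — one must be careful: $(\HA_{C_2})^{C_2} = \rmH A(C_2)$, the non-equivariant Eilenberg--MacLane spectrum of the \emph{Burnside ring}, not $\HZ$). So I would run the long exact sequence in homotopy: $\cdots \to \pi_n(\HZ_{hC_2}) \xra{\mathrm{tr}} \pi_n(\rmH A(C_2)) \to \pi_n\Phi^{C_2}\HA_{C_2} \to \pi_{n-1}(\HZ_{hC_2}) \to \cdots$. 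Here $\pi_n(\HZ_{hC_2}) = H_n(C_2;\bbZ)$, which is $\bbZ$ in degree $0$, $\bbZ/2$ in odd positive degrees, and $0$ in even positive degrees; $\pi_n(\rmH A(C_2)) = A(C_2) = \bbZ^2$ in degree $0$ and $0$ otherwise. The transfer map $\mathrm{tr}: H_0(C_2;\bbZ) = \bbZ \to A(C_2)$ sends $1$ to the class $[C_2] = t$ of the free orbit (by the double-coset / tom Dieck splitting description of the transfer on $\pi_0$). Thus in degree $0$ the sequence gives $\pi_0\Phi^{C_2}\HA_{C_2} = \mathrm{coker}(\bbZ \xra{1 \mapsto t} A(C_2)) = \bbZ[t]/(t^2 - 2t, t) = \bbZ$, as expected since $\Phi^{C_2}$ is unital. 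In degree $1$: $\pi_1\Phi^{C_2}\HA_{C_2}$ fits in $0 = \pi_1(\rmH A(C_2)) \to \pi_1\Phi \to \ker(\mathrm{tr}: \pi_0\HZ_{hC_2} \to \pi_0\rmH A(C_2))$; since $\mathrm{tr}(1) = t \neq 0$ is not torsion, the kernel is $0$, so $\pi_1\Phi = 0$. In degree $2$: $\pi_2(\HZ_{hC_2}) = H_2(C_2;\bbZ) = 0$ and $\pi_1(\HZ_{hC_2}) = \bbZ/2$, so $\pi_2\Phi^{C_2}\HA_{C_2} \cong \pi_1(\HZ_{hC_2}) = \bbZ/2$, giving the generator $x$ in degree $2$. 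Continuing, $\pi_n\Phi^{C_2}\HA_{C_2} \cong \pi_{n-1}(\HZ_{hC_2})$ for $n \geq 2$, which is $\bbZ/2$ for $n$ even $\geq 2$ and $0$ for $n$ odd; together with $\pi_0 = \bbZ$ and $\pi_n = 0$ for $n < 0$, this gives the additive structure of $\bbZ[x]/(2x)$, $|x| = 2$.

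\textbf{Main obstacle.} The multiplicative structure — identifying the ring with $\bbZ[x]/(2x)$ rather than merely a graded group whose underlying additive structure agrees — is the part requiring real care. One approach: observe that the map $\Phi^{C_2}\HA_{C_2} \to \HZ^{tC_2}$ (the ``Tate-valued Frobenius'' style map, or rather the canonical comparison coming from $\widetilde{E}C_2 \wedge \HA_{C_2} \to (\widetilde{E}C_2 \wedge \HA_{C_2})^{\wedge}$) is a ring map, and on homotopy in positive degrees it is injective onto $\bbF_2[u]$ (the connective cover of $\hat{H}^{-*}(C_2;\bbZ) = \bbF_2[u^{\pm 1}]$), identifying $x$ with $u$ and hence forcing $x^n \neq 0$ and $x^n$ generates $\pi_{2n}$; combined with $2x = 0$ (since $\pi_2 = \bbZ/2$) this pins down the ring. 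I expect checking that this comparison map is a ring isomorphism onto the connective cover $\tau_{\geq 0}\HZ^{tC_2}$ in positive degrees — using that $EC_2_+ \wedge \HA_{C_2}$ is coconnective-free enough and $\rmH A(C_2)$ is an Eilenberg--MacLane spectrum — to be the one genuinely delicate point, and I would likely invoke the standard fact (e.g.\ as in \cite[Section~I.2]{NikSch} together with the isotropy separation square) that for a connective $G$-spectrum $X$ the natural map $\Phi^{C_2}X \to X^{tC_2}$ is an equivalence on $\tau_{\geq N}$ for $N$ large, which here suffices because everything in sight is concentrated in a single Mackey homotopy degree. The final sentence ($\Phi^{C_2}\HA_{C_2} \not\simeq \HZ$) is then immediate: $\HZ$ has trivial positive homotopy while $\pi_2\Phi^{C_2}\HA_{C_2} = \bbZ/2 \neq 0$.
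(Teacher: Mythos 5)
Your argument is correct, and it reaches the computation by a route whose details differ from the paper's. You run the isotropy separation sequence $\HZ_{hC_2} \to (\HA_{C_2})^{C_2} \to \Phi^{C_2}\HA_{C_2}$ directly, using that $(\HA_{C_2})^{C_2} \simeq \rmH A(C_2)$ and that on $\pi_0$ the first map is the transfer $1 \mapsto [C_2]$; this gives the additive answer in every degree at once. You then obtain the ring structure from the ring map $\Phi^{C_2}\HA_{C_2} \to \HZ^{tC_2}$, which is an isomorphism on $\pi_n$ for $n \ge 2$ because (via the Tate square) its fiber agrees with the fiber of $\rmH A(C_2) \to \HZ^{hC_2}$, a spectrum concentrated in degrees $\le 0$. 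The paper instead compares $\HA_{C_2}$ with the constant-Mackey-functor spectrum $\HZbar$: it quotes the known computation $\pi_*\Phi^{C_2}\HZbar \cong \bbF_2[x]$ with $|x|=2$, compares the two isotropy separation sequences to see that $\Phi^{C_2}\HA_{C_2} \to \Phi^{C_2}\HZbar$ is an isomorphism in positive degrees, and computes $\pi_0$ as genuine fixed points modulo transfers, $A(C_2)/([C_2]) \cong \bbZ$. Both proofs live in the same circle of ideas (isotropy separation plus a Tate-type computation for $\HZ$ with trivial action); yours is more self-contained and makes the degreewise long exact sequence explicit, while the paper's is shorter by outsourcing the positive-degree multiplicative structure to the standard computation of $\Phi^{C_2}\HZbar$.

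One caution: the general statement you gesture at, that for a connective genuine $C_2$-spectrum $X$ the map $\Phi^{C_2}X \to X^{tC_2}$ is an equivalence in all sufficiently high degrees, is false at that level of generality (already for $X = \mathbb{S}$ the cofiber of $\mathbb{S}^{C_2} \to \mathbb{S}^{hC_2}$ is not bounded above, by the Segal conjecture). What actually saves you is the special situation you point out yourself: $(\HA_{C_2})^{C_2}$ is Eilenberg--MacLane and $\HZ^{hC_2}$ is coconnective, so the common fiber in the Tate square is concentrated in degrees $\le 0$ and the comparison map is an isomorphism in degrees $\ge 2$, which is all you need. Phrase the justification that way rather than as a general fact about connective $G$-spectra, and the proof is complete.
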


\begin{proof}
	Let $\HZbar$ denote the $C_2$-equivariant Eilenberg-MacLane spectrum associated to the constant Mackey functor $\underline{\mathbb{Z}}$.  It follows from the Tate square of \cite{GreenleesMay95b} and the Tate cohomology of $C_2$ that $\pi_* \Phi^{C_2}\HZbar$ is the polynomial algebra $\mathbb{F}_2[x]$ where $x$ has degree two.  Comparing the isotropy separation sequences for $\HZbar$ and $\HA_{C_2}$, we conclude that the canonical map $\Phi^{C_2} \HA_{C_2} \to \Phi^{C_2} \HZbar$ induces an isomorphism on homotopy groups in positive degrees. On the other hand, again using the isotropy separation sequence, we know that  $\pi_0$ of geometric fixed points is isomorphic to $\pi_0$ of genuine fixed points modulo proper transfers. Hence ${\pi_0 \Phi^{C_2}\HA_{C_2}\cong A(C_2)/[C_2] \cong \mathbb{Z}}$. This completes the proof. 
\end{proof}

\begin{Cor}
	The derived category of the integers $\Der(\HZ)$ is not equivalent as a triangulated category to the homotopy category of modules over $\Phi^{C_2}\HA_{C_2}$.
\end{Cor}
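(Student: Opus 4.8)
The plan is to distinguish the two triangulated categories by a purely triangulated invariant: the boundedness of the graded endomorphism ring of a compact object. Write $R := \Phi^{C_2}\HA_{C_2} \in \CAlg(\Sp)$ for the (nonequivariant) ring spectrum in question, so that the category under consideration is $\Ho(R\MMod)$. By Proposition~\ref{prop:counter-example} we have $\pi_* R \cong \bbZ[x]/(2x)$ with $|x| = 2$; in particular $\pi_n R$ is nonzero for infinitely many integers $n$ (namely all non-negative even $n$).

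First I would note that the free rank-one module $R$ is a compact object of $\Ho(R\MMod)$ and that
\[
	\Hom^n_{\Ho(R\MMod)}(R,R) = \Hom_{\Ho(R\MMod)}\!\big(R, \Sigma^n R\big) \cong \pi_{-n}R,
\]
since the mapping spectrum $\mathrm{map}_{R\MMod}(R, M)$ is equivalent to $M$ for every $R$-module $M$. Hence $\Ho(R\MMod)$ contains a compact object whose graded self-Hom ring $\Hom^*_{\Ho(R\MMod)}(R,R)$ is nonzero in infinitely many degrees.

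Next I would record the contrasting property of $\Der(\HZ) \cong \Der(\bbZ)$ (Remark~\ref{rem:derived-functor-category}): every compact object $X$ of $\Der(\bbZ)$ is a perfect complex and therefore quasi-isomorphic to a bounded complex of finitely generated free $\bbZ$-modules. For such a representative the Hom-complex $\underline{\Hom}_{\bbZ}(X,X)$ computes $\mathrm{RHom}_{\bbZ}(X,X)$ and is itself a bounded complex of abelian groups; consequently $\Hom^n_{\Der(\bbZ)}(X,X) = 0$ for all but finitely many $n$. Thus \emph{no} compact object of $\Der(\bbZ)$ has graded self-Hom ring isomorphic to $\bbZ[x]/(2x)$.

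To conclude, a triangulated equivalence carries compact objects to compact objects and induces isomorphisms on all graded $\Hom$-groups, the shift $[1]$ being part of the triangulated structure. An equivalence $\Der(\HZ) \simeq \Ho(R\MMod)$ would therefore send $R$ to some compact object $X$ of $\Der(\bbZ)$ with $\Hom^*_{\Der(\bbZ)}(X,X) \cong \pi_* R \cong \bbZ[x]/(2x)$, which is nonzero in infinitely many degrees --- contradicting the previous step. This argument presents no genuine difficulty; the only point meriting (minor) attention is the boundedness claim for compact objects of $\Der(\bbZ)$, which as indicated is immediate from the existence of bounded projective representatives for perfect complexes.
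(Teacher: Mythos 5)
Your argument is correct and is essentially the paper's own proof: both reduce to the observation that compact objects of $\Der(\bbZ)$ are perfect complexes with bounded graded self-maps, while Proposition~\ref{prop:counter-example} shows the graded endomorphisms of the unit $R=\Phi^{C_2}\HA_{C_2}$ in $\Ho(R\MMod)$ are unbounded, and a triangulated equivalence would preserve compactness and graded Homs. The only cosmetic difference is that the paper phrases this via a compact generator and the splitting of perfect $\bbZ$-complexes into shifted finitely generated abelian groups, whereas you use an arbitrary compact object represented by a bounded complex of finitely generated frees.
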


\begin{proof}
	If $P$ is any compact generator in $\DHZ$ then it is a perfect complex and hence quasi-isomorphic to a direct sum of finitely generated abelian groups.  This implies that the endomorphism ring spectrum of $P$ has bounded homotopy groups.  But the homotopy groups of $\Phi^{C_2}\HA_{C_2}$ are not bounded by Proposition~\ref{prop:counter-example}.
\end{proof}

\begin{Rem}
	This demonstrates that for the ordinary derived category of Mackey functors $\Der(\Mack(G))\cong\Der(\HA_G)$, the target category of the geometric fixed point functor $\Phi^H$ varies with the subgroup $H$.  For example, $\DHZ$ is always the target of the geometric fixed point functor $\Phi^{1} \cong \res^G_{1}$ associated to the trivial subgroup, but if $G$ contains a subgroup $H=C_2$ then the target of $\Phi^H$ is $\Der(\Phi^{C_2}(\HA_{C_2}))$ which is not equivalent to $\Der(\HZ)$.  This is in stark contrast to examples like $\SH(G)$ or the category of derived Mackey functors $\Der(\HZ_G)$ where the geometric fixed point functors $\Phi^H$ always land in the same category, namely the category associated to the trivial group.
\end{Rem}


\bibliographystyle{alphasort}\bibliography{TG-articles}\end{document}